\DeclareMathOperator{\Sel}{Sel}
\DeclareMathOperator{\Gal}{Gal}
\DeclareMathOperator{\cor}{cor}
\DeclareMathOperator{\res}{res}
\renewcommand{\Im}{\textrm{Im}}
\newcommand{\Frob}{\textrm{Frob}}
\newcommand{\Fitt}{\textrm{Fitt}}
\newcommand{\ord}{\textrm{ord}}
\renewcommand{\char}{\textrm{char}}
\DeclareMathOperator{\Hom}{Hom}
\DeclareMathOperator{\Aut}{Aut}
\DeclareMathOperator{\KS}{KS}
\DeclareMathOperator{\ES}{ES}
\newcommand{\loc}{\textrm{loc}}
\newcommand{\m}{\mathfrak{m}}
\newcommand{\p}{\mathfrak{p}}
\newcommand{\du}{^\vee}
\newcommand{\coker}{\textrm{coker}}
\newcommand{\length}{\textrm{length}}
\newcommand{\rank}{\textrm{rank}}
\newcommand{\Ind}{\textrm{Ind}}
\DeclareMathOperator{\cond}{cond}
\newcommand{\Q}{\mathbb Q}
\newcommand{\R}{\mathbb R}
\newcommand{\C}{\mathbb C}
\newcommand{\Z}{\mathbb Z}
\newcommand{\F}{\mathbb F}
\newcommand{\N}{\mathbb N}
\newcommand{\FF}{\mathcal F}
\newcommand{\Fcl}{{\mathcal{F}_{\textrm{cl}}}}
\newcommand{\Fcan}{{\mathcal{F}^{\textrm{can}}}}
\newcommand{\FLambda}{{\mathcal{F}_\Lambda}}
\newcommand{\FBK}{{\mathcal{F}_{\textrm{BK}}}}
\newcommand{\PP}{\mathcal P}
\newcommand{\NN}{\mathcal N}
\newcommand{\DD}{\mathcal D}
\newcommand{\GG}{\mathcal G}
\newcommand{\OO}{\mathcal O}
\newcommand{\LL}{\mathcal L}
\newcommand{\II}{\mathcal I}
\newcommand{\mupi}{\mu_{p^\infty}}
\newcommand{\tr}{\textrm{tr}}
\newcommand{\f}{\textrm{f}}
\newcommand{\s}{\textrm{s}}
\newcommand{\Tr}{\textrm{Tr}}
\newcommand{\fs}{\textrm{fs}}
\renewcommand{\f}{\textrm{f}}
\newcommand{\GL}{\textrm{GL}}
\newcommand{\tors}{\textrm{tors}}
\newcommand{\per}{\textrm{per}}
\newcommand{\crys}{\textrm{crys}}
\newcommand{\dR}{\textrm{dR}}
\newcommand{\chibar}{{\overline{\chi}}}
\newcommand{\psibar}{{\overline{\psi}}}
\renewcommand{\Im}{\textrm{Im}}
    \DeclareFontFamily{U}{wncy}{}
    \DeclareFontShape{U}{wncy}{m}{n}{<->wncyr10}{}
    \DeclareSymbolFont{mcy}{U}{wncy}{m}{n}
\DeclareMathSymbol{\Sha}{\mathord}{mcy}{"58}
\theoremstyle{definition}
\newtheorem{theorem}{Theorem}[section]
\newtheorem{theorem*}{Theorem}[section]
\newtheorem{definition}[theorem]{Definition}
\newtheorem{example}[theorem]{Example}
\newtheorem{remark}[theorem]{Remark}
\newtheorem{remark*}[theorem*]{Remark}
\newtheorem{proposition}[theorem]{Proposition}
\newtheorem{corollary}[theorem]{Corollary}
\newtheorem{corollary*}[theorem*]{Corollary}
\newtheorem{lemma}[theorem]{Lemma}
\newtheorem{conjecture}[theorem]{Conjecture}
\newtheorem{assumption}[theorem]{Assumption}
\def\namedlabel#1#2{\begingroup
    #2%
    \def\@currentlabel{#2}%
    \phantomsection\label{#1}\endgroup
}
\begin{document}

\title{Kolyvagin systems of rank 0 and the structure of the Selmer group of elliptic curves over abelian extensions}
\author{Alberto Angurel}
\date{}
\maketitle

\begin{abstract}
With the motivation to study the Selmer group of an elliptic curve, we improve the theory of Kolyvagin systems to describe the Fitting ideals of a Selmer group in the core rank zero situation. By relaxing a Selmer structure of rank zero at certain prime, we can construct an auxiliary Kolyvagin system whose localisation determines most of the Fitting ideals of the Selmer group, and all of them when the Galois representation is not self-dual. With this new theory, one can describe, in terms of the modular symbols, the Galois structure of the Selmer group of an elliptic curve $E/\mathbb{Q}$ over a finite abelian extension whose degree is coprime to $p$.
\end{abstract}

\tableofcontents

\section{Introduction}
\label{sec:intro}

The theory of Euler systems was introduced by V.A. Kolyvagin in \cite{Kolyvagin} in order to bound the rank and the order of the Tate-Shafarevich group of certain elliptic curves. In \cite{Rubin}, K.~Rubin generalised the theory to bound certain Selmer groups for arbitrary Galois representations. Those bounds are given by a collection of cohomology classes, called Kolyvagin's derivative classes, and can be obtained from every Euler system. In \cite{MazurRubin}, B.~Mazur and K.~Rubin axiomatized the theory of these derivative classes to introduce the notion of Kolyvagin systems. The theory can be applied to study the Selmer groups associated with all geometric Galois representations satisfying certain mild assumptions.

Important applications of this theory are the computations of class groups of number fields and Selmer groups of elliptic curves, which encode their rank and the structure of the Tate-Shafarevich group. However, in the elliptic curve case, the results in \cite{MazurRubin} only described the Selmer group 'restricted at $p$', which differs from the classical Selmer group in the local condition at the prime $p$. The main innovation of this article is the description of the structure of the classical Selmer group of an elliptic curve defined over $\Q$ in terms of its modular symbols.

From the modular symbols of an elliptic curve $E/\Q$, one can define, for many square-free integers $n\in \Z$, quantities $\delta_n\in \Z_p$, called Kurihara numbers after \cite{Kur2012}. They are only defined modulo some power of $p$ depending on $n$. Following \cite{Kim23}, if we group the quantities $\delta_n$ by the number of prime divisors of $n$, we can find, under some technical assumptions (see Theorem \ref{th:Kim_Selmer_str}), the rank $r$ of $E(\Q)$ and certain quantities $a_i$ such that 
\[\Sha(E/\Q)[p^\infty]=\biggl(\Z/(p)^{a_r}\biggr)^2\times \biggl(\Z/(p)^{a_{r+2}}\biggr)^2\times \cdots \times \biggl(\Z/(p)^{a_{s-2}}\biggr)^2\]
for some natural number $s$. We will generalise this result to study the Selmer groups $\Sel(K,E[p^\infty])$ for abelian extensions $K/\Q$. In order to do that, we define the twisted $\delta_{n,\chi}$ for all characters $\chi$ of $\Gal(K/\Q)$ and use these new quantities to characterise the Galois structure of the Mordell-Weil group $E(K)$ and the Tate-Shafarevich group $\Sha(E/K)$.

In order to introduce the main results in detail, we have to introduce some notation. Let $R$ be a complete, noetherian, principal, local ring of residue characteristic $p$ with maximal ideal $\m$ and let $T$ be a free $R$-module of finite rank endowed with a continuous action of the absolute Galois group $G_\Q$ of the rationals. A Selmer structure $\FF$ is a collection of subgroups $H^1_\FF(\Q_\ell,T)$ of the local Galois cohomology groups $H^1(\Q_\ell,T)$, where $\ell$ runs through all the primes. Those subgroups are called local conditions. Its associated Selmer group, denoted by $H^1_\FF(\Q,T)$, will be the elements in the global cohomology group $H^1(\Q,T)$ satisfying that their localisation for every prime belongs to the specified local condition.

The above Selmer groups are a generalisation of the classical Selmer groups that are defined, for instance, for the Tate module of an elliptic curve. In this case, the local conditions are the images of the local Kummer maps and the rank of the Selmer group is an upper bound of the algebraic rank of the elliptic curve, this upper bound being exactly the rank when the $p$-primary part of the Tate-Shafarevich group is finite.

Associated with a Selmer structure, there  is a quantity called core rank and denoted by $\chi(T,\FF)$ that, roughly speaking, measures the difference of the ranks between the Selmer groups of $T$ and its Cartier dual $T^*$. The theory in \cite{MazurRubin} can only be applied when $\chi(T,\FF)=1$. In this case, the module of Kolyvagin systems $\KS(T,\FF)$ is free of rank one over $R$, so a primitive Kolyvagin system, i.e., a generator of $\KS(T,\FF)$, describes the structure of the Selmer group.

The theory of Kolyvagin systems is based on the existence of a set of Kolyvagin primes $\PP$ satisfying certain properties. A Kolyvagin system is a collection of elements $\kappa_n\in H^1_{\FF(n)}(\Q,T)$ with $n$ running through the square-free products of primes in $\PP$, satisfying certain relations between $\kappa_n$ and $\kappa_{n\ell}$. Here $\FF(n)$ is the Selmer structure obtained by modifying conveniently the local conditions at the prime divisors of $n$. From the elements $\kappa_n$ in which $n$ is the product of exactly $i$ primes in $\PP$, one can define some ideals $\Theta_i$ in $R$, which coincide with the $i^{th}$ Fitting ideals of $H^1_\FF(\Q,T)$.

When $\chi(T,\FF)>1$, the module of Kolyvagin systems is not finitely generated, so the notion of primitivity cannot be defined. The theory was generalised in \cite{MazurRubin16} (see also \cite{BurnsSakamotoSano2}) by redefining the Kolyvagin systems as collections of classes in the exterior powers of certain cohomology groups. With this new definition, $\KS(T,\FF)$ is free of rank one over $R$ and a primitive Kolyvagin system describes the structure of the Selmer group similarly.

The case $\chi(T,\FF)=0$ remained unsolved by these methods. The main reason was the lack of nonzero Kolyvagin systems, as shown in \cite{MazurRubin}. The idea of R.~Sakamoto in \cite{Sakamoto_rk0} was to consider, for every Kolyvagin prime $\ell$, the Selmer structure $\FF^\ell$ relaxed at $\ell$, i.e., the structure obtained by modifying the local condition at $\ell$ to $H^1(\Q_\ell, T)$. These Selmer structures are of core rank one, so they admit non-trivial Kolyvagin systems.

R. Sakamoto defined the Kolyvagin systems of rank $0$ for $\FF$ as collections of (rank one) Kolyvagin systems of $\FF^\ell$ with $\ell$ running through the Kolyvagin primes, satisfying some relations among them. He proved that the module of rank zero Kolyvagin systems is free of rank one and that the primitive Kolyvagin systems carry some information about the Selmer group. He managed to construct the ideal $\Theta_i$ similarly and proved it was contained in the $i^{th}$ Fitting ideal of $H^1_\FF(\Q,T)$. For $i=0$, the ideal $\Theta_0$ is exactly the $0^{\textrm{th}}$ Fitting ideal of $H^1_\FF(\Q,T)$. This result is not powerful enough to determine the full structure of the Selmer group but it establishes its rank and cardinality.


In this article, we will find that it is only necessary to consider the relaxed Selmer structure $\FF^\ell$ for a unique prime $\ell$ in order to study the Selmer group of $\FF$. We also do not impose $\ell$ to be a Kolyvagin prime. The only condition we need $\ell$ to satisfy is that 
\[\frac{H^1(\Q_\ell,T/\m^kT)}{H^1_{\FF}(\Q_\ell,T/\m^kT)}\cong R/\m^k\ \ \forall k\geq 0.\]

Since $T,\FF^\ell)=1$, we can find a primitive Kolyvagin system for this Selmer structure and define
\[\delta_n:=\loc_\ell^s(\kappa_n),\]
where $\loc_\ell^s:\ H^1(\Q,T)\to \frac{H^1(\Q_\ell,T/\m^kT)}{H^1_{\FF}(\Q_\ell,T/\m^kT)}\cong R/\m^k$ is the localisation map and $k$ is some index depending on $n$.

Define $\Theta_i$ as the ideal of $R$ generated by all the $\delta_n$ in which $n$ is the product of exactly $i$ different primes. In case $\ell\in \PP$, the ideals $\Theta_i$ coincide with those defined in \cite{Sakamoto_rk0}. These ideals are contained in the respective Fitting ideals of the Selmer group and we can prove the equality holds in enough cases to determine the full structure of the Selmer group as an $R$-module.

\begin{theorem*} (See Theorem \ref{th:kur_par})
Under the assumptions \ref{Hirred}-\ref{Hprimes} in \textsection\ref{sec:global}, for every $i\in \Z_{\geq 0}$ we have that 
\[\Theta_i\subset \Fitt_i^R\biggl(H^1_{\FF^*}(\Q,T^*)\du\biggr).\]
where $\Fitt^i_R$ denotes the $i^{\textrm{th}}$ Fitting ideal. The equality holds when $i$ satisfies one of the following conditions:
\begin{itemize}
\item $i=\rank_R\biggl(H^1_{\FF^*}(\Q,T^*)\du\biggr)$,
\item $\Theta_{i-1}\subsetneq \Fitt_{i-1}^R\biggl(H^1_{\FF^*}(\Q,T^*)\du\biggr)$.
\end{itemize}
\label{th:kur_par_intro}
\end{theorem*}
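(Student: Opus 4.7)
The plan is to combine the rank one Kolyvagin system machinery of Mazur-Rubin applied to $\FF^\ell$ with global (Poitou-Tate) duality, transferring information about the ``relaxed-dual'' Selmer group $H^1_{(\FF^\ell)^*}(\Q, T^*)\du$ to the target $H^1_{\FF^*}(\Q, T^*)\du$. Since relaxing a core rank zero Selmer structure at one prime increases the core rank by exactly one, $\chi(\FF^\ell) = 1$, so $\KS(T, \FF^\ell)$ is free of rank one over $R$ and admits a primitive generator $\kappa$ by the main theorem of \cite{MazurRubin}.

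The key comparison is the exact sequence given by global duality applied to the pair $\FF \subset \FF^\ell$, which differ only at $\ell$:
\[
0 \to H^1_\FF(\Q, T) \to H^1_{\FF^\ell}(\Q, T) \xrightarrow{\loc_\ell^s} \frac{H^1(\Q_\ell, T)}{H^1_\FF(\Q_\ell, T)} \to H^1_{\FF^*}(\Q, T^*)\du \to H^1_{(\FF^\ell)^*}(\Q, T^*)\du \to 0.
\]
By hypothesis \ref{Hprimes}, the central quotient is (non-canonically) isomorphic to $R/\m^k$ at every finite level, so $H^1_{\FF^*}(\Q, T^*)\du$ is realised as an extension of $H^1_{(\FF^\ell)^*}(\Q, T^*)\du$ by $\coker(\loc_\ell^s)$. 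Standard lemmas on Fitting ideals of extensions then express $\Fitt_i^R(H^1_{\FF^*}(\Q, T^*)\du)$ in terms of the Fitting ideals of $H^1_{(\FF^\ell)^*}(\Q, T^*)\du$ and the content of $\loc_\ell^s$.

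For the inclusion $\Theta_i \subset \Fitt_i^R(H^1_{\FF^*}(\Q, T^*)\du)$, I would adapt Sakamoto's argument from \cite{Sakamoto_rk0}: running over square-free products $n = q_1 \cdots q_i$ of Kolyvagin primes, the Kolyvagin recursion relating $\kappa_n$ and $\kappa_{nq}$ produces a matrix of localisations whose minors bound $\Theta_i$ inside a subideal of $\Fitt_i$ computed through the extension above. The novelty, and where assumption \ref{Hprimes} is used, is that $\loc_\ell^s$ factors through the cyclic quotient $R/\m^k$; this is formally all that is required for the projection at $\ell$ to behave like a finite-to-singular map at a Kolyvagin prime, so we do not need $\ell$ itself to lie in $\PP$.

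The equality statements are the main obstacle. When $i = \rank_R(H^1_{\FF^*}(\Q, T^*)\du)$, the relevant Fitting ideal is principal and coincides with the characteristic ideal of the module; primitivity of $\kappa$, combined with Mazur-Rubin's main theorem for core rank one applied to $\FF^\ell$, should yield an index $n$ with $\omega(n)=i$ for which $\delta_n$ generates this principal ideal, using the extension extracted from the five-term sequence to compare the generators on the two sides. The inductive case ``$\Theta_{i-1}\subsetneq \Fitt_{i-1}$ implies $\Theta_i = \Fitt_i$'' follows from a general structural fact about Fitting ideals: a strict drop at one level forces an extra free summand to appear at the next level, and a well-chosen $n$ with $\omega(n)=i$ produces a $\delta_n$ which is a unit multiple of the desired generator of that summand. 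The most delicate point, on which I would spend the most care, is to verify that the Kolyvagin recursion allows this passage from level $i-1$ to level $i$ even when $\ell \notin \PP$ --- that is, that the single axiom \ref{Hprimes} on the cyclic local quotient at $\ell$ is enough to substitute for the full list of Kolyvagin-primality conditions in the classical and in Sakamoto's arguments.
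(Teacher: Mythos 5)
Your high-level plan (exploit the relaxed structure $\FF^q$ of core rank one, relate $H^1_{\FF^*}(\Q,T^*)\du$ to $H^1_{(\FF^q)^*}(\Q,T^*)\du$ via Poitou--Tate, prove the inclusion \`a la Sakamoto, then upgrade to equality via primitivity) has the right overall shape, and the five-term sequence you write is indeed Proposition~\ref{prop:global_duality} applied to $\FF\subset\FF^q$. However, several of the steps you flag as routine are in fact the crux, and the one you flag as delicate is not where the difficulty lies.

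First, there is no ``standard lemma'' that computes $\Fitt_i$ of a two-step extension from the Fitting ideals of the pieces; Fitting ideals of extensions satisfy only a one-sided containment $\Fitt_0(A)\Fitt_0(C)\subset \Fitt_0(B)$, and for higher $i$ the relation is even looser. The paper sidesteps this entirely: rather than an extension of Fitting ideals, it proves a \emph{length} identity (Lemma~\ref{lem:ord_delta}), namely that for a primitive $\kappa$ one has $\ord(\delta_n)=\length\bigl(H^1_{\FF(n)}(\Q,T/\m^{k_n})\bigr)$ whenever the right-hand side is $<k_n$. This is what links the numbers $\delta_n$ to the module you are trying to describe, and your proposal contains no analogue of it. The inclusion $\Theta_i\subset\Fitt_i$ then comes from the cheap observation that $H^1_{\FF_n}\subset H^1_{\FF(n)}$ and the structure theorem over a principal ring, not from tracking minors of localisation matrices through an extension.

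Second, the ``well-chosen $n$'' in both equality statements is the whole theorem, and you never say how to produce it. The paper's engine is a Chebotarev-type lemma (Lemma~\ref{lem:cr0}) valid precisely in core rank $0$: given $H^1_{\FF^*}(\Q,T^*[\m^k])\approx R/\m^{e_1}\times\cdots\times R/\m^{e_s}$, there exist infinitely many $\ell\in\PP_k$ with $H^1_{\FF^*(\ell)}\approx R/\m^{j}\times R/\m^{e_3}\times\cdots$ for some $e_2\le j\le k$, and if $e_1>e_2$ one can even achieve $j=e_2$. Condition~(ii) in the theorem ($\Theta_{i-1}\subsetneq\Fitt_{i-1}$) is exactly what guarantees the $e_1>e_2$ situation at the inductive step, which is why a strict drop at level $i-1$ forces equality at level $i$ --- not because ``an extra free summand appears,'' which is not a theorem about Fitting ideals in general, but because of this specific Chebotarev argument. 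Without an analogue of Lemma~\ref{lem:cr0} your argument for the equality cases does not close.

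Third, the delicate point you single out --- whether the Kolyvagin recursion tolerates $q\notin\PP$ --- is a non-issue. The Kolyvagin system $\kappa$ lives on $\FF^q$ and its recursion only involves primes $\ell\in\PP$ (the Kolyvagin primes). The auxiliary prime $q$ never enters the recursion; it is used only to define $\delta_n:=\loc_q^{\s}(\kappa_n)$, and the only thing required of it is that $H^1_{\s}(\Q_q,T/\m^k)\cong R/\m^k$, which is Assumption~\ref{Hloc} (not~\ref{Hprimes}, as you wrote) and is turned into a statement mod $\m^j$ by Lemma~\ref{lem:Hloc_tors}. So there is no hidden ``Kolyvagin-primality'' condition on $q$ to verify.
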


In particular, Theorem \ref{th:kur_par_intro} guarantees that the equality between the ideal $\Theta_i$ and the $i^{\textrm{th}}$ Fitting ideal of the Selmer group cannot fail for two consecutive indices $i$. Using the structure theorem for finitely generated modules over principal ideal domains, there is enough information to compute all Fitting ideals of $H^1_{\FF^*}(\Q,T^*)^\vee$ and, therefore, it is determined up to isomorphism by the ideals $\Theta_i$.

The quantities $\delta_n$ have an explicit and effectively computable formula in the case of the classical Selmer group of an elliptic curve. Assume $E$ is an elliptic curve defined over $\Q$ for which the absolute Galois group $G_\Q$ acts surjectively on the Tate module $T_pE$. In this case, M. Kurihara gave in \cite{Kur2012} a description of the structure of the Selmer group in terms of the following quantities, later known as Kurihara numbers:

\[
\widetilde \delta_n=\sum_{a\in (\Z/n)^*} \left[\frac{a}{n}\right]^+ \prod_{\ell\mid n}\log_{\eta_\ell}(a) \in \Z/p^{k_n}.
\]
Here $\left[\frac{a}{n}\right]^+$ is the real part of the modular symbol, normalised by the Néron period, of the modular form associated with $E$ and $k_n$ is some natural number depending on $n$. Similarly, one can define the ideals $\widetilde \Theta_i$ as the ones generated by the $\widetilde\delta_n$'s when $n$ runs through the square-free products of exactly $i$ primes in $\PP$.

Under some assumptions, like the vanishing of the Iwasawa $\mu$-invariant or the non-degeneracy of the $p$-adic height pairing, M. Kurihara proved that $\widetilde \Theta_i$ coincides with the $i^{th}$ Fitting ideal of $\Sel(\Q,E[p^\infty])\du$ when $i$ has the same parity as the algebraic rank of the elliptic curve and that $\widetilde \Theta_i=0$ otherwise. C.~H.~Kim improved the result by only assuming weaker hypotheses.
\begin{theorem*}(\cite[Theorems 1.9 and 1.11]{Kim23})
    Let $E/\Q$ be an elliptic curve defined over $\Q$ and let $p\geq 5$ be a prime number such that:
    \begin{itemize}
    \item $G_\Q$ acts surjectively on $T_pE$,
    \item The Tamagawa numbers of $E$ are prime to $p$,
    \item $E(\Q_p)$ contains no $p$-torsion,
    \item The Manin constant $c_1(E)$ is prime to $p$,
    \item The Iwasawa main conjecture holds.
    \end{itemize}
Let $n_i$ be the exponent $\widetilde \Theta_i=(p)^{n_i}$ and denote $a_i=\frac{n_i-n_{i+2}}{2}$.  Then there exist (minimal) indices $r$ and $s$ such that $\widetilde \Theta_r\neq 0$ and $\widetilde \Theta_s=\Z_p$ and the Selmer group can be described up to isomorphism as
\[\Sel(\Q,E[p^\infty])\cong \Bigl({\Q_p}/{\Z_p}\Bigr)^r \times \Bigl(\Z/(p)^{a_r}\Bigr)^2\times \Bigl(\Z/(p)^{a_{r+2}}\Bigr)^2\times \cdots\times \Bigl(Z/(p)^{a_{s-2}}\Bigr)^2. \]
\label{th:Kim_Selmer_str}
\end{theorem*}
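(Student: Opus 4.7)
The plan is to apply Theorem~\ref{th:kur_par_intro} to the Kolyvagin system for $T=T_pE$ derived from Kato's Euler system, and then to read off the Selmer group structure from its Fitting ideals.

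First, I would identify $\widetilde\delta_n$ with the abstract quantities $\loc_\ell^s(\kappa_n)$ of the paper. Kato's Euler system for $T_pE$ yields, via the usual derivative construction, a Kolyvagin system $\kappa$ for the Selmer structure $\FF^\ell$ relaxed at a suitable auxiliary prime $\ell$. An explicit reciprocity law following Kato and Kurihara expresses $\loc_\ell^s(\kappa_n)$ as the modular symbol sum defining $\widetilde\delta_n$, up to a $p$-adic factor; the hypotheses that the Manin constant and the Tamagawa numbers are prime to $p$ and that $E(\Q_p)$ contains no $p$-torsion are exactly what ensure that this factor is a unit, so that $\Theta_i=\widetilde\Theta_i$ for all $i$. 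Surjectivity of Galois on $T_pE$ together with $p\geq 5$ verifies the large-image hypotheses of Theorem~\ref{th:kur_par_intro}, while the Iwasawa main conjecture is used to ensure that $\kappa$ is primitive (otherwise one has only a divisibility rather than an equality in the core rank one theory feeding into Theorem~\ref{th:kur_par_intro}).

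Second, Theorem~\ref{th:kur_par_intro} then gives
\[\widetilde\Theta_i=\Theta_i\subseteq \Fitt_i^{\Z_p}\bigl(\Sel(\Q,E[p^\infty])\du\bigr),\]
with equality at $i=r$ (the $\Z_p$-rank of the dual Selmer group, which equals the Mordell--Weil rank of $E/\Q$ once $\Sha(E/\Q)[p^\infty]$ is known to be finite, a fact that follows from $\widetilde\Theta_r\neq 0$) and at every index where the previous inclusion is strict. Since equality cannot fail at two consecutive indices, equality necessarily holds along the arithmetic progression $i=r,r+2,r+4,\ldots,s-2,s$, which is precisely the set of indices appearing in the formula $a_i=(n_i-n_{i+2})/2$.

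Third, by the finiteness of $\Sha(E/\Q)[p^\infty]$, the dual Selmer group decomposes as
\[\Sel(\Q,E[p^\infty])\du\cong \Z_p^{\,r}\oplus \Sha(E/\Q)[p^\infty],\]
and the Cassels--Tate alternating pairing forces $\Sha(E/\Q)[p^\infty]\cong \bigoplus_{j\geq 0}(\Z/p^{a_{r+2j}})^2$ for some nonnegative integers $a_{r+2j}$. A direct computation of Fitting ideals of such a module yields $\ord_p\Fitt_{r+2j}^{\Z_p}=2\sum_{k\geq j}a_{r+2k}$, from which the stated formula $a_{r+2j}=(n_{r+2j}-n_{r+2j+2})/2$ is immediate. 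The indices $r$ and $s$ are characterised intrinsically as the smallest $i$ with $\Fitt_i\neq 0$ (the $\Z_p$-corank of $\Sel$) and the smallest $i$ with $\Fitt_i=\Z_p$ (the minimal number of generators of the dual), respectively.

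The main obstacle is the first step: constructing a Kolyvagin system from Kato's Euler system whose singular localisations are literally the Kurihara numbers, and proving it is primitive. The match of singular localisations with modular symbols relies on a delicate unravelling of the explicit reciprocity law together with the hypotheses on the Manin constant, Tamagawa numbers and $p$-torsion in $E(\Q_p)$; primitivity is the place where the Iwasawa main conjecture is essential, as without it one only obtains a one-sided divisibility and thereby an upper bound on the Fitting ideals rather than the exact values needed to pin down each $a_{r+2j}$.
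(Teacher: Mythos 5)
Your proposal follows the paper's broad strategy (Kato's Kolyvagin system, identification of $\widetilde\delta_n$ with $\loc_p^\s(\kappa_n)$, primitivity via the Iwasawa main conjecture, application of Theorem~\ref{th:kur_par}), but it has a genuine gap in the second step that the third step does not repair.

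You assert that since ``equality cannot fail at two consecutive indices'', equality necessarily holds along the arithmetic progression $i=r,r+2,\ldots,s$. Condition (ii) of Theorem~\ref{th:kur_par} gives $\Theta_i=\Fitt_i$ only when $\Theta_{i-1}\subsetneq \Fitt_{i-1}$; it tells you nothing when $\Theta_{i-1}=\Fitt_{i-1}$. Knowing $\Theta_r=\Fitt_r$, you cannot conclude $\Theta_{r+2}=\Fitt_{r+2}$ unless you first show the inclusion is strict at $i=r+1$. The Cassels--Tate structure, which you invoke in step three, does not force this: if consecutive exponents $a_{r+2j}$ happen to coincide, Corollary~\ref{cor:kur_par} permits a strict inclusion $\Theta_{r+2j}\subsetneq \Fitt_{r+2j}$ at an interior even index, and then your claimed formula $a_{r+2j}=(n_{r+2j}-n_{r+2j+2})/2$ would read off the wrong value. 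Your third step is also circular as written: it presupposes $n_{r+2j}=\ord_p\Fitt_{r+2j}$, which is exactly what step two was supposed to establish.

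The paper closes this gap with an ingredient you do not use: the functional equation of the Kurihara numbers inherited from the Mazur--Tate elements (Proposition~\ref{eq:MT_fe}, Corollary~\ref{cor:delta_fe}). It implies $\widetilde\Theta_i=0$ for every $i\geq r$ of the opposite parity to $r$; since $\Fitt_i\neq 0$ for all $i\geq r$, the inclusion at such $i$ is strict for free, so condition (ii) of Theorem~\ref{th:kur_par} fires at $i+1$ and yields equality along the whole progression. This also means the paper's proof dispenses with Cassels--Tate entirely (replacing it with the functional equation and the structure theorem over $\Z_p$), which is explicitly noted as the point of departure from Kim's original argument. Your proposal, by contrast, falls back on Cassels--Tate in the spirit of Kim's original proof, but without the functional equation it cannot pin down the even-index Fitting ideals from the $\Theta_i$, so the argument as written is incomplete.
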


The proof of Theorem \ref{th:Kim_Selmer_str} in \cite{Kim23} is based on the existence of Cassels-Tate pairing, which is a bilinear non-degenerate pairing defined on the torsion part of the Selmer group. 

Theorem \ref{th:kur_par_intro} provides a different proof for this result since $\widetilde \delta_n=\loc_p^s(\kappa_n)$ when $\kappa$ represents the Kolyvagin system obtained by deriving Kato's Euler system. When $i$ has a different parity than the rank of $E$, the functional equation applied to the Kurihara numbers implies that $\widetilde \Theta_i=0$, so the equality in Theorem \ref{th:kur_par_intro} holds for the other $i$'s and hence Theorem \ref{th:Kim_Selmer_str} follows as a consequence of the structure theorem of finitely generated $\Z_p$-modules. This argument will be presented in more generality to study the Galois structure of $\Sel(K,E[p^\infty])$ when $K$ is a finite abelian extension of $\Q$.

Therefore, one cannot expect the equality in Theorem \ref{th:kur_par_intro} to hold for every $i$, as we have seen a counterexample in $\Sel(\Q,E[p^\infty])$. The impediment appears when $T$ is self dual. In this case, there exists generalised Cassels-Tate pairings that force the Selmer groups to have specific forms, so there are obstructions to this method in specific values of $i$. However, if we impose a non-self-duality assumption on $T$, the equality holds for all $i$.
\begin{theorem*} (See Theorem \ref{th:kur})
Assume, in addition to \ref{Hirred}-\ref{Hprimes} in \textsection\ref{sec:global}, that there are no isomorphic subquotients of $T$ and $T^*$ and that the map $G_\Q\to \Aut(T)$ (given by the Galois action) has a sufficiently large image. Then 
\[\Theta_i=\Fitt^R_i\biggl(H^1_{\FF}(\Q,T^*)\du\biggr)\ \ \forall i\in \Z_{\geq 0}.\]
\label{th:kur_intro}
\end{theorem*}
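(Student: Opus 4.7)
The containment $\Theta_i \subseteq \Fitt_i^R(H^1_{\FF^*}(\Q, T^*)\du)$ is already supplied by Theorem \ref{th:kur_par}, so the task is to upgrade it to equality for every $i$. My plan is to run the construction of \textsection\ref{sec:global} a second time for the dual pair $(T^*, \FF^*)$ and then combine the two one-sided bounds by exploiting the absence of a Cassels-Tate-type pairing.

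Concretely, I would pick a prime $\ell^*$ satisfying the analogue (for $T^*$) of the hypothesis imposed on $\ell$ for $T$, build a primitive Kolyvagin system $\kappa^*$ for the core rank one structure $(\FF^*)^{\ell^*}$, and define dual Kurihara-type quantities $\delta_n^* := \loc_{\ell^*}^{\textrm{s}}(\kappa_n^*)$ together with the ideals $\Theta_i^*$ they generate. Theorem \ref{th:kur_par} applied in this dual setting then yields
\[
\Theta_i^* \subseteq \Fitt_i^R\bigl(H^1_{\FF}(\Q, T)\du\bigr),
\]
with equality at $i = \rank_R(H^1_\FF(\Q, T)\du)$ and whenever $\Theta_{i-1}^* \subsetneq \Fitt_{i-1}^R(H^1_\FF(\Q, T)\du)$. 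A Poitou-Tate-type exact sequence links the lengths of $H^1_\FF(\Q, T)\du$ and $H^1_{\FF^*}(\Q, T^*)\du$ up to controlled local correction terms, so a failure of equality in one of the two families of ideals propagates to a failure in the other.

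The decisive new ingredient is the non-self-duality. Because $T$ and $T^*$ share no Galois-equivariant subquotients, the large image hypothesis together with Chebotarev density rules out any nondegenerate pairing between $H^1_{\FF^*}(\Q, T^*)$ and itself that is compatible with the Kolyvagin system structure. In the self-dual case, it is precisely such a generalised Cassels-Tate pairing that forces the Kurihara numbers $\delta_n$ to vanish at the ``parity-mismatched'' indices and produces the strict inclusions $\Theta_i \subsetneq \Fitt_i$ permitted by Theorem \ref{th:kur_par}; once the pairing is excluded, those exceptional indices cannot occur. I expect the hardest step to be this final translation — converting the abstract condition ``no isomorphic subquotients of $T$ and $T^*$'' into a concrete vanishing of the obstruction module measuring $\Fitt_i / \Theta_i$. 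This is where the large image hypothesis is essential: it supplies enough Chebotarev-generic auxiliary primes to separate $T$ from $T^*$ in cohomology, and so close the gap for every $i$ simultaneously.
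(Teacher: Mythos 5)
Your proposal runs in a genuinely different direction from the paper, and the new machinery you introduce is not what makes the argument work. Let me flag the two main issues.

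First, the dual-Kolyvagin-system apparatus you set up for $(T^*,\FF^*)$ is redundant. Because the core rank is zero, Theorem \ref{th:core_rank} already gives, for every $n$ and $j$, non-canonical isomorphisms $H^1_{\FF(n)}(\Q,T/\m^j)\approx H^1_{\FF^*(n)}(\Q,T^*[\m^j])$. So $H^1_\FF(\Q,T)\du$ and $H^1_{\FF^*}(\Q,T^*)\du$ already have the same Fitting ideals, and a dual family $\Theta_i^*$ with the same one-sided containment as $\Theta_i$ buys you nothing: two one-sided bounds against the same target do not combine into an equality, and the Poitou--Tate sequence you invoke relates lengths of Selmer modules (already matched) rather than relating $\Theta_i$ to $\Theta_i^*$. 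The claim that a failure in one family ``propagates'' to the other is asserted, not argued.

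Second, and more seriously, you defer the entire content of the theorem to the final step, which is left as a heuristic. The paper's hypotheses \ref{Nsd}--\ref{Nsur} are used for exactly one purpose: they enable \cite[proposition 3.6.2]{MazurRubin} (quoted as proposition \ref{prop:mr362}), a strengthened Chebotarev statement letting one prescribe \emph{independently} the kernels of $\loc_\ell$ on chosen finite submodules of $H^1(\Q,T)$ and of $H^1(\Q,T^*)$ simultaneously. This feeds into lemma \ref{lem:nd}, which is the real engine of the proof: given $H^1_\FF(\Q,T)\approx H^1_{\FF^*}(\Q,T^*)\approx R/\m^{e_1}\times\cdots\times R/\m^{e_s}$, one can always choose a prime $\ell$ so that both $H^1_{\FF(\ell)}(\Q,T)$ and $H^1_{\FF^*(\ell)}(\Q,T^*)$ peel off exactly the top cyclic factor, i.e., become $R/\m^{e_2}\times\cdots\times R/\m^{e_s}$ --- in contrast to the self-dual case, where lemma \ref{lem:cr0} only shows the new exponent lies between $e_2$ and $k$. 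That sharp version of Chebotarev fails when $T\cong T^*$ as residual Galois modules (the two kernel conditions are then forced to be compatible), which is precisely the obstruction encoded by the self-dual Cassels--Tate pairing. With lemma \ref{lem:nd} in hand, the inductive construction of the auxiliary integers $n_i$ from the proof of theorem \ref{th:kur_par} produces, for \emph{every} $i$, an $n$ with $\nu(n)=i$ realizing the Fitting ideal, so equality holds across the board; no dual Kolyvagin system and no Poitou--Tate comparison are needed. Your write-up gestures at the right ingredient (``enough Chebotarev-generic auxiliary primes to separate $T$ from $T^*$ in cohomology'') but does not isolate the statement actually needed or show how it forces the exceptional indices of theorem \ref{th:kur_par} to be empty.
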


\begin{remark*}
While working on this project, the author was informed by R. Sakamoto that he had proven in his unpublished work \cite{Sakamoto24} the above result under the rather strong assumption $H^1_{\FF}(\Q,T)=0$. In this sense, Theorem \ref{th:kur_intro} is a generalization of R.~Sakamoto's result. 
\end{remark*}

With this result, we can generalise Theorem \ref{th:Kim_Selmer_str} to describe $\Sel(K,E[p^\infty])$ when $K/\Q$ is an abelian extension of degree prime to $p$, unramified at $p$ and at every bad prime of $E$. Recall that we need $E$ to be defined over $\Q$. We aim to define the structure of the Selmer group not only as a $\Z_p$-module but as a $\Z_p[\Gal(K/\Q)]$-module. In order to do that, we can use Shapiro's lemma to split the Selmer group into character parts:
$$\Sel(\Q,E[p^\infty])\otimes \OO_d=\bigoplus_{\chi} \Sel\biggl(\Q,E[p^\infty]\otimes \OO_d(\chi)\biggr),$$
where $\chi$ runs through the characters of $\Gal(K/\Q)$ and $\OO_d(\chi)$ is $\Z_p[\mu_d]$ endowed with an action of $G_\Q$ given by $\sigma\cdot x:=\chi(\sigma)x$.

In order to compute the structure of $\Sel\Bigl(\Q,E[p^\infty]\otimes \OO_d(\chi)\Bigr)$, we define the $\chi$-twisted Kurihara numbers as
\[\delta_{n,\chi}=\sum_{a\in (\Z/cn\Z)^*} \chibar(a) \left[\frac{a}{cn}\right]^{\chi(-1)} \prod_{\ell\mid n} \log_{\eta_\ell}(a)\in \OO_d/p^{k_n}.\]
where $c$ is the conductor of $\chi$. Similarly, the ideal $\widetilde \Theta_{i,\chi}$ is the ideal generated by all $\delta_{n,\chi}$ where $n$ is the square-free product of $i$ primes in $\PP$.

\begin{theorem*}(Theorem \ref{th:EK_str})
    Let $E/\Q$ be an elliptic curve defined over $\Q$ and let $\chi$ be a Dirichlet character of order prime to $p$ and conductor $c$. Denote by $K$ the fixed field of $\chi$. Assume the following:
    \begin{itemize}
    \item $G_\Q$ acts surjectively on $T_pE$,
    \item The Manin constant $c_1(E)$ is prime to $p$,
    \item $\textrm{gcd}(c,Np)=1$,
    \item The Tamagawa numbers of $E$ over $K$ are prime to $p$,
    \item $E(K_\p)$ contains no $p$-torsion for every prime $\p$ above $p$,
    \item The Iwasawa main conjecture for $f_\chi$ holds, where $f_\chi$ is the twist by $\chi$ of the associated modular form to $E$.
    \end{itemize}
Let $n_i$ be the exponent $\widetilde \Theta_{i,\chi}=(p)^{n_i}$.  Then there exist (minimal) indices $r$ and $s$ such that $\widetilde \Theta_r\neq 0$ and $\widetilde \Theta_s=\Z_p$: Also, denote $a_i=\frac{n_i-n_{i+2}}{2}$.
\begin{itemize}
\item If $\chi^2=1$:
\[\Sel\biggl(\Q,E[p^\infty]\otimes \OO_d(\chi)\biggr)\cong \left(\OO_d\otimes \frac{\Q_p}{\Z_p}\right)^r \times \left(\frac{\OO_d}{(p)^{a_r}}\right)^2\times\left(\frac{\OO_d}{(p)^{a_{r+2}}}\right)^2\times \cdots\times \left(\frac{\OO_d}{(p)^{a_{s-2}}}\right)^2.\]
\item If $\chi^2\neq 1$:
 \[\Sel\biggl(\Q,E[p^\infty]\otimes \OO_d(\chi)\biggr)\cong \left(\OO_d\otimes \frac{\Q_p}{\Z_p}\right)^r \times \frac{\OO_d}{(p)^{{n_r-n_{r+1}}}}\times \cdots\times \frac{\OO_d}{(p)^{{n_{s-1}-n_{s}}}}.\]
\label{th:EK_str_intro}
\end{itemize}
\end{theorem*}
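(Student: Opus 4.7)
Using the Shapiro-type decomposition displayed just before the statement, it suffices to compute, for each character $\chi$ of $\Gal(K/\Q)$, the $\OO_d$-module structure of $\Sel(\Q,E[p^\infty]\otimes\OO_d(\chi))$. Let $R=\OO_d=\Z_p[\mu_d]$, a discrete valuation ring with uniformizer $p$ (since $d=\ord(\chi)$ is coprime to $p$); set $T=T_pE\otimes\OO_d(\chi^{-1})$ so that $T^*\cong E[p^\infty]\otimes\OO_d(\chi)$, and equip $T$ with the Selmer structure $\FF$ whose dual $\FF^*$ on $T^*$ recovers the classical Bloch--Kato Selmer group. Under the running hypotheses, the technical assumptions \ref{Hffr}--\ref{Hprimes} of \textsection\ref{sec:global} hold with $\ell=p$ in the role of the relaxed prime; the key verification is $H^1(\Q_p,T/\m^k)/H^1_\FF(\Q_p,T/\m^k)\cong R/\m^k$, which reduces, via the Kummer description of the local condition and the absence of $p$-torsion in $E(K_\p)$, to a freeness statement under local Tate duality.

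Kato's Euler system attached to the $\chi$-twisted modular form $f_\chi$ then produces, through the Kolyvagin derivative construction, a Kolyvagin system $\kappa\in\KS(T,\FF^p)$; the Iwasawa main conjecture for $f_\chi$ guarantees its primitivity. The central computational step is the identification $\loc_p^s(\kappa_n)\doteq\delta_{n,\chi}$ up to a unit of $\OO_d$, obtained from Kato's explicit reciprocity law combined with the standard formula expressing the $p$-adic $L$-function in terms of modular symbols, the twist by $\chi$ being absorbed by the Gauss sum $\tau(\chibar)$ and the character sum over $(\Z/c\Z)^*$. Consequently $\widetilde\Theta_{i,\chi}$ coincides with the ideal $\Theta_i$ of the abstract theory.

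If $\chi^2\neq 1$, then no subquotient of $T$ is isomorphic to a subquotient of $T^*$, since the two differ by the non-trivial twist $\chi^2$, which is detected by the large Galois image on $T_pE$. Theorem \ref{th:kur_intro} then applies directly to yield $\widetilde\Theta_{i,\chi}=\Fitt_i^{\OO_d}\bigl(\Sel(\Q,E[p^\infty]\otimes\OO_d(\chi))\du\bigr)$ for every $i$, and the structure theorem for finitely generated modules over the PID $\OO_d$ produces the stated decomposition with elementary divisors $n_i-n_{i+1}$.

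If $\chi^2=1$, the representation is essentially self-dual and Theorem \ref{th:kur_intro} does not apply; instead we invoke Theorem \ref{th:kur_par_intro}. The functional equation for $L(f_\chi,s)$, transported to the level of Kurihara numbers through the explicit formula, forces $\delta_{n,\chi}=0$ whenever the parity of $\omega(n)$ differs from the sign of the functional equation, so $\widetilde\Theta_{i,\chi}=0$ on every other $i$. Theorem \ref{th:kur_par_intro} then prevents $\widetilde\Theta_i\subsetneq\Fitt_i$ from failing on two consecutive indices, so the vanishing on one parity propagates equality everywhere, and the relation $n_i=n_{i+1}$ between consecutive Fitting exponents of a torsion module with paired invariants yields the doubled elementary divisors $(\OO_d/p^{a_i})^2$. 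The main obstacle I expect is the identification $\loc_p^s(\kappa_n)\doteq\delta_{n,\chi}$ in the ramified twisted setting, which requires careful bookkeeping of Gauss sums and the compatibility of Kato's reciprocity law with a finite-order character twist, together with a sharp enough version of the functional equation at the level of twisted Kurihara numbers.
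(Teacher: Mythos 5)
Your outline reproduces the overall architecture of the paper's argument faithfully: decompose via Shapiro and restrict to the primitive $\chi$-isotypic component, verify \ref{Hffr}--\ref{Hprimes} with $q=p$ as the relaxed prime, twist Kato's Euler system, use the IMC to get (non-vanishing/primitivity of) the derived Kolyvagin system, identify $\loc_p^s(\kappa_n)$ with $\delta_{n,\chi}$, and then split on $\chi^2=1$ versus $\chi^2\neq 1$, invoking Theorem~\ref{th:kur_intro} in the non-self-dual case and Theorem~\ref{th:kur_par_intro} plus the functional equation in the self-dual case. That is indeed the route the paper takes. However, two pieces of your write-up need scrutiny.

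First, the identification $\loc_p^s(\kappa_{n,\chi})\doteq\delta_{n,\chi}$ is far from a one-line consequence of ``Kato's explicit reciprocity law plus Gauss sums.'' The paper's derivation occupies \textsection\ref{sec:exp}--\textsection\ref{sec:Kato_kol}: one must pass from Kato's interpolation formula through the Stickelberger/Mazur--Tate elements, show that twisting the Euler system by corestriction matches the $\chi$-component of $\Theta_{K(n)}(\xi_{K(n)})$, compare $\Theta_{K(n)}$ with the Mazur--Tate element $\theta_{K(n)}$ \emph{only on the primitive-character part} (Corollary~\ref{cor:Theta_MT} fails for imprimitive characters, and Proposition~\ref{prop:kol_der_comp} is needed to control the error after Kolyvagin derivation), apply Proposition~\ref{prop:logs_formula} to recover $\delta_{n,\chi}$, and finally compute the image of the integral local cohomology under $\exp^*_{\omega_E}$ (equations~\eqref{eq:exp_tw_im}--\eqref{eq:exp_tw}) to cancel the Euler factor $k_\chi'$. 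You flag this as the main obstacle but give no actual argument, so this remains the dominant gap.

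Second, your justification for the doubling in the case $\chi^2=1$ is not correct as written. You assert ``the vanishing on one parity propagates equality everywhere'' and then invoke ``paired invariants,'' which sounds like a Cassels--Tate argument --- but the whole point of this paper is to \emph{avoid} Cassels--Tate (that was Kim's route). Theorem~\ref{th:kur_par_intro} only gives $\Theta_i=\Fitt_i$ at indices of the \emph{same} parity as $r$; at the wrong-parity indices $\Theta_i=0$ is a strict containment, and equality does not hold there. The doubling instead comes from Corollary~\ref{cor:kur_par}: the Fitting convexity $m_{i+1}-m_i\geq m_i-m_{i-1}$ combined with the established equalities at even-parity indices and the vanishing of $\Theta_i$ at odd-parity indices pins the odd-parity Fitting exponent to $m_i=(m_{i-1}+m_{i+1})/2$, and then the elementary divisors are $m_{i-1}-m_i=m_i-m_{i+1}=a_{i-1}$, giving $(\OO_d/p^{a_{i-1}})^2$. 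Without this convexity step you cannot distinguish $(\OO_d/p^a)^2$ from $\OO_d/p^{2a}$ using only the even-indexed Fitting ideals, so the doubling does not follow from the information you cite.
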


\begin{remark*}
Theorem \ref{th:EK_str_intro} describes the structure of $\Sel(K,E[p^\infty])$ as a $\Z_p[\Gal(K/\Q)]$-module (see Proposition \ref{prop:fitting_integral}). An example of this process in a particular elliptic curve is shown in \textsection\ref{sec:examples}.
\end{remark*}

Although Theorem \ref{th:EK_str_intro} assumes the Iwasawa main conjecture to describe the structure of the Selmer group, the theory of Kurihara numbers also provides a numerical check of the Iwasawa main conjecture.

\begin{theorem*} (Theorem \ref{th:EK_IMC})
Assume all the assumptions of Theorem \ref{th:EK_str_intro} holds except for the Iwasawa main conjecture. Then there exists some square-free product of primes in $\PP$, say $n$, such that $\delta_{n,\chi}\in \OO_d^\times$ if and only if the Iwasawa main conjecture for $f_\chi$ holds.
\label{th:EK_IMC_intro}
\end{theorem*}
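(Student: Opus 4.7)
The plan is to identify both sides of the equivalence with the primitivity of Kato's Kolyvagin system $\kappa^{\textrm{Kato}}$ for $T:=T_pE\otimes\OO_d(\chibar)$ and the Selmer structure $\FF^p$ relaxed at $p$. The hypotheses of Theorem~\ref{th:EK_str_intro} minus IMC ensure that assumptions~\ref{Hffr}--\ref{Hprimes} hold in this setting, so Theorem~\ref{th:kur_par_intro} applies with $\delta_{n,\chi}=\loc_p^s(\kappa_n^{\textrm{Kato}})$.

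First I would show that there exists $n$ with $\delta_{n,\chi}\in\OO_d^{\times}$ if and only if $\kappa^{\textrm{Kato}}$ is primitive. Since $\KS(T,\FF^p)$ is free of rank one over $\OO_d$, primitivity is equivalent to nonvanishing modulo the maximal ideal $(p)$. If some $\delta_{n,\chi}\in\OO_d^{\times}$, then $\loc_p^s(\kappa_n^{\textrm{Kato}})\not\equiv 0\pmod{p}$, so $\kappa^{\textrm{Kato}}$ is primitive. Conversely, if $\kappa^{\textrm{Kato}}$ is primitive, applying Theorem~\ref{th:kur_par_intro} over the residue field $\F:=\OO_d/(p)$ at $i$ equal to the $\F$-dimension of $H^1_{\FF^*}(\Q,(T/pT)^*)\du$ forces $\Theta_i=\Fitt_i^{\F}(H^1_{\FF^*}(\Q,(T/pT)^*)\du)=\F$ (the equality case $i=\rank$ has no parity obstruction over a field), so some $\delta_{n,\chi}\bmod p$ is a unit.

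Next I would connect primitivity of $\kappa^{\textrm{Kato}}$ to IMC via Iwasawa theory. Kato's Euler system produces an Iwasawa-theoretic Kolyvagin system $\boldsymbol{\kappa}^{\textrm{Kato}}$ over $\Lambda:=\OO_d[[\Gal(\Q_\infty/\Q)]]$ whose reduction recovers $\kappa^{\textrm{Kato}}$, and the explicit reciprocity law identifies $\loc_p^s(\boldsymbol{\kappa}_1^{\textrm{Kato}})$ with $L_p(f_\chi)$ up to a unit, so that $\Theta_0^{\Lambda}=(L_p(f_\chi))$. Kato's divisibility theorem already gives $\char_\Lambda(\Sel_\infty\du)\supseteq(L_p(f_\chi))$ in $\Lambda$. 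Via the $\Lambda$-analog of Theorem~\ref{th:kur_par_intro}, primitivity of $\boldsymbol{\kappa}^{\textrm{Kato}}$ is equivalent to $\Theta_0^{\Lambda}=\Fitt_0^{\Lambda}(\Sel_\infty\du)=\char_\Lambda(\Sel_\infty\du)$, i.e., to IMC. Since $\Lambda$ and $\OO_d$ share the residue field $\F$, primitivity at the Iwasawa level is equivalent to primitivity of $\kappa^{\textrm{Kato}}$ at finite level, completing the chain of equivalences.

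The main obstacle I expect is the bookkeeping in the Iwasawa-theoretic step: verifying that Kato's $\boldsymbol{\kappa}^{\textrm{Kato}}$ fits the axiomatic framework of this paper (so that the $\Lambda$-analog of Theorem~\ref{th:kur_par_intro} applies), confirming that the explicit reciprocity law identifies $\Theta_0^{\Lambda}$ with $(L_p(f_\chi))$ exactly (rather than up to a spurious power of $p$), and handling the self-dual case $\chi^2=1$, where parity obstructions underlying Theorem~\ref{th:kur_par_intro} require separate treatment mirroring the even/odd split in Theorem~\ref{th:EK_str_intro}. A cleaner alternative would bypass Iwasawa theory entirely by arguing that the existence of a unit $\delta_{n,\chi}$ determines all Fitting ideals of $\Sel(\Q,E[p^\infty]\otimes\OO_d(\chi))\du$ via Theorem~\ref{th:kur_par_intro}, after which Kato's one-sided divisibility together with the reciprocity formula for $\delta_{1,\chi}$ would suffice to extract IMC directly.
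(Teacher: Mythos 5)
Your proposal correctly identifies the central strategy — reducing both sides of the equivalence to the primitivity of Kato's Kolyvagin system — and this is exactly what the paper does in \textsection\ref{sec:proof_IMC}. The finite-level step ``some $\delta_{n,\chi}\in\OO_d^\times$ $\iff$ $\kappa_\chi$ primitive'' is also in the paper, via equation \eqref{eq:orders} and lemma \ref{lem:dinf} (which uses core vertices and is more direct than your detour through theorem \ref{th:kur_par} at the residue field, though the conclusion is the same).

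However, the obstacle you flagged in the Iwasawa-theoretic step is a genuine one, not mere bookkeeping: the ``$\Lambda$-analog of theorem \ref{th:kur_par}'' does not exist in the paper's framework. Theorem \ref{th:kur_par} requires the coefficient ring to be a principal local ring (a DVR or artinian quotient thereof) and the Selmer structure to have core rank zero; $\Lambda$ is a two-dimensional regular local ring, and $\FLambda$ on $T\otimes\Lambda$ has core rank one, so neither hypothesis is met. The paper avoids this entirely by appealing to theorem \ref{th:MC_ind} (Mazur--Rubin), which already states that $\char(X_\infty)$ divides $\textrm{Ind}(\kappa)$ with equality precisely when $\kappa$ is $\Lambda$-primitive; comparing $\textrm{Ind}(\kappa)$ with the left-hand side of conjecture \ref{conj:IMC} makes ``$\kappa^\infty_\chi$ primitive $\iff$ IMC'' immediate, with no need for explicit reciprocity or Kato's one-sided divisibility as separate inputs. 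Your final sentence — that sharing the residue field $\F$ gives ``$\Lambda$-primitivity $\iff$ finite-level primitivity'' — also leans on an unstated ingredient: this requires the surjectivity of the reduction map $\KS(T\otimes\Lambda,\FLambda,\PP)\to\KS(T,\Fcan,\PP)$ with kernel $(\gamma-1)$, which is theorem \ref{th:kol_red} (Büyükboduk) together with corollaries \ref{cor:kol_red_prim} and \ref{cor:kol_red_loc}; the residue field observation alone is not enough. With these two substitutions (theorem \ref{th:MC_ind} in place of the nonexistent $\Lambda$-version of \ref{th:kur_par}, and theorem \ref{th:kol_red} to justify the descent to finite level), your outline becomes the paper's proof.
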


The article is structured in three main sections. In \textsection\ref{sec:gc}, the basic definitions and results about the Selmer structures are stated and the assumptions \ref{Hirred}-\ref{Hprimes} necessary for the main results are explained. \textsection\ref{sec:ks} is dedicated to introducing the general theory of Kolyvagin systems and to the proof of the results describing the structure of the Selmer group in the core rank zero situation. In particular, Theorem \ref{th:kur_par_intro} is proven in \textsection\ref{sec:fitd} and the proof of Theorem \ref{th:kur} appears in \textsection\ref{sec:fitn}.

The particular case when the Selmer group is the one associated with an elliptic curve appears in \textsection\ref{sec:EC}. This section is structured with a detailed description of this specific Selmer structure and the main results in \textsection\ref{sec:EC_intro}, with the rest of the section dedicated to the proofs. \textsection\ref{sec:exp}-\ref{sec:Kato_kol} develop the link between the twisted Kato's Kolyvagin system and the twisted Kurihara numbers. The proof of Theorem \ref{th:EK_IMC_intro} is concluded in \textsection\ref{sec:proof_IMC} and Theorem \ref{th:EK_str_intro} is proven in \textsection\ref{sec:proof_str}.

\paragraph{Acknowledgments}
 The author is indebted to Christian Wuthrich for proposing the question that led to this article and for many helpful discussions and comments throughout the development of this paper. The author also thanks the anonymous referee for their careful reading of an earlier version of this manuscript and their attentive and insightful comments that contributed to the improvement of this work.

\section{Galois cohomology}
\label{sec:gc}

This section outlines some preliminary results on Galois cohomology and Selmer structures. The reader is referred to \cite{MazurRubin}, where a more complete treatment is given. Most results in this section are proven in loc.~cit. and, when so, the concrete result is precisely stated.

\subsection{Local Galois cohomology}

Throughout this and next section, we will assume the following.

\begin{assumption}
Let $R$ be a discrete valuation ring of characteristic $0$ with finite residue field of characteristic $p>2$ and let $\m$ denote its maximal ideal.\footnote{Some results are valid only when $R$ is artinian. In those cases, we will state the result for $R/\m^k$ for some positive integer $k$.} Let $T$ be a free, finitely generated $R$-module endowed with an $R$-linear continuous action of the absolute Galois group $G_\Q$ of the rational numbers that is ramified only at finitely many primes. We will denote the Cartier dual of $T$ by $T^*=\Hom(T,\mu_{p^\infty})$, where $\mu^{p^\infty}$ is the set of roots of unity whose orders are powers of $p$.
\label{ass:R}
\end{assumption}

For every rational prime $\ell$, we fix an inclusion of the algebraic closures $\overline \Q\hookrightarrow \overline \Q_\ell$. This choice determines an inclusion of the absolute Galois groups $G_{\Q_\ell}\subset G_\Q$. We will consider the Galois cohomology group:
$$H^1(\Q_\ell, T):=H^1(G_{\Q_\ell}, T).$$

Let $\II_\ell$ be the inertia subgroup of $G_{\Q_\ell}$, i.e., the absolute Galois group of the maximal unramified extension $\Q_{\ell, \textrm{ur}}$ of $\Q_\ell$.
\begin{definition}
Suppose that $T$ satisfies Assumption \ref{ass:R} and let $\ell$ be a prime different form $p$. The \emph{finite} cohomology subgroup $H^1_\f(\Q_\ell, T)\subset H^1(\Q_\ell, T)$ is the kernel of the restriction map
$$H^1(\Q_\ell, T)\to H^1(\II_\ell, T\otimes \Q_p).$$
We will also denote
\[H^1_\s(\Q_\ell, T):= \frac{H^1(\Q_\ell, T)}{H^1_\f(\Q_\ell,T)}.\]
\end{definition}

\begin{proposition}(\cite[lemma 1.3.5]{Rubin})
If $T$ is unramified at $\ell\neq p$, then 
$$H^1_\f(\Q_\ell, T)=H^1(\Q_{\ell, ur}/\Q_\ell, T^{G_{\Q_{\ell, \textrm{ur}}}})=\ker\left(H^1(\Q_\ell, T)\to H^1(\II_\ell, T)\right).$$
\end{proposition}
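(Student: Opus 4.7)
The plan is to establish both equalities by a two-step argument: first use inflation--restriction to obtain the leftmost equality, then use a short exact sequence argument to show that the auxiliary tensor with $\Q_p$ in the definition of $H^1_\f$ does not change the kernel.

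First I would apply the inflation--restriction sequence attached to the normal subgroup $\II_\ell \triangleleft G_{\Q_\ell}$, whose quotient is $\Gal(\Q_{\ell,\ur}/\Q_\ell)$:
\[
0 \to H^1(\Q_{\ell,\ur}/\Q_\ell,\, T^{\II_\ell}) \to H^1(\Q_\ell,T) \to H^1(\II_\ell,T).
\]
Since $T$ is unramified at $\ell$ we have $T^{\II_\ell} = T$, so the kernel of the restriction map is exactly $H^1(\Q_{\ell,\ur}/\Q_\ell,T)$. This gives the rightmost equality (identifying the kernel with the inflation group).

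Next I would compare $H^1(\II_\ell,T)$ with $H^1(\II_\ell, T \otimes \Q_p)$ via the short exact sequence
\[
0 \to T \to T\otimes \Q_p \to T\otimes (\Q_p/\Z_p) \to 0.
\]
Because $T$ is unramified, $\II_\ell$ acts trivially on each of the three modules, so taking $\II_\ell$-invariants is the identity functor on them. In particular, the map $H^0(\II_\ell, T\otimes \Q_p) \to H^0(\II_\ell, T\otimes \Q_p/\Z_p)$ is just the quotient $T\otimes \Q_p \twoheadrightarrow T\otimes \Q_p/\Z_p$, which is surjective. The long exact sequence then forces the connecting map $H^0(\II_\ell, T\otimes \Q_p/\Z_p) \to H^1(\II_\ell, T)$ to vanish, and hence
\[
H^1(\II_\ell,T) \hookrightarrow H^1(\II_\ell, T\otimes \Q_p)
\]
is injective. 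Composing with the restriction $H^1(\Q_\ell,T) \to H^1(\II_\ell,T)$ shows the kernels of these two maps from $H^1(\Q_\ell,T)$ coincide, which together with the first step chains all three groups together.

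There is no serious obstacle; the only point to be careful about is that $T\otimes \Q_p$ really is a module on which $\II_\ell$ still acts trivially (which is immediate from $T$ being unramified), and that the tensor is taken over $\Z_p$ so that $T \otimes \Q_p/\Z_p$ is the honest quotient. Once that is noted, the result is a direct application of inflation--restriction plus a one-line long-exact-sequence computation.
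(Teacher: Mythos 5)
Your proof is correct. The paper itself does not supply a proof here (it simply cites \cite[lemma 1.3.5]{Rubin}), and your argument — inflation--restriction for $\II_\ell \triangleleft G_{\Q_\ell}$ to handle the second equality, plus the observation that $H^1(\II_\ell,T)\to H^1(\II_\ell,T\otimes\Q_p)$ is injective (because $\II_\ell$ acts trivially on the unramified module, so $H^0(\II_\ell,T\otimes\Q_p)\to H^0(\II_\ell,T\otimes\Q_p/\Z_p)$ is already surjective and the connecting map vanishes) — is exactly the standard argument found in Rubin.
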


For certain primes, the finite cohomology group helps to describe the whole cohomology group. 

\begin{definition} 
Fix a natural number $k\in \N$. Let $\PP_k$ be the set of prime numbers $\ell$ satisfying the following assumptions:
\begin{itemize}
\item\namedlabel{P1modp}{(P1)} $\ell\equiv 1\mod p^k$.
\item\namedlabel{P1dim}{(P2)} $T/(\m^k, \Frob_\ell-1) T$ is a free $R/\m^k$-module of rank $1$, where $\Frob_\ell$ is the arithmetic Frobenius at $\ell$.
\end{itemize}
The set of square-free products of primes in $\PP_k$ will be denoted by $\NN(\PP_k)$. For every $n\in \NN(\PP_k)$, we write $\nu(n)$ for the number of primes dividing $n$.
\end{definition}

In order to make the computations of the primes $\PP_k$ in the particular example described in \textsection \ref{sec:EC}, it is interesting to define $\PP_k$ relative to an extension $F/\Q$.

\begin{definition}
Let $k\in \N$ be a natural number and let $F/\Q$ be a Galois extension. Define $\PP_{k,F}$ as the subset of primes $\ell$ in $\PP_k$ such that 
\begin{itemize}
\item \namedlabel{Psplits}{(P3)} $\ell$ splits completely in $F/\Q$.
\end{itemize}
\end{definition}

\begin{remark}
Let $L$ be the field associated with the kernel of the map $G_\Q\to \textrm{Aut}(T^*[p^k])$. By the Chebotarev density theorem, if $\PP_k$ is non-empty and $L\cap F=\Q$, then $\PP_{k,F}$ is an infinite set.
\end{remark}

\begin{lemma} (\cite[lemma 1.2.1]{MazurRubin})
Let $T$ be as in Assumption \ref{ass:R}, assume $\ell$ satisfies \ref{P1modp} and denote by $\GG_\ell$ the $p$-primary part of the Galois group of $\Q(\mu_\ell)/\Q$, where $\mu_\ell$ is the set of $\ell^{\textrm{th}}$ roots of unity. Then there are canonical functorial isomorphisms:
\begin{itemize}
\item $H^1_\f(\Q_\ell, T/\m^k T)\cong T/(\m^k,\Frob_\ell-1) T$.
\item $H^1_\s(\mathbb Q_\ell, T/\m^k T)\otimes \GG_\ell \cong (T/\m^kT)^{\Frob_\ell=1}$.
\end{itemize}
\label{lem:mr121}
\end{lemma}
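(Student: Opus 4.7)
The plan is to run the inflation--restriction sequence for the inertia subgroup $\II_\ell \trianglelefteq G_{\Q_\ell}$ and identify the two resulting pieces with the finite and singular parts of the cohomology. Since $T$ is unramified at $\ell$, we have $(T/\m^k)^{\II_\ell} = T/\m^k$; and since $G_{\Q_\ell}/\II_\ell \cong \widehat{\Z}$ is topologically generated by $\Frob_\ell$ and has cohomological dimension one on finite modules, the five-term sequence collapses to
\[0 \to H^1(\widehat{\Z}, T/\m^k) \to H^1(\Q_\ell, T/\m^k) \to H^1(\II_\ell, T/\m^k)^{\Frob_\ell} \to 0,\]
in which the image of the first term is $H^1_\f(\Q_\ell, T/\m^k)$ by the very definition of the finite cohomology, while the third term is $H^1_\s(\Q_\ell, T/\m^k)$.

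For the finite part, the standard computation of $\widehat{\Z}$-cohomology on a finite module gives $H^1(\widehat{\Z}, T/\m^k) \cong T/(\m^k, \Frob_\ell - 1)T$, which is the first isomorphism. For the singular part, wild inertia is pro-$\ell$, so its $H^1$ with $p$-power coefficients vanishes and $H^1(\II_\ell, T/\m^k) = \Hom(\II_\ell^{\mathrm{tame},p}, T/\m^k)$. The tame character is a canonical Galois-equivariant isomorphism $\II_\ell^{\mathrm{tame},p}/p^k \cong \mu_{p^k}(\bar{\Q}_\ell)$; and since $\ell \equiv 1 \pmod{p^k}$ by \ref{P1modp}, one has $\mu_{p^k}(\bar{\Q}_\ell) = \mu_{p^k}(\F_\ell) = \GG_\ell[p^k]$ inside $\F_\ell^*$. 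Under \ref{P1modp}, $\Frob_\ell$ acts on this group as multiplication by $\ell \equiv 1 \pmod{p^k}$, i.e.\ trivially, so taking $\Frob_\ell$-invariants gives
\[H^1_\s(\Q_\ell, T/\m^k) \cong \Hom\!\bigl(\GG_\ell[p^k],\,(T/\m^k)^{\Frob_\ell = 1}\bigr).\]

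To match the statement of the lemma, tensor both sides with $\GG_\ell$ and invoke the canonical evaluation-type isomorphism $\Hom(\GG_\ell[p^k], A) \otimes \GG_\ell \xrightarrow{\sim} A$, valid for any abelian group $A$ annihilated by $p^k$ and defined by $f \otimes g \mapsto f(p^{N-k} g)$, where $p^N = |\GG_\ell|$. The formula is well defined because $p^{N-k} g$ only depends on $g$ modulo $p^k \GG_\ell$, and it is an isomorphism by a direct check after choosing a generator of $\GG_\ell$, using that $A$ is $p^k$-torsion. I expect this final identification to be the main subtlety: it is what produces the ``Tate-twist'' factor $\GG_\ell$ in the singular-part isomorphism and what makes the whole statement canonical, i.e.\ independent of any choice of generator of inertia or of $\GG_\ell$, and functorial in both $T$ and $\ell$. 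The rest---inflation--restriction, the vanishing of wild $H^1$, and the tame character---is routine.
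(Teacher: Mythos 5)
Your proof is correct. The paper gives no argument of its own here, quoting the statement verbatim from \cite[lemma 1.2.1]{MazurRubin}; your route---inflation--restriction with respect to $\II_\ell$ (together with $T$ being unramified at $\ell$, which is implicit in the paper's setup since Kolyvagin primes avoid $\Sigma(\FF)$), vanishing of $H^2(\widehat{\Z},\cdot)$ on finite modules, the tame character identification $\II_\ell^{\mathrm{tame},p}/p^k\cong\mu_{p^k}$, triviality of the Frobenius action under \ref{P1modp}, and the final evaluation isomorphism $\Hom\bigl(\GG_\ell[p^k],A\bigr)\otimes\GG_\ell\cong A$---is precisely the standard argument underlying that reference.
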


For every prime $\ell$ in $\PP_k$, it is possible to define the finite-singular map (see \cite[Definition 1.2.2]{MazurRubin}), which is a canonical isomorphism
\[\phi_\ell^\fs:\ H^1_\f\left(\Q_\ell, {T}/{\m^kT}\right)\to H^1_\s\left(\Q_\ell, {T}/{\m^kT}\right)\otimes \GG_\ell.\]

\begin{lemma}(\cite[lemma 1.2.3]{MazurRubin})
If $\ell\in \PP_k$ and $T$ satisfies Assumption \ref{ass:R}, the map $\phi_\ell^{\fs}$ is an isomorphism.
\label{lem:mr123}
\end{lemma}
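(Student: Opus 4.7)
The plan is to establish the two assertions separately. For the vanishing $\det(1 - \Frob_\ell\mid T/\m^k) = 0$, one applies Smith normal form to the $R$-linear endomorphism $\Frob_\ell - 1$ on the free $R$-module $T$. Since $R$ is a complete DVR with uniformiser $\pi$, there exist bases of $T$ (chosen independently on source and target) in which $\Frob_\ell - 1$ becomes $\mathrm{diag}(\pi^{a_1}, \ldots, \pi^{a_r})$, under the convention $\pi^\infty = 0$. The cokernel over $R$ is $\bigoplus_i R/\pi^{a_i}$, whose reduction modulo $\m^k$ is $\bigoplus_i R/\pi^{\min(a_i,k)}$. Hypothesis \ref{P1dim} forces $r-1$ of the exponents to vanish and the remaining $a_r$ to satisfy $a_r \geq k$, so $\det(\Frob_\ell - 1) \in \m^{a_r} \subset \m^k$, proving the first claim.

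For the isomorphism claim, well-definedness of the middle map $Q(\Frob_\ell^{-1})$ follows from Cayley--Hamilton applied to $\Frob_\ell$. Since $\Frob_\ell$ is invertible on $T/\m^k$, one has $P_\ell(\Frob_\ell^{-1}) = 0$; combined with $P_\ell(x) = (x-1) Q(x)$, the identity $(\Frob_\ell^{-1} - 1) Q(\Frob_\ell^{-1}) = 0$ shows that $Q(\Frob_\ell^{-1})$ takes values in $(T/\m^k)^{\Frob_\ell = 1}$, while the commutativity of $Q(\Frob_\ell^{-1})$ with $\Frob_\ell - 1$ shows it kills the image of $\Frob_\ell - 1$ and hence factors through the cokernel. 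The same Smith-form analysis identifies both source and target of $\phi_\ell^{\fs}$ as cyclic $R/\m^k$-modules of length $k$, so proving the isomorphism reduces to proving surjectivity.

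By Nakayama's lemma, surjectivity may be verified modulo $\m$, where each side is one-dimensional over the residue field. Since the cokernel of $\Frob_\ell - 1$ on $T/\m$ is one-dimensional, $\Frob_\ell$ has geometric multiplicity one at the eigenvalue $1$; after base changing to the algebraic closure, its restriction to the generalised $1$-eigenspace is a single Jordan block $J_m(1)$ for some $m \geq 1$, and $P_\ell(x) \equiv (1-x)^m g(x) \pmod{\m}$ with $g(1) \neq 0$. The main obstacle I anticipate is that the natural shortcut of evaluating $Q(\Frob_\ell^{-1})$ as $Q(1)$ on the invariants fails when $m \geq 2$, since $Q(1) \equiv 0 \pmod{\m}$ in that case; the resolution is a direct computation exploiting the nilpotency of $J_m(1)^{-1} - I$, using $Q(x) \equiv (-1)^m (x-1)^{m-1} g(x) \pmod{\m}$ to show that $Q(J_m(1)^{-1})$ maps the cyclic generator of $\coker(J_m(1) - I)$ to a non-zero scalar multiple of the generator of $\ker(J_m(1) - I)$.
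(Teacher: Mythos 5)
The paper states this lemma with a citation to Mazur--Rubin and gives no proof of its own, so there is no in-paper argument to compare against; I can only assess correctness. Your proof is correct, and I want to highlight that you have identified the genuine subtle point: when the residual Frobenius is not semisimple on the generalised $1$-eigenspace (i.e.\ $m\geq 2$), the naive evaluation $Q(1)$ vanishes mod $\m$, and one must instead compute $Q(J_m(1)^{-1})$ on the Jordan block, where the nilpotent operator $N$ satisfies $N^m=0$ and $(J_m(1)^{-1}-I)^{m-1}=(-1)^{m-1}N^{m-1}$, yielding $Q(J_m(1)^{-1})\equiv -g(1)N^{m-1}\pmod\m$, which carries the cokernel generator $e_m$ to the nonzero element $-g(1)e_1$ of the kernel. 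This, together with Nakayama, is exactly what is needed.

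Two small remarks. First, you assume $R$ is a DVR, but the paper only assumes $R$ is a complete noetherian local \emph{principal} ring, which may be artinian; the Smith normal form argument still applies verbatim over such rings (they are quotients of DVRs), so this is a cosmetic gap rather than a mathematical one. Second, for the determinant vanishing there is a slicker route than Smith normal form: the adjugate identity $\det(\Frob_\ell-1)\cdot I=(\Frob_\ell-1)\operatorname{adj}(\Frob_\ell-1)$ shows $\det(\Frob_\ell-1)$ annihilates $T/(\Frob_\ell-1)T$, hence annihilates $T/(\m^k,\Frob_\ell-1)T\cong R/\m^k$, forcing $\det(\Frob_\ell-1)\in\m^k$ directly. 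Your Smith-form version has the advantage of simultaneously establishing that both $(T/\m^k)^{\Frob_\ell=1}$ and the cokernel are free of rank one over $R/\m^k$, which you then need anyway for the Nakayama reduction, so there is a reasonable economy to your organisation.
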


Another important subgroup of the local Galois cohomology group is the transverse subgroup, which is defined via the cohomological inflation map:
$$H^1_\tr(\Q_\ell, T/\m^k T):=\textrm{Im}\left(\textrm{Inf}: H^1(\Q_\ell(\mu_\ell)/\Q_\ell, T/\m^k T)\hookrightarrow H^1(\Q_\ell, T/\m^kT)\right).$$

\begin{lemma}(\cite[lemma $1.2.4$]{MazurRubin})
Let $T$ be as in Assumption \ref{ass:R} and assume $\ell$ satisfies \ref{P1modp}. Then there is a functorial splitting
$$H^1(\Q_\ell, T/\m^k T)=H^1_\f(\Q_\ell, T/\m^k T)\oplus H^1_{\tr}(\Q_\ell, T/\m^kT).$$
\label{lem:local_split}
\end{lemma}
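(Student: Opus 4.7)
The plan is to verify the two conditions defining a direct-sum decomposition for $M := T/\m^k T$: that $H^1_\f \cap H^1_\tr = 0$ and that the two summands together generate $H^1(\Q_\ell, M)$. I would begin by unpacking the transverse subgroup via inflation–restriction applied to the tower $\overline{\Q}_\ell / \Q_\ell(\mu_\ell) / \Q_\ell$, yielding
\[0 \to H^1\bigl(\Gal(\Q_\ell(\mu_\ell)/\Q_\ell), M^{G_{\Q_\ell(\mu_\ell)}}\bigr) \xrightarrow{\textrm{Inf}} H^1(\Q_\ell, M) \xrightarrow{\textrm{Res}} H^1(\Q_\ell(\mu_\ell), M).\]
This shows inflation is injective, so $H^1_\tr(\Q_\ell, M)$ is identified with the kernel of restriction to $\Q_\ell(\mu_\ell)$, i.e.\ those classes trivialising on $G_{\Q_\ell(\mu_\ell)}$.

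For the trivial intersection, a class in $H^1_\f \cap H^1_\tr$ is represented by a cocycle $c$ that is both a coboundary upon restriction to the inertia subgroup $\II_\ell$ (by finiteness) and trivial on $G_{\Q_\ell(\mu_\ell)}$ (by transverseness). Since $\Q_\ell(\mu_\ell)/\Q_\ell$ is totally tamely ramified of degree $\ell - 1$ (tame because $\ell \neq p$, and totally ramified because the residue field of $\Q_\ell$ contains no nontrivial $\ell$-th roots of unity), the composition $\II_\ell \hookrightarrow G_{\Q_\ell} \twoheadrightarrow \Gal(\Q_\ell(\mu_\ell)/\Q_\ell)$ is surjective, and equivalently $G_{\Q_\ell} = \II_\ell \cdot G_{\Q_\ell(\mu_\ell)}$. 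A short cocycle manipulation then forces $c$ to be a global coboundary.

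For the sum covering $H^1$, I would compare cardinalities. Under \ref{P1modp}, Lemma \ref{lem:mr121} provides the explicit descriptions $H^1_\f(\Q_\ell, M) \cong M/(\Frob_\ell - 1)M$ and $H^1_\s(\Q_\ell, M) \otimes \GG_\ell \cong M^{\Frob_\ell = 1}$, whose cardinalities multiply to $|H^1(\Q_\ell, M)|$ by local Tate duality and the Euler characteristic formula for finite modules of order prime to $\ell$. On the other hand, identifying $H^1_\tr$ with $H^1(\Gal(\Q_\ell(\mu_\ell)/\Q_\ell), M^{G_{\Q_\ell(\mu_\ell)}})$, exploiting that $\Gal(\Q_\ell(\mu_\ell)/\Q_\ell)$ is cyclic of order divisible by the exponent of $M$ (by \ref{P1modp}) and acts trivially on $M$ when $T$ is unramified at $\ell$, one computes $|H^1_\tr| = |H^1_\s|$. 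Combined with the trivial intersection, this forces the direct-sum decomposition, and functoriality follows automatically from that of the inflation–restriction sequence and of the finite/singular subgroup definitions.

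The main obstacle is the careful bookkeeping of which group-theoretic subgroup ($\II_\ell$ versus $G_{\Q_\ell(\mu_\ell)}$) governs each summand, together with extracting from the totally tamely ramified nature of $\Q_\ell(\mu_\ell)/\Q_\ell$ the decisive identity $G_{\Q_\ell} = \II_\ell \cdot G_{\Q_\ell(\mu_\ell)}$. Once this geometric input is in place, the cohomological verification is essentially formal.
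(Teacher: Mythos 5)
The paper does not supply a proof here — it simply cites Mazur--Rubin Lemma~1.2.4 — so there is no internal proof to compare against; but your argument does follow the standard approach of the cited source (inject the transverse part via inflation, kill the intersection with $H^1_\f$ using total ramification, then match cardinalities with the singular quotient). One step is underspecified, though: in the intersection argument you represent the class by a cocycle $c$ that vanishes on $G_{\Q_\ell(\mu_\ell)}$ and whose restriction to $\II_\ell$ is a coboundary, and then appeal to a "short cocycle manipulation." As written, this does not quite close: if $c(\iota)=(\iota-1)m$ on $\II_\ell$, the decomposition $\sigma=\iota h$ with $h\in G_{\Q_\ell(\mu_\ell)}$ gives $c(\sigma)=(\iota-1)m$, and this equals $(\sigma-1)m$ only when $m\in M^{G_{\Q_\ell(\mu_\ell)}}$, which you have no reason to assume. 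The missing input is that $T$ is unramified at $\ell$ (a standing hypothesis for Kolyvagin primes, and the reason $H^1_\f$ is the unramified subgroup in the first place), so $\II_\ell$ acts trivially on $M=T/\m^kT$; then the coboundary $(\iota-1)m$ is identically zero, hence $c$ itself vanishes on $\II_\ell$, and therefore on all of $G_{\Q_\ell}=\II_\ell\cdot G_{\Q_\ell(\mu_\ell)}$. You do invoke the trivial $\II_\ell$-action later in the cardinality count, so the fact is available to you; you simply need to bring it into the intersection argument to make it watertight. The cardinality step is otherwise fine, though the appeal to the local Euler characteristic formula is superfluous — $|H^1_\f|\cdot|H^1_\s|=|H^1|$ holds by the definition $H^1_\s=H^1/H^1_\f$, and the real content is the comparison $|H^1_\tr|=|H^1_\s|$, which you do establish via Lemma~\ref{lem:mr121} and (P1).
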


The most important feature about the finite local cohomology is that it behaves well under Tate duality.
\begin{proposition}(\cite[Proposition 1.3.2]{MazurRubin})
Let $\ell$ be a finite prime. The cup product induces a perfect pairing
$$H^1(\Q_\ell,T)\times H^1(\Q_\ell, T^*)\to H^2(\Q_\ell, \mu_{p^\infty})\cong \Q_p/\Z_p.$$
If $T$ is unramified at $\ell\neq p$, then
\begin{enumerate}
\item $H^1_\f(\Q_\ell,T)$ and $H^1_\f(\Q_\ell, T^*)$ are orthogonal complements under this pairing.
\item If $\ell$ satisfies \ref{P1modp} for some $k\in \N$, then $H^1_{\tr}(\Q_\ell,  T)$ and $H^1_{\tr}(\Q_\ell, T^*[\m^k])$ are orthogonal complements.
\end{enumerate}
\label{prop:local_duality}
\end{proposition}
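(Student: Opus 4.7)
The plan is to reduce the proposition to classical local Tate duality and then analyse the two orthogonality statements separately, upgrading isotropy to equality in each case by a cardinality count.

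For the perfectness of the pairing, the first step is to reduce to finite coefficients: continuous Galois cohomology is compatible with $T = \varprojlim T/\m^k$ and $T^* = \varinjlim T^*[\m^k]$, and the cup product together with the evaluation map $T/\m^k \otimes T^*[\m^k] \to \mu_{p^k}$ respects these limits. At finite level the assertion is the classical local Tate duality for finite Galois modules over $\Q_\ell$, which follows by combining the cup product with the invariant isomorphism $H^2(\Q_\ell, \mu_{p^k}) \cong \Z/p^k$ from local class field theory.

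For part (1), since $T$ is unramified at $\ell \neq p$, the inflation-restriction sequence for the unramified extension identifies $H^1_{\f}(\Q_\ell, T/\m^k)$ with the image of $\textrm{Inf}\colon H^1(\hat{\Z}, T/\m^k) \hookrightarrow H^1(\Q_\ell, T/\m^k)$, and similarly for $T^*[\m^k]$ (since $\mu_{p^k}$ is also unramified at $\ell \neq p$). The cup product of two inflated classes is itself inflated, and after evaluation lies in the image of $H^2(\hat{\Z}, \mu_{p^k})$, which vanishes because $\hat{\Z}$ has cohomological dimension one. This gives the inclusion $H^1_{\f}(\Q_\ell, T) \subseteq H^1_{\f}(\Q_\ell, T^*)^{\perp}$. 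Equality then follows from a finite-level cardinality count: the standard computation gives $|H^1_{\f}(\Q_\ell, T/\m^k)| = |H^0(\Q_\ell, T/\m^k)|$, and combining this with Tate's local Euler characteristic formula (trivial at $\ell \neq p$) and the duality $|H^2(\Q_\ell, T/\m^k)| = |H^0(\Q_\ell, T^*[\m^k])|$ yields $|H^1_{\f}(\Q_\ell, T/\m^k)| \cdot |H^1_{\f}(\Q_\ell, T^*[\m^k])| = |H^1(\Q_\ell, T/\m^k)|$, matching the cardinality of the orthogonal complement.

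For part (2), Lemma \ref{lem:local_split} provides the direct sum decomposition $H^1 = H^1_{\f} \oplus H^1_{\tr}$ on both sides, and combining with the cardinality from part (1) gives $|H^1_{\tr}(\Q_\ell, T/\m^k)| \cdot |H^1_{\tr}(\Q_\ell, T^*[\m^k])| = |H^1(\Q_\ell, T/\m^k)|$, so it suffices to prove $H^1_{\tr}(\Q_\ell, T/\m^k) \subseteq H^1_{\tr}(\Q_\ell, T^*[\m^k])^{\perp}$. A pair of transverse classes is inflated from cohomology of the cyclic group $\Gal(\Q_\ell(\mu_\ell)/\Q_\ell)$ of order $\ell - 1$, and after evaluation their cup product becomes the inflation of a $2$-cocycle on this cyclic group with values in $\mu_{p^k} \subset \Q_\ell$ (the containment coming from \ref{P1modp}). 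I would exhibit an explicit $1$-cochain primitive for this cocycle by direct calculation (essentially $\sigma^k \mapsto -\binom{k}{2} c$, where $c$ encodes the value of the cup product on a generator), using the arithmetic fact $p^k \mid (\ell-1)/2$ (valid since $p$ is odd) to verify well-definedness on the finite cyclic group. Thus the cup product vanishes in $H^2(\Gal(\Q_\ell(\mu_\ell)/\Q_\ell), \mu_{p^k})$, and a fortiori its inflation to $H^2(\Q_\ell, \mu_{p^\infty})$ vanishes, giving the required orthogonality.

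The main obstacle is part (2): unlike in the unramified case, $H^2(\Gal(\Q_\ell(\mu_\ell)/\Q_\ell), \mu_{p^k})$ is itself non-zero (cyclic of order $p^k$), so the vanishing of the cup product cannot be obtained from a cohomological dimension argument alone. The concrete cocycle computation is essential, and its success depends on the specific shape of the cup product of two homomorphisms on a cyclic group together with the divisibility $p^k \mid (\ell-1)/2$ supplied by \ref{P1modp} and the hypothesis $p > 2$.
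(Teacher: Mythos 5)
The statement is cited in the paper directly from \cite[proposition 1.3.2]{MazurRubin} without an internal proof, so there is no in-paper argument to compare against; I can only assess the proof on its own merits.

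Your argument is correct and is essentially the standard one. For the perfect pairing, passing to finite levels and invoking local Tate duality is right. For part (1), the inflation/cohomological-dimension argument establishes isotropy of the unramified subgroups, and the count $\lvert H^1_{\f}(\Q_\ell,T/\m^k)\rvert\cdot\lvert H^1_{\f}(\Q_\ell,T^*[\m^k])\rvert=\lvert H^1(\Q_\ell,T/\m^k)\rvert$ (via $\lvert H^1_{\f}\rvert=\lvert H^0\rvert$, the Euler characteristic at $\ell\neq p$, and $H^2$-$H^0$ duality) upgrades this to equality. For part (2), reducing to isotropy via the splitting of lemma \ref{lem:local_split} together with the count from part (1) is the right move, and your explicit primitive $h(\sigma^i)=-\binom{i}{2}c$ for the cup-product cocycle $(\sigma^i,\sigma^j)\mapsto ij\,c$ does check out: the coboundary reproduces $ij\,c$ when $i+j<n$, and for $i+j\geq n$ it differs by $\bigl(-n(i+j)+\tfrac{n(n+1)}{2}\bigr)c$, which vanishes in $\mu_{p^k}$ precisely because $p^k\mid n$ and $p^k\mid\tfrac{n(n+1)}{2}$ — the latter needing $n=\ell-1$ even and $p$ odd, as you correctly flag. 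Your observation that $H^2(\Gal(\Q_\ell(\mu_\ell)/\Q_\ell),\mu_{p^k})$ is itself a nontrivial cyclic group of order $p^k$, so a cohomological-dimension argument cannot substitute for the explicit cochain, shows a good grasp of where the subtlety actually lies.

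One small point to make explicit if this were written out in full: the splitting of lemma \ref{lem:local_split} and the inflation description of $H^1_\tr$ are used for both $T/\m^k$ and $T^*[\m^k]$, and the module on which $\Gal(\Q_\ell(\mu_\ell)/\Q_\ell)$ genuinely acts (trivially) is the Frobenius-invariant part $(T/\m^k)^{\Frob_\ell=1}$ (respectively $(T^*[\m^k])^{\Frob_\ell=1}$); it is only after taking these invariants and composing with the evaluation pairing into $\mu_{p^k}$ that the cocycle computation over the cyclic group makes sense. This is implicit in your sketch and harmless, but worth recording.
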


\begin{corollary}
Suppose $T$ satisfies Assumption \ref{ass:R}. Given a prime $\ell\in \PP_k$, the finite and singular cohomology groups $H^1_\f(\Q_\ell,T/\m^k)$, $H^1_\s(\Q_\ell, T/m^k)$, $H^1_\f(\Q_\ell,T^*[\m^k])$ and $H^1_\s(\Q_\ell,T^*[\m^k])$ are free $R/\m^k$-modules of rank one, where $T^*[m^k]$ denotes the $m^{k}$-torsion elements of $T^*$.
\label{cor:local_coh_cyclic}
\end{corollary}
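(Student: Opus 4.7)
The plan is to chain together the three preceding results: the explicit formulas of Lemma \ref{lem:mr121}, the finite--singular isomorphism of Lemma \ref{lem:mr123}, and the local Tate duality of Proposition \ref{prop:local_duality}. The claim for $H^1_\f(\Q_\ell,T/\m^k)$ is immediate: assumption \ref{P1dim} says that $T/(\m^k,\Frob_\ell-1)T$ is free of rank one over $R/\m^k$, and Lemma \ref{lem:mr121} identifies this quotient canonically with $H^1_\f(\Q_\ell,T/\m^k)$.

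For the singular cohomology of $T$, I would apply Lemma \ref{lem:mr123}, which provides an $R/\m^k$-linear isomorphism $\phi_\ell^\fs\colon H^1_\f(\Q_\ell,T/\m^k)\cong H^1_\s(\Q_\ell,T/\m^k)\otimes\GG_\ell$. Now $\GG_\ell$ is the $p$-Sylow subgroup of the cyclic group $\Gal(\Q(\mu_\ell)/\Q)\cong(\Z/\ell\Z)^\times$, hence itself cyclic, and by \ref{P1modp} its order is divisible by $p^k$. Since $H^1_\s(\Q_\ell,T/\m^k)$ is annihilated by $p^k$, tensoring it over $\Z$ with $\GG_\ell$ is (non-canonically) the identity functor on such a module; consequently $H^1_\s(\Q_\ell,T/\m^k)$ is also free of rank one over $R/\m^k$.

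For the Cartier dual $T^*[\m^k]$, I would invoke Proposition \ref{prop:local_duality}: the cup product is a perfect pairing into $\Q_p/\Z_p$ between $H^1(\Q_\ell,T/\m^k)$ and $H^1(\Q_\ell,T^*[\m^k])$, and the finite subgroups on each side are mutual annihilators. Passing to the quotient by $H^1_\f$ on one factor at a time gives perfect Pontryagin pairings
\[ H^1_\f(\Q_\ell,T/\m^k)\times H^1_\s(\Q_\ell,T^*[\m^k])\to \Q_p/\Z_p \quad\text{and}\quad H^1_\s(\Q_\ell,T/\m^k)\times H^1_\f(\Q_\ell,T^*[\m^k])\to \Q_p/\Z_p. \]
Since $R/\m^k$ is a principal Artin local ring, it is Gorenstein (self-injective), so the Pontryagin dual into $\Q_p/\Z_p$ of any $R/\m^k$-module is isomorphic to its $R/\m^k$-linear dual $\Hom_{R/\m^k}(-,R/\m^k)$; in particular, the Pontryagin dual of a free rank-one $R/\m^k$-module is again free of rank one. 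Applied to the two pairings above this yields the remaining two freeness statements. The only non-formal ingredient is this last duality step, which hinges on the self-injectivity of the chain ring $R/\m^k$; I would expect pinning that down cleanly to be the one point deserving care, since everything else is purely formal bookkeeping with the cited lemmas.
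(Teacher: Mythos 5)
Your proof is correct and follows the same route as the paper: assumption \ref{P1dim} together with Lemma \ref{lem:mr121} gives the finite part of $T/\m^k$, Lemma \ref{lem:mr123} transfers this to the singular part, and Proposition \ref{prop:local_duality} handles the two groups on the $T^*$ side by duality. The only substantive difference is cosmetic: you spell out why tensoring with $\GG_\ell$ is harmless and why the Pontryagin dual of a free rank-one $R/\m^k$-module is again free rank-one (self-injectivity of the Artin chain ring), both of which the paper leaves implicit.
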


\begin{proof}
By \ref{P1dim} and Lemma \ref{lem:mr121}, $H^1_\f(\Q_\ell, T/\m^k T)$ is free of rank one over $R/\m^k$. By Lemma \ref{lem:mr123}, the finite-singular map $\phi_\ell^{\fs}$ is an isomorphism, so $H^1_\s(\Q_\ell,T)$ is a rank one free $R/\m^k$-module too.

If, for every $R$-module $A$, $A^\vee$ denotes its Pontryagin dual 
\[A^\vee:=\Hom(A,\Q_p/\Z_p),\] 
then Proposition \ref{prop:local_duality} implies that
\[\begin{array}{cc}
H^1_\f(\Q_\ell,T^*[\m^k])\cong H^1_\s(\Q_\ell, T/\m^k T)^\vee;\ \ \ &
H^1_\s(\Q_\ell,T^*[\m^k])\cong H^1_\f(\Q_\ell, T/\m^k T)^\vee.\ \ \end{array}\]
Hence $H^1_\f(\Q_\ell,T^*[\m^k])$ and $H^1_\s(\Q_\ell,T^*[\m^k])$ are also free $R/\m^k$-modules of rank one.
\end{proof}

\subsection{Selmer structures}
\label{sec:global}

An important tool to study the cohomology of the global absolute Galois group is the use of Selmer structures.

\begin{definition}
A \emph{Selmer structure} $\mathcal F$ on $T$ is a collection of the following data:
\begin{itemize}
\item A finite set $\Sigma(\mathcal F)$ of places of $\Q$, including $\infty$, $p$ and all primes where $T$ is ramified.
\item For every $\ell\in \Sigma(\mathcal F)$, an $R[[G_{\Q_\ell}]]$-submodule $H^1_\mathcal F(\Q_\ell,T)\subset H^1(\Q_\ell, T)$, where $G_{\Q_\ell}$ is the absolute Galois group of $\Q_\ell$. This choice is usually referred to as a \emph{local condition} at the prime $\ell$.
\end{itemize}
\label{def:selmer_structure}
\end{definition}

\begin{definition}
The \emph{Selmer module} $H^1_\FF(\Q,T)\subset H^1(\Q,T)$ associated with a Selmer structure $\FF$ is the kernel of the sum of restriction maps
$$H^1(\Q_{\Sigma(\FF)}/\Q,T)\to \bigoplus_{\ell\in \Sigma(\FF)} H^1(\Q_\ell,T)/H^1_\FF(\Q_\ell,T),$$
where $\Q_{\Sigma(\FF)}$ is the maximal extension of $\Q$ unramified outside $\Sigma(\FF)$.
\end{definition}

\begin{remark}
Given a Selmer structure $\FF$, we will denote $H^1_\FF(\Q_\ell, T)=H^1_\f(\Q_\ell,T)$ for every prime $\ell\notin \Sigma(\FF)$.
\end{remark}

\begin{definition}
Let $\FF$ be a Selmer structure defined on $T$. If $T'\subset T$ is a submodule of $T$, $\FF$ induces a Selmer structure where the local conditions $H^1_\FF(\Q_\ell, T')$ are the inverse images of $H^1_\FF(\Q_\ell,T)$ under the natural map $H^1(\Q_\ell,T')\to H^1(\Q_\ell, T)$.\\
Similarly, if $T\twoheadrightarrow T''$ is a surjection, $\FF$ induces a Selmer structure on $T''$ where the local conditions $H^1(\Q_\ell, T'')$ are the images $H^1(\Q_\ell, T)$ under the map $H^1(\Q_\ell,T)\to H^1(\Q_\ell, T'')$.
\label{def:induced_str}
\end{definition}

\begin{definition}
Given a Selmer structure $\FF$ on $T$, there is a dual Selmer structure $\FF^*$ on the Cartier dual $T^*$ where, for every prime $\ell$, the local conditions $H^1_{\FF^*}(\Q_\ell,T^*)$ is the orthogonal complement of $H^1_\FF(\Q_\ell,T)$ under the pairing defined in Proposition \ref{prop:local_duality}. Note $\FF^*$ is well defined by Proposition \ref{prop:local_duality}.
\end{definition}

In order to compare Selmer structures, Poitou-Tate global duality is helpful.
\begin{proposition}(\cite[Theorem 2.3.4]{MazurRubin})
Let $\FF$ and $\GG$ be Selmer structures of $T$ such that $H^1_{\FF}(\Q_\ell, T)\subset H^1_{\GG}(\Q_\ell,T)$ for every prime $\ell$. Then the following sequence is exact.

\begin{center}
\begin{tikzpicture}[descr/.style={fill=white,inner sep=1.5pt}]
        \matrix (m) [
            matrix of math nodes,
            row sep=3.5em,
            column sep=3em,
            text height=1.5ex, text depth=0.25ex
        ]
        { 0 & H^1_{\FF}(\Q,T) & H^1_{\GG}(\Q,T) & \displaystyle{\bigoplus_{\ell\in \Sigma(\FF)\cup\Sigma(\GG)} \frac{H^1_{\GG}(\Q_\ell,T)}{H^1_{\FF}(\Q_\ell,T)}} \\
            & H^1_{\FF^*}(\Q,T^*)^\vee & H^1_{\GG^*}(\Q,T^*)^\vee& 0, &\\
        };

        \path[overlay,->, font=\scriptsize,>=latex]
        (m-1-1) edge (m-1-2)
        (m-1-2) edge (m-1-3)
        (m-1-3) edge (m-1-4)
        (m-1-4) edge [out=355,in=175] (m-2-2) 
        (m-2-2) edge (m-2-3)
        (m-2-3) edge (m-2-4);
        
\end{tikzpicture}
\end{center}
where the third map is induced by Proposition \ref{prop:local_duality}.
\label{prop:global_duality}
\end{proposition}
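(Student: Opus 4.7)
The plan is to derive the sequence from the Poitou-Tate nine-term global duality exact sequence, combined with the local Tate duality recalled in proposition \ref{prop:local_duality}. Fix a finite set of places $\Sigma \supset \Sigma(\FF) \cup \Sigma(\GG)$ and write $G_\Sigma = \Gal(\Q_\Sigma/\Q)$.

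The left half of the sequence is essentially formal. Both $H^1_\FF(\Q,T)$ and $H^1_\GG(\Q,T)$ are defined as kernels of the global-to-local restriction maps with targets $\bigoplus_\ell H^1(\Q_\ell,T)/H^1_\FF(\Q_\ell,T)$ and $\bigoplus_\ell H^1(\Q_\ell,T)/H^1_\GG(\Q_\ell,T)$ respectively. Since $H^1_\FF \subset H^1_\GG$ locally, applying the snake lemma to these two parallel defining sequences produces the injection $H^1_\FF(\Q,T) \hookrightarrow H^1_\GG(\Q,T)$ and exactness up to the first quotient term.

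The connecting morphism is given by the local cup product. For $c = (c_\ell)$ in the direct sum, pick lifts $\tilde c_\ell \in H^1_\GG(\Q_\ell, T)$ and define the functional
\[x \longmapsto \sum_{\ell} \langle \tilde c_\ell, \loc_\ell(x)\rangle_\ell, \qquad x \in H^1_{\FF^*}(\Q,T^*).\]
Independence of the lift follows from the local orthogonality of $H^1_\FF(\Q_\ell,T)$ and $H^1_{\FF^*}(\Q_\ell, T^*)$, while orthogonality of $H^1_\GG(\Q_\ell,T)$ and $H^1_{\GG^*}(\Q_\ell,T^*)$ guarantees that the resulting functional annihilates $H^1_{\GG^*}(\Q,T^*)$; thus the image of the connecting map lies inside $\ker\bigl(H^1_{\FF^*}(\Q,T^*)^\vee \to H^1_{\GG^*}(\Q,T^*)^\vee\bigr)$. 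Global reciprocity $\sum_\ell \langle \loc_\ell(y), \loc_\ell(x) \rangle_\ell = 0$ for $y \in H^1(G_\Sigma, T)$ shows that the composition with the previous arrow vanishes.

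The non-formal exactness statements at the direct sum and at $H^1_{\FF^*}(\Q,T^*)^\vee$ both reduce to the middle portion of the Poitou-Tate nine-term sequence
\[H^1(G_\Sigma, T) \xrightarrow{\loc} \bigoplus_{\ell \in \Sigma} H^1(\Q_\ell, T) \xrightarrow{\gamma} H^1(G_\Sigma, T^*)^\vee\]
by a diagram chase, using local duality to translate the vanishing of the functional on the Selmer-type subgroup $H^1_{\FF^*}$ into the hypothesis required by Poitou-Tate exactness. The surjectivity of the final map is automatic by Pontryagin duality, since the inclusion $H^1_{\GG^*}(\Q,T^*) \hookrightarrow H^1_{\FF^*}(\Q,T^*)$ of discrete $p$-primary modules dualises to a surjection. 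The main obstacle is bookkeeping in this diagram chase: one must adjust a given lift of $c$ by an appropriate tuple in $\bigoplus_\ell H^1_\FF(\Q_\ell,T)$ so that its image in $H^1(G_\Sigma, T^*)^\vee$ vanishes on all of $H^1(G_\Sigma, T^*)$ rather than merely on $H^1_{\FF^*}$, which is precisely the condition needed to invoke exactness of the Poitou-Tate sequence and thereby recover a global class in $H^1_\GG(\Q,T)$.
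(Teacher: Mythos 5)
The paper itself gives no proof of this proposition; it is imported verbatim from Mazur--Rubin as \cite[theorem 2.3.4]{MazurRubin}. Your argument reproduces the standard (and indeed the Mazur--Rubin) derivation — Poitou--Tate nine-term exactness for the middle, local cup-product duality for the connecting map and its well-definedness, global reciprocity for $\text{(boundary)}\circ\text{(restriction)}=0$, and Pontryagin dualisation of the inclusion $H^1_{\GG^*}\subset H^1_{\FF^*}$ for the final surjection — so it is correct in outline and takes essentially the same route.

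Two details are worth spelling out when you write this up. First, the ``snake lemma'' remark is not quite what is happening at the left end: the defining sequences share the same middle term $H^1(G_\Sigma,T)$, so exactness there is just the observation that $H^1_\FF(\Q,T)$ is precisely the subgroup of $H^1_\GG(\Q,T)$ whose localisations land in $H^1_\FF(\Q_\ell,T)$. Second, the ``bookkeeping'' in your diagram chase at the direct sum term conceals the only genuinely nontrivial point: you must extend a functional that vanishes on $H^1_{\FF^*}(\Q,T^*)$ — hence factors through $H^1(G_\Sigma,T^*)/H^1_{\FF^*}(\Q,T^*)$, which injects into $\bigoplus_\ell H^1(\Q_\ell,T^*)/H^1_{\FF^*}(\Q_\ell,T^*)\cong\bigoplus_\ell H^1_\FF(\Q_\ell,T)^\vee$ — to a functional on that direct sum, using divisibility of $\Q_p/\Z_p$. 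The resulting tuple $a\in\bigoplus_\ell H^1_\FF(\Q_\ell,T)$ is the adjustment you need; once $\gamma(\tilde c-a)=0$, Poitou--Tate gives the global class. The analogous extension argument gives exactness at $H^1_{\FF^*}(\Q,T^*)^\vee$. With these two points made explicit, the proof is complete.
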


In order to compute the structure of a Selmer group, it is helpful to make use of slightly modified Selmer structures.
\begin{definition}
Given a Selmer structure $\mathcal F$ defined on a Galois representation $T$, and square-free and pairwise relatively prime integers $a$, $b$ and $c$, we define a new Selmer structure $\mathcal F_a^b(c)$ given by the following data:
\begin{itemize}
\item $\Sigma(\FF_a^b(c)):=\Sigma(\mathcal F)\cup\{\ell|abc\},$
\item $H^1_{\FF_a^b(c)}(\Q,T)=\left\{\begin{aligned}
&H^1(\Q_\ell,T)\ &\textrm{if }\ell|a,\\
&0\ &\textrm{if }\ell|b,\\
&H^1_{\tr}(\Q_\ell,T)\ &\textrm{if }\ell|c,\\
&H^1_\FF(\Q_\ell,T)\ &\textrm{otherwise.}
\end{aligned}\right.$
\end{itemize}
If any of $a$, $b$ or $c$ is not explicitly written, we will assume it to be one. For example $\FF^\ell$ is shorthand of $\FF^\ell_1(1)$.
\label{def:modified}
\end{definition}

We now define the notion of Selmer triples
\begin{definition}
A \emph{Selmer triple} is a triple $(T,\FF,\PP)$ where $\FF$ is a Selmer structure on a Galois representation $T$ and $\mathcal P$ is a set of rational primes disjoint from $\Sigma(\FF)$.
\end{definition}

For technical reasons, we need to assume the following mild hypotheses in our Selmer triples:
\begin{itemize}

\item\namedlabel{Hirred}{(H1)} $T/\m T$ is an absolutely irreducible $R/\m[G_\Q]$-representation.
\item\namedlabel{Hcoh0}{(H2)} $H^1(\Q(T,\mu_{p^\infty})/\Q,T/\m T)=H^1(\Q(T,\mupi)/\Q,T^*[\m])=0$, where $\Q(T,\mu_{p^\infty})$ is the minimal extension of $\Q$ whose absolute Galois group acts trivially on $T$, $T^*$ and $\mu_{p^\infty}$.

\item\namedlabel{Hsur}{(H3)} There is a $\tau\in G_\Q$ such that $\tau$ acts trivially on $\mu_{p^\infty}$ and $T/(\tau-1)T$ is free of rank one over $R$.
\item\namedlabel{Hchev}{(H4)} Either
\begin{itemize}
\item\namedlabel{Hchev_hom}{(H4a)} $\Hom_{\F_p[G_\Q]}(T/\m T,T^*[\m])=0$ or
\item\namedlabel{Hchev_5}{(H4b)} $p\geq 5$.
\end{itemize}
\item\namedlabel{Hloc}{(H5)} There is a prime $q$ such that $H^1_{/\FF}(\Q_q,T)\cong R$ and $H^2(\Q_q, T)=0$.

\item\namedlabel{Hcartesian}{(H6)} For every $\ell\in\Sigma(\mathcal F)$, the local condition of $\mathcal F$ at $\ell$ is cartesian in the category of quotients of $T$, in the sense of \cite[definition 1.1.4]{MazurRubin}.
\item\namedlabel{Hcore0}{(H7)} The core rank $\chi(T,\FF)$ is zero (see Theorem \ref{th:core_rank} below), i.e., for every $n\in\mathcal N(\mathcal P)$ and $j\in \Z_{\geq0}$ there is a non-canonical isomorphism
$$H^1_{\mathcal F(n)}(\Q,T/\m^j)\cong H^1_{\mathcal F^*(n)}(\Q,T^*[\m^j]).$$

\item \namedlabel{Hprimes}{(H8)} There exists some $k\in \mathbb N$ and some Galois extension $F/\Q$ such that $F\cap \Q(T/\m^k)=\Q$ and $\left(\PP_{k,F}\setminus \Sigma(\FF)\right)\subset \PP\subset \PP_1 $.
\end{itemize}

\begin{remark} (\cite[remark 3.5.1]{MazurRubin})
Suppose that $(T,\FF,\PP)$ satisfies hypotheses \ref{Hirred}-\ref{Hprimes}. Then $( T/\m^k T, \FF,\PP)$ also satisfies \ref{Hirred}-\ref{Hprimes} for every $k\in \Z_{\geq}0$.
\label{rem:Hi_quot}
\end{remark}

All of the above hypothesis except for \ref{Hloc} and \ref{Hcore0} were imposed by Mazur and Rubin in \cite[\textsection $3.5$]{MazurRubin} to develop the basic theory of Kolyvagin systems when $\chi(T,\FF)=1$. 

\ref{Hirred}, \ref{Hcoh0} and \ref{Hcartesian} are necessary to ensure that the Selmer group in the cohomology with torsion coefficient rings $H^1_\FF(\Q,T/\m^j)$ (resp. $H^1_{\FF^*}(\Q,T^*[\m^j])$) coincides with the $\m^j$-torsion of the full Selmer group $H^1_{\FF}(\Q,T)$ (resp. $H^1_{\FF^*}(\Q,T^*)$). This is explained in the following lemmas.

\begin{lemma}(\cite[lemma 3.5.3]{MazurRubin}, \cite[Corollary 3.8]{BurnsSakamotoSano2})\footnote{This lemma was first established in \cite{MazurRubin}. However, the proof in loc.~cit. was not fully correct. A complete argument is given in \cite{BurnsSakamotoSano2}.}
Assume that $R$ and $T$ satisfy Assumption \ref{ass:R} and that \ref{Hirred} and \ref{Hcoh0} hold. For every $j\in \Z_{\geq 0}$, the inclusion $T^*[\m^j]\hookrightarrow T$ induces an isomorphism:
\[H^1_{\FF^*}(\Q,T^*[\m^j])\cong H^1_{\FF^*}(\Q,T^*)[\m^j].\]
\label{lem:mr353}
\end{lemma}

\begin{lemma}(\cite[lemma 3.5.4]{MazurRubin})
    Suppose that $R$ and $T$ satisfy Assumption \ref{ass:R}, that $R$ that \ref{Hirred}, \ref{Hcoh0} and \ref{Hcartesian} hold. For every $j\leq k$, if $\pi$ is a generator of $\m$, the multiplication by $\pi^{k-j}$ induces a map $T/\m^j\hookrightarrow T/\m^k$, which in turn induces an isomorphism in the cohomology groups:
    \[H^1_{\FF}(\Q,T/\m^j)\cong H^1_{\FF}(\Q,T/\m^k)[\m^j].\]
    \label{lem:mr354}
    \end{lemma}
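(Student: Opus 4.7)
The plan is to reduce the statement to a purely local module-theoretic identification combined with a compatible cohomological computation, with the cartesian hypothesis doing the work of transporting the isomorphism through the Selmer conditions.

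First I would set up the identification $T/\m^j \cong T[\m^j]$ as $R[G_\Q]$-modules. Since $R$ is artinian of length $k$ and $T$ is free, $T \cong R^r$ and $T[\m^j] = \pi^{k-j}T$, so multiplication by $\pi^{k-j}$ fits into a short exact sequence
\[0 \longrightarrow T/\m^j \xrightarrow{\pi^{k-j}} T \longrightarrow T/\m^{k-j} \longrightarrow 0\]
of $R[G_\Q]$-modules. Taking the long exact sequence in $G_\Q$-cohomology, the map $H^1(\Q,T/\m^j) \to H^1(\Q,T)$ lands in $H^1(\Q,T)[\m^j]$, and to show the induced map is an isomorphism onto the $\m^j$-torsion it suffices to check $H^0(\Q, T/\m^{k-j})=0$ and then to match the kernel of $\pi^j$ on $H^1(\Q,T)$. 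Both of these follow from \ref{Hirred} and \ref{Hcoh0}: the filtration $\m^i T/\m^{i+1}T \cong T/\m T$ lets one reduce to $H^0(\Q,T/\m T)$, which is zero by absolute irreducibility together with \ref{Hcoh0} (or, equivalently, by the standard inflation-restriction argument over $\Q(T,\mupi)/\Q$).

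Next I would port this global identification through the Selmer structure. For the local conditions at primes $\ell \in \Sigma(\FF)$, the cartesian hypothesis \ref{Hcartesian} says precisely that the local condition on quotients of $T$ is compatible with the identification $T/\m^j \cong T[\m^j]$ in the sense that the corresponding local groups on $T/\m^j$ embed as exactly the $\m^j$-torsion of the local condition on $T$. Combined with Tate duality from Proposition \ref{prop:local_duality}, the dual local conditions defining $\FF^*$ inherit the analogous cartesian behaviour, and one gets the local isomorphism $H^1_{\FF^*}(\Q_\ell, T/\m^j) \cong H^1_{\FF^*}(\Q_\ell, T)[\m^j]$ for every $\ell$. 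Assembling these with the global identification through the defining exact sequence of the Selmer group yields the desired isomorphism.

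The main obstacle will be Step 3: carefully tracking the interaction between the multiplication-by-$\pi^{k-j}$ identification $T/\m^j \cong T[\m^j]$ and the cartesian compatibility of the local conditions. One needs to verify, at each $\ell$, that passing to the $\m^j$-torsion on the $T$-side matches the pullback local condition on the $T/\m^j$-side, since cartesianness is stated in \cite[Definition 1.1.4]{MazurRubin} for quotients of $T$ rather than submodules. This is essentially bookkeeping once the identification is fixed, but it is where the hypothesis \ref{Hcartesian} is unavoidable, and it is the reason this lemma requires genuinely more than Lemma \ref{lem:mr353}, which only needed \ref{Hirred} and \ref{Hcoh0}.
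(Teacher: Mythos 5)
The paper gives no proof of this lemma; it cites \cite[Lemma 3.5.4]{MazurRubin} directly, so there is no in-paper argument to compare against. Your outline does match the cited argument in its essential structure: identify $T/\m^j$ with $T[\m^j]$ via $\pi^{k-j}$, establish the global comparison $H^1(\Q,T/\m^j)\cong H^1(\Q,T)[\m^j]$ from the vanishing of the relevant $H^0$'s (reducible to $H^0(\Q,T/\m T)=0$ via the $\m$-adic filtration, which is where \ref{Hirred} and \ref{Hcoh0} enter), and then transport local conditions across the identification using \ref{Hcartesian}.

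There is, however, a confusion in the second half of your Step 3 that you should fix, and it stems from a typographical error in the paper's statement: the lemma as printed reads $H^1_{\FF^*}(\Q,T/\m^j)\cong H^1_{\FF^*}(\Q,T)[\m^j]$, but $\FF^*$ is a Selmer structure on $T^*$, not on $T$, so the symbols $H^1_{\FF^*}(\Q,T)$ and $H^1_{\FF^*}(\Q_\ell,T/\m^j)$ do not type-check. The cited Mazur--Rubin Lemma 3.5.4 states the result for $\FF$ on $T$; the $T^*$-companion is the earlier Lemma \ref{lem:mr353}. Taking the printed statement at face value, you invoke Tate duality (Proposition \ref{prop:local_duality}) to argue that $\FF^*$ ``inherits'' the cartesian property and then assert a local isomorphism $H^1_{\FF^*}(\Q_\ell,T/\m^j)\cong H^1_{\FF^*}(\Q_\ell,T)[\m^j]$ --- but this expression is not meaningful, and the duality step is not needed. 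Once the statement is read as $\FF$ on $T$, hypothesis \ref{Hcartesian} applies directly to the local conditions of $\FF$ in the quotient category of $T$, exactly as in the first sentence of your Step 3; Tate duality plays no role. Delete the duality sentence and keep the first.

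A minor imprecision in Step 2: to show the image of $H^1(\Q,T/\m^j)\to H^1(\Q,T)$ is \emph{all} of $H^1(\Q,T)[\m^j]$ and not merely contained in it, you need $H^0(\Q,T/\m^j T)=0$ in addition to $H^0(\Q,T/\m^{k-j}T)=0$. The former makes $H^1(\Q,\m^j T)\to H^1(\Q,T)$ injective, which is what converts the factorisation of $\pi^j$ through $T\to T/\m^{k-j}T\cong\m^j T\hookrightarrow T$ into an equality of kernels. Your filtration remark covers both vanishings, but the argument should name the second explicitly rather than fold it into ``match the kernel of $\pi^j$.''
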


\ref{Hsur}, \ref{Hchev} and \ref{Hprimes} are required to ensure there exists infinitely many Kolyvagin primes satisfying \ref{P1modp} and \ref{P1dim} that bound the Selmer group appropriately. Assumption \ref{Hprimes} is slightly weaker than the corresponding hypothesis in \cite{MazurRubin}. It does not suppose any inconvenience in the proofs of this reference and permits a cleaner description of the Kolyvagin primes in the example in \textsection\ref{sec:EC}.

\ref{Hcore0} is imposed because we want to generalise the theory in \cite{MazurRubin} to the case of core rank 0. However, those Selmer structures do not admit any non-trivial Kolyvagin systems. Therefore, \ref{Hloc} is required to modify the structure in order to link it with a Selmer structure of rank one, $\FF^q$, which the theory in \cite{MazurRubin} can be applied to. Note that, by local Tate duality, the condition $H^2(\Q_q,T)$ is equivalent to $H^0(\Q_q, T^*)=0$.

Assumption \ref{Hloc} can be understood in terms of the cohomology groups with torsion coefficients. 

\begin{lemma}
    Suppose that $R$ and $T$ satisfy Assumption \ref{ass:R} and let $q$ be a prime satisfying \ref{Hloc}. Then, for every $j\in \Z_{\geq 0}$, there is an isomorphism
$$H^1_\s(\Q_q,T/\m^j)\cong R/\m^j.$$
\label{lem:Hloc_tors}
\end{lemma}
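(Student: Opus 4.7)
The plan is to reduce the torsion statement to the integral assumption $H^1_\s(\Q_q,T)\cong R$ by applying the long exact sequence for multiplication by a uniformiser and using the freeness of $H^1_\s(\Q_q,T)$ to control quotients.

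First I would fix a generator $\pi$ of $\m$ and consider the short exact sequence $0\to T\xrightarrow{\pi^j}T\to T/\m^j\to 0$ of $R[G_{\Q_q}]$-modules. Passing to local Galois cohomology and invoking the hypothesis $H^2(\Q_q,T)=0$ from \ref{Hloc}, the connecting morphism out of $H^1(\Q_q,T/\m^j)$ vanishes, giving a canonical isomorphism
\[
H^1(\Q_q,T)/\m^j\;\cong\;H^1(\Q_q,T/\m^j).
\]

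Next I would tensor the tautological short exact sequence
\[
0\to H^1_\f(\Q_q,T)\to H^1(\Q_q,T)\to H^1_\s(\Q_q,T)\to 0
\]
with $R/\m^j$. Since $H^1_\s(\Q_q,T)\cong R$ is free, the $\mathrm{Tor}_1^R$ term vanishes and the reduced sequence stays short exact. Combining with the identification above yields
\[
0\to H^1_\f(\Q_q,T)/\m^j\to H^1(\Q_q,T/\m^j)\to R/\m^j\to 0.
\]

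The final step is to recognise the left-hand term as $H^1_\f(\Q_q,T/\m^j)$, after which the quotient on the right is by definition $H^1_\s(\Q_q,T/\m^j)\cong R/\m^j$. By definition \ref{def:induced_str}, the local condition on $T/\m^j$ at $q$ is propagated from that on $T$, i.e.\ it is the image of $H^1_\f(\Q_q,T)$ inside $H^1(\Q_q,T/\m^j)$, which via the isomorphism of the first step is precisely $H^1_\f(\Q_q,T)/\m^j$. The only mild obstacle is verifying that this propagated description agrees with the intrinsic one whenever the latter is available: for $q\neq p$ with $T$ unramified at $q$ it follows from the cohomological dimension one of $\hat\Z$ applied to the same short exact sequence of coefficients as in the first step, and for $q=p$ it is a standard Bloch--Kato compatibility. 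Everything else is pure homological algebra over the PID $R$.
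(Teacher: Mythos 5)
Your argument is correct and follows the same route as the paper's: invoke $H^2(\Q_q,T)=0$ to identify $H^1(\Q_q,T/\m^j)$ with $H^1(\Q_q,T)/\m^j$, then pass the quotient by the local condition through to conclude. You add one detail the paper elides — invoking freeness of $H^1_\s(\Q_q,T)$ to keep the reduced tautological sequence short exact — but for the statement itself even this is superfluous: right-exactness of $-\otimes_R R/\m^j$ already gives $H^1_\s(\Q_q,T)/\m^j$ as the cokernel, which is all you need (the freeness is only used to recognise this cokernel as $R/\m^j$ rather than to ensure injectivity on the left). Your closing aside about reconciling the propagated condition at $q$ with an intrinsic one is also a non-issue: in the lemma's setting the local condition at $q$ \emph{is}, by definition \ref{def:induced_str}, the propagated one (indeed in the elliptic-curve application $q=p$ carries the Bloch--Kato condition, which has no intrinsic analogue on torsion coefficients), so there is nothing to reconcile.
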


\begin{proof}
Let $\pi$ be a generator of $\m$ and consider the short exact sequence
\[\xymatrix{0\ar[r] & T\ar[r]^{\pi^j} & T\ar[r] & T/\m^j T\ar[r] &0}.\]
Since $H^2(\Q_q,T)=0$, there is an isomorphism
$$H^1(\Q_q,T)/\m^j H^1(\Q_q,T)\xrightarrow{\sim} H^1(\Q_q,T/\m^jT).$$
By Definition \ref{def:induced_str}, $H^1_\FF(\Q_q, T/\m^j T)$ is the image of $H^1_\FF(\Q_q,T)$ via the projection map $T\twoheadrightarrow T/\m^j T$. By \ref{Hloc}, we have that 
\[H^1_{/\FF}(\Q_q, T/\m^j T)\cong H^1_{/\FF}(\Q_q, T)/\m^j H^1_{/\FF}(\Q_q, T)\cong R/\m^j.\qedhere\]
\end{proof}

Some elements in $\NN(\PP)$ will play an important role in the theory of Kolyvagin systems.

\begin{definition}
An element $n\in \NN(\PP)$ is said to be a \emph{core vertex} if either $H^1_{\FF(n)}(\Q,T)=0$ or $H^1_{\FF^*(n)}(\Q,T^*)=0$.
\end{definition}

\begin{proposition}(\cite[Corollary 4.1.9]{MazurRubin})
Suppose that $R$ and $T$ satisfy Assumption \ref{ass:R}. For every $n\in \NN(\PP)$, there is some core vertex $m\in \NN(\PP)$ such that $n|m$.
\label{prop:core_vertex}
\end{proposition}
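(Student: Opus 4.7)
The plan is to proceed by induction on the $R$-length of $H^1_{\FF(n)}(\Q,T/\m^k)$ for $k$ large enough that it captures all torsion in $H^1_{\FF(n)}(\Q,T)$; Remark \ref{rem:Hi_quot} permits us to work with $R/\m^k$. The base case (length zero) gives a core vertex. For the inductive step, I aim to find $\ell \in \PP$ with $\ell\nmid n$ and $\ell\notin\Sigma(\FF)$ such that $\length_R H^1_{\FF(n\ell)}(\Q,T/\m^k) < \length_R H^1_{\FF(n)}(\Q,T/\m^k)$; iterating reaches a core vertex in finitely many steps.

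If $n$ is not a core vertex, both $H^1_{\FF(n)}(\Q,T)$ and (by \ref{Hcore0}) $H^1_{\FF^*(n)}(\Q,T^*)$ are nonzero, so we may pick nonzero classes $c\in H^1_{\FF(n)}(\Q,T/\m)$ and $c^*\in H^1_{\FF^*(n)}(\Q,T^*[\m])$. I would then apply the standard Chebotarev prime-production argument: the restrictions of $c$ and $c^*$ to $G_{\Q(T,\mu_{p^\infty})}$ give nonzero Galois-equivariant maps valued in $T/\m$ and $T^*[\m]$; combining the element $\tau$ of \ref{Hsur} with the non-duality condition \ref{Hchev} produces a conjugacy class in $\Gal(L/\Q)$, for an appropriate finite extension $L$ containing $F$ and $\Q(T,\mu_{p^\infty},c,c^*)$, whose elements are Frobenius candidates for primes $\ell\in\PP_{k,F}\subset\PP$ (using \ref{Hprimes}) with $\loc_\ell(c)\neq 0$ in $H^1_\f(\Q_\ell,T/\m)$ and $\loc_\ell(c^*)\neq 0$ in $H^1_\f(\Q_\ell,T^*[\m])$. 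Chebotarev then supplies an actual such prime.

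Given such an $\ell$, I would use the decomposition $H^1(\Q_\ell,T/\m^k) = H^1_\f \oplus H^1_\tr$ of Lemma \ref{lem:local_split} together with Poitou--Tate duality (Proposition \ref{prop:global_duality}) applied to the inclusions $\FF(n)\subset\FF(n)^\ell$ and $\FF(n\ell)\subset\FF(n)^\ell$, where $\FF(n)^\ell$ denotes the structure relaxed at $\ell$. Letting $V$ denote the image of $H^1_{\FF(n)^\ell}(\Q,T/\m^k)$ under $\loc_\ell$, the long exact sequence yields
\[
\length_R H^1_{\FF(n)}(\Q,T/\m^k) - \length_R H^1_{\FF(n\ell)}(\Q,T/\m^k) = \length_R(V\cap H^1_\f) - \length_R(V\cap H^1_\tr).
\]
The analogous identity holds for $T^*$; combining the two with \ref{Hcore0} and the orthogonality of $H^1_\f(\Q_\ell,T)$ with $H^1_\f(\Q_\ell,T^*)$ (Proposition \ref{prop:local_duality}), the nonvanishing of both $\loc_\ell(c)$ and $\loc_\ell(c^*)$ forces $\length_R(V\cap H^1_\f) > \length_R(V\cap H^1_\tr)$, closing the induction.

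The main obstacle is the simultaneous Chebotarev step: producing a single prime $\ell$ that lies in $\PP$ and whose localization kills neither $c$ nor $c^*$. This is where hypothesis \ref{Hchev} is essential, as it guarantees the image of $G_\Q$ in $\Aut(T/\m)\times\Aut(T^*[\m])$ is large enough that the Frobenius class can be chosen to witness nontriviality of both cocycles independently. Without such a non-duality input, one could be forced into a situation where every $\ell$ killing $c$ automatically preserves $c^*$ (or vice versa), and the inductive decrease would fail.
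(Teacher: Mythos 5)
The paper states this proposition as a citation to Mazur–Rubin (Corollary 4.1.9) and gives no proof of its own, so you are reconstructing the argument from scratch. Your overall strategy — induct on the $R/\m^k$-length of the Selmer module, use Chebotarev (in the form of Proposition \ref{prop:mr361}, together with \ref{Hsur}, \ref{Hchev}, \ref{Hprimes}) to produce a prime $\ell\in\PP$ at which both localisations are nontrivial, and then use the $H^1_\f\oplus H^1_\tr$ decomposition and local/global duality to show the lengths strictly drop when you replace $n$ by $n\ell$ — is precisely the mechanism behind the cited result, and it lines up with the paper's own ancillary statements (Corollary \ref{cor:loc_sur}, Lemma \ref{lem:mr417}, Corollary \ref{cor:mr458}).

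The one place the argument as written is not tight is the final inference. You deduce $\length_R(V\cap H^1_\f) > \length_R(V\cap H^1_\tr)$ from the nonvanishing of $\loc_\ell(c)$ and $\loc_\ell(c^*)$ for merely nonzero classes $c\in H^1_{\FF(n)}(\Q,T/\m)$ and $c^*\in H^1_{\FF^*(n)}(\Q,T^*[\m])$. That implication is false at this level of generality: with $R=\Z/p^2$, $k=2$, and $V=\m\cdot H^1(\Q_\ell,T/\m^2)$, one has $V\cap H^1_\f\neq 0$ and $V^\perp\cap H^1_\f(T^*)\neq 0$, yet $\length(V\cap H^1_\f)=\length(V\cap H^1_\tr)=1$, so no strict decrease. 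What is actually needed is \emph{surjectivity} of the full localisation maps $H^1_{\FF(n)}(\Q,T/\m^k)\to H^1_\f(\Q_\ell,T/\m^k)$ and $H^1_{\FF^*(n)}(\Q,T^*[\m^k])\to H^1_\f(\Q_\ell,T^*[\m^k])$. Once you have both surjectivities, the orthogonality $H^1_\f(\Q_\ell,T)^\perp = H^1_\f(\Q_\ell,T^*)$ forces $V=H^1_\f(\Q_\ell,T/\m^k)$ exactly, so $V\cap H^1_\tr=0$ and the decrease is by $k$; equivalently, both parts of Mazur--Rubin's Lemma 4.1.7 apply and give $H^1_{\FF(n\ell)}=H^1_{\FF_\ell(n)}$ together with $H^1_{\FF^*(n\ell)}=H^1_{\FF^*_\ell(n)}$. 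To obtain the surjectivity you should not apply Chebotarev to arbitrary nonzero mod-$\m$ classes, but rather, as the paper does in the proof of Corollary \ref{cor:loc_sur}, choose $d\in H^1_{\FF(n)}(\Q,T/\m^k)$ and $d^*\in H^1_{\FF^*(n)}(\Q,T^*[\m^k])$ of maximal order and apply Proposition \ref{prop:mr361} to $\pi^{k-1}d$ and $\pi^{k-1}d^*$; nonvanishing of those localisations then forces $\loc_\ell(d)$ and $\loc_\ell(d^*)$ to be generators of the cyclic targets. With that adjustment the induction closes and the proposal matches the intended proof.
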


The concept of core rank plays a vital role in the study of Selmer structures.

\begin{theorem}(\cite[Theorem 4.1.5]{MazurRubin})
    Suppose that $R$ and $T$ satisfy Assumption \ref{ass:R} and let $\FF$ be a cartesian Selmer structure. Then there exist integers $x,y\in \Z_{\geq 0}$, one of which can be taken to be $0$, such that for every $n\in \NN(\PP)$ and every $j\in \Z_{\geq0}$ there is a non-canonical isomorphism
    $$H^1_{\FF(n)}(\Q,T/\m^j)\oplus (R/\m^j)^x\cong H^1_{\FF^*(n)}(\Q,T^*[\m^j])\oplus (R/\m^j)^y.$$
    The integer $\chi(T,\FF)=y-x$ is said to be the \emph{core rank} of $\FF$.
    \label{th:core_rank}
    \end{theorem}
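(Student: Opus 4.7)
The plan is to realise the integer $\chi(T,\FF)$ as a generalised Euler characteristic and show it is independent of both $n \in \NN(\PP)$ and $j \in \Z_{\geq 0}$. I would proceed in four stages.

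\textbf{Step 1 (dimensions at $j=1$).} When $j=1$ the groups $H^1_{\FF(n)}(\Q,T/\m)$ and $H^1_{\FF^*(n)}(\Q,T^*[\m])$ are vector spaces over the residue field $R/\m$, so the asserted non-canonical isomorphism collapses to an equality of dimensions. I would apply the Poitou--Tate sequence from proposition \ref{prop:global_duality} comparing $\FF(n)$ with the structure whose local condition at every $\ell\in\Sigma(\FF(n))$ is all of $H^1(\Q_\ell,-)$, combine it with local duality (proposition \ref{prop:local_duality}) and the classical local Euler characteristic formula, and extract the Greenberg--Wiles-type identity
$$\dim H^1_{\FF(n)}(\Q,T/\m) - \dim H^1_{\FF^*(n)}(\Q,T^*[\m]) = \dim H^0(\Q,T/\m) - \dim H^0(\Q,T^*[\m]) + \sum_{\ell}\bigl(\dim H^1_{\FF(n)}(\Q_\ell,T/\m) - \dim H^0(\Q_\ell,T/\m)\bigr).$$

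\textbf{Step 2 (invariance in $n$).} Next I would check that the right-hand side is constant in $n\in\NN(\PP)$. Replacing $n$ by $n\ell$ for $\ell\in\PP$ only alters the local condition at $\ell$, swapping the finite subgroup $H^1_\f(\Q_\ell,T/\m)$ for the transverse subgroup $H^1_\tr(\Q_\ell,T/\m)$. By corollary \ref{cor:local_coh_cyclic} together with the splitting of lemma \ref{lem:local_split}, both are free of rank $1$ over $R/\m$, and $H^0(\Q_\ell,T/\m)$ is unaffected. So the local contribution does not change, and the difference of dimensions is a constant $\chi:=\chi(T,\FF)$. Setting $(x,y)=(-\chi,0)$ or $(0,\chi)$ according to the sign of $\chi$ makes one of them zero and gives $y-x=\chi$.

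\textbf{Step 3 (upgrading to $j\geq 1$ and to modules).} To lift the result to general $j$ and from dimensions to actual module isomorphisms after adjoining free summands, I would use proposition \ref{prop:core_vertex} to enlarge $n$ to a core vertex $m$, at which one of $H^1_{\FF(m)}(\Q,T)$ or $H^1_{\FF^*(m)}(\Q,T^*)$ vanishes. Hypotheses \ref{Hirred}, \ref{Hcoh0}, \ref{Hcartesian} together with lemmas \ref{lem:mr353}--\ref{lem:mr354} identify the torsion-coefficient Selmer groups with the $\m^j$-torsion of the corresponding full Selmer groups, so the vanishing persists over every $R/\m^j$ on that side, while the other side is forced by the dimension count of step 1 (applied at every $R/\m^j$) to be a free module of rank $|\chi|$. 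Descending from $m$ back to $n$ one prime $\ell\mid m/n$ at a time, proposition \ref{prop:global_duality} produces comparison exact sequences whose connecting terms are quotients of $H^1_\f(\Q_\ell,T/\m^j)$ or $H^1_\tr(\Q_\ell,T/\m^j)$, hence free cyclic $R/\m^j$-modules by corollary \ref{cor:local_coh_cyclic}. Assembling these single-prime changes produces the claimed isomorphism after adjoining the free summands of ranks $x$ and $y$.

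\textbf{Main obstacle.} The hard part is step 3: turning the numerical equality of lengths into an honest isomorphism of $R/\m^j$-modules. One must verify that the cokernels in each single-prime modification are genuinely \emph{free} over $R/\m^j$ (rather than merely of the correct length), and that the resulting splittings are compatible as $j$ varies. The freeness input is corollary \ref{cor:local_coh_cyclic}; compatibility across $j$ is precisely what the cartesian condition \ref{Hcartesian} provides, through lemmas \ref{lem:mr353}--\ref{lem:mr354}. Weaving these two rigidities together, and propagating them from a core vertex $m$ back down to an arbitrary $n\in\NN(\PP)$, is the delicate step of the argument.
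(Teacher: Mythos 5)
Your three-step plan has the right overall shape — a Greenberg--Wiles dimension formula at $j=1$, invariance of the resulting integer $\chi$ along the graph, and then a promotion to a module isomorphism — and Steps 1 and 2 are essentially correct: the only local contribution that changes when passing from $\FF(n)$ to $\FF(n\ell)$ is at $\ell$, where the finite and transverse conditions are both free cyclic of rank one by corollary \ref{cor:local_coh_cyclic} and lemma \ref{lem:local_split}, so the Euler-characteristic constant $\chi$ is independent of $n$.

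The gap is in Step 3, and it is directional. After picking a core vertex $m$ where (say) $H^1_{\FF^*(m)}(\Q,T^*[\m^j])=0$ and arguing that $H^1_{\FF(m)}(\Q,T/\m^j)$ is free of rank $\chi$, you propose to remove the primes dividing $m/n$ one at a time and read off $H^1_{\FF(n)}$ from the Poitou--Tate sequences of proposition \ref{prop:global_duality}. But those sequences only exhibit $H^1_{\FF(n)}(\Q,T/\m^j)$ as an \emph{extension} of a cyclic submodule of $H^1_\f(\Q_\ell,T/\m^j)$ by $H^1_{\FF_\ell(n)}(\Q,T/\m^j)$; they say nothing about the extension class. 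Such extensions are not rigid over $R/\m^k$: for instance both $R/\m^4\times R/\m$ and $R/\m^3\times R/\m^2$ occur as extensions of $R/\m^3$ by $R/\m^2$. So knowing $H^1_{\FF(m)}$, the lengths of everything in sight, and the freeness of the local cohomology still does not determine $H^1_{\FF(n)}$ as a module, and the final ``assembling these single-prime changes'' step does not go through. The reference's own analogues of this manoeuvre (corollary \ref{cor:mr458}, lemma \ref{lem:cr0}) run in the opposite direction for exactly this reason: one \emph{chooses} $\ell$ via Chebotarev so that the localisation is surjective and the kernels are controlled — and note that lemma \ref{lem:cr0} already invokes theorem \ref{th:core_rank} as an input, so it cannot be used here without circularity.

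A cleaner repair runs through the coefficient rings rather than the graph. Apply the global Euler-characteristic formula over $R/\m^j$ for \emph{every} $j\le k$, using hypothesis \ref{Hcartesian} and corollary \ref{cor:local_coh_cyclic} to see that the local terms scale linearly in $j$; this upgrades Step 1 to the length identity
$$\length\,H^1_{\FF(n)}(\Q,T/\m^j)-\length\,H^1_{\FF^*(n)}(\Q,T^*[\m^j])=j\chi \qquad (1\le j\le k).$$
Lemmas \ref{lem:mr353}--\ref{lem:mr354} identify these groups with the $\m^j$-torsion of $A:=H^1_{\FF(n)}(\Q,T/\m^k)$ and $B:=H^1_{\FF^*(n)}(\Q,T^*[\m^k])$. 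Writing $A\approx\bigoplus_i R/\m^{a_i}$ and $B\approx\bigoplus_i R/\m^{b_i}$ and taking successive differences in $j$, the identity forces $\#\{i:a_i\ge j\}-\#\{i:b_i\ge j\}=\chi$ for all $1\le j\le k$, hence $\#\{i:a_i=v\}=\#\{i:b_i=v\}$ for $v<k$ and $\#\{i:a_i=k\}-\#\{i:b_i=k\}=\chi$. That is precisely $A\approx B\oplus(R/\m^k)^\chi$ (or the reverse if $\chi<0$), which is the statement with $(x,y)=(0,\chi)$ or $(\chi,0)$; constancy in $n$ is already your Step 2. This route needs neither the core-vertex detour nor any graph propagation, and it is where the rigidity you correctly flagged as the ``main obstacle'' — freeness, cartesianness, and compatibility across $j$ — actually does the work.
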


\begin{proposition}(\cite[Corollary 3.21]{Sakamoto18})
    Let $(T,\FF,\PP)$ be a Selmer triple in which $T$ satisfies Assumption \ref{ass:R} and let $a,b,c\in \NN(\PP_k)$ be pairwise relatively prime. Then
    $$\chi( T/\m^k T,\FF_a^b(c))=\chi(T/\m^k T,\FF)+\nu(b)-\nu(a).$$
    \label{prop:core_abc}
    \end{proposition}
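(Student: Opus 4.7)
The plan is to reduce the proposition to single-prime modifications and then extract the core rank difference from Poitou--Tate global duality, reinterpreted as an alternating length computation over the artinian ring $R/\m^k$. Since $\FF_a^b(c)$ is obtained from $\FF$ by modifying the local conditions one prime at a time, and since different primes can be handled independently, it suffices to prove three base cases for a single prime $\ell\in\PP_k$:
\[\chi(T/\m^k,\FF_\ell) = \chi(T/\m^k,\FF)-1,\quad \chi(T/\m^k,\FF^\ell) = \chi(T/\m^k,\FF)+1,\quad \chi(T/\m^k,\FF(\ell)) = \chi(T/\m^k,\FF),\]
corresponding to relaxed, strict and transverse modifications; the proposition will then follow by induction on $\nu(abc)$.

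The first main step is to turn the core rank into a length invariant. Applying Theorem \ref{th:core_rank} to the Selmer triple $(T/\m^k,\GG,\PP)$, which still satisfies \ref{Hffr}--\ref{Hprimes} by Remark \ref{rem:Hi_quot}, and computing lengths on both sides of the defining non-canonical isomorphism yields
\[\chi(T/\m^k,\GG)\cdot k = \length_R H^1_{\GG^*}(\Q,T^*[\m^k]) - \length_R H^1_\GG(\Q, T/\m^k).\]
Given two Selmer structures $\FF'\subset \FF''$ differing only at a single prime $\ell$, the five-term Poitou--Tate exact sequence of Proposition \ref{prop:global_duality}, together with the identity $\length_R M^\vee = \length_R M$ for finite $R$-modules, yields after alternating-length bookkeeping
\[\bigl(\chi(T/\m^k,\FF') - \chi(T/\m^k,\FF'')\bigr)\cdot k \;=\; \length_R\!\left(\frac{H^1_{\FF''}(\Q_\ell,T/\m^k)}{H^1_{\FF'}(\Q_\ell,T/\m^k)}\right).\]

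Each base case is then a direct evaluation. For $\ell\in\PP_k$, Corollary \ref{cor:local_coh_cyclic} and Lemma \ref{lem:local_split} ensure that $H^1_\f(\Q_\ell,T/\m^k)$, $H^1_\s(\Q_\ell,T/\m^k)$ and $H^1_\tr(\Q_\ell,T/\m^k)$ are free of rank one over $R/\m^k$, with the finite--transverse splitting $H^1(\Q_\ell,T/\m^k) = H^1_\f(\Q_\ell,T/\m^k)\oplus H^1_\tr(\Q_\ell,T/\m^k)$. Taking $(\FF',\FF'')=(\FF,\FF_\ell)$ the quotient is $H^1_\s(\Q_\ell,T/\m^k)$ of length $k$, giving the relaxed case; taking $(\FF',\FF'')=(\FF^\ell,\FF)$ the quotient is $H^1_\f(\Q_\ell,T/\m^k)$ of length $k$, giving the strict case; and taking $(\FF',\FF'')=(\FF(\ell),\FF_\ell)$ the quotient is $H^1(\Q_\ell,T/\m^k)/H^1_\tr(\Q_\ell,T/\m^k) \cong H^1_\f(\Q_\ell,T/\m^k)$ of length $k$, which combined with the relaxed case yields the transverse case. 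The only mild subtlety I anticipate is verifying that every term in the Poitou--Tate sequence has finite length over $R/\m^k$; this is automatic because $T/\m^k$ is itself finite, so all the Selmer and local cohomology groups in play are finite $R/\m^k$-modules, and the alternating-length arithmetic is legitimate.
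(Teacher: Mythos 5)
Your strategy — reduce to single-prime modifications, turn core rank into a length identity, and apply Poitou--Tate — is the right one, but the length identity you wrote has the wrong sign. From Theorem \ref{th:core_rank}, $H^1_\GG(\Q,T/\m^k)\oplus(R/\m^k)^x\approx H^1_{\GG^*}(\Q,T^*[\m^k])\oplus(R/\m^k)^y$ with $\chi(\GG)=y-x$, so taking lengths gives
\[\chi(T/\m^k,\GG)\cdot k = \length_R H^1_\GG(\Q,T/\m^k)-\length_R H^1_{\GG^*}(\Q,T^*[\m^k]),\]
the negative of your formula; at a core vertex with $\chi=1$ (where $H^1_\GG\cong R/\m^k$ and $H^1_{\GG^*}=0$) your version would yield $\chi k=-k$. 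Propagating the correct sign through the Poitou--Tate alternating sum gives, for $\FF'\subset\FF''$, the identity $\bigl(\chi(\FF'')-\chi(\FF')\bigr)k=\length_R\bigl(H^1_{\FF''}(\Q_\ell,T/\m^k)/H^1_{\FF'}(\Q_\ell,T/\m^k)\bigr)$: relaxing a local condition raises the core rank, which matches both the intuition and the paper's own later usage ($\chi(\FF^\ell)=1$ for the relaxed $\FF^\ell$ when $\chi(\FF)=0$).

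That your base cases nonetheless land on the proposition's formula is an artifact of a typo in the paper's printed definition of $\FF_a^b(c)$: the displayed cases assign $H^1(\Q_\ell,T)$ to $\ell\mid a$ and $0$ to $\ell\mid b$, but the rest of the paper — and the formula $\chi(\FF)+\nu(b)-\nu(a)$ together with the correct intuition — require the opposite, with the superscript relaxed and the subscript strict. Your two sign errors exactly cancel, which is why the base cases come out right, but the key derivation is not correct as written. Once both signs are fixed (so that, e.g., $\FF\subset\FF^\ell$ with quotient $H^1_\s$ giving $\chi(\FF^\ell)=\chi(\FF)+1$, and $\FF_\ell\subset\FF$ with quotient $H^1_\f$ giving $\chi(\FF_\ell)=\chi(\FF)-1$), the three base cases, the transverse computation via the finite--transverse splitting, and the induction on $\nu(abc)$ all go through; the plan is sound.
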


    \begin{remark}
    Proposition \ref{prop:core_abc} is also true when $a,b\in \mathcal N(\PP')$, where $\PP'$ denotes the set of primes $\ell$ such that $H^1_\s(\Q_\ell,T/\m^k T)$ is free of rank one over $R/\m^k$. 
    \label{rem:core_abc}
    \end{remark}

The key idea of this work is that we can bound the Selmer group by using appropriate primes, whose existence is known due to the Chebotarev density theorem. To be precise, we will now state the result needed.

\begin{proposition}(\cite[Proposition 3.6.1]{MazurRubin})
Suppose that $R$ and $T$ satisfy Assumption \ref{ass:R} and let $k\in \N$. Let $c_1,c_2\in H^1(\Q,T/\m^k)\setminus\{0\}$ and let $c_3,c_4\in H^1(\Q,T^*[\m^k])\setminus\{0\}$. Then there exists an infinite set of primes $S\subset \PP_k$ such that
$$\loc_\ell(c_i)\neq 0\ \ \forall \ell\in S,\ \forall i=1,2,3,4.$$
\label{prop:mr361}
\end{proposition}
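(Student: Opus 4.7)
The plan is to apply the Chebotarev density theorem to a finite Galois extension $L'/\Q$ that simultaneously encodes the Frobenius structure required for membership in $\PP_k$ and the non-vanishing of each localization $\loc_\ell(c_i)$.

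First I would pass to torsion coefficients. Because $H^1(\Q,T)$ embeds into $\varprojlim_j H^1(\Q,T/\m^j)$ and similarly for $T^*$, after replacing $k$ by a larger integer if necessary one may assume that the reductions $\bar c_i$ of the $c_i$ modulo $\m^k$ remain nonzero in $H^1(\Q,T/\m^k)$ or $H^1(\Q,T^*[\m^k])$. Set $L=\Q(T/\m^k,\mupn)$, so that $G_L$ acts trivially on $T/\m^k$, $T^*[\m^k]$, and $\mupn$. Hypotheses \ref{Hirred} and \ref{Hcoh0}, combined with an inflation--restriction argument and induction on $k$, yield $H^1(L/\Q,T/\m^k)=H^1(L/\Q,T^*[\m^k])=0$. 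Consequently each $\bar c_i$ restricts to a nonzero $\Gal(L/\Q)$-equivariant homomorphism $\phi_i\colon G_L\to M_i$ with $M_i\in\{T/\m^k,T^*[\m^k]\}$; let $L_i\subset\overline{\Q}$ be the abelian extension of $L$ cut out by $\phi_i$ and set $L'=L\cdot L_1\cdots L_4$.

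Second I would construct an element $\sigma\in\Gal(L'/\Q)$ whose restriction to $L$ is conjugate to $\tau$ from \ref{Hsur} and whose action on $L'/L$ witnesses non-vanishing in all four cocycles. Given any lift $\sigma_0$ of $\tau$, the cocycle relation gives $\bar c_i(\sigma_0\gamma)=\bar c_i(\sigma_0)+\sigma_0\cdot\phi_i(\gamma)$ for $\gamma\in\Gal(L'/L)$. I need to choose $\gamma$ so that this expression is nonzero in the cokernel $M_i/(\tau-1)M_i$ for every $i$; by \ref{Hsur} each such cokernel is a free $R/\m^k$-module of rank one. The image of $\Gal(L'/L)$ inside $\bigoplus_i M_i$ is a $\Gal(L/\Q)$-stable submodule; its reduction modulo $\m$ is a submodule of $\bigoplus_i M_i/\m M_i$, and by \ref{Hirred} together with \ref{Hchev}, which prevents the $T$- and $T^*$-valued factors from sharing a common irreducible $\F_p[G_\Q]$-constituent, a Goursat-type argument combined with Nakayama's lemma shows this image surjects onto the product of the four rank-one cokernels. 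Avoiding the single forbidden value in each cokernel then produces the desired $\gamma$.

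Finally, the Chebotarev density theorem yields an infinite set of primes $\ell$ unramified in $L'$ whose Frobenius class is conjugate to $\sigma$. For any such $\ell$, the restriction $\sigma|_{\mupn}=1$ gives condition \ref{P1modp}, and $T/(\Frob_\ell-1)T\cong T/(\tau-1)T$ being free of rank one gives condition \ref{P1dim}, so $\ell\in\PP_k$. Under the identification $H^1_\f(\Q_\ell,T/\m^k)\cong T/(\Frob_\ell-1)T$ of lemma \ref{lem:mr121}, the inequality $\bar c_i(\sigma)\neq 0$ translates into $\loc_\ell(\bar c_i)\neq 0$, and hence $\loc_\ell(c_i)\neq 0$. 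The main obstacle is the simultaneous coordination of four non-vanishing conditions with the Frobenius constraint imposed by \ref{Hsur}; this is exactly where \ref{Hchev} is indispensable, guaranteeing that the $T$-valued and $T^*$-valued classes cut out sufficiently independent abelian extensions of $L$ for the Goursat-type surjectivity to hold.
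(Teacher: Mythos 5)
Your overall strategy is the one used in Mazur--Rubin: pass to torsion coefficients, restrict the cocycles to $G_L$ where they become $\Gal(L/\Q)$-equivariant homomorphisms $\phi_i$, form the finite extension $L'/L$ they cut out, choose a Chebotarev conjugacy class lifting $\tau$, and translate non-vanishing of $\bar c_i(\sigma)$ in $M_i/(\tau-1)M_i$ into $\loc_\ell(c_i)\neq 0$ via Lemma~\ref{lem:mr121}. The skeleton is correct.

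The gap is in the claim that the image of $\Gal(L'/L)$ inside $\bigoplus_i M_i$ surjects onto $\bigoplus_i M_i/(\tau-1)M_i$. This is false in general: if $c_1$ and $c_2$ are, say, equal or congruent modulo $\m$, the joint image of $(\phi_1,\phi_2)$ lies in a proper diagonal-type submodule of $M_1\oplus M_2$, so surjectivity onto the product of cokernels fails. Moreover, your use of Goursat relies on \ref{Hchev_hom}, but hypothesis \ref{Hchev} allows the alternative \ref{Hchev_5} ($p\geq 5$) without \ref{Hchev_hom}; in that case $T/\m T$ and $T^*[\m]$ may share constituents and the Goursat argument does not apply. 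What is actually needed is weaker than surjectivity: it is a coset-avoidance argument. After projecting each $\phi_i$ to the one-dimensional $\F_p$-space $M_i/((\tau-1)M_i+\m M_i)$, one must find $\gamma\in\Gal(L'/L)$ avoiding at most four affine hyperplanes. When two conditions happen to cut out the same hyperplane, there is nothing extra to avoid; when they are genuinely distinct, \ref{Hchev_hom} guarantees the $T$-valued and $T^*$-valued constraints are independent, and in the remaining case \ref{Hchev_5} supplies at least five points on a line so four can be avoided. Replacing the surjectivity claim by this avoidance argument -- and treating the two branches of \ref{Hchev} separately -- closes the gap.
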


The main consequence of this proposition is that it can be used to find primes whose finite localisation maps are surjective. The following corollary is proven within \cite[Proposition 4.5.8]{MazurRubin}.

\begin{corollary}(\cite[Proposition 4.5.8]{MazurRubin})
Let $(T,\FF,\PP)$ be a Selmer triple in which $T$ satisfies Assumption \ref{ass:R} and \ref{Hirred}-\ref{Hprimes} and let $j\in\NN$ be such that $\m^{j-1} H^1_\FF(\Q,T)\neq 0$. Then there are infinitely many primes $\ell \in \PP$ such that 
\[\loc_\ell\bigl(H^1_{\FF}(\Q,T/\m^j)\bigr)=H^1_f(\Q_\ell,T/\m^j).\]
Assume further that $\m^{j-1} H^1_\FF(\Q,T^*)\neq 0$. Then there are infinitely many primes $\ell\in \PP$ such the following two conditions hold:
\[\begin{array}{cc}
    \loc_\ell\bigl(H^1_{\FF}(\Q,T/\m^j)\bigr)=H^1_\f(\Q_\ell,T/\m^j),\ &
    \loc_\ell\bigl(H^1_{\FF^*}(\Q,T^*[\m^j])\bigr)=H^1_\f(\Q_\ell,T^*[\m^j]).
\end{array}\]

\label{cor:loc_sur}
\end{corollary}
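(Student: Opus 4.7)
The plan is to apply proposition \ref{prop:mr361} to the mod-$\m$ reduction of a suitably chosen primitive class in $H^1_\FF(\Q,T)$, where $\pi$ denotes a generator of $\m$.

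First I would exploit the decomposition of $H^1_\FF(\Q,T)$ over the principal local ring $R$: write $H^1_\FF(\Q,T)\cong R^a\oplus\bigoplus_i R/\pi^{b_i}$. The hypothesis $\m^{j-1}H^1_\FF(\Q,T)\neq 0$ forces either $a\geq 1$ or some $b_i\geq j$, and I would pick $c\in H^1_\FF(\Q,T)$ to be a generator of a corresponding summand; by construction $c\notin\pi\cdot H^1_\FF(\Q,T)$ and $\pi^{j-1}c\neq 0$. The key technical step is to promote the first property to $c\notin\pi H^1(\Q,T)$: if $c=\pi x$ for some $x\in H^1(\Q,T)$, then $\pi\loc_\ell(x)=\loc_\ell(c)\in H^1_\FF(\Q_\ell,T)$ for each $\ell\in\Sigma(\FF)$, and the cartesian condition \ref{Hcartesian} forces $\loc_\ell(x)\in H^1_\FF(\Q_\ell,T)$; this places $x$ in $H^1_\FF(\Q,T)$, contradicting the choice of $c$.

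Once $c$ is primitive in $H^1(\Q,T)$, the long exact sequence from $0\to T\xrightarrow{\pi}T\to T/\m T\to 0$ combined with $H^0(\Q,T)=0$ (a consequence of \ref{Hirred}) shows that the reduction $\tilde c\in H^1(\Q,T/\m)$ is nonzero. Proposition \ref{prop:mr361}, applied to the triple $(T/\m,\FF,\PP)$ which satisfies \ref{Hffr}-\ref{Hprimes} by remark \ref{rem:Hi_quot}, yields infinitely many primes $\ell\in\PP\cap\PP_j$ with $\loc_\ell(\tilde c)\neq 0$. For these $\ell$, the class $\loc_\ell(c)$ is a primitive element of $H^1_\f(\Q_\ell,T)\cong T/(\Frob_\ell-1)T$, so its further reduction $\loc_\ell(\bar c)$ is a generator of $H^1_\f(\Q_\ell,T/\m^j)\cong R/\m^j$, which is free of rank one over $R/\m^j$ by lemma \ref{lem:mr121} and \ref{P1dim}. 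Surjectivity of the localisation map on the Selmer group follows immediately.

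For the second statement, lemma \ref{lem:mr353} identifies $H^1_{\FF^*}(\Q,T^*[\m^j])$ with the $\m^j$-torsion of $H^1_{\FF^*}(\Q,T^*)$, and an entirely analogous structural argument produces a class $c^*\in H^1_{\FF^*}(\Q,T^*)$ primitive in $H^1(\Q,T^*)$ with $\pi^{j-1}c^*\neq 0$. Proposition \ref{prop:mr361} in its full strength handles up to two classes in $H^1(\Q,T)$ and two in $H^1(\Q,T^*)$ simultaneously, so a single prime $\ell$ can be chosen to witness both $\loc_\ell(\tilde c)\neq 0$ and $\loc_\ell(\tilde c^*)\neq 0$, yielding the required simultaneous surjectivity of both localisation maps. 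The main obstacle throughout is the promotion of primitivity from $H^1_\FF$ to the ambient $H^1$, where the cartesian condition \ref{Hcartesian} plays the essential role.
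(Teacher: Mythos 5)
Your proposal takes a genuinely different and substantially longer route than the paper, and the central technical step has a gap.

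The paper's own proof never invokes any primitivity argument in the ambient group $H^1(\Q,T)$. It works entirely with torsion coefficients: by lemmas \ref{lem:mr353} and \ref{lem:mr354} it produces $d_1\in H^1_\FF(\Q,T/\m^j)$ with $\m^{j-1}d_1\neq 0$, sets $c_1:=\pi^{j-1}d_1$, and applies proposition \ref{prop:mr361} directly to the nonzero class $c_1\in H^1(\Q,T/\m^j)$ (using remark \ref{rem:Hi_quot} to know the triple $(T/\m^j,\FF,\PP)$ satisfies \ref{Hffr}--\ref{Hprimes}). For the resulting primes $\ell\in\PP\cap\PP_j$ one has $\pi^{j-1}\loc_\ell^\f(d_1)=\loc_\ell^\f(c_1)\neq 0$ inside $H^1_\f(\Q_\ell,T/\m^j)\cong R/\m^j$, which forces $\loc_\ell^\f(d_1)$ to generate. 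Nothing about divisibility in $H^1(\Q,T)$ ever enters.

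The gap in your approach is the claim that \ref{Hcartesian} forces $\pi y\in H^1_\FF(\Q_\ell,T)\Rightarrow y\in H^1_\FF(\Q_\ell,T)$. The cartesian condition of \cite[definition 1.1.4]{MazurRubin} is a compatibility of the propagated local conditions along injections $T/\m^a\hookrightarrow T/\m^b$ in the category of \emph{torsion quotients} of $T$. When $R$ is artinian of length $k$ (which is exactly the situation in which this corollary is later applied, e.g.\ in the proof of theorem \ref{th:kur_par} where $k$ is taken to be $\length(R)$), the map $\pi\colon T\to T$ is not injective, so it is not one of the maps the cartesian axiom speaks about, and the inference fails as stated. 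Even granting the saturation at each $\ell$, there is a second unaddressed issue in the artinian case: the kernel of $H^1(\Q,T)\to H^1(\Q,T/\m T)$ is the image of $H^1(\Q,\pi T)\to H^1(\Q,T)$, which in general strictly contains $\pi H^1(\Q,T)$, so $c\notin\pi H^1(\Q,T)$ does not by itself guarantee $\tilde c\neq 0$. These are precisely the subtleties that the paper's choice to work with $\pi^{j-1}d_1$ at the torsion level avoids: one only needs a \emph{nonzero} class, not a \emph{primitive} one, and proposition \ref{prop:mr361} already delivers that.
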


This result alongside the following lemma plays a central role in the proof of Theorem \ref{th:mr459} below.

\begin{lemma}(\cite[lemma 4.1.7(ii)]{MazurRubin})
Let $(T,\FF,\PP)$ be a Selmer triple in which $T$ satisfies Assumption \ref{ass:R} and let $\ell\in \PP_k$ be a prime not dividing $n\in\NN(\PP_k)$. If the localisation map $$\loc_\ell: H^1_{\FF(n)}(\Q, T)\to H^1_{\f}(\Q_\ell, T/\m^k T)$$ is surjective, then
$$H^1_{\FF^*(n\ell)}(\Q,T^*[\m^k])=H^1_{(\FF^*)_\ell(n)}(\Q,T^*[\m^k]).$$
\label{lem:mr417}
\end{lemma}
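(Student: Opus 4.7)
The plan is to apply Poitou--Tate global duality (proposition~\ref{prop:global_duality}) to a pair of Selmer structures that differ only at $\ell$, and then exploit the surjectivity hypothesis to collapse the resulting exact sequence. First I would fix the notational convention: $\FF^*_\ell(n)$ is read as $(\FF_\ell(n))^*$, so that under proposition~\ref{prop:local_duality} the relaxed condition at $\ell$ dualises to the strict (zero) condition. The conditions at primes dividing $n$ remain transverse (self-dual) and at $\Sigma(\FF)$ remain the $\FF^*$-conditions. With this reading, $\FF^*_\ell(n)\subset \FF^*(n\ell)$ locally at every prime, so the inclusion $H^1_{\FF^*_\ell(n)}(\Q,T^*[\m^k])\subset H^1_{\FF^*(n\ell)}(\Q,T^*[\m^k])$ is automatic.

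For the reverse inclusion I would apply proposition~\ref{prop:global_duality} on $T/\m^k$ to the pair $\FF(n\ell)\subset \FF_\ell(n)$, which agree at every place except $\ell$, where the local quotient is
\[\frac{H^1(\Q_\ell,T/\m^k)}{H^1_\tr(\Q_\ell,T/\m^k)}\cong H^1_\f(\Q_\ell,T/\m^k)\]
via the splitting of lemma~\ref{lem:local_split}. The nine-term sequence then collapses to
\[H^1_{\FF_\ell(n)}(\Q,T/\m^k)\xrightarrow{\loc_\ell^\f} H^1_\f(\Q_\ell,T/\m^k)\to H^1_{\FF^*(n\ell)}(\Q,T^*[\m^k])\du \to H^1_{\FF^*_\ell(n)}(\Q,T^*[\m^k])\du\to 0,\]
so the claim becomes equivalent to showing the left-hand map is surjective.

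Here the hypothesis enters directly. We have $H^1_{\FF(n)}(\Q,T/\m^k)\subset H^1_{\FF_\ell(n)}(\Q,T/\m^k)$ because relaxing a local condition enlarges the Selmer group, and on the smaller group $\loc_\ell^\f$ already surjects onto $H^1_\f(\Q_\ell,T/\m^k)$ by assumption. A fortiori the map from $H^1_{\FF_\ell(n)}$ is surjective, the connecting arrow to $H^1_{\FF^*(n\ell)}\du$ is zero, and so the last map $H^1_{\FF^*(n\ell)}\du\to H^1_{\FF^*_\ell(n)}\du$ is an isomorphism; dualising over $R/\m^k$ yields the desired equality.

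The only genuine subtlety is the notational one: one must recognise that the ``$*$'' on $\FF^*_\ell(n)$ is applied after all the decorations, so that it denotes the full dual of $\FF_\ell(n)$ and hence carries the strict condition at $\ell$, not the relaxed condition that the subscript $\ell$ would otherwise suggest. Once that is understood, everything reduces to a clean application of Poitou--Tate together with the given surjectivity of $\loc_\ell^\f$.
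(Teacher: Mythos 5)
Your proof is correct and follows the standard Poitou--Tate argument, which is exactly how Mazur--Rubin prove the cited lemma: apply the global duality sequence to a pair of Selmer structures differing only at $\ell$, identify the local quotient with $H^1_\f(\Q_\ell,T/\m^k)$ via the finite--transverse splitting, and observe that the surjectivity hypothesis kills the connecting map.

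One remark on the notational ``fix'' you introduce. The definition of $\FF_a^b(c)$ in the paper (relaxed at $a\vert\ell$, strict at $b\vert\ell$) is inconsistent with proposition 2.2.16, which asserts $\chi(\FF_a^b(c))=\chi(\FF)+\nu(b)-\nu(a)$ (relaxing must \emph{raise} the core rank), and with every subsequent usage in the paper (e.g.\ $\FF^q$ with $\chi(\FF^q)=1>\chi(\FF)=0$ must be relaxed at $q$). So the convention actually in force is the Mazur--Rubin one: subscript $=$ strict, superscript $=$ relaxed. Under that reading $\FF^*_\ell(n)$ already denotes ``$\FF^*$, transverse at $n$, strict at $\ell$'' with no need to reinterpret the ``$*$'' as applying last, and the Poitou--Tate pair to use is $\FF(n\ell)\subset\FF^\ell(n)$ (superscript $=$ relaxed) rather than $\FF(n\ell)\subset\FF_\ell(n)$. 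Your version is self-consistent under the literal (typo'd) definition and produces the identical exact sequence, so the mathematics goes through; it would simply read more cleanly, and match the rest of the paper, if you used the intended convention from the start.
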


The above two results imply the following
\begin{corollary}(\cite[Proposition 4.5.8]{MazurRubin})
Let $(T,\FF,\PP)$ be a Selmer triple in which $T$ satisfies Assumption \ref{ass:R}. Suppose that \ref{Hirred}-\ref{Hchev}, \ref{Hcartesian} and \ref{Hprimes} are satisfied and assume that $\chi(T/\m^k T,\FF)>0$ and that $H^1_{\FF^*}(\Q,T^*[\m^k])\cong R/\m^{e_1}\times \cdots\times R/\m^{e_s}$ for some integers $e_1\geq e_2\geq\ldots \geq e_s$. Then there are infinitely many primes $\ell\in \PP$ such that 
$$H^1_{\FF^*(\ell)}(\Q,T^*[\m^k])\cong R/\m^{e_2}\times \cdots\times R/\m^{e_s}.$$
\label{cor:mr458}
\end{corollary}

However, the above lemma only applies when $\chi(T/\m^k T,\FF)\geq 0$. In the case where $\chi(T/\m^k T,\FF)=0$, those primes might not exist but we can still get the following partial result.

\begin{lemma}
Let $(T,\FF,\PP)$ be a Selmer triple in which $T$ satisfies Assumption \ref{ass:R} and that \ref{Hirred}-\ref{Hchev}, \ref{Hcartesian} and \ref{Hprimes} hold. In addition, assume that $\chi(T/\m^k\FF)=0$ and that $H^1_{\FF^*}(\Q,T^*[\m^k])\cong R/\m^{e_1}\times \cdots\times R/\m^{e_s}$ for some integers $e_1\geq e_2\geq\ldots \geq e_s$. Then there are infinitely many primes $\ell\in \PP$ such that 
$$H^1_{\FF^*(\ell)}(K,T^*[\m^k])\cong R/{\m^j}\times R/\m^{e_3}\times \cdots\times R/\m^{e_s},$$
where $e_2\leq j\leq k$. In the case when $e_1>e_2$, then $j=e_2$ holds true for infinitely many primes $\ell\in \PP$.
\label{lem:cr0}
\end{lemma}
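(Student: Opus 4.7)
The plan is to combine Poitou-Tate global duality (proposition \ref{prop:global_duality}) with the Chebotarev density input of proposition \ref{prop:mr361}, mimicking the strategy of corollary \ref{cor:mr458} but working with the auxiliary relaxed structure $\FF^\ell$, which has core rank one by proposition \ref{prop:core_abc}. This auxiliary core-rank-one structure is what compensates for the absence of a free summand in $H^1_\FF(\Q, T/\m^k)$ when $\chi(\FF)=0$, which is exactly the reason corollary \ref{cor:mr458} fails in the present setting.

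First, I would choose a class $c \in H^1_{\FF^*}(\Q, T^*[\m^k])$ whose image in the top summand $R/\m^{e_1}$ is a generator, and apply proposition \ref{prop:mr361} to $\pi^{e_1-1} c \neq 0$ (with $\pi$ a uniformiser of $R$) to obtain infinitely many primes $\ell \in \PP_k$ at which $\loc_\ell^\f(c)$ has order $p^{e_1}$ in $H^1_\f(\Q_\ell, T^*[\m^k]) \cong R/\m^k$. For such $\ell$ the image $\loc_\ell^\f(H^1_{\FF^*})$ is forced to be the unique length-$e_1$ submodule $\m^{k-e_1} R/\m^k$. Poitou-Tate applied to $\FF \subset \FF^\ell$ (dually $\FF^*_\ell \subset \FF^*$) then identifies $H^1_{\FF^*_\ell}(\Q, T^*[\m^k])$ with $\ker(\loc_\ell^\f|_{H^1_{\FF^*}})$, and a standard change-of-basis argument over the discrete valuation ring $R$ (any surjection $\bigoplus_i R/\m^{e_i} \twoheadrightarrow R/\m^{e_1}$ splits as the projection onto a distinguished length-$e_1$ summand) gives $H^1_{\FF^*_\ell}(\Q, T^*[\m^k]) \approx R/\m^{e_2} \oplus \cdots \oplus R/\m^{e_s}$.

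Applying Poitou-Tate once more, this time to $\FF(\ell) \subset \FF^\ell$ (dually $\FF^*_\ell \subset \FF^*(\ell)$) and using the local splitting in lemma \ref{lem:local_split}, produces the short exact sequence
$$0 \to H^1_{\FF^*_\ell}(\Q, T^*[\m^k]) \to H^1_{\FF^*(\ell)}(\Q, T^*[\m^k]) \to R/\m^{k-a} \to 0,$$
where $a$ denotes the length of $\loc_\ell^\f(H^1_{\FF^\ell}(\Q, T/\m^k))$ in $H^1_\f(\Q_\ell, T/\m^k) \cong R/\m^k$. Applying the Chebotarev argument of the previous step also to the $T/\m^k$ side shows that $\loc_\ell^\f$ has length-$e_1$ image on the subgroup $H^1_\FF \subset H^1_{\FF^\ell}$, so $a \geq e_1$; an elementary-divisor analysis of the extension of cyclic $R$-modules above then converts the sequence into the required structure with $j = e_2 + (k - a) \in [e_2, k]$.

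The hardest step is the sharpening to $j = e_2$ when $e_1 > e_2$, which forces $a = k$ and hence full surjectivity of $\loc_\ell^\f$ on the enlarged group $H^1_{\FF^\ell}(\Q, T/\m^k)$. Since $\chi(\FF^\ell)=1$ by proposition \ref{prop:core_abc}, this group carries a free $R/\m^k$-summand, but that summand depends on $\ell$, so corollary \ref{cor:loc_sur} cannot be applied directly to $\FF^\ell$. The circularity is resolved by constructing, uniformly over the family of primes from the first step, a test class in $H^1(\Q_{\Sigma(\FF)\cup\{\ell\}}/\Q, T/\m^k)$ detecting the free direction, and imposing the Chebotarev condition of proposition \ref{prop:mr361} on this class simultaneously with the condition on $c$. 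This requires enlarging the auxiliary field $F$ of hypothesis \ref{Hprimes} to accommodate all conditions at once, and extracting the single prime $\ell$ at which every required localisation is maximal is the principal technical obstacle of the proof.
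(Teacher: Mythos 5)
Your first two paragraphs and the derivation of the short exact sequence
\[
0 \to H^1_{\FF^*_\ell}(\Q, T^*[\m^k]) \to H^1_{\FF^*(\ell)}(\Q, T^*[\m^k]) \to R/\m^{k-a} \to 0
\]
are essentially correct, but the argument breaks down at the crucial step. A short exact sequence over a local principal ring does \emph{not} determine the middle term: the isomorphism class depends on the extension class. For instance, with $s=2$, $e_1=e_2=2$, $k-a=2$, the sequence $0\to R/\m^2\to M\to R/\m^2\to 0$ admits $M\approx R/\m^4$, $M\approx R/\m^3\times R/\m$, or the split $M\approx R/\m^2\times R/\m^2$, and the last two are \emph{not} of the required shape $R/\m^j\times R/\m^{e_3}\times\cdots$ (with $s-1$ factors). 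So the phrase ``an elementary-divisor analysis of the extension\dots converts the sequence into the required structure with $j=e_2+(k-a)$'' is assuming exactly what needs to be proved, namely that $H^1_{\FF^*(\ell)}$ has $s-1$ cyclic factors. The paper supplies the missing constraint in a different way: via lemma \ref{lem:mr417} applied at the reduced level $e_1$ (for which the finite localisation maps \emph{are} surjective), one gets $H^1_{\FF^*(\ell)}(\Q,T^*[\m^k])[\m^{e_1}]\approx R/\m^{e_2}\times\cdots\times R/\m^{e_s}$, and this torsion computation together with the embedding $H^1_{\FF^*(\ell)}\hookrightarrow R/\m^k\oplus H^1_{\FF^*_\ell}$ coming from $\chi(\FF^\ell)=1$ pins down the elementary divisors unambiguously. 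You never invoke lemma \ref{lem:mr417} or otherwise control $M[\m^{e_1}]$, so the gap is genuine.

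The second problem is in the sharpening to $j=e_2$ when $e_1>e_2$. You try to force $a=k$, i.e., full surjectivity of $\loc_\ell^\f$ on $H^1_{\FF^\ell}(\Q,T/\m^k)$, and you correctly observe that this is circular because $H^1_{\FF^\ell}$ depends on $\ell$; you then say the circularity is ``resolved by constructing\dots a test class'' but you do not actually carry this out, and it is not clear that it can be done in the framework of proposition \ref{prop:mr361}, which imposes conditions on classes fixed \emph{before} $\ell$ is chosen. The paper sidesteps this entirely: once $M$ is known to embed into $R/\m^k\oplus R/\m^{e_2}\times\cdots$ and to have $\m^{e_1}$-torsion $\approx R/\m^{e_2}\times\cdots$, pure module theory shows that any divisor of $M$ strictly exceeding $e_1$ would force $e_1=e_2$, so when $e_1>e_2$ the largest divisor is automatically $e_2$ — no Chebotarev argument on $H^1_{\FF^\ell}$ is needed. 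To repair your proof you would need to add the $\m^{e_1}$-torsion computation via lemma \ref{lem:mr417}, at which point the short exact sequence and the attempt to make $a=k$ become unnecessary.
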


\begin{proof}
Choose some $k\in \N$ satisfying \ref{Hprimes}. Since $\chi(T/\m^k T,\FF)=0$, Theorem \ref{th:core_rank} implies that 
$$H^1_{\FF^*}(K,T^*[\m^k])\cong H^1_{\FF}(K,T/\m^k T)\cong R/\m^{e_1}\times\cdots \times R/\m^{e_s}.$$

We can pick classes $c\in H^1_{\FF}(K,T/\m^k T)$ and $c^*\in H^1_{\FF^*}(K,T^*[\m^k])$ such that $\pi^{e_1} c\neq 0$ and $\pi^{e_1} c^*\neq 0$. By Proposition \ref{prop:mr361}, we can choose a Kolyvagin prime $\ell\in \PP$ such that 
\[\begin{array}{cc}
\loc_\ell(\pi^{e_1-1} c) \neq 0,\ & \loc_\ell(\pi^{e_1-1}c^*)\neq 0.
\end{array}\]
Consider the diagram 
\[\xymatrix{ H^1_{\FF}(K,T/\m^{e_1})\ar[r]^{\loc_\ell} \ar[d]^{\pi^{k-e_1}}      &       H^1_\f(K_\ell,T/\m^{e_1}) \ar[d]^{\pi^{k-e_1}} \\
            H^1_{\FF}(K,T/\m^k T)\ar[r]^{\loc_\ell}     &       H^1_\f(K_\ell,T/\m^k T).
}\]
By Lemma \ref{lem:mr354}, since $H^1_{\FF}(K,T/\m^k T)=H^1_{\FF}(K,T/\m^k T)[\m^{e_1}]$, the leftmost vertical map is surjective, so there is some $c'\in H^1_{\FF}(K,T/\m^{e_1})$ such that $\loc_\ell(\pi^{e_1-1}c')\neq 0$, implying that the top horizontal map is surjective.

Note that the element $\tau\in G_K$ from \ref{Hsur} satisfies that 
\[T/(\m^{e_1},\tau-1)\cong R/\m^{e_1}\]
and, since $K(T/\m^{e_1})_{p^{e_1}}\subset K(T)_;$, then $\Frob_\ell$ is conjugate to $\tau$ in $\Gal(K(T/\m^{e_1})_{p^{e_1}}/K)$, so it is a Kolyvagin prime for $T/\m^{e_1}$. Then we can apply Lemma \ref{lem:mr417} to guarantee that
\[H^1_{(\FF^*)(\ell)}(K,T^*[\m^{e_1}])=H^1_{(\FF^*)_\ell}(K,T^*[\m^{e_1}])\cong R/\m^{e_2}\times \cdots \times R/\m^{e_s}.\]

By Lemma \ref{lem:mr353},
\[H^1_{\FF^*(\ell)}(\Q,T^*)[\m^{e_1}]= H^1_{\FF^*(\ell)}(\Q,T^*[\m^{e_1}])\cong R/\m^{e_2}\times \cdots \times R/\m^{e_s}.\]
Finally, Theorem \ref{th:core_rank} implies that
\[H^1_{\FF(\ell)}(K,T)[\m^{e_1}]\cong H^1_{\FF^*(\ell)}(\Q,T^*)[\m^{e_1}] \cong R/\m^{e_2}\times \cdots \times R/\m^{e_s}. \]

Since $\chi(T/\m^k T,\FF^\ell)=1$ by by Proposition \ref{prop:core_abc}, then Theorem \ref{th:core_rank} implies that
$$ H^1_{\FF(\ell)}(\Q,T)\subset H^1_{\FF^\ell}(\Q,T)\cong R/\m^k\oplus H^1_{(\FF_\ell)^*}(\Q,T^*).$$
Therefore, $H^1_{\FF(\ell)}(\Q,T)$ can be injected into $R/\m^k\times R/\m^{e_2}\times\cdots \times R/\m^{e_s}$ and its $\m^{e_1}$-torsion is isomorphic to $R/\m^{e_2}\times \cdots \times R/\m^{e_s}$. Under those considerations, the lemma follows by the structure theorem of $R/\m^k$-modules. 
\end{proof}

Given a Galois representation $T$ defined over a discrete valuation ring $\OO$, there are some natural Selmer structures that can be defined on $T$. The first example is called the \emph{canonical Selmer structure}. The finite cohomology group is a well behaved local condition for primes different from $p$ and $\infty$. The idea behind the canonical example is to consider the local conditions at those primes to be the whole local cohomology groups.

\begin{definition}
    Let $T$ be a Galois representation over a discrete valuation ring of residual characteristic $p>0$. Then the canonical local conditions are defined as
    $$\left\{\begin{aligned}
& H^1_{\Fcan}(\Q_\ell,T)=H^1_\f(\Q_\ell, T)\ \ \textrm{if}\ \ell\neq p,\\
&H^1_{\Fcan}(\Q_p,T)=H^1(\Q_p,T),\\
&H^1_{\Fcan}(\R,T)=H^1(\R,T).
\end{aligned}\right.$$
\label{def:Fcan_str}
\end{definition}

The canonical structure is the one that can be studied directly by using Euler systems. However, it is usually different to the Selmer groups that appear naturally in arithmetic geometry, because they differ in the local conditions at $p$ and $\infty$. The infinite prime is usually not an issue since $H^1(\R,T)=0$ when $p>2$. At $p$, S.~Bloch and K.~Kato gave in \cite{BlochKato} a definition of a local condition at $p$, based purely on the Galois representation $T$. In the most arithmetically important cases, it coincides with the local condition at $p$ defined geometrically. Their definition uses Fontaine's period rings from $p$-adic Hodge theory. Moreover, Bloch-Kato Selmer group is conjecturally directly related to the special values of the motivic $L$-function.

\begin{definition}
Let $T$ be a Galois representation over a discrete valuation ring $R$ of residual characteristic $p>0$. Then the Bloch-Kato local conditions are defined as
$$\left\{\begin{aligned}
& H^1_{\FBK}(\Q_\ell,T):=H^1_\f(\Q_\ell, T)\ \ \textrm{if}\ \ell\neq p,\\
&H^1_{\FBK}(\Q_p,T):=\ker\left(H^1(\Q_\ell, T)\to H^1(\Q_\ell, T\otimes_{\Z_p} B_\textrm{crys})\right).
\end{aligned}\right.,$$
where $B_\crys$ denotes the crystalline period ring.
\label{def:BK_str}
\end{definition}

\section{Kolyvagin systems}
\label{sec:ks}

\subsection{General theory}
\label{sec:kol}

\begin{definition}
A \emph{Kolyvagin system} for a Selmer triple $(T,\FF,\PP)$ is a collection of cohomology classes
$$\kappa=\left\{\kappa_n\in H^1_{\mathcal F(n)}(\Q,T/\m^{k_n} T)\otimes  \bigotimes_{\ell\mid n} \GG_\ell:\ n\in \NN(\PP)\right\}$$
such that 
\begin{equation}
\loc_\ell^\s(\kappa_{n\ell})=\phi_\ell^{\fs}\circ \loc_\ell(\kappa_n)
\label{eq:kol_cond}
\end{equation}
for every $n\in \NN(\PP)$ and $\ell\in \PP$ such that $(n,\ell)=1$. Here, $\loc_{\ell}^\s$ denotes the composition of the localisation at $\ell$ with the projection to the singular cohomology group. By Definition \ref{def:modified}, we have that 
\[\begin{aligned}
&\loc_\ell(\kappa_n)\in H^1_\f(\Q_\ell,T/\m^{k_n} T)\otimes \bigotimes_{q\mid n} \GG_q,\\
&\loc_\ell^\s(\kappa_{n\ell})\in H^1_\s(\Q_\ell,T/\m^{k_{n\ell}})\otimes \bigotimes_{q\mid n\ell} \GG_q.\\
\end{aligned}\]
Applying the finite-singular map at $\ell$ from Lemma \ref{lem:mr123} to $\kappa_{n}$, we obtain an element in 
\[\phi_\ell^{\fs}\circ \loc_\ell(\kappa_n)\in H^1_\s(\Q_\ell,T/\m^{k_n} T)\otimes\bigotimes_{q\mid n\ell} \GG_q.\]
Both $\loc_\ell^\s(\kappa_{n\ell})$ and $\phi_\ell^{\fs}\circ \loc_\ell(\kappa_n)$ project to an element in 
\[H^1_\s(\Q_\ell,T/\m^{\min\{k_n,k_{n\ell}\}})\otimes\bigotimes_{q\mid n\ell} \GG_q,\] 
so it is in this group where the relation in \eqref{eq:kol_cond} shows up.
\label{def:kol}
\end{definition}

The module of Kolyvagin systems will be denoted by $\textrm{KS}(T,\FF,\PP)$ and it can be determined depending on the core rank of the Selmer triple $(T,\FF,\PP)$.
\begin{theorem} (\cite[Theorems 4.2.2, 4.3.2 and 5.2.10]{MazurRubin})
Let $T$ be as in Assumption \ref{ass:R}, let $(T,\FF,\PP)$ be a Selmer triple satisfying \ref{Hirred}-\ref{Hchev}, \ref{Hcartesian} and \ref{Hprimes} and let $k\in \N$.
\begin{itemize}
\item If $\chi(T,\FF)=0$, then $\textrm{KS}(T,\FF,\PP)=0$.
\item If $\chi(T,\FF)=1$, then $\textrm{KS}(T,\FF,\PP)$ is free of rank $1$ over $R$. 
\end{itemize}
\label{th:KS}
\end{theorem}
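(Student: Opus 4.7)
The plan is to reformulate a Kolyvagin system as a ``global section'' on the Selmer graph whose vertices are $\NN(\PP)$ and whose edges connect $n$ with $n\ell$ for each $\ell\in\PP$ coprime to $n$. The stalk at $n$ is the $R$-module
\[
W(n) := H^1_{\FF(n)}\bigl(\Q, T/\m^{k_n}\bigr) \otimes \bigotimes_{\ell\mid n}\GG_\ell,
\]
and the edge relation is the Kolyvagin condition \ref{eq:kol_cond}, which becomes a symmetric two-sided constraint once one uses that $\phi_\ell^{\fs}$ is an isomorphism (lemma \ref{lem:mr123}). The key input throughout is proposition \ref{prop:core_vertex}, which says every vertex is dominated by a core vertex, together with theorem \ref{th:core_rank}, which controls the rank of $W(n)$ at such vertices.

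To handle $\chi(\FF)=0$, the first step is to observe that at a core vertex $n$ the stalk $W(n)$ vanishes: by definition one of $H^1_{\FF(n)}(\Q,T)$ or $H^1_{\FF^*(n)}(\Q,T^*)$ is zero; by lemma \ref{lem:mr353} (and its analogue for $T$) this vanishing is inherited by the torsion cohomology; and theorem \ref{th:core_rank} with $\chi=0$ couples the two Selmer groups up to free summands of equal rank, forcing $H^1_{\FF(n)}(\Q,T/\m^{k_n})=0$. Hence $\kappa_n=0$ at every core vertex. The second step is to propagate backwards: given $m\in\NN(\PP)$, choose a core vertex $M$ with $m\mid M$, write $M=m\ell_1\cdots\ell_t$, and induct on $t$. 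Each inductive step uses the Kolyvagin relation to replace the transverse local condition at some $\ell_i$ by the strict one, and the Poitou--Tate exact sequence of proposition \ref{prop:global_duality} (applied to compare $\FF(n)$ and $\FF_{\ell_i}(n/\ell_i)$) to conclude the iterated Selmer module stays trivial under these modifications. This forces $\kappa_m=0$.

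For $\chi(\FF)=1$, theorem \ref{th:core_rank} instead gives that at a core vertex $n$ the module $H^1_{\FF(n)}(\Q,T/\m^{k_n})$ is free of rank one over $R/\m^{k_n}$, so $W(n)$ is free of rank one over $R$. I would first reduce to the artinian case $R/\m^k$ via an inverse limit argument on $k$, using that Kolyvagin systems are compatible under the reduction maps (remark \ref{rem:Hi_quot}). In the artinian case, I would fix a core vertex $n_0$ and show that evaluation $\KS(T,\FF,\PP)\to W(n_0)$, $\kappa\mapsto\kappa_{n_0}$, is bijective. Injectivity is the same backward propagation argument as above, now using that the extra rank-one free summand at non-core vertices is cut down to zero after finitely many edge constraints. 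Surjectivity amounts to constructing, from a generator $w\in W(n_0)$, a consistent family $(\kappa_n)_n$: (i) move between core vertices along one-prime edges using lemma \ref{lem:mr417} together with corollaries \ref{cor:loc_sur} and \ref{cor:mr458} (in the rank-one version) to ensure $w$ lifts to each neighbouring core vertex with a uniquely determined value; (ii) fill in non-core vertices by transporting $\kappa$ along an edge from a nearby core vertex.

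The main obstacle in both parts is showing that the consistency constraints along different paths in the Selmer graph agree: after propagating $\kappa$ from $n_0$ to a second core vertex $n_1$ along two distinct chains of primes, one must verify that both chains assign the same value, i.e.\ that the core-vertex subgraph is ``simply connected'' with respect to the Kolyvagin edge relation. This is the technical heart of the Mazur--Rubin theory and ultimately follows from a careful double application of Poitou--Tate duality (proposition \ref{prop:global_duality}) together with the Chebotarev-type existence results of proposition \ref{prop:mr361} and corollary \ref{cor:loc_sur}, which supply enough primes to realise any desired path of single-prime modifications in the Selmer graph.
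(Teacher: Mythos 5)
This theorem is cited from Mazur--Rubin \cite[theorems 4.2.2, 4.3.2 and 5.2.10]{MazurRubin} and the paper supplies no proof of its own, so there is no "paper argument" to match against; the question is whether your sketch is sound on its own terms.

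Your framing via the Selmer graph with stalks $W(n)$ and edge constraints is in the right spirit, and your observation that $W(n)=0$ at a core vertex when $\chi(\FF)=0$ is correct (either side of the core-vertex dichotomy forces both torsion Selmer groups to vanish via the core-rank-zero isomorphism and lemmas \ref{lem:mr353}--\ref{lem:mr354}). But the backward-propagation induction, as you phrase it, has a gap. Writing $M=m\ell_1\cdots\ell_t$ and inducting one prime at a time, the Kolyvagin relation at step $j$ only gives $\loc_{\ell_{j+1}}^\f(\kappa_{m\ell_1\cdots\ell_j})=0$, hence $\kappa_{m\ell_1\cdots\ell_j}\in H^1_{\FF_{\ell_{j+1}}(m\ell_1\cdots\ell_j)}$. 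For $j<t-1$ this module carries the \emph{finite} local condition at $\ell_{j+2},\dots,\ell_t$, whereas $H^1_{\FF(M)}$ carries the \emph{transverse} condition there; by lemma \ref{lem:local_split} these are complementary, so the containment $H^1_{\FF_{\ell_{j+1}}(m\ell_1\cdots\ell_j)}\subset H^1_{\FF(M)}$ you need simply fails once two or more primes of $M/m$ remain. No application of proposition \ref{prop:global_duality} repairs this. The correct induction is on $\nu(M/n)$ for divisors $n$ of $M$ with $m\mid n$: assuming $\kappa_{n\ell}=0$ for every $\ell\mid M/n$, the Kolyvagin relations for \emph{all} those $\ell$ simultaneously place $\kappa_n$ in $H^1_{\FF_{M/n}(n)}$, which \emph{is} contained in $H^1_{\FF(M)}=0$ because the strict condition is contained in the transverse condition at every prime of $M/n$ at once. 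With that modification the $\chi=0$ part is correct.

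For $\chi(\FF)=1$, the reduction to the artinian case and the plan of proving that evaluation at a core vertex is bijective match the shape of the Mazur--Rubin argument, but your surjectivity discussion essentially restates the problem rather than solving it: the claim that the "core-vertex subgraph is simply connected with respect to the Kolyvagin edge relation" is precisely the nontrivial content, and it is not established by "a careful double application of Poitou--Tate duality." In Mazur--Rubin this is carried out through the machinery of stub Kolyvagin systems and the sheaf $\mathcal{H}$ on the graph $\mathcal{X}$; the consistency along different paths comes from a global-duality parity/self-duality argument rather than from a connectivity statement. As a sketch it is acceptable to gesture at the citation, but as written this part is not a proof.
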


The Kolyvagin systems giving the best bound to the Selmer group are those generating the module $\textrm{KS}(T,\FF,\PP)$.
\begin{definition}
If $(T,\FF,\PP)$ satisfies Assumption \ref{ass:R}, \ref{Hirred}-\ref{Hchev}, \ref{Hcartesian} and \ref{Hprimes} and $\chi(T,\FF)=1$, a Kolyvagin system is said to be \emph{primitive} if it generates $\textrm{KS}(T,\FF,\PP)$ as an $R$-module.
\label{def:primitive}
\end{definition}

\begin{proposition}(\cite[Corollary 4.5.2]{MazurRubin})
In the setting of Definition \ref{def:primitive}, a Kolyvagin system $\kappa\in \textrm{KS}(T,\FF,\PP)$ is primitive if and only if its image under the canonical map $\textrm{KS}(T,\FF,\PP)\to\textrm{KS}(T/\m T,\FF,\PP)$ is non-zero.
\label{prop:mr452}
\end{proposition}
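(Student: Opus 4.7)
The plan is to use Nakayama's lemma together with the rank-one structure provided by theorem \ref{th:KS}. Since $\chi(\FF)=1$, both $\textrm{KS}(T,\FF,\PP)$ and $\textrm{KS}(T/\m,\FF,\PP)$ are free of rank one over $R$ and $R/\m$ respectively. An element $\kappa$ of the free rank-one $R$-module $\textrm{KS}(T,\FF,\PP)$ is a generator if and only if it does not lie in $\m\cdot \textrm{KS}(T,\FF,\PP)$, equivalently, if and only if its image in the residue quotient $\textrm{KS}(T,\FF,\PP)/\m$ is nonzero.

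The canonical reduction map $\rho:\textrm{KS}(T,\FF,\PP)\to \textrm{KS}(T/\m,\FF,\PP)$ is $R$-linear and its target is annihilated by $\m$, so it factors through an $R/\m$-linear map
\[\bar\rho: \textrm{KS}(T,\FF,\PP)/\m\to \textrm{KS}(T/\m,\FF,\PP).\]
Both the source and target of $\bar\rho$ are free of rank one over $R/\m$, so $\bar\rho$ is either zero or an isomorphism. If one shows $\bar\rho$ is an isomorphism, the claim follows at once: $\kappa$ is primitive if and only if its class in $\textrm{KS}(T,\FF,\PP)/\m$ is nonzero if and only if $\rho(\kappa)\neq 0$.

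The main step, and the main obstacle, is verifying that $\bar\rho$ is nonzero. I would do this by invoking a core vertex $n\in \NN(\PP)$, which exists by proposition \ref{prop:core_vertex}. By the definition of a core vertex together with theorem \ref{th:core_rank}, in the case $H^1_{\FF^*(n)}(\Q,T^*[\m^{k_n}])=0$ the group $H^1_{\FF(n)}(\Q,T/\m^{k_n})$ is free of rank one over $R/\m^{k_n}$, with the other case handled symmetrically via $T^*$. The construction in the proof of theorem \ref{th:KS} identifies $\textrm{KS}(T,\FF,\PP)$ with $H^1_{\FF(n)}(\Q,T/\m^{k_n})$ via the evaluation map $\kappa\mapsto \kappa_n$, and the analogous identification holds with $T$ replaced by $T/\m$. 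This identification is functorial in the coefficient module, so it yields a commutative square whose horizontal arrows are the evaluations at $n$ and whose vertical arrows are the reductions mod $\m$ on $\textrm{KS}$ and on $H^1_{\FF(n)}$ respectively. Since the right vertical $H^1_{\FF(n)}(\Q,T/\m^{k_n})\to H^1_{\FF(n)}(\Q,T/\m)$ is surjective (a generator of the rank-one source maps to a generator of the rank-one target), the left vertical $\rho$ is surjective too, so $\bar\rho$ is nonzero and hence an isomorphism. The delicate point is the functoriality of the evaluation-at-a-core-vertex isomorphism of theorem \ref{th:KS}, which one has to read off from the inductive construction that propagates $\kappa_n$ to all $n'\in \NN(\PP)$ via the finite-singular relation \eqref{eq:kol_cond}; this propagation is defined by cohomological operations which manifestly commute with reduction modulo $\m$.
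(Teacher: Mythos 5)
Your overall strategy is the right one and coincides with the standard Mazur--Rubin approach: reduce to showing that the reduction map $\rho:\KS(T,\FF,\PP)\to \KS(T/\m,\FF,\PP)$ is surjective, after which Nakayama and the rank-one structure finish the argument. However, the step where you verify surjectivity has a genuine gap. Your parenthetical justification for surjectivity of the right vertical --- ``a generator of the rank-one source maps to a generator of the rank-one target'' --- is circular: an $R$-linear map from a cyclic module certainly takes a generator to a generator of the \emph{image}, but whether that image is all of $H^1_{\FF(n)}(\Q,T/\m)$ rather than zero is exactly the content of the claim. Nonvanishing of this reduction map $H^1_{\FF(n)}(\Q,T/\m^{k_n})\to H^1_{\FF(n)}(\Q,T/\m)$ is precisely where the cartesianity hypothesis \ref{Hcartesian} and the lemmas \ref{lem:mr353}, \ref{lem:mr354} about compatibility of Selmer groups with $\m$-power torsion come in; it cannot be dispatched as a formal consequence of rank-one freeness.

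There is a second, smaller inaccuracy: the claim that the evaluation-at-a-core-vertex map identifies $\KS(T,\FF,\PP)$ with $H^1_{\FF(n)}(\Q,T/\m^{k_n})$ is not correct when $R$ is not artinian. For $n>1$ the quantity $k_n$ is finite, so the target is $\m^{k_n}$-torsion, while $\KS(T,\FF,\PP)$ is free of rank one over $R$; they cannot be isomorphic if $R$ is a DVR. What is actually true (and all your argument needs) is that the evaluation map is surjective and that the reduction maps $\KS(T/\m^{k+1})\to\KS(T/\m^k)$ are surjective --- but these surjectivities are themselves a nontrivial part of the proof of theorem \ref{th:KS} (Mazur--Rubin 4.3.2 and 5.2.10), not a free-standing consequence of functoriality of the finite-singular relation. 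If you simply cite that part of theorem \ref{th:KS}, the rest of your Nakayama argument goes through; as written, the crucial surjectivity is asserted rather than proved.
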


\begin{definition}
Given a Kolyvagin system $\kappa \in \textrm{KS}(T/\m^k T,\FF, \PP)$, for every $n\in \NN(\PP)$, define the order of $\kappa_n$
$$\ord(\kappa_n)=\max\left\{j\in \N\cup\{0,\infty\}: \kappa_n\in \m^jH^1_{\mathcal F(n)}(\Q,T/\m^{k_n} T)\right\}.$$
We also consider the following quantities
$$\partial^{(i)}(\kappa)=\min\left\{\ord(\kappa_n):n\in\NN(\PP),\nu(n)=i\right\}.$$
\label{def:order}
\end{definition}

\begin{remark}
When $\ord(\kappa_n)$ is less than $k_n$, our definition of order coincides with the standard $p$-adic valuation $\ord_p$. The only difference appears when $\kappa_n\in \m^{k_n} H^1_{\mathcal F(n)}(\Q,T/\m^{k_n} T)$. It means that $\kappa_n$ vanishes, so Definition \ref{def:order} says that $\ord(\kappa_n)=\infty$.
\end{remark}

\begin{theorem}(\cite[Proposition 4.5.8, theorem 4.5.9 and theorem 5.2.12]{MazurRubin})
Suppose that $(T,\FF,\PP)$ satisfies Assumption \ref{ass:R}, \ref{Hirred}-\ref{Hchev}, \ref{Hcartesian} and \ref{Hprimes}, let $\chi(T,\FF)=1$ and assume $\kappa \in \textrm{KS}(T,\FF, \PP)$ is a primitive Kolyvagin system. For $n\in \NN(\PP)$, if
\[\length\left(H^1_{\FF^*(n)}(\Q,T^*[\m^{k_n}])\right)>k_n,\]
then $\ord(\kappa_n)=\infty$. Otherwise,
\[\ord(\kappa_n)=\length\left(H^1_{\FF^*(n)}(\Q,T^*[\m^{k_n}])\right).\]
Furthermore, the quantities $\partial^{(i)}(\kappa)$ describe the Fitting ideals of the Selmer group:
$$\m^{\partial^{(i)}(\kappa)}=\Fitt_i^{R}\Bigl(H^1_{\FF^*}(\Q,T^*)^\vee\Bigr),$$
where $\Fitt^i_R$ denotes the $i^{\textrm{th}}$ Fitting ideal (see \cite[\textsection 20.2]{Eisenbud} or \cite[Chapter 3]{Northcott})
\label{th:mr459}
\end{theorem}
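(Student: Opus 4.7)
The plan is to prove both assertions by a combined induction on the length of the modified dual Selmer group, using Chebotarev (Proposition \ref{prop:mr361} / Corollary \ref{cor:loc_sur}) together with Poitou--Tate duality (Proposition \ref{prop:global_duality}). First I would reduce to the artinian case: by Remark \ref{rem:Hi_quot} the hypotheses descend to $R/\m^k$ for any $k$, and by Proposition \ref{prop:mr452} primitivity of $\kappa$ passes to its image in $\KS(T/\m^k,\FF,\PP)$; working level by level and taking limits then yields the infinite-level statement.

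For the order formula, I would induct on $\length H^1_{\FF^*(n)}(\Q,T^*[\m^{k_n}])$. The base case is when $n$ is a core vertex, so that $H^1_{\FF^*(n)}(\Q,T^*[\m^{k_n}])=0$; then core rank one (Theorem \ref{th:core_rank}) forces $H^1_{\FF(n)}(\Q,T/\m^{k_n}) \cong R/\m^{k_n}$, and primitivity forces $\kappa_n$ to generate this module, giving $\ord(\kappa_n)=0$ as required. For the inductive step, Proposition \ref{prop:core_vertex} produces a core vertex $m=n\ell_1\cdots\ell_r\in\NN(\PP)$ with $n\mid m$. The Kolyvagin relation \eqref{eq:kol_cond}
\[
\loc_{\ell_j}^\s(\kappa_{n\ell_1\cdots\ell_j}) = \phi_{\ell_j}^\fs\circ \loc_{\ell_j}^\f(\kappa_{n\ell_1\cdots\ell_{j-1}}),
\]
combined with the finite--singular isomorphism (Lemma \ref{lem:mr123}), lets me telescope the order from $m$ back to $n$. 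By choosing the $\ell_j$ via Corollary \ref{cor:loc_sur} so that the relevant localizations are surjective and applying Lemma \ref{lem:mr417}, each step decreases the length of the modified dual Selmer group by exactly the amount that $\phi_{\ell_j}^\fs\circ\loc_{\ell_j}^\f$ records in $\ord(\kappa_{n\ell_1\cdots\ell_j})-\ord(\kappa_{n\ell_1\cdots\ell_{j-1}})$; summing yields $\ord(\kappa_n)=\length H^1_{\FF^*(n)}(\Q,T^*[\m^{k_n}])$ whenever this length is at most $k_n$. If the accumulated drop would exceed $k_n$, then $\kappa_n$ is forced to vanish modulo $\m^{k_n}$, explaining the $\ord(\kappa_n)=\infty$ dichotomy.

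For the Fitting ideal statement, decompose $H^1_{\FF^*}(\Q,T^*[\m^k])\approx R/\m^{e_1}\oplus\cdots\oplus R/\m^{e_s}$ with $e_1\geq\cdots\geq e_s>0$, so that $\Fitt_i^R(H^1_{\FF^*}(\Q,T^*)\du)=\m^{e_{i+1}+\cdots+e_s}$. To get the upper bound $\partial^{(i)}(\kappa)\leq e_{i+1}+\cdots+e_s$, iterate Corollary \ref{cor:mr458} $i$ times to produce primes $\ell_1,\ldots,\ell_i\in\PP$ realising $H^1_{\FF^*(\ell_1\cdots\ell_i)}(\Q,T^*[\m^k])\approx R/\m^{e_{i+1}}\oplus\cdots\oplus R/\m^{e_s}$; the order formula just proved gives $\ord(\kappa_{\ell_1\cdots\ell_i})=e_{i+1}+\cdots+e_s$. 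For the reverse inequality, applying Poitou--Tate (Proposition \ref{prop:global_duality}) with $\GG=\FF(n)$ bounds the local contribution at the $i$ primes dividing any $n$ with $\nu(n)=i$, forcing $\length H^1_{\FF^*(n)}(\Q,T^*[\m^k])\geq e_{i+1}+\cdots+e_s$ and hence $\ord(\kappa_n)\geq e_{i+1}+\cdots+e_s$.

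The main obstacle will be the controlled descent along the Chebotarev chain in the inductive step: at each step I need a prime $\ell\in\PP_k$ whose finite localization simultaneously surjects onto $H^1_\f(\Q_\ell,T/\m^k)$ from both the Selmer and the dual Selmer side and detects the exact $\m$-adic order of the current class. Arranging both conditions at once requires combining Proposition \ref{prop:mr361} with the primitivity criterion (Proposition \ref{prop:mr452}) so that no localization inadvertently destroys information; this is the delicate point that pins down equality rather than merely an inequality between $\ord(\kappa_n)$ and the length of the modified dual Selmer group.
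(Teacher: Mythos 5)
The paper does not prove this statement; it is quoted directly from Mazur--Rubin (\cite[proposition 4.5.8, theorem 4.5.9 and theorem 5.2.12]{MazurRubin}), so there is no in-paper argument to compare your proposal against. Evaluating it on its own merits: the overall skeleton (reduce to the artinian case, base the induction at core vertices, build Chebotarev chains, use Poitou--Tate to control the Fitting ideals) is the right shape, but two load-bearing steps are glossed over in a way that leaves genuine gaps.

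First, the base case. You assert that for a core vertex $n$ (where $H^1_{\FF^*(n)}(\Q,T^*[\m^{k_n}])=0$, hence $H^1_{\FF(n)}(\Q,T/\m^{k_n})\cong R/\m^{k_n}$), ``primitivity forces $\kappa_n$ to generate this module.'' The primitivity criterion (Proposition \ref{prop:mr452}) only says that \emph{some} $\kappa_{n'}\not\equiv 0\pmod{\m}$, with no control over which $n'$. To conclude that $\kappa_n$ generates at the \emph{given} core vertex you need the full module-theoretic input of Theorem \ref{th:KS}: that $\KS(T/\m^k,\FF,\PP)$ is free of rank one over $R/\m^k$ \emph{and} that evaluation at any core vertex is an isomorphism onto the stub Selmer module. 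That is precisely the content of Mazur--Rubin's \S4.3--4.4 and cannot be replaced by Proposition \ref{prop:mr452} alone. Second, the telescoping step. The Kolyvagin relation $\loc_\ell^\s(\kappa_{n\ell})=\phi_\ell^{\fs}\circ\loc_\ell^\f(\kappa_n)$ and the finite--singular isomorphism give $\ord(\loc_\ell^\s(\kappa_{n\ell}))=\ord(\loc_\ell^\f(\kappa_n))$, but both sides are a priori only \emph{upper} bounds for $\ord(\kappa_{n\ell})$ and $\ord(\kappa_n)$ respectively, so you get no one-sided comparison between $\ord(\kappa_{n\ell})$ and $\ord(\kappa_n)$ without also knowing that the chosen localisation is faithful on the cyclic submodule generated by the class you are tracking --- and you do not yet know what that class is. In Mazur--Rubin this is resolved by first proving the global inequality $\ord(\kappa_n)\geq \length\bigl(H^1_{\FF^*(n)}(\Q,T^*[\m^{k_n}])\bigr)$ for \emph{all} $n$ (their Theorem 4.4.1, a separate graph-theoretic argument) and only then establishing equality along a well-chosen Chebotarev chain; the equality does not ``telescope'' in one pass. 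Your final paragraph on the Fitting ideals likewise silently invokes $\ord(\kappa_n)\geq\length H^1_{\FF^*(n)}(\Q,T^*[\m^k])$ for arbitrary $n$ of fixed $\nu(n)$ --- exactly the missing inequality. Without adding an explicit proof of that bound, the argument is incomplete.
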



\begin{remark}
In the case where $\chi(T,\FF)=1$, if $\kappa\in \textrm{KS}(T,\FF, \PP)$ is non-zero but not primitive, then there is some $j\in\N$ such that $\kappa$ generates $\m^j\textrm{KS}(T,\FF, \PP)$. Then $j=\min\{\ord(\kappa_n):\ n\in \NN(\PP)\}$ and 
$$\m^{\partial^{(i)}(\kappa)-j}=\Fitt_i^{R}\Bigl(H^1_{\FF^*}(\Q,T^*)^\vee\Bigr).$$
\end{remark}

\subsection{\texorpdfstring{$\Lambda$}{Lambda}-adic Kolyvagin systems}
\label{sec:lambda}

Let $\Q_\infty$ denote the cyclotomic $\Z_p$-extension of $\Q$ and let $\Q_n$ be the degree $p^n$ subextension of $\Q_\infty$. Let $\Lambda$ denote the Iwasawa algebra
\[\Lambda=\Z_p[[\Gal(\Q_\infty/\Q)]]=\varprojlim_n \Z_p[\Gal(\Q_n/\Q)]\cong \Z_p[[X]].\]

\begin{lemma} (\cite[lemma 5.3.1]{MazurRubin}) Let $\Sigma$ be a finite set of primes containing $p$ and $\infty$ and all the places where $T$ is ramified. Let $\Q_\Sigma$ be the maximal extension of $\Q$ unramified outside $\Sigma$. Then,
\begin{itemize}
    \item $H^1(\Q_\Sigma/\Q,T\otimes_{\Z_p} \Lambda)\cong \varprojlim_{n\in \N} H^1(\Q_\Sigma/\Q_n,T)$,
    \item $H^1(\Q_\ell,T\otimes_{\Z_p} \Lambda)=H^1_\f(\Q_\ell, T\otimes_{\Z_p} \Lambda)\ \forall \ell\notin \Sigma$,
    \item $H^1(\Q,T\otimes_{\Z_p} \Lambda)=H^1(\Q_\Sigma/\Q,T\otimes_{\Z_p} \Lambda)$.
\end{itemize}
\end{lemma}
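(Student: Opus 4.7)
The plan is to dispatch each bullet in sequence, using Shapiro's lemma at each finite level of the cyclotomic tower, a Mittag-Leffler argument to commute $\varprojlim$ with $H^1$, and an inflation-restriction argument at the end; throughout I exploit that $\Q_\infty/\Q$ ramifies only at $p$. For the first bullet, write $T\otimes \Lambda=\varprojlim_n\bigl(T\otimes_{\Z_p}\Z_p[\Gal(\Q_n/\Q)]\bigr)$ and identify each finite-level module with the induced representation $\Ind^{\Gal(\Q_\Sigma/\Q)}_{\Gal(\Q_\Sigma/\Q_n)} T$, which is legitimate because $\Q_n\subset\Q_\Sigma$. Shapiro's lemma provides a natural isomorphism
\[H^1\bigl(\Q_\Sigma/\Q,\,T\otimes_{\Z_p}\Z_p[\Gal(\Q_n/\Q)]\bigr)\cong H^1(\Q_\Sigma/\Q_n,T).\]
Taking $\varprojlim_n$, continuous cohomology of a profinite group commutes with a projective limit of compact coefficients provided the Mittag-Leffler condition holds on the tower of $H^0$'s; since each $H^0(\Q_\Sigma/\Q_n,T)$ is a submodule of the finite-rank $R$-module $T$, Mittag-Leffler is automatic.

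For the second bullet, apply the same Shapiro-plus-inverse-limit machine locally: for $\ell\neq p$ the tower $\Q_{\ell,\infty}/\Q_\ell$ is the unique unramified $\Z_p$-extension of $\Q_\ell$, so Shapiro's lemma gives $H^1(\Q_\ell, T\otimes\Lambda)\cong\varprojlim_n H^1(\Q_{\ell,n},T)$ and an analogous identification for the unramified subgroups. The content is to show $\varprojlim_n H^1_\s(\Q_{\ell,n},T)=0$. Using the local splitting of Lemma \ref{lem:local_split} and the description of the singular part via Lemma \ref{lem:mr121} in terms of $T^{\II_\ell}/(\Frob_\ell^{p^n}-1)T^{\II_\ell}$, together with the fact that the transition maps are corestrictions satisfying $\res\circ\cor=p\cdot\textrm{id}$, the inverse limit of the singular parts is $p$-divisible and hence vanishes, so $H^1$ coincides with its unramified subgroup in the $\Lambda$-adic limit.

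For the third bullet, observe that $T\otimes\Lambda$ is unramified outside $\Sigma$: $T$ is by the Selmer-structure setup, and the action on $\Lambda$ factors through $\Gal(\Q_\infty/\Q)$, which ramifies only at $p\in\Sigma$. Thus the $G_\Q$-action on $T\otimes\Lambda$ factors through $\Gal(\Q_\Sigma/\Q)$, and inflation-restriction yields
\[0\to H^1(\Q_\Sigma/\Q,T\otimes\Lambda)\to H^1(\Q,T\otimes\Lambda)\to \Hom_{\textrm{cts}}(G_{\Q_\Sigma},T\otimes\Lambda)^{\Gal(\Q_\Sigma/\Q)}.\]
Given $c\in H^1(\Q,T\otimes\Lambda)$, its restriction at any $\ell\notin\Sigma$ lies in $H^1(\Q_\ell,T\otimes\Lambda)=H^1_\ur(\Q_\ell,T\otimes\Lambda)$ by the second bullet, so $c|_{\II_\ell}=0$; since the $\II_\ell$'s for $\ell\notin\Sigma$ topologically generate $G_{\Q_\Sigma}$, the restriction $c|_{G_{\Q_\Sigma}}$ vanishes and $c$ comes from inflation.

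The main obstacle will be the vanishing $\varprojlim_n H^1_\s(\Q_{\ell,n},T)=0$ in the second bullet: the $p$-divisibility argument is clean when $T$ is unramified at $\ell$, but one must phrase the singular/unramified decomposition $\Lambda$-adically when $T$ is ramified at some $\ell\in\Sigma\setminus\{p\}$, extracting the necessary divisibility from the $\Lambda$-direction rather than from $T$ itself.
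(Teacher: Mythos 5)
The paper does not prove this lemma: it is imported verbatim as \cite[lemma 5.3.1]{MazurRubin}. So there is no ``paper's proof'' to match; I can only assess your reconstruction on its merits.

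Your treatments of the first and third bullets are sound and are essentially the standard arguments. For the first, Shapiro's lemma at each finite level together with the exact sequence $0\to\varprojlim^1H^0\to H^1(G,\varprojlim M_n)\to\varprojlim H^1(G,M_n)\to 0$ for a projective system of compact $G$-modules with surjective transition maps does the job, and your observation that the $H^0$'s are submodules of the noetherian module $T$ (so their norm-images stabilize) correctly gives the Mittag-Leffler condition. The third bullet is the usual inflation-restriction argument; your use of the fact that $G_{\Q_\Sigma}$ is the closed normal subgroup of $G_\Q$ generated by the inertia groups $\II_v$ at places outside $\Sigma$, combined with the second bullet, is exactly right.

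The second bullet, which you yourself flag as the main obstacle, is where the argument as written has genuine gaps. First, you invoke Lemma \ref{lem:local_split} and Lemma \ref{lem:mr121}, but both are stated only for primes $\ell$ satisfying \ref{P1modp}, i.e.\ $\ell\equiv 1\bmod p^k$; the claim you are proving is for all $\ell\neq p$, including the ramified primes of $T$ in $\Sigma$, where the finite/transverse splitting does not exist and $H^1_\f$ need not even be described as in Lemma \ref{lem:mr121}. Second, the ``$p$-divisibility'' step is not a proof: from $\cor\circ\res=[\,\Q_{\ell,n+1}:\Q_{\ell,n}\,]=p$ one gets that $p\cdot H^1_\s(\Q_{\ell,n},T)$ lies in the image of corestriction, but a compatible system $(x_n)$ already consists of corestriction images, so no contradiction or vanishing follows directly; you would need, say, a uniform bound together with $\varprojlim$-exactness, and you do not supply it. Two standard repairs: (a) dualize---$\varprojlim_\cor H^1_\s(\Q_{\ell,n},T)$ is Pontryagin dual to $\varinjlim_\res H^1_\f(\Q_{\ell,n},T^*)=H^1\bigl(\Gal(\Q_\ell^{\ur}/\Q_{\ell,\infty}),(T^*)^{\II_\ell}\bigr)$, and $\Gal(\Q_\ell^{\ur}/\Q_{\ell,\infty})\cong\hat{\Z}/\Z_p$ has no nontrivial pro-$p$ quotient, so its $H^1$ with $p$-primary coefficients vanishes; or (b) work directly over $\Lambda$---$H^1_\s(\Q_\ell,T\otimes\Lambda)$ injects into $\bigl(H^1(\II_\ell,T)\otimes\Lambda\bigr)^{\Frob_\ell=1}$, the $\Lambda$-linear operator $\Frob_\ell\otimes\gamma_\ell-1$ on the free $\Lambda$-module $H^1(\II_\ell,T)_{\textrm{tf}}\otimes\Lambda$ has determinant $P(\gamma_\ell^{-1})$ for a nonzero $P\in\Z_p[Y]$, and since $\gamma_\ell\neq 1$ (as $\ell\neq p$) this determinant is a nonzero element of $\Lambda$, forcing the kernel to vanish. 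Either of these replaces the informal divisibility heuristic with an actual argument and handles the ramified $\ell\in\Sigma\setminus\{p\}$ uniformly.
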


\begin{definition}
The canonical Selmer structure $\FF_\Lambda$ on $T\otimes_{\Z_p} \Lambda$ is defined by choosing $\Sigma$ as the set formed by $p$, $\infty$ and all the primes where $T$ is ramified, and local conditions 
\[H^1_\FLambda(\Q_\ell,T\otimes \Lambda)=H^1(\Q_\ell,T\otimes \Lambda) \forall \ell\in \Sigma.\]
\end{definition}

\begin{remark}
The canonical Selmer group $H^1_\FLambda(\Q,T\otimes_{\Z_p} \Lambda)$ is the full cohomology group $H^1(\Q,T\otimes \Lambda)$.
\end{remark}

The study of Kolyvagin systems for the representation $T\otimes_{\Z_p} \Lambda$ requires a slight modification of the definition of Kolyvagin systems given in Definition \ref{def:kol}. Following \cite[Definition 3.1.6]{MazurRubin}, we consider 
\[\overline{\KS}(T\otimes_{\Z_p} \Lambda,\FF_\Lambda,\PP):=\varprojlim_{k\in \N}\left(\varinjlim_{j\in N}\KS(T\otimes_{\Z_p} \Lambda/\m^k,\FF_\Lambda,\PP\cap\PP_j)\right).\]

\begin{remark}
For every $\kappa\in \overline{\KS}(T\otimes_{\Z_p} \Lambda,\FF_\Lambda,\PP)$, the leading term $\kappa_1$ is a well-defined class in $H^1_{\FF_\Lambda}(\Q,T\otimes \Lambda)$.
\end{remark}

In order to study the Kolyvagin systems for $(T\otimes \Lambda, \FLambda,\PP)$, we need to assume some extra hypothesis on $T$.
\begin{itemize}
\item\namedlabel{ITam}{(I1)} $(T\otimes \Q_p/\Z_p)^{\II_\ell}$ is $p$-divisible for all $\ell\in \Sigma(\FF)\setminus\{\infty\}$,
\item \namedlabel{Izerop}{(I2)} $H^0(\Q_p,T^*)=0$.
\end{itemize}

\begin{proposition}(\cite[Theorem A]{Buyukboduk10})
Assume $T$ is a $p$-adic Galois representation such that $\chi(T,\Fcan)=1$ and that $(T,\Fcan)$ satisfies \ref{Hirred}-\ref{Hchev}  and \ref{ITam}-\ref{Izerop}. If $\PP$ satisfies \ref{Hprimes}, then the module of Kolyvagin systems $\overline{\KS}(T\otimes \Lambda,\FLambda,\PP)$ is free of rank one over $\Lambda$. In this case, we say that a Kolyvagin system $\kappa\in\overline{\KS}(T\otimes \Lambda,\FLambda,\PP)$ is $\Lambda$-primitive if it generates $\overline{\KS}(T\otimes \Lambda,\FLambda,\PP)$ as a $\Lambda$-module.
\label{prop:lambda_kol_free}
\end{proposition}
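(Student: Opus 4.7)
The plan is to reduce the $\Lambda$-adic statement to the Artinian situation already handled by theorem \ref{th:KS}, by passing to an inverse limit over a cofinal system of finite-index ideals of $\Lambda$. Choose such a system $\{\mathfrak{a}_n\}$ (for instance $\mathfrak{a}_n = \m_\Lambda^n$ where $\m_\Lambda$ is the maximal ideal of $\Lambda$), set $\Lambda_n := \Lambda/\mathfrak{a}_n$, and consider the finite Galois modules $T_n := T\otimes_R \Lambda_n$, each free of finite rank over the Artinian local ring $\Lambda_n$.

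The first step is to establish a natural isomorphism
\[
\KS(T\otimes\Lambda,\FLambda,\PP)\;\cong\;\varprojlim_n \KS(T_n,\FLambda,\PP),
\]
with transition maps induced by the projections $T\otimes\Lambda \twoheadrightarrow T_n$. At each prime $\ell$, this reduces to comparing the cohomology of $T\otimes\Lambda$ with the inverse limit of the cohomologies of $T_n$; the Mittag-Leffler condition is automatic because each finite-level group is finite. The role of hypothesis \ref{ITam} is precisely to guarantee that the local condition at each ramified prime $\ell\in\Sigma(\FF)\setminus\{p,\infty\}$ for $T\otimes\Lambda$ is the inverse limit of the corresponding local conditions for $T_n$, so that the Selmer structure $\FLambda$ is compatible with the limit. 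The compatibility of the Kolyvagin relations (equation \eqref{eq:kol_cond}) with the projections is then formal.

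Second, I would apply theorem \ref{th:KS} at each finite level. This requires verifying the hypotheses \ref{Hffr}--\ref{Hchev} and \ref{Hprimes} for the Selmer triple $(T_n, \FLambda, \PP)$ -- these propagate from the corresponding hypotheses on $T$, since $T_n$ is a finite free $\Lambda_n$-module on which the Galois action factors through the same quotient of $G_\Q$ -- and showing $\chi(T_n, \FLambda) = 1$. The core rank is computed via Wiles's formula (equivalently, the Poitou-Tate long exact sequence) as an alternating combination of local Euler characteristics and $H^0$-contributions of $T_n^*$. Hypothesis \ref{Izerop} controls the $H^0$-term at $p$, and combined with the assumption $\chi(T)=1$ and the fact that $\FLambda$ imposes no condition at $p$ (matching $\Fcan$ on $T$), one concludes $\chi(T_n, \FLambda)=1$. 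Theorem \ref{th:KS} then yields $\KS(T_n, \FLambda, \PP)\cong \Lambda_n$.

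The final step is to show that the inverse system $\{\KS(T_n,\FLambda,\PP)\}$ has surjective transition maps, so that its limit is free of rank one over $\Lambda = \varprojlim_n \Lambda_n$. Surjectivity amounts to lifting a Kolyvagin system at level $n$ to one at level $n+1$; the obstruction lies in a Kolyvagin system module for the kernel representation, which is itself of rank one by the same argument, and a generator of $\KS(T_{n+1},\FLambda,\PP)$ produced by theorem \ref{th:KS} provides the lift. The principal technical obstacle in this plan is the core rank calculation at finite levels: because $\Lambda_n$ is Artinian but not a principal ideal ring (as $\Lambda$ itself is two-dimensional), the theory recalled in theorems \ref{th:core_rank} and \ref{th:KS} does not apply verbatim, and one must work with the refined version over Artinian Gorenstein coefficient rings, keeping careful track of freeness (not just rank-one-ness) throughout the limiting procedure.
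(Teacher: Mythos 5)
The statement you are proving is one the paper does not actually prove: it is quoted wholesale from Büyükboduk (\cite{Buyukboduk10}, Theorem A), so there is no ``paper's proof'' to compare against directly. Your sketch is, however, roughly the sort of dévissage-to-Artinian-levels argument that underlies such results, so the comparison is with the argument actually carried out in Mazur--Rubin~\cite[\S 5.3]{MazurRubin} and refined by Büyükboduk, and there the gap you flag at the end is bigger than you make it sound.

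The concrete problem is your choice of cofinal system $\mathfrak{a}_n=\m_\Lambda^n$. Since $\Lambda\cong\Z_p[[X]]$ is a two-dimensional regular local ring, the quotient $\Lambda/\m_\Lambda^n$ for $n\geq 2$ is not only \emph{not} a principal ring (so Theorems~\ref{th:core_rank} and~\ref{th:KS}, stated here over local principal rings, do not apply), it is \emph{not Gorenstein either}: its socle $\m_\Lambda^{n-1}/\m_\Lambda^n$ has dimension $n$ over the residue field. So the fallback you propose --- passing to ``the refined version over Artinian Gorenstein coefficient rings'' --- is unavailable for these particular quotients. The fix that makes the strategy work, and which is what Mazur--Rubin and Büyükboduk actually do, is to replace $\{\m_\Lambda^n\}$ by a different cofinal system of ideals, e.g.\ $\mathfrak{a}_k=(p^k,X^k)$ (or more flexibly $(p^k, g_k)$ with $g_k$ distinguished). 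These are generated by a regular sequence of length $\dim\Lambda$, so the quotients $\Lambda/\mathfrak{a}_k$ are complete-intersection, hence Gorenstein, Artinian local rings, and it is over such rings that the rank-one and freeness statements for Kolyvagin-system modules are established. Even then, the surjectivity of the transition maps in the inverse system, which you dispatch in one sentence, is a substantial part of the proof (it is essentially the lifting statement underpinning Theorem~\ref{th:kol_red}): the kernel computation is not a formal consequence of the rank-one statement at the next level, and the hypothesis~\ref{ITam} enters precisely to control the local conditions so that lifting is possible. As a plan of attack your proposal is pointed in the right direction; as a proof it is missing the correct choice of quotients and the lifting argument, which are the two places where the real work lies.
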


\begin{definition}
Let $\kappa$ be a Kolyvagin system in $\overline{\KS}(T\otimes \Lambda, \FLambda, \PP)$. Then 
\[\textrm{Ind}(\kappa):=\textrm{char}\Biggl(\left(\frac{H^1(\Q,T\otimes \Lambda)}{\Lambda\kappa_1}\right)_{\tors}\Biggr),\]
where $\char$ denotes the characteristic ideal of an Iwasawa module.
\end{definition}

\begin{theorem}(\cite[Theorem 5.3.10]{MazurRubin})
Assume that $\chi(T,\FLambda)=1$ and that $T$ satisfies \ref{Hirred}-\ref{Hchev}, \ref{Hcartesian} and \ref{Hprimes}. Let 
\[X_\infty:=\Hom(H^1_{(\FLambda)^*}(\Q,(T\otimes \Lambda)^*),\Q_p/\Z_p)\] 
and let $\kappa\in \overline{\KS}(T\otimes \Lambda,\FLambda,\PP_1)$ whose leading term $\kappa_1$ is nonzero. Then $X_\infty$ is a torsion $\Lambda$-module satisfying that:
\begin{itemize}
\item $\textrm{char}(X_\infty)$ divides $\textrm{Ind}(\kappa)$,
\item $\textrm{char}(X_\infty)=\textrm{Ind}(\kappa)$ if and only if $\kappa$ is $\Lambda$-primitive.
\end{itemize}
\label{th:MC_ind}
\end{theorem}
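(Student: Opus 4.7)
The plan is to deduce the $\Lambda$-adic statement from the finite-level core-rank-$1$ theory (theorem \ref{th:mr459}) by specialising the Kolyvagin system at height-one primes of $\Lambda$ and assembling the resulting bounds via the structure theorem for $\Lambda$-modules, using the freeness in proposition \ref{prop:lambda_kol_free} for the primitivity half.

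First I would verify that $X_\infty$ is $\Lambda$-torsion. Passing proposition \ref{prop:global_duality} through the cyclotomic tower gives a $\Lambda$-adic Poitou--Tate exact sequence, and the core-rank-$1$ hypothesis $\chi(\FF)=1$ produces a rank imbalance forcing $H^1_{\FLambda^*}(\Q,(T\otimes\Lambda)^*)^\vee$ to be torsion and $H^1(\Q,T\otimes\Lambda)$ to have generic $\Lambda$-rank $1$; in particular $\Ind(\kappa)$ and $\char(X_\infty)$ are well-defined ideals of $\Lambda$.

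For the divisibility, I would work one height-one prime $P$ of $\Lambda$ at a time. The quotient $\Lambda/P^k$ is, after completion and normalisation, a complete local principal ring, and the Selmer triple $(T\otimes\Lambda/P^k,\FLambda\bmod P^k,\PP)$ inherits the hypotheses \ref{Hffr}--\ref{Hprimes} by the argument of remark \ref{rem:Hi_quot}. The specialisation of $\kappa$ is a Kolyvagin system for this triple to which theorem \ref{th:mr459} applies, yielding
\[\ord_P\bigl(\char(X_\infty)\bigr) \;\leq\; \partial^{(0)}(\kappa\bmod P^k) \;=\; \ord_P\bigl(\Ind(\kappa)\bigr),\]
the last equality coming from Poitou--Tate together with the identification of the torsion cokernel of $\Lambda\kappa_1$ inside $H^1(\Q,T\otimes\Lambda)$ with the dual Selmer module. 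Collecting these inequalities over all height-one $P$ gives $\char(X_\infty)\mid\Ind(\kappa)$.

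For the primitivity characterisation, I would combine proposition \ref{prop:lambda_kol_free} with the natural $\Lambda$-adic analogue of proposition \ref{prop:mr452}: $\kappa$ generates $\KS(T\otimes\Lambda,\FLambda,\PP)$ if and only if its reduction modulo the maximal ideal $\m_\Lambda$ is non-zero in $\KS(T/pT,\FF,\PP)$. This non-vanishing is exactly the sharpness condition for theorem \ref{th:mr459} at the residual specialisation, and combined with the divisibility above it propagates to equality at every height-one prime, hence at the level of characteristic ideals. The main obstacle I expect is ensuring that the specialised Kolyvagin system still lives in the rank-one module predicted by theorem \ref{th:KS} at \emph{all} height-one primes of $\Lambda$; hypotheses such as \ref{Hirred} and \ref{Hcoh0} can degenerate for the quotient $T\otimes\Lambda/P$ at exceptional $P$, and handling these bad primes (for instance via a twisting argument in the spirit of \cite{MazurRubin}) is the technical heart of the proof.
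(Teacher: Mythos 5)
The paper offers no proof of this theorem; it is taken verbatim from \cite[theorem 5.3.10]{MazurRubin}, so there is no internal argument to compare against. Your high-level strategy --- specialize at height-one primes of $\Lambda$, apply the finite-level core-rank-one theory of theorem \ref{th:mr459} to each specialization, and reassemble --- is indeed what that reference does, but the specialization step as you have written it contains a concrete gap.

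You claim that $\Lambda/P^k$ is ``after completion and normalisation'' a complete local principal ring and then apply theorem \ref{th:mr459} to the triple $(T\otimes\Lambda/P^k,\ \cdot\ ,\PP)$. For a height-one prime $P$ and $k\geq 2$, the ring $\Lambda/P^k$ has nilpotents and is one-dimensional, so ``normalisation'' is meaningless; and even for $k=1$, $\Lambda/P$ is a local domain finite over $\Z_p$ which need not be integrally closed, hence need not be a DVR. The object actually used in the cited proof is $S_P$, the integral closure of $\Lambda/P$ in its fraction field, which is a DVR, together with its artinian quotients $S_P/\m_{S_P}^k$. Passing from $T\otimes\Lambda$ to $T\otimes S_P$ is not formal: $S_P/(\Lambda/P)$ has finite length, and one must control the kernels and cokernels of the induced maps on Selmer modules before a bound from theorem \ref{th:mr459} over $S_P$ can be converted into a statement about $\ord_P(\char X_\infty)$. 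Similarly the displayed identity $\partial^{(0)}(\kappa\bmod P^k)=\ord_P(\Ind(\kappa))$ is a control theorem that has to be proved, not read off Poitou--Tate; establishing it, and handling the finitely many exceptional $P$ where the hypotheses \ref{Hirred}--\ref{Hcoh0} can fail for the specialization (which you correctly flag), is the entire technical content of Mazur and Rubin's proof rather than an input to it. Lastly, your primitivity argument routes through Büyükboduk's freeness of $\KS(T\otimes\Lambda,\FLambda,\PP)$ (proposition \ref{prop:lambda_kol_free}); that is not circular, but it is a later and genuinely different structural result than what Mazur--Rubin had available, and even granting it one still needs the sharpness-at-every-height-one-$P$ analysis whose difficulties your outline elides.
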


The following result can be obtained by combining \cite[Theorem A]{Buyukboduk10} and \cite[Proposition 5.2.9]{MazurRubin}.

\begin{theorem}(\cite[Theorem A]{Buyukboduk10})
Assume that $\chi(T,\FLambda)=1$ and that $T$ satisfies \ref{Hirred}-\ref{Hchev} and \ref{ITam}-\ref{Izerop}. If $\PP$ satisfies \ref{Hprimes}, there is a surjective canonical map
\begin{equation}
\Pi: \overline{\KS}(T,\FLambda,\PP)\to \KS(T,\Fcan,\PP).
\label{eq:kol_red}
\end{equation}
\label{th:kol_red}
\end{theorem}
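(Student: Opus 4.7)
The plan is to exploit that both modules of Kolyvagin systems in \eqref{eq:kol_red} are free of rank one over their respective coefficient rings: $\KS(T\otimes\Lambda,\FLambda,\PP)$ is $\Lambda$-free by Proposition \ref{prop:lambda_kol_free}, and $\KS(T,\Fcan,\PP)$ is $R$-free by Theorem \ref{th:KS}, since $\chi(T,\Fcan)=\chi(T)=1$. The natural map is $\Lambda$-linear, with $\Lambda$ acting on the target through the augmentation $\Lambda\twoheadrightarrow \Z_p=R$. Hence surjectivity is equivalent to showing that a $\Lambda$-primitive Kolyvagin system $\kappa^\infty$ maps to a primitive element of $\KS(T,\Fcan,\PP)$, which by Proposition \ref{prop:mr452} is the same as the non-vanishing of its image in $\KS(T/\m,\Fcan,\PP)$.

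First I would make the specialisation map precise. Using the identification $H^1(\Q,T\otimes\Lambda)\cong\varprojlim_n H^1(\Q_\Sigma/\Q_n,T)$ and applying the augmentation produces a map $H^1(\Q,T\otimes\Lambda)\to H^1(\Q,T)$; one then has to verify that this specialisation preserves the Kolyvagin data. Compatibility at the places $p$ and $\infty$ is immediate because no condition is imposed on either side. For $\ell\in\PP$, hypothesis \ref{ITam} ensures that $\ell$ is unramified throughout the cyclotomic tower, so the finite/singular splitting of Lemma \ref{lem:local_split} and the isomorphism $\phi_\ell^\fs$ descend compatibly, whence the Kolyvagin relation \eqref{eq:kol_cond} is preserved. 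Hypothesis \ref{Izerop} is what ensures that the descent of local cohomology at $p$ is exact, so no spurious obstructions appear when specialising along the augmentation ideal. This makes \eqref{eq:kol_red} well defined and $\Lambda$-equivariant.

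Write the image as $\kappa=c\,\kappa^{\textrm{can}}$, where $\kappa^{\textrm{can}}$ generates $\KS(T,\Fcan,\PP)$ and $c\in R$; the proof is complete once $c\in R^\times$. I would argue by contradiction: if $c\in\m$, then $\ord(\kappa_n)\geq 1$ for every $n\in\NN(\PP)$, so $\partial^{(i)}(\kappa)\geq 1$ for every $i$, and by Theorem \ref{th:mr459} each Fitting ideal of $H^1_{\Fcan^*}(\Q,T^*)\du$ acquires an extra factor of $\m$. The hard step is to transport this downwards contradiction up to the $\Lambda$-adic level: combining Theorem \ref{th:MC_ind} applied to the $\Lambda$-primitive $\kappa^\infty$ (which gives $\textrm{char}(X_\infty)=\textrm{Ind}(\kappa^\infty)$) with a Mazur-type control theorem relating $X_\infty$ to $H^1_{\Fcan^*}(\Q,T^*)\du$, for which assumptions \ref{ITam}--\ref{Izerop} are essential, the predicted specialisation of $\textrm{char}(X_\infty)$ at the augmentation ideal contradicts the assumption that $\kappa_1^\infty$ lands in $\m\cdot H^1(\Q,T)$. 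Therefore $c$ is a unit, $\kappa$ is primitive, and \eqref{eq:kol_red} is surjective.
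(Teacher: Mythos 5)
The paper does not prove this statement; it cites it directly as \cite[theorem A]{Buyukboduk10}. So there is no ``paper's proof'' to compare against, but your proposal contains a real gap that would need to be fixed before it could serve as a substitute.

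Your reduction is fine: both sides are free of rank one, $\Lambda$ acts on the target through the augmentation, and surjectivity is equivalent to the image $\kappa$ of a $\Lambda$-generator $\kappa^\infty$ being $R$-primitive. The problem is in the contradiction you try to derive from $\kappa = c\,\kappa^{\mathrm{can}}$ with $c\in\m$. You claim that then $\partial^{(i)}(\kappa)\geq 1$ for all $i$, and that ``by Theorem \ref{th:mr459} each Fitting ideal of $H^1_{\Fcan^*}(\Q,T^*)\du$ acquires an extra factor of $\m$.'' But Theorem \ref{th:mr459} computes the Fitting ideals from a \emph{primitive} Kolyvagin system; for a non-primitive one, the remark immediately after it says precisely that the correction factor $j=\min_n\ord(\kappa_n)$ is subtracted off: $\m^{\partial^{(i)}(\kappa)-j}=\Fitt_i^R(H^1_{\FF^*}(\Q,T^*)\du)$. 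So if $c\in\m^j\setminus\m^{j+1}$, all the orders shift by $j$ and the Fitting ideals come out identical --- the orders of the $\kappa_n$ alone carry no memory of whether $j>0$. There is no contradiction at this stage.

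The intended rescue, via Theorem \ref{th:MC_ind} and a control theorem, has a second structural problem: $\mathrm{Ind}(\kappa^\infty)$ and its comparison to $\mathrm{char}(X_\infty)$ involve only $\kappa_1^\infty$, not the full collection $\{\kappa_n^\infty\}$. Under the augmentation one learns something about $\kappa_1$. When the dual Selmer group $H^1_{\FF^*}(\Q,T^*)$ is infinite (positive Mordell--Weil rank in the elliptic curve case), one has $\kappa_1^{\mathrm{can}}=0$ by Theorem \ref{th:mr459}, hence $\kappa_1=0$ as well, and the control/main-conjecture comparison at the augmentation ideal gives no information about $c$. Any correct proof along these lines must use the $\kappa_n$ for $n>1$, or argue at a generic specialisation of $\Lambda$ before descending, neither of which your sketch does. (A smaller point: \ref{ITam} is not what makes primes $\ell\in\PP$ unramified in $\Q_\infty/\Q$ --- that is automatic since $\Q_\infty/\Q$ is unramified outside $p$; \ref{ITam} is the Tamagawa-number condition needed for control at the bad primes in $\Sigma(\FF)$.)
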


\begin{corollary}
In the setting of Theorem \ref{th:kol_red}, the image of a primitive Kolyvagin system in $\overline{\KS}(T\otimes \Lambda,\FLambda, \PP)$ under the map in \eqref{eq:kol_red} is a primitive Kolyvagin system in $\KS(T,\Fcan,\PP)$.
\label{cor:kol_red_prim}
\end{corollary}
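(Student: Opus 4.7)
The plan is a short deduction from Theorem~\ref{th:kol_red} combined with the observation that the reduction map respects the module structures in the appropriate way.

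First, I would note that the reduction map \eqref{eq:kol_red} is $\Lambda$-linear, where the target $\KS(T,\Fcan,\PP)$ carries its natural $R$-module structure and is viewed as a $\Lambda$-module through the augmentation homomorphism $\mathrm{aug}\colon \Lambda \twoheadrightarrow R$ corresponding to specialization at the trivial character of $\Gal(\Q_\infty/\Q)$. Consequently, for any element $\bar{\kappa}$ in the image and any $\lambda \in \Lambda$ we have $\lambda \cdot \bar{\kappa} = \mathrm{aug}(\lambda)\cdot \bar{\kappa}$, so the $\Lambda$-submodule of $\KS(T,\Fcan,\PP)$ generated by $\bar{\kappa}$ coincides with the $R$-submodule generated by $\bar{\kappa}$.

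Next, let $\kappa \in \KS(T\otimes\Lambda, \FLambda, \PP)$ be a $\Lambda$-primitive Kolyvagin system, so by Proposition~\ref{prop:lambda_kol_free} we have $\KS(T\otimes\Lambda,\FLambda,\PP) = \Lambda\cdot\kappa$. Denote by $\bar{\kappa}$ its image in $\KS(T,\Fcan,\PP)$. By $\Lambda$-linearity, the image of the map \eqref{eq:kol_red} equals $\Lambda\cdot\bar{\kappa}$, which by the previous paragraph equals $R\cdot\bar{\kappa}$. Theorem~\ref{th:kol_red} asserts that this image is all of $\KS(T,\Fcan,\PP)$, and therefore $\bar{\kappa}$ generates $\KS(T,\Fcan,\PP)$ as an $R$-module; that is, $\bar{\kappa}$ is primitive.

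I expect no substantive obstacle. The main point is that primitivity on either side simply means generating the respective cyclic module, so surjectivity of \eqref{eq:kol_red} automatically transports generators to generators. The only verification required is that the reduction map is indeed $\Lambda$-linear with the augmentation as structure map on the target, which follows directly from the construction of the specialization morphism on Kolyvagin systems at each finite level. Alternatively, one could combine Theorem~\ref{th:kol_red} with Proposition~\ref{prop:mr452}: surjectivity implies that the composite with the further reduction $\KS(T,\Fcan,\PP)\to \KS(T/\m,\Fcan,\PP)$ is also surjective, and since $\KS(T/\m,\Fcan,\PP)$ is nonzero, the image of any $\Lambda$-generator must survive modulo $\m$, giving primitivity by the criterion in Proposition~\ref{prop:mr452}.
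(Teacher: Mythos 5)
Your proof is correct and follows the obvious route the paper intends (the corollary is stated without proof as an immediate consequence of Theorem~\ref{th:kol_red}): a surjective $\Lambda$-linear map carries a $\Lambda$-generator to a generator, and since the $\Lambda$-action on $\KS(T,\Fcan,\PP)$ factors through the augmentation $\Lambda\twoheadrightarrow R$, generating as a $\Lambda$-module is the same as generating as an $R$-module, i.e.\ primitivity. Your alternative via Proposition~\ref{prop:mr452} is also fine and relies on the same mechanism.
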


\begin{corollary}
In the setting of Theorem \ref{th:kol_red}, the kernel of the map in \eqref{eq:kol_red} is $(\gamma-1)\overline{\KS}(T\otimes \Lambda,\FLambda,\PP)$, where $\gamma$ is a (topological) generator of $\Gal(\Q_\infty/\Q)$, which acts on $\overline{\KS}(T\otimes \Lambda,\FLambda,\PP)$ by conjugation on the cohomology groups. By Theorems \ref{th:MC_ind} and \ref{th:kol_red}, the Kolyvagin systems in the kernel are those satisfying that
\[\ord_{X\Lambda}(\textrm{Ind}(\kappa))>\ord_{X\Lambda}(X_\infty).\]
\label{cor:kol_red_loc}
\end{corollary}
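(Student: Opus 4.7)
The plan is to exploit the rank-one freeness of both modules of Kolyvagin systems appearing in \eqref{eq:kol_red}. First, by Proposition~\ref{prop:lambda_kol_free} I fix a $\Lambda$-primitive generator $\kappa^\Lambda$ of $\KS(T\otimes \Lambda, \FLambda, \PP)$; by Theorem~\ref{th:KS} the target $\KS(T, \Fcan, \PP)$ is free of rank one over $R = \Z_p$, and by Corollary~\ref{cor:kol_red_prim} the image $\kappa^{\textrm{can}}$ of $\kappa^\Lambda$ under \eqref{eq:kol_red} generates the target. Since \eqref{eq:kol_red} is $\Lambda$-linear (with $\Lambda$ acting on the target via the augmentation $\Lambda \twoheadrightarrow \Lambda/(\gamma-1)\Lambda = R$), any element, written uniquely as $\lambda \kappa^\Lambda$ with $\lambda \in \Lambda$, maps to $(\lambda \bmod (\gamma-1))\,\kappa^{\textrm{can}}$, which vanishes if and only if $\lambda \in (\gamma-1)\Lambda$. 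This immediately yields the first assertion $\ker \eqref{eq:kol_red} = (\gamma-1)\KS(T\otimes \Lambda, \FLambda, \PP)$.

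For the equivalence with the $X$-adic inequality, the key step is the multiplicativity $\textrm{Ind}(\lambda \kappa^\Lambda) = (\lambda)\cdot \textrm{Ind}(\kappa^\Lambda)$. From $\kappa_1 = \lambda \kappa^\Lambda_1$ we obtain $\Lambda\kappa_1 = \lambda \Lambda\kappa^\Lambda_1$ and a short exact sequence
\[0 \longrightarrow \Lambda\kappa^\Lambda_1/\Lambda\kappa_1 \longrightarrow H^1(\Q, T\otimes \Lambda)/\Lambda\kappa_1 \longrightarrow H^1(\Q, T\otimes \Lambda)/\Lambda\kappa^\Lambda_1 \longrightarrow 0,\]
whose first term is $\Lambda/\lambda\Lambda$ provided $\kappa^\Lambda_1$ is non-torsion. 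Multiplicativity of characteristic ideals on torsion $\Lambda$-modules then gives the claim. Using Theorem~\ref{th:MC_ind} to identify $\textrm{Ind}(\kappa^\Lambda) = \char(X_\infty)$, and the identification $X = \gamma-1$ under $\Lambda \cong \Z_p[[X]]$, I arrive at
\[\ord_{X\Lambda}(\textrm{Ind}(\kappa)) = \ord_X(\lambda) + \ord_{X\Lambda}(X_\infty).\]
Consequently $\lambda \in (\gamma-1)\Lambda$ is equivalent to $\ord_X(\lambda) \geq 1$, which is exactly $\ord_{X\Lambda}(\textrm{Ind}(\kappa)) > \ord_{X\Lambda}(X_\infty)$, closing the equivalence.

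The main obstacle is justifying that $\kappa^\Lambda_1$ is non-torsion, as this is what guarantees $\Lambda\kappa^\Lambda_1 \cong \Lambda$ and makes the multiplicativity of characteristic ideals applicable in the exact sequence above. This non-torsion property is implicit in the primitivity hypothesis together with Theorem~\ref{th:MC_ind}: since $X_\infty$ is a torsion $\Lambda$-module with $\char(X_\infty) = \textrm{Ind}(\kappa^\Lambda)$ a well-defined nonzero element of $\Lambda$, the class $\kappa^\Lambda_1$ must generate a free rank-one $\Lambda$-submodule of $H^1(\Q, T\otimes \Lambda)$ modulo torsion. Once this is in place, the rest of the argument is a formal manipulation of $\Lambda$-module structure combined with the already-cited theorems.
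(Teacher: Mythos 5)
The paper states this corollary without a proof, so there is no argument to compare against; your job is effectively to supply one, and your plan gives a correct and natural one. The first half — freeness of $\KS(T\otimes\Lambda,\FLambda,\PP)$ over $\Lambda$ (Proposition~\ref{prop:lambda_kol_free}), freeness of the target over $\Z_p$ (Theorem~\ref{th:KS}), surjectivity and preservation of primitivity (Theorem~\ref{th:kol_red}, Corollary~\ref{cor:kol_red_prim}), and $\Lambda$-semilinearity through the augmentation $\Lambda\to\Lambda/(\gamma-1)\Lambda$ — pins down the kernel as $(\gamma-1)\KS(T\otimes\Lambda,\FLambda,\PP)$ exactly as you argue. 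For the second half, the multiplicativity $\textrm{Ind}(\lambda\kappa^\Lambda)=(\lambda)\,\textrm{Ind}(\kappa^\Lambda)$ is the right mechanism, and combined with Theorem~\ref{th:MC_ind} it gives the stated $\ord_{X\Lambda}$ inequality.

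You are right to flag the weak point: the multiplicativity argument via the exact sequence needs $\kappa^\Lambda_1$ non-torsion \emph{and} $H^1(\Q,T\otimes\Lambda)/\Lambda\kappa^\Lambda_1$ torsion, so that characteristic ideals genuinely multiply and the torsion subscript in the definition of $\textrm{Ind}$ is harmless. Your justification (``$X_\infty$ torsion with $\char(X_\infty)=\textrm{Ind}(\kappa^\Lambda)$ forces $\kappa^\Lambda_1$ to generate a free rank-one submodule modulo torsion'') is circular as phrased, because $\textrm{Ind}$ is defined by passing to the torsion submodule and so is well-defined whether or not $\kappa^\Lambda_1$ is torsion. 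The clean fix is to invoke the structural facts from \cite{MazurRubin}, \textsection 5.3 under the hypotheses in force here: $H^1(\Q,T\otimes\Lambda)$ is a torsion-free $\Lambda$-module of rank one (this is [MR, Theorem 5.3.6] together with the torsionness of $X_\infty$ and global Euler characteristic considerations). With that, $\kappa^\Lambda_1\neq 0$ (primitivity) gives $\Lambda\kappa^\Lambda_1\cong\Lambda$, the quotient is torsion, and your formal manipulation closes the argument. With that one citation supplied, your proof is complete.
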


\subsection{Euler systems}
\label{sec:euler}

Kolyvagin systems are a powerful tool to compute the Selmer groups. However, the main problem of this machinery is the difficulty to construct Kolyvagin systems. The only known way to do it is via Euler systems, when the Selmer structure is the canonical one in $T$. The process will be briefly described in this section.

\begin{definition}
Let $\mathcal K$ be an abelian extension of $\Q$, let $T$ be a $p$-adic Galois representation and let $\PP$ be a set of primes different from $p$. For every $n\in \NN(\PP)$, denote by $\Q(n)$ the maximal $p$-subextension $\Q(\mu_n)/\Q$. Assume that
\begin{itemize}
\item $\mathcal K$ contains $\Q(q)$ for every $q\in \PP$,
\item $\mathcal K$ contains the cyclotomic $\Z_p$-extension.
\end{itemize}
Let $\Omega$ be the set of number fields inside $\mathcal K$. A collection of cohomology classes
$$c=\{c_F\in H^1(F,T):\ F\in \Omega\}$$
is an \emph{Euler system} if whenever $F'/F$ is an extension of fields in $\Omega$, then 
$$\textrm{cor}_{F'/F}(c_{F'})=\left(\prod_{\ell\in \PP\cap\Sigma(F'/F)} P_\ell(\Frob_{\ell}^{-1})\right) c_F$$
where $\Sigma(F'/F)$ is the set of primes in $\PP$ which ramify in $F'/\Q$ but do not ramify in $F/\Q$. Also, recall that $P_\ell(x)=\det(1-\Frob_\ell\, x|T)$.
\label{def:euler_system}
\end{definition}

From an Euler system, there is a Kolyvagin derivative process that produces Kolyvagin systems. The reader is referred to \cite[Appendix A]{MazurRubin} for more details about this construction.
\begin{theorem} (\cite[Theorems 3.2.4 and 5.3.3]{MazurRubin})
Assume that
\begin{itemize}
\item $T/(\Frob_\ell-1) T$ is a cyclic $\OO$-module,
\item $\Frob_\ell^{p^k}-1$ is injective for every $k\in \Z_{\geq 0}$.
\end{itemize}
Then there exists maps 
\[\begin{array}{cc}
    \psi_\infty:\ES(T)\to \KS(T,\Fcan,\PP),\ &\psi_\infty:\ES(T)\to \overline{\KS}(T\otimes \Lambda,\FLambda,\PP),
\end{array}\]
such that the following diagram is commutative:

\begin{center}
    \begin{tikzpicture}[descr/.style={fill=white,inner sep=1.5pt}]
    
            \matrix (m) [
                matrix of math nodes,
                row sep=4em,
                column sep=4em,
                text height=1.5ex, text depth=0.25ex
            ]
            {\ES(T) & \\
            \overline{\KS}(T\otimes \Lambda,\FLambda,\PP) &\KS(T,\Fcan,\PP).\\
            };
    
            \path[overlay,->, font=\scriptsize,>=latex]
            (m-1-1) edge node[auto]{$\psi_\infty$} (m-2-1)
            (m-2-1) edge node[auto]{$\Pi$} (m-2-2)
            (m-1-1) edge node[auto]{$\psi$} (m-2-2);
    \end{tikzpicture}
    \end{center}
Here, $\Pi$ represents the projection map from Theorem \ref{th:kol_red}.
\label{th:eul_to_kol_lambda}
\end{theorem}

\subsection{Localisation of Kolyvagin systems}

According to Theorem \ref{th:KS}, there are no non-trivial Kolyvagin systems when $\chi(T,\FF)=0$. However, we can determine the structure of $H^1_{\FF}(\Q,T)$ by using the modified Selmer structure $\FF^q$, where $q$ is a prime satisfying \ref{Hloc}. Note that $\chi(T,\FF^q)=1$ by Lemma \ref{lem:Hloc_tors} and Remark \ref{rem:core_abc}. For that Selmer structure, we can consider a primitive Kolyvagin system $\kappa\in \textrm{KS}(T,\FF^q, \PP\setminus\{q\})$ and its localisation $\loc^\ell_\s(\kappa_n)$.

\begin{definition}
For every $\ell\in \PP$, fix a generator $\tau_\ell$ of $\GG_\ell$. Since $p^{k_\ell}\mid\#\GG_\ell$, the generator $\tau_\ell$  induces a map $\GG_\ell\to R/\m^{k_\ell}$. 

For every $n\in \NN(\PP)$ and every prime divisor $\ell$ of $n$, then $k_n\leq k_{\ell}$ by the definition of these quantities. Therefore, there is a surjection $R/\m^{k_\ell}\to R/\m^{k_{n}}$ and we can consider $\kappa_n$ as an element in $H^1_{\FF(n)}(\Q,T/\m^{k_n})$. Define
$$\delta_n=\delta_n(\kappa):=\loc^s_q(\kappa_n)\in R/\m^{k_n},$$
where we are using the isomorphism from \ref{Hloc}. Note that each $\delta_n$ is defined only up to multiplication by a unit in $R$ since it depends on the generators chosen.
\label{def:delta}
\end{definition}

For every $n\in \mathcal N=\mathcal N(\mathcal P)$, recall $\nu(n)$ is the number of primes dividing $n$ and let

$$\begin{aligned}
&\ord(\delta_n):=\max\{j\in\N\cup\{0,\infty\}:\delta_n\in \mathfrak{m}^j/\mathfrak{m}^{k_n} \},\\& \partial^{(i)}(\delta):=\min\{\ord(\delta_n): n\in \N,\nu(n)=i\}.
\end{aligned}$$

\begin{definition}
In analogy with the second part of Theorem \ref{th:mr459}, consider the ideals of $R$ defined by
$$\Theta_i(\kappa):=\m^{\partial^{(i)}(\delta)}.$$
\label{def:theta}
\end{definition}

In the rest of the section, we state some lemmas from \cite[\textsection 5.2]{Kim23}. However, the convention of orders in that article is slightly different and the proofs are trickier in our case. For the sake of completeness, we also include them here.

%

\begin{lemma} (\cite[Proposition 5.2]{Kim23})
Let $(T,\FF,\PP)$ be a Selmer triple satisfying Assumptions \ref{ass:R} and \ref{Hirred}-\ref{Hprimes} and let $\kappa\in \KS(T,\FF^q)$. For every $n\in\mathcal N$, let 
$$C_n:=\coker\left(\loc_q^s: H^1_{\mathcal F^q(n)}(\mathbb Q,T/\m^{k_n} T)\to H^1_\s(\mathbb Q_q,T/\m^{k_n} T)\cong R/\m^{k_n}\right).$$
If $\ord(\kappa_n)+\length(C_n)<k_n$, then 
$$\ord(\kappa_n)+\length(C_n)= \ord(\delta_n).$$
Otherwise, $\ord(\delta_n)$ is infinite, i.e., $\delta_n=0$.
\label{lem:ordkappadelta}
\end{lemma}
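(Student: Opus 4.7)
The plan is to split the claim into an easy module-theoretic inequality and a reverse inequality requiring primitivity, which I would attack by reducing to a boundary case and then handling that boundary case via duality together with a Chebotarev-style argument.

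For the easy direction, I use that $H^1_\s(\Q_q,T/\m^{k_n})\cong R/\m^{k_n}$ is cyclic, so every $R$-submodule has the form $\m^i/\m^{k_n}$. Since the cokernel of $\loc_q^s$ has length $\length(C_n)$, its image equals $\m^{\length(C_n)}/\m^{k_n}$ when $\length(C_n)\le k_n$ and is zero otherwise. Combined with $\kappa_n\in\m^{\ord(\kappa_n)}H^1_{\FF^q(n)}(\Q,T/\m^{k_n})$, this forces $\delta_n\in\m^{\ord(\kappa_n)+\length(C_n)}/\m^{k_n}$; it is automatically zero when $\ord(\kappa_n)+\length(C_n)\ge k_n$, giving the ``otherwise'' clause, and otherwise yields the one-sided bound $\ord(\delta_n)\ge\ord(\kappa_n)+\length(C_n)$.

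For the reverse inequality when $\ord(\kappa_n)+\length(C_n)<k_n$, I would apply proposition \ref{prop:global_duality} to the pair $\FF(n)\subset\FF^q(n)$ (which differ only at $q$, where the local condition of $\FF^q$ is enlarged from $H^1_\f$ to the whole $H^1(\Q_q,T)$) to obtain
\[
\length(C_n)\;=\;\length H^1_{\FF^*(n)}(\Q,T^*[\m^{k_n}])-\length H^1_{(\FF^q)^*(n)}(\Q,T^*[\m^{k_n}]).
\]
Using theorem \ref{th:mr459} applied to the primitive $\kappa$ on the core-rank-one structure $\FF^q$, which gives $\ord(\kappa_n)=\length H^1_{(\FF^q)^*(n)}(\Q,T^*[\m^{k_n}])$, I deduce $\ord(\kappa_n)+\length(C_n)=\length H^1_{\FF^*(n)}(\Q,T^*[\m^{k_n}])$. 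Replacing $\kappa$ by its reduction modulo $\m^{j}$ with $j=\ord(\kappa_n)+\length(C_n)+1$ — which remains primitive by proposition \ref{prop:mr452} and preserves all the lengths in the identity above — reduces the equality to the boundary case $k_n=\ord(\kappa_n)+\length(C_n)+1$. In this case $\delta_n$ lies in the socle $\m^{k_n-1}/\m^{k_n}\cong R/\m$, so it only remains to verify $\delta_n\ne 0$.

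The main obstacle is this boundary case. My approach is by contradiction: assume $\delta_n=0$, so that $\kappa_n\in H^1_{\FF(n)}(\Q,T/\m^{k_n})$. I would invoke corollary \ref{cor:loc_sur} to produce an auxiliary prime $\ell\in\PP\setminus\{q\}$ whose finite localisation of $\kappa_n$ is surjective onto $H^1_\f(\Q_\ell,T/\m^{k_n})$, then combine the Kolyvagin relation $\loc_\ell^\s(\kappa_{n\ell})=\phi_\ell^{\fs}(\loc_\ell^\f(\kappa_n))$ with a further application of proposition \ref{prop:global_duality} to $\FF^*(n\ell)$ to obtain a length mismatch contradicting the identity $\ord(\kappa_{n\ell})=\length H^1_{(\FF^q)^*(n\ell)}(\Q,T^*[\m^{k_{n\ell}}])$ forced by primitivity. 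The delicate point — and the genuine obstacle — is ensuring that the auxiliary descent at $\ell$ is compatible with the relaxation at $q$, so that one can bring in the vanishing $\KS(T,\FF,\PP\setminus\{q\})=0$ coming from $\chi(\FF)=0$ (first part of theorem \ref{th:KS}) to close the argument without requiring $\kappa$ itself to descend to a Kolyvagin system for $\FF$.
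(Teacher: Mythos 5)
Your first paragraph is fine: cyclicity of $H^1_\s(\Q_q,T/\m^{k_n})$ immediately gives $\ord(\delta_n)\ge\ord(\kappa_n)+\length(C_n)$ and disposes of the ``otherwise'' clause. Your identification of $\length(C_n)$ via Poitou--Tate duality applied to $\FF(n)\subset\FF^q(n)$, namely $\length(C_n)=\length H^1_{\FF^*(n)}-\length H^1_{\FF^*_q(n)}$, also matches the exact sequence \eqref{eq:dualseq} the paper uses elsewhere.

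However, the reverse inequality is where the proof must actually do work, and your argument there has a genuine gap. You reduce to a boundary case and then must show $\delta_n\neq 0$; you explicitly flag the closing step as an ``obstacle'' and leave it as a sketch involving an auxiliary prime $\ell$, corollary \ref{cor:loc_sur}, and the vanishing $\KS(T,\FF,\PP)=0$. This plan does not obviously close: once you assume $\delta_n=0$ and get $\kappa_n\in H^1_{\FF(n)}(\Q,T/\m^{k_n})$, you still need to derive a contradiction, and the Kolyvagin relation $\loc_\ell^\s(\kappa_{n\ell})=\phi_\ell^{\fs}\loc_\ell^\f(\kappa_n)$ lives inside $\FF^q$, not $\FF$, so the vanishing of $\KS(T,\FF,\PP)$ is not directly applicable without an additional descent lemma — precisely the ``delicate point'' you acknowledge. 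In its current form the argument is not a proof.

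The paper avoids this entirely by a purely module-theoretic observation you miss. Since $\chi(\FF^q)=1$, theorem \ref{th:core_rank} gives a (non-canonical) splitting $H^1_{\FF^q(n)}(\Q,T/\m^{k_n})=(R/\m^{k_n})z\oplus A$ with $A\cong H^1_{\FF^*_q(n)}(\Q,T^*[\m^{k_n}])$. The key point is that theorem \ref{th:mr459} gives $\ord(\kappa_n)\ge\length(A)$, and since every nonzero element of $A$ has order strictly less than $\length(A)$, this forces $\kappa_n$ to lie in the cyclic summand: $\kappa_n=u\,\pi^{\ord(\kappa_n)}z$ for a unit $u$. After that the whole lemma reduces to tracking lengths of cyclic submodules of $R/\m^{k_n}$, with no Chebotarev input and no auxiliary prime. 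The paper then handles the two cases ($\delta_n\neq 0$ and $\delta_n=0$) directly from this factorization, which is cleaner and complete. Your decomposition via $\length H^1_{\FF^*(n)}$ is more in the spirit of the subsequent lemma \ref{lem:ord_delta}; the present lemma's proof does not need it.
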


\begin{proof}

Since $\chi(T/\m^{k_n},\mathcal F^q)=1$, we have a non-canonical isomorphism
\begin{equation}
H^1_{\mathcal F^q(n)}(\Q,T/\m^{k_n})\cong R/\m^{k_n}\oplus H^1_{(\mathcal F^*)_q(n)}(\Q,T^*[\m^{k_n}]).
\label{eq:core1}
\end{equation}
Hence we can find an element $z\in H^1_{\FF^q(n)}(\Q,T/\m^{k_n} T)$ and a submodule $A$, isomorphic to $H^1_{(\mathcal F^*)_q(n)}(\Q,T^*[\m^{k_n}])$, such that 
\begin{equation}
H^1_{\mathcal F^q(n)}(\Q,T/\m^{k_n} T)=(R/\m^{k_n})\, z\oplus A.
\label{eq:split}
\end{equation}

By Theorem \ref{th:mr459},
\begin{equation}
\ord(\kappa_n)\geq \length\Bigl(H^1_{(\mathcal F^*)_q(n)}(\Q,T^*[\m^{k_n}])\Bigr),
\label{eq:ordk2}
\end{equation}
so the component of $\kappa_n$ in $A$ in the splitting in \eqref{eq:split} vanishes. Then $\kappa_n\in (R/\m^{k_n})\, z$. In particular, there is some unit $u\in R^\times$ and some generator $\pi$ of $\m$ such that
\begin{equation}
\kappa_n=u\pi^{\ord(\kappa_n)}z.
\label{eq:kn_ord}
\end{equation}

Assume first that $\delta_n:=\loc_q^\s(\kappa_n)\neq 0$. Then $\pi^{\ord(\kappa_n)}\loc_q^\s(z)\neq 0$, so using \eqref{eq:ordk2} we obtain that
\[\length\Biggl(\loc_q^s\Bigl(H^1_{\FF^q(n)}(\Q,T/\m^{k_n} T)\Bigr)\Biggr)\geq \ord(\kappa_n)\geq  \length\Bigl(H^1_{(\mathcal F^*)_q(n)}(\Q,T^*[\m^{k_n}])\Bigr).\]

Since $H^1_s(\Q_q,T/\m^{k_n} T)$ is isomorphic to $R/\m^{k_n}$ by Lemma \ref{lem:Hloc_tors}, \eqref{eq:ordk2} and the fact that $\pi^{\ord(\kappa_n)}\loc_q^\s(z)\neq 0$ imply that $\loc_q^s(z)$ generates the image of $\loc_q^\s$, i.e,
$$\loc_q^s\left(H^1_{\mathcal F^q(n)}(\Q,T/\m^{k_n} T)\right)=(R/\m^{k_n})\loc_q^s\left(z\right).$$
By equation \eqref{eq:kn_ord},
\begin{equation}
p^{\ord(\kappa_n)}\loc_q^s\left(H^1_{\mathcal F^q(n)}(\Q,T/\m^{k_n} T)\right)=(R/\m^{k_n})\delta_n.
\label{eq:modules}
\end{equation}
Since $H^1_s(\Q_q,T/\m^{k_n} T)$ is a cyclic $R/\m^{k_n}$-module, computing the length of each module in \eqref{eq:modules} leads to the following equality:
$$\length\left(\loc_q^s(H^1_{\mathcal F^q(n)}(\Q,T/\m^{k_n} T))\right)-\ord(\kappa_n)=k_n-\ord(\delta_n).$$
By definition, $\length(C_n)=k_n-\length\left(\loc_q^s(H^1_{\mathcal F^q(n)}(\Q,T/\m^{k_n} T))\right)$. Therefore,
$$\ord(\kappa_n)+\length(C_n)=\ord(\delta_n).$$
Note that, in this situation, since $\delta_n\neq 0$, its order is at most $k_n-1$, so
\[\ord(\kappa_n)+\length(C_n)<k_n.\]

Now assume that $\delta_n:=\loc_q^s(\kappa_n)=0$. Using \eqref{eq:kn_ord}, $\pi^{\ord(\kappa_n)}\loc_q^s(z)=0$, so we can conclude that
$$\length\biggl((R/\m^{k_n})\loc_q^s(z)\biggr)\leq \ord(\kappa_n).$$
Note that, since $H^1_s(\Q_q,T/\m^{k_n} T)$ is a free $R/\m^{k_n}$-module of rank one, we have that
\[\begin{aligned}&\length\left(\loc_q^s\left(H^1_{\mathcal F^q(n)}(\Q,T/\m^{k_n} T)\right)\right)=\\ &\max\Biggl\{\length\biggl(R/\m^{k_n}\loc_q^s(z)\biggr),\length\biggl(\loc_q^s(A)\biggr)\Biggr\}.\end{aligned}\]
However, by \eqref{eq:ordk2}, 
\[\length\biggl(\loc_q^\s(A)\biggr)\leq \length(A)\leq \ord(\kappa_n).\]
Hence,
$$\length\left(\loc_q^s\left(H^1_{\mathcal F^q(n)}(\Q,T/\m^{k_n} T)\right)\right)\leq\ord(\kappa_n).$$
Therefore, 
\[\ord(\kappa_n)+\length(C_n)\geq k_n.\qedhere\]
\end{proof}

The following lemma is proven in \cite[lemma 5.3]{Kim23} for the particular case where $T$ is the Tate moudule of an elliptic curve and $\FF$ is the Bloch-Kato Selmer structure.

\begin{lemma}
Let $(T,\FF,\PP)$ be a Selmer triple satisfying Assumptions \ref{ass:R} and \ref{Hirred}-\ref{Hprimes} and let $\kappa\in \KS(T,\FF^q,\PP)$ be a primitive Kolyvagin system. Then there is some $n\in \NN$ such that $\delta_n\in R^\times$.
\label{lem:dinf}
\end{lemma}

\begin{proof}
By Proposition \ref{prop:local_duality}, for every $n\in \NN$ there is an exact sequence
\begin{equation}
\xymatrix{0\ar[r]&H^1_{(\mathcal F^*)_q(n)}(\Q,T^*[\m^{k_n}])\ar[r] & H^1_{\mathcal F^*(n)}(\Q,T^*[\m^{k_n}])\ar[r] & C_n\du\ar[r] & 0,}
\label{eq:dualseq}
\end{equation}
where $C_n$ was defined in Lemma \ref{lem:ordkappadelta}

By Proposition \ref{prop:core_vertex}, there is some $n_0\in \NN$ such that 
$$H^1_{\mathcal F^*(n_0)}(\Q,T^*)=0.$$

By Lemma \ref{lem:mr353},
\[H^1_{\FF^*(n_0)}(\Q,T^*[\m^{k_{n_0}}])=H^1_{\mathcal F^*(n_0)}(\Q,T^*)[\m^{k_{n_0}}]=0.\]

Therefore, \eqref{eq:dualseq} implies that $C_{n_0}=0$. Then Theorem \ref{th:mr459} and Lemma \ref{lem:ordkappadelta} imply that $\ord(\delta_{n_0})=\ord(\kappa_{n_0})=0$, which means that $\delta_n\in R^\times$.
\end{proof}

\begin{lemma}
Let $(T,\FF,\PP)$ be a Selmer triple satisfying Assumptions \ref{ass:R} and \ref{Hirred}-\ref{Hprimes} and let $\kappa\in \KS(T,\FF^q,\PP)$ be a primitive Kolyvagin system. For every $n\in\mathcal N$, we have that
$$\ord(\delta_n)=\length(H^1_{\mathcal F(n)}(\mathbb Q,T/\m^{k_n} T))$$
if the latter is less than $k_n$. Otherwise, $\ord(\delta_n)=\infty$.
\label{lem:ord_delta}
\end{lemma}

\begin{proof}
Since $\chi(T,\FF)=0$ by \ref{Hcore0}, Theorem \ref{th:core_rank}, \eqref{eq:dualseq} and Theorem \ref{th:mr459} imply that
\begin{equation}
\begin{aligned}\length\left(H^1_{\mathcal F(n)}(\mathbb Q,T/\m^{k_n} T)\right)=\length\left(H^1_{\mathcal F^*(n)}(\mathbb Q,T^*[\m^{k_n}])\right)=\\
\length\left(H^1_{(\mathcal F^*)_q(n)}(\mathbb Q,T^*[\m^{k_n}])\right)+\length(C_n)=\ord(\kappa_n)+\length(C_n).\end{aligned}
\label{eq:dual_lengths}
\end{equation}

If $\length\left(H^1_{\mathcal F(n)}(\mathbb Q,T/\m^{k_n} T)\right)<k_n$, then Lemma \ref{lem:ordkappadelta} implies that
$$\length\left(H^1_{\mathcal F(n)}(\mathbb Q,T/\m^k T)\right)=\ord(\delta_n).$$

Assume now that $\length(H^1_{\mathcal F(n)}(\mathbb Q,T/\m^{k_n} T))\geq k_n$. By Theorem \ref{th:mr459} and \eqref{eq:dual_lengths},
\[\ord(\kappa_n)+\length(C_n)\geq k_n.\]

By the second part of Lemma \ref{lem:ordkappadelta}, $\ord(\delta_n)=\infty$.
\end{proof}

\subsection{Fitting ideals of Selmer groups of core rank zero}
\label{sec:fitd}

Throughout this section, the concept of rank of a finitely generated module will play a role. With the aim of avoiding confusion, especially when $R$ is artinian, we clarify this concept in the following definition.

\begin{definition}
Let $M$ be an $R$-module. We define the rank of $M$ as the maximum number of linearly independent elements in $M$. In case that $R$ is local, principal and artinian of length $k$, this quantity coincides with the following:
\[\rank_R(M)=\dim_{R/\m}\m^{k-1} M.\]
\label{def:rank}
\end{definition}

The main purpose of this section is to prove the following theorem, whose proof will appear at the end of the section.

\begin{theorem}
Assume $R$ is a discrete valuation ring or an artinian, local, principal ring. Let $(T,\FF,\PP)$ be a Selmer triple satisfying \ref{Hirred}-\ref{Hprimes}. For every $i\in \Z_{\geq0}$, we have that
\begin{equation}
\Theta_i(\kappa)\subset\Fitt_i^R\left(H^1_{\mathcal F^*}(\mathbb Q, T^*)\du\right).
\label{eq:theta}
\end{equation}
where $\Theta_i(\kappa)$ was defined in Definition \ref{def:theta}. Moreover, if one of the following conditions is satisfied:
\begin{enumerate}[(i)]
\item $i=\rank_R(H^1_{\FF^*}(\Q,T^*)\du)=:r,$
\item $\Theta_{i-1}(\kappa)\subsetneq \Fitt_{i-1}^R\left(H^1_{\mathcal F^*}(\mathbb Q, T^*)\du\right),$
\item There is some $k\in \N$ and some $n\in\mathcal N$ such that $\nu(n)=i-1$, $\Theta_{i-1}(\kappa)=\delta_n R$ and 
$$H^1_{\mathcal F(n)}(\Q,T/\m^k T)\cong R/\m^{e_1}\times \cdots \times R/\m^{e_s}$$
for some $e_1>e_2\geq \cdots\geq e_s$.
\end{enumerate}
then we have the equality $\Theta_i(\kappa)=\Fitt_i^R\left(H^1_{\mathcal F^*}(\mathbb Q, T^*)\du\right)$.
\label{th:kur_par}
\end{theorem}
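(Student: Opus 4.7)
The plan has two phases: establishing the containment $\Theta_i \subset \Fitt_i(X)$ for all $i$, where $X := H^1_{\FF^*}(\Q, T^*)\du$, and then producing explicit minimizers to obtain equality under each of the conditions (i), (ii), (iii). The basic bridge is lemma \ref{lem:ord_delta}, which together with the core rank zero hypothesis \ref{Hcore0} identifies
\[\ord(\delta_n) = \length\bigl(H^1_{\FF(n)}(\Q, T/\m^{k_n})\bigr) = \length\bigl(H^1_{\FF^*(n)}(\Q, T^*[\m^{k_n}])\bigr)\]
whenever this length is less than $k_n$, and $\ord(\delta_n) = \infty$ otherwise. Writing $\Fitt_i(X) = \m^{D_i}$, the containment amounts to showing $\length H^1_{\FF^*(n)}(\Q, T^*[\m^{k_n}]) \geq D_i$ for every $n \in \NN(\PP)$ with $\nu(n) = i$.

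To prove the containment I would compare $H^1_{\FF^*(n)}$ with the unmodified $H^1_{\FF^*}$ through the Poitou-Tate exact sequence of proposition \ref{prop:global_duality}, applied to suitable pairs of Selmer structures (for instance $\FF^*_n \leq \FF^* \leq \FF^{*n}$ together with $\FF^*_n \leq \FF^*(n) \leq \FF^{*n}$). Using that each local cohomology $H^1_\f(\Q_\ell, T^*[\m^{k_n}])$ and $H^1_\tr(\Q_\ell, T^*[\m^{k_n}])$ is cyclic of length at most $k_n$ (corollary \ref{cor:local_coh_cyclic}), the Selmer lengths are tightly controlled as primes are added to $n$. Combined with the explicit length $r k_n + \sum_j \min(d_j, k_n)$ of $H^1_{\FF^*}(\Q, T^*[\m^{k_n}])$ in terms of the rank $r$ and invariants $d_j$ of $X$ (a consequence of lemmas \ref{lem:mr353} and \ref{lem:mr354}), collating these local contributions yields the required inequality.

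For the equality statements each condition provides an explicit minimizer. Under (i), $i = r$, iteratively apply lemma \ref{lem:cr0} starting from $\FF^*$ and its successive modifications (each still core rank zero by proposition \ref{prop:core_abc}) to produce $n = \ell_1 \cdots \ell_r$ with $H^1_{\FF^*(n)}(\Q, T^*[\m^k])$ isomorphic to the torsion part of $X$, hence $\ord(\delta_n) = \sum_j d_j = D_r$. Under (iii), apply lemma \ref{lem:cr0} directly to $\FF(n)$: the strictness $e_1 > e_2$ forces the output parameter $j$ of that lemma to equal $e_2$, so $\length H^1_{\FF^*(n\ell)}$ drops by exactly $e_1$, which a direct Fitting-ideal computation identifies with $D_{i-1} - D_i$. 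Under (ii), reduce to (iii): strict containment $\Theta_{i-1} \subsetneq \Fitt_{i-1}$ forces the minimizing $n$ to satisfy $\ord(\delta_n) > D_{i-1}$, and a structural comparison of the Smith normal form of $H^1_{\FF(n)}$ against the invariants of $X$ produces the strictly dominant top factor required by (iii). The main obstacle lies in Phase I: in core rank zero the direct Chebotarev argument from the rank one theory of theorem \ref{th:mr459} can fail (as lemma \ref{lem:cr0} only bounds the new top invariant rather than pinning it down), so the length bound has to be extracted indirectly via careful Poitou-Tate bookkeeping. The reduction of case (ii) to (iii) is the other delicate point, where the invariant structure of the auxiliary Selmer groups $H^1_{\FF(n)}$ must be tied to that of $X$.
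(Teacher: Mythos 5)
Your overall framework matches the paper's: lemma \ref{lem:ord_delta} is the bridge converting $\ord(\delta_n)$ to lengths of auxiliary Selmer groups; the containment $\Theta_i\subset\Fitt_i$ follows from an exact-sequence/Fitting-ideal comparison using the core rank zero hypothesis; and equality is achieved by producing explicit $n$ of the right shape via Chebotarev-type arguments (your iterated application of lemma \ref{lem:cr0} for case (i) is a legitimate alternative to the paper's use of corollary \ref{cor:loc_sur} together with lemma \ref{lem:mr417}). However, there is a genuine gap in your handling of cases (ii) and (iii), and the source of the gap is that your plan abandons inductive control after the construction of $n_r$.

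Concretely, for case (ii) you propose to "reduce to (iii)" by deducing, from $\Theta_{i-1}\subsetneq\Fitt_{i-1}$, that a \emph{minimizer} $n$ with $\Theta_{i-1}=\delta_n R$ has $H^1_{\FF(n)}$ with a strictly dominant top invariant. This is not justified: strict containment only yields $\length H^1_{\FF(n)}>\alpha_{i-1}$, which is compatible with all invariant factors of $H^1_{\FF(n)}$ being equal, and the Fitting-ideal constraints $\Fitt_j(H^1_{\FF(n)})\subset\Fitt_{j+i-1}\bigl(H^1_{\FF^*}(\Q,T^*)\du\bigr)$ go the wrong way (they give $e_1\le e'_i$, not $e_1>e_2$). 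Similarly, your case (iii) asserts that the drop $e_1$ in length after applying the strong version of lemma \ref{lem:cr0} "identifies with $D_{i-1}-D_i$", which again is only one inequality, not an equality. What the paper does instead is continue the construction past $n_r$: it builds $n_r\mid n_{r+1}\mid n_{r+2}\mid\cdots$ by repeatedly applying lemma \ref{lem:cr0}, so that by construction $H^1_{\FF(n_{i-1})}$ has invariants $(h_{i-1},e_{i+1},\ldots,e_s)$ whose \emph{tail is the fixed tail of the invariants of the dual Selmer group}. With that structure in hand, condition (ii) immediately forces $h_{i-1}>e_i\ge e_{i+1}$ for this specific $n_{i-1}$ (not an arbitrary minimizer), which triggers the strong case of lemma \ref{lem:cr0} and produces $\ord(\delta_{n_{i-1}\ell})=\alpha_i$ on the nose. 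Without maintaining this controlled inductive sequence, the step from strict containment to the strictly dominant top factor cannot be made, and this is the missing ingredient in your proposal.
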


\begin{remark}
Assume that either
\begin{itemize}
\item $R$ is a discrete valuation ring, or
\item $\length(R)\geq \length\left(H^1_\FF(\Q,T)_{\tors}\right)$.
\end{itemize}
Then $\rank_R(H^1_{\FF^*}(\Q,T^*)\du)$ is the minimal $i$ such that $\Theta_i\neq 0$.
\end{remark}

Provided that we know the ideals $\Theta_i(\kappa)$, Theorem \ref{th:kur_par} determines the Fitting ideals of the dual Selmer group. That is enough to determine the Selmer groups up to isomorphism.
\begin{corollary}
Assume $R$ is a discrete valuation ring and write $\Theta_i(\kappa)=\mathfrak m^{n_i}$. Then
$$\Fitt_i^R\left(H^1_{\mathcal F^*}(\mathbb Q, T^*)\du\right)=\mathfrak m^{\min\left\{n_i,\frac{n_{i+1}+n_{i-1}}{2}\right\}}.$$
\label{cor:kur_par}
\end{corollary}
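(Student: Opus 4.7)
The plan is to derive the explicit formula from Theorem \ref{th:kur_par} by combining the inclusion $\Theta_i(\kappa)\subset \Fitt_i^R$ with the discrete convexity of Fitting-ideal exponents of a finitely generated module over a DVR. Throughout, write $f_i$ for the exponent satisfying $\Fitt_i^R\bigl(H^1_{\FF^*}(\Q,T^*)\du\bigr)=\mathfrak{m}^{f_i}$, with $f_i=\infty$ when this Fitting ideal vanishes.

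The first step is the lower bound $\min\{n_i,(n_{i-1}+n_{i+1})/2\}\geq f_i$. Over the DVR $R$, decompose
\[H^1_{\FF^*}(\Q,T^*)\du\cong R^r\oplus\bigoplus_{j=1}^{s}R/\mathfrak{m}^{b_j},\qquad b_1\geq b_2\geq\cdots\geq b_s.\]
A direct minor computation on the obvious $(r+s)\times s$ presentation matrix gives $f_{r+j}=b_{j+1}+\cdots+b_s$ for $0\leq j\leq s$ (and $f_i=\infty$ for $i<r$, $f_i=0$ for $i\geq r+s$). The monotonicity $b_j\geq b_{j+1}$ translates to $f_{i-1}-f_i\geq f_i-f_{i+1}$, i.e.\ $f_i\leq(f_{i-1}+f_{i+1})/2$. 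Together with the inclusion $n_j\geq f_j$ from \eqref{eq:theta}, this yields the lower bound on the minimum.

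For the upper bound we split into cases. If $n_i=f_i$ the minimum is at most $n_i=f_i$ trivially. If $n_i>f_i$, the contrapositive of Theorem \ref{th:kur_par}(ii), applied at both $i$ and $i+1$, forces $n_{i-1}=f_{i-1}$ and $n_{i+1}=f_{i+1}$; hence $(n_{i-1}+n_{i+1})/2=(f_{i-1}+f_{i+1})/2$, and it suffices to establish $(f_{i-1}+f_{i+1})/2=f_i$, equivalently $b_{i-r}=b_{i-r+1}$.

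The crux is therefore the implication: if $n_i>f_i$, then $b_{i-r}=b_{i-r+1}$. Since $R$ is a DVR, the ideal $\Theta_{i-1}$ is principal and is generated by some $\delta_n$ with $\nu(n)=i-1$; by Lemma \ref{lem:ord_delta}, such $n$ satisfies $\length\bigl(H^1_{\FF(n)}(\Q,T/\mathfrak{m}^{k_n})\bigr)=f_{i-1}$. We then arrange the choice of $n$ so that the invariant factors of $H^1_{\FF(n)}(\Q,T/\mathfrak{m}^{k_n})$ are precisely $b_{i-r},\ldots,b_s$, i.e.\ the original invariant factors with the top $i-1$ entries stripped off. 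With this choice in hand, condition (iii) of Theorem \ref{th:kur_par}, applied with $e_1=b_{i-r}$, $e_2=b_{i-r+1}$, would under the hypothesis $b_{i-r}>b_{i-r+1}$ force $\Theta_i=\Fitt_i$ and contradict $n_i>f_i$; therefore $b_{i-r}=b_{i-r+1}$.

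The main obstacle is the realisation step: simultaneously arranging $\Theta_{i-1}=(\delta_n)$ and the ``generic'' shape $b_{i-r},\ldots,b_s$ for the invariant factors of $H^1_{\FF(n)}$. The natural approach is an inductive construction $n=\ell_1\cdots\ell_{i-1}$ in which each $\ell_j$ is chosen via Proposition \ref{prop:mr361} and Lemma \ref{lem:cr0} to both strip off the current top invariant factor and attain the minimal $\delta$-valuation at that level; verifying that these two Chebotarev conditions can be met simultaneously at every step is where the bookkeeping is most delicate.
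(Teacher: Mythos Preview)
Your approach is the paper's approach: the same convexity lower bound, the same case split on whether $n_i=f_i$, the same contrapositive use of condition (ii) to pin down $n_{i\pm1}=f_{i\pm1}$, and the same appeal to condition (iii) to force $b_{i-r}=b_{i-r+1}$. The only difference is that where you pause to flag the ``realisation step'' as an obstacle, the paper simply writes ``condition (iii) cannot be satisfied \ldots\ hence the equality holds in \eqref{eq:Fitting_ineq}'' and moves on.

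That terse sentence in the paper is indeed carrying exactly the load you identify, but the bookkeeping you worry about is lighter than you suggest, and you do not need to impose two independent Chebotarev conditions. The witness is supplied by the inductive sequence $n_r\mid n_{r+1}\mid\cdots$ already built in the proof of Theorem \ref{th:kur_par}, for which $H^1_{\FF(n_j)}\approx R/\m^{h_j}\times R/\m^{b_{j-r+2}}\times\cdots\times R/\m^{b_s}$ with $h_j\ge b_{j-r+1}$. The ``$e_1>e_2$'' clause of Lemma \ref{lem:cr0} gives $h_j=b_{j-r+1}$ whenever $h_{j-1}>b_{j-r+1}$; in particular, if $h_{j-1}>b_{j-r}$ (i.e.\ $h_{j-1}$ is ``incorrect'') then automatically $h_{j-1}>b_{j-r}\ge b_{j-r+1}$, so $h_j$ is correct. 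Now apply this at $j=i$: were $h_{i-1}$ incorrect, then $h_i=b_{i-r+1}$, so the constructed element $n_i$ would satisfy $\ord(\delta_{n_i})=m_i$, forcing $n_i=m_i$ and contradicting $n_i>m_i$. Hence $h_{i-1}=b_{i-r}$, the constructed $n_{i-1}$ generates $\Theta_{i-1}$ with invariant factors exactly $b_{i-r},b_{i-r+1},\ldots,b_s$, and the failure of (iii) then reads $b_{i-r}=b_{i-r+1}$, which is the desired equality in \eqref{eq:Fitting_ineq}. So your proposal is correct once this short observation is added; no new Chebotarev input beyond the proof of Theorem \ref{th:kur_par} is needed.
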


\begin{proof}
Write $\Fitt_i^R\left(H^1_{\mathcal F^*}(\mathbb Q, T^*)\du\right)=\m^{m_i}$, so the inequality in Theorem \ref{th:kur_par} implies that $n_i\geq m_i$. By the structure theorem of finitely generated modules over principal ideal domains, the following inequality holds for $i\in \N$:

\[m_{i+1}-m_i\geq m_i-m_{i-1}.\]
Hence the index $m_i$ can be upper bounded using $m_{i-1}$ and $m_{i+1}$:
\begin{equation}
m_i\leq \frac{m_{i+1}+m_{i-1}}{2}.
\label{eq:Fitting_ineq}
\end{equation}

Assume that $n_i=m_i$. Then 
\[m_i\leq \frac{m_{i+1}+m_{i-1}}{2}\leq \frac{n_{i+1}+n_{i-1}}{2}.\]
Therefore,
\[m_i=n_i=\min\left\{n_i,\frac{n_{i+1}+n_{i-1}}{2}\right\}.\]

Assume that $n_i> m_i$. Theorem \ref{th:kur_par} can be applied to $i+1$. Since the second condition for the equality stated in this theorem holds by our assumption, we obtain that $m_{i+1}=n_{i+1}$. On the other hand, assume by contradiction that $n_{i-1}>m_{i-1}$. In this case, Theorem \ref{th:kur_par}, would imply that $n_i=m_i$, contradicting our assumption. Therefore, $n_{i-1}=m_{i-1}$. Moreover, condition (iii) in Theorem \ref{th:kur_par} cannot be satisfied since, otherwise, our assumption would not be true. Hence, the equality holds in equation \eqref{eq:Fitting_ineq} and we obtain
\[m_i=\frac{m_{i+1}+m_{i-1}}{2}=\frac{n_{i+1}+n_{i-1}}{2}=\min\left\{n_i,\frac{n_{i+1}+n_{i-1}}{2}\right\}.\qedhere\]
\end{proof}

\begin{proof}[Proof of Theorem \ref{th:kur_par}]


If $R$ is artinian, choose $k=\length(R)$. If $R$ is a discrete valuation ring, we can choose some $k\in \N$ such that $\PP_k\subset \PP$ and
\begin{equation}
k\geq \length(H^1_{\mathcal F^*}(\mathbb Q, T^*)\du_{\tors}).
\label{eq:k_cond}
\end{equation}

 It is enough to study the Fitting ideals of $H^1_{\FF^*}(\Q,T^*[\m^k])\du$. Indeed, when $R$ is a discrete valuation ring, let $\alpha_i\in \Z_{\geq 0}$ be such that $\Fitt_i^{R}\left(H^1_{\mathcal F^*}(\mathbb Q, T^*)\du\right)=\mathfrak m^{\alpha_i}$. By Lemma \ref{lem:mr353}, 
\[\Fitt_i^{R/\m^k}\left(H^1_{\mathcal F^*}(\mathbb Q, T^*[\m^k])\right)=\Fitt_i^{R/\m^k}\left(H^1_{\mathcal F^*}(\mathbb Q, T^*)\du\otimes_R R/\m^k\right)=\mathfrak m^{\min\{k,\alpha_i\}}.\]
Since $k$ has been chosen satisfying \eqref{eq:k_cond}, then $\min\{k,\alpha_i\}=k$ if and only if $\m^{\alpha_i}=0$.

For every $n\in\NN(\PP_K)$, consider the exact sequence
$$\xymatrix{0\ar[r] & H^1_{\mathcal F_n}(\mathbb Q,T/\m^{k} T)\ar[r] & H^1_{\mathcal F}(\Q,T/\m^{k} T)\ar[r] & \bigoplus_{\ell\mid n} H^1_\f(\Q_\ell,T/\m^{k} T)}.$$
Note the last term is a free $R/\m^{k}$-module of rank $\nu(n)$ by Corollary \ref{cor:local_coh_cyclic}. Since $\chi(T/\m^k,\mathcal F)=0$, we have that
$$\min\{k,\alpha_i\}\leq \length\left(H^1_{\mathcal F_n}(\mathbb Q,T/\m^{k} T)\right)\leq \length\left( H^1_{\mathcal F(n)}(\mathbb Q,T/\m^{k} T)\right).$$
since $H^1_{\mathcal F_n}(\mathbb Q,T/\m^{k} T)\subset H^1_{\mathcal F(n)}(\mathbb Q,T/\m^{k} T)$. By Lemma \ref{lem:ord_delta}, for every $n\in \NN$ such that $\nu(n)=i$, we then have that
$$\ord(\delta_n)\geq \alpha_i$$
Equivalently, $\delta_n\in \mathfrak m^{\alpha_i}$. Therefore,
$$\Theta_i(\kappa)\subset \Fitt_i^{R/\m^k}\left(H^1_{\mathcal F^*}(\mathbb Q, T^*[\m^k])\du\right).$$

To prove the equality in \eqref{eq:theta} for some $i\in \mathbb Z_{\geq 0}$, we need to find some $n\in\mathcal N_k$ such that $\nu(n)=i$ and $\length(H^1_{\mathcal F(n)}(\mathbb Q,T))=\alpha_i$.

If $\Fitt_i^{R/\m^k}\left(H^1_{\mathcal F^*}(\mathbb Q, T^*[\m^k])\du\right)=0$, then $\alpha_i=k$ and the result is clear. So let $i$ be the rank of $H^1_{\mathcal F}(\Q,T/\m^k T)\cong H^1_{\mathcal F^*}(\Q,T^*[\m^k])$ as an $R/\m^k$-module. 

 Using an inductive application of Corollary \ref{cor:loc_sur}, we can choose primes $\ell_1,\ldots, \ell_i$ such that the maps
$$H^1_{\mathcal F}(\mathbb Q,T/\mathfrak m^k T)\to \bigoplus_{k=1}^i H^1_\f(\mathbb Q_{\ell_i},T/\mathfrak m^k T),\ \ 
H^1_{\mathcal F^*}(\mathbb Q,T^*[\mathfrak m^k])\to \bigoplus_{k=1}^i H^1_\f(\mathbb Q_{\ell_i},T^*[\mathfrak m^k])$$
are surjective. By Lemma \ref{lem:mr417}, for $n_r:=\ell_1\cdots\ell_i$ we have that 
\[H^1_{\mathcal F^*(n_r)}(\mathbb Q,T^*[\mathfrak m^k])=H^1_{(\mathcal F^*)_{n_r}}(\mathbb Q,T^*[\mathfrak m^k]).\] 
Thus
$$\ord(\delta_{n_r})=\length\left(H^1_{\mathcal F^*({n_r})}(\mathbb Q,T^*[\mathfrak m^k])\right)=\length\left( H^1_{(\mathcal F^*)_{n_r}}(\mathbb Q,T^*[\mathfrak m^k])\right)=\alpha_i,$$
where the last equality comes from the structure theorem of finitely generated $R/\m^k$-modules. Therefore, the equality holds for $i=r$.

For every $i>r$, construct $n_i\in \NN$ and $h_i\in \N$ inductively as follows. Note that $n_r$ was already constructed and let $h_r$ be the exponent of $H^1_{\FF^*(n_r)}(\Q,T^*[\m^k]).$

Assume that $n_i\in \mathcal N$ was already constructed satisfying that $\nu(n_i)=i$.

Write the structure of the Selmer group as
$$H^1_{\mathcal F(n_i)}(\Q, T)\cong H^1_{\mathcal F^*(n_i)}(\Q,T^*[\m^k])\cong R/\mathfrak m^{h_i}\times R/\m^{e_{i+2}}\times \cdots\times R/\m^{e_{s}}$$
for some integers $e_{i+2},\ldots, e_s$. 

By Lemma \ref{lem:cr0}, there are infinitely many primes $\ell_{i+1}\in \PP_k$ satisfying that
$$H^1_{\mathcal F^*(n\ell_{i+1})}(\Q,T^*[\m^k])\cong R/\m^{h_{i+1}}\times R/\m^{e_{i+3}}\times \cdots \times R/\m^{e_s}$$
for $e_{i+2}\leq h_{i+1}\leq k$. We can choose the prime $\ell_{i+1}$ minimising $h_{i+1}$ and define $n_{i+1}:=n_i\ell_{i+1}$.

By Lemma \ref{lem:ord_delta},
$$\delta_{n_i}R=\m^{h_i}\prod_{j=i+2}^s \m^{e_j}\subset\prod_{j=i+1}^s \m^{e_j}=\Fitt_{i+1}^{R/\m^k}\left(H^1_{\mathcal F^*}(\Q,T^*[\m^k])\right).$$
When $h_{i}= e_{i+1}$,
$$\Theta_{i+1}(\kappa)=\Fitt_{i+1}^{R/\m^k}\left(H^1_{\mathcal F^*}(\Q,T^*[\m^k])\right).$$

When $h_i>e_{i+1}\geq e_{i+2}$ or $h_i\geq e_{i+1}>e_{i+2}$, Lemma \ref{lem:cr0} implies the existence $\ell_{i+1}$ such that $h_{i+1}=e_{i+2}$.

If the assumption $\Theta_i(\kappa)\subsetneq \Fitt_i^R(H^1_{\FF^*}(\Q,T^*[\m^k]))$ holds, then $h_i>e_{i+1}$, so $h_{i+1}=e_{i+2}$. 

Same result can be obtained assuming hypothesis (iii). In this case, $e_{i+1}>e_{i+2}$ and we obtain that 
\[\Theta_{i+1}(\kappa)=\Fitt_{i+1}^R(H^1_{\FF^*}(\Q,T^*[\m^k])).\qedhere\]
\end{proof}

\subsection{Non-self-dual case}
\label{sec:fitn}

In the case when $T$ is not residually self-dual, we can improve the previous result to get the equality in \eqref{eq:theta} for every $i\in \Z_{\geq 0}$. More precisely, assume the following extra assumptions:
\begin{itemize}
\item\namedlabel{Nsd}{(N1)} $\Hom_{\F_p[G_\Q]}(T/\m T,T^*[\m])=0$,
\item\namedlabel{Nsur}{(N2)} The image of the homomorphism $R\to \textrm{End}(T)$ is contained in the image of $\Z_p[[G_\Q]]\to \textrm{End}(T)$.
\end{itemize}

Under those assumptions, the following improvement of Theorem \ref{th:kur_par} is true. Sakamoto proves the equality \eqref{eq:theta_nd} below under stronger assumptions when the coefficient ring $R$ is a Gorenstein ring of dimension zero. In particular, R. Sakamoto's result only worked when $H^1_{\FF}(\Q,T)=0$. However, when $R$ is a principal ring, we can weaken the assumptions to obtain the following result.

\begin{theorem}
Let $R$ be either a discrete valuation ring or a principal, artinian, local ring and let $(T,\FF,\PP)$ be a Selmer triple satisfying \ref{Hirred}-\ref{Hprimes} and \ref{Nsd}-\ref{Nsur}. Then for every $i\in \Z_{\geq 0}$, the following equality is satisfied:
\begin{equation}
\Theta_i(\kappa)=\Fitt_i^R\left(H^1_{\mathcal F^*}(\mathbb Q, T^*)\du\right).
\label{eq:theta_nd}
\end{equation}
\label{th:kur}
\end{theorem}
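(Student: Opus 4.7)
The strategy is induction on $i$, with Theorem \ref{th:kur_par} serving as the base: we already have $\Theta_i(\kappa) \subset \Fitt_i^R(H^1_{\FF^*}(\Q,T^*)\du)$ for every $i$, equality at $i = r := \rank_R H^1_{\FF^*}(\Q,T^*)\du$, and equality whenever strict inclusion held at index $i-1$. The task is to rule out, under \ref{Nsd}-\ref{Nsur}, the ``parity'' obstruction that Theorem \ref{th:kur_par} leaves open, i.e.\ the situation where $e_1 = e_2$ at some stage of the inductive construction of witnesses $n_i$ and condition (iii) of Theorem \ref{th:kur_par} is not applicable.

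The central technical step I would prove is a non-self-dual enhancement of Lemma \ref{lem:cr0}. Under \ref{Nsd}-\ref{Nsur}, if
\[H^1_{\FF^*(n)}(\Q,T^*[\m^k]) \approx R/\m^{e_1}\times R/\m^{e_2}\times\cdots\times R/\m^{e_s},\]
then there are infinitely many primes $\ell \in \PP_k$ coprime to $n$ such that
\[H^1_{\FF^*(n\ell)}(\Q,T^*[\m^k]) \approx R/\m^{e_2}\times R/\m^{e_3}\times\cdots\times R/\m^{e_s},\]
\emph{even when} $e_1 = e_2$. To prove this, I combine two ingredients. First, a Chebotarev argument refining Proposition \ref{prop:mr361}: \ref{Nsd} ensures $T/\m T$ and $T^*[\m]$ share no irreducible $\F_p[G_\Q]$-subquotients, and \ref{Nsur} propagates this to the linear disjointness of $\Q(T/\m^k)$ and $\Q(T^*[\m^k])$ over $\Q(\mu_{p^k})$, so $\Frob_\ell$ can be prescribed independently on each side. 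This yields primes $\ell$ for which a chosen class $x \in H^1_{\FF^*(n)}(\Q,T^*[\m^k])$ of maximal order $\m^{e_1}$ has $\loc_\ell^\f(x)$ of order exactly $\m^{e_1}$ in $H^1_\f(\Q_\ell, T^*[\m^k]) \cong R/\m^k$. Second, the Poitou--Tate sequence of Proposition \ref{prop:global_duality} applied to $\FF(n) \subset \FF^\ell(n)$ gives
\[H^1_{\FF^*_\ell(n)}(\Q,T^*[\m^k])\du \cong \coker\bigl(H^1_\s(\Q_\ell, T/\m^k) \to H^1_{\FF^*(n)}(\Q,T^*[\m^k])\du\bigr),\]
and the Chebotarev choice makes this image a cyclic submodule of maximal length $e_1$; since a quotient of $R/\m^{e_1}\times\cdots\times R/\m^{e_s}$ by any cyclic summand of length $e_1$ is isomorphic to $R/\m^{e_2}\times\cdots\times R/\m^{e_s}$, the cokernel has the desired shape. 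A variant of Lemma \ref{lem:mr417} in the non-self-dual regime, again leveraging \ref{Nsd}, then identifies $H^1_{\FF^*_\ell(n)}(\Q,T^*[\m^k])$ with $H^1_{\FF^*(n\ell)}(\Q,T^*[\m^k])$.

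With this enhanced lemma the inductive construction of the witnesses $n_r, n_{r+1}, \ldots$ from the proof of Theorem \ref{th:kur_par} runs unconditionally: at each step the largest summand of the dual Selmer group can be killed, yielding $\nu(n_i) = i$ and, by Lemma \ref{lem:ord_delta}, $\ord(\delta_{n_i}) = \alpha_i$ where $\Fitt_i^R(H^1_{\FF^*}(\Q,T^*)\du) = \m^{\alpha_i}$. Combined with the inclusion from Theorem \ref{th:kur_par}, this yields equality for every $i$.

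The main obstacle will be the linear disjointness of $\Q(T/\m^k)$ and $\Q(T^*[\m^k])$ over $\Q(\mu_{p^k})$ for large $k$. Assumption \ref{Nsd} is residual (mod $\m$) and must be lifted to mod $\m^k$, which is where \ref{Nsur} enters crucially: any common $G_\Q$-subquotient of the two mod-$\m^k$ Galois extensions would, after a diagonal reduction argument using \ref{Hirred}-\ref{Hcoh0}, yield a non-zero $G_\Q$-equivariant map $T/\m T \to T^*[\m]$, contradicting \ref{Nsd}, and \ref{Nsur} is needed to ensure scalar twists in $R$ cannot evade this because the $R$-module structure is already visible to $\Z_p[[G_\Q]]$.
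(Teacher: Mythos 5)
Your high-level strategy matches the paper's: prove an enhancement of Lemma \ref{lem:cr0} which, under \ref{Nsd}--\ref{Nsur}, kills the top cyclic summand even when $e_1 = e_2$, then feed the resulting witnesses $n_i$ into the inductive argument from the proof of Theorem \ref{th:kur_par}. The paper does exactly this with Lemma \ref{lem:nd}. However, your proof of the enhanced lemma has a genuine gap at the crucial step.

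The issue is the final transition you describe as ``a variant of Lemma \ref{lem:mr417} in the non-self-dual regime \ldots\ identifies $H^1_{\FF^*_\ell(n)}(\Q,T^*[\m^k])$ with $H^1_{\FF^*(n\ell)}(\Q,T^*[\m^k])$.'' That identification is equivalent, via the Poitou--Tate sequence for $\FF(n\ell) \subset \FF^\ell(n)$, to the surjectivity of $\loc_\ell^\f$ on the \emph{relaxed} Selmer group $H^1_{\FF^\ell(n)}(\Q,T/\m^k)$ onto $H^1_\f(\Q_\ell, T/\m^k) \cong R/\m^k$. This is not the hypothesis of Lemma \ref{lem:mr417}, which asks for surjectivity on the smaller $H^1_{\FF(n)}(\Q,T/\m^k)$; that hypothesis simply cannot be met once $k$ exceeds the exponent $e_1$ of $H^1_{\FF(n)}(\Q,T/\m^k)$, and the self-dual/non-self-dual distinction does not change that. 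What \ref{Nsd}--\ref{Nsur} actually buy, via Proposition \ref{prop:mr362}, is the ability to \emph{synchronize} the localization at $\ell$ with the localization at an auxiliary prime $b$, independently on the $T$- and $T^*$-sides. This is the content of the paper's Lemma \ref{lem:nd}: one first chooses $b$ so that both $\loc_b^\f$ maps are surjective at level $\m^{e_1}$, then uses Proposition \ref{prop:mr362} to produce $\ell$ with $\ker(\loc_\ell) = \ker(\loc_b)$ on the dual Selmer group and $\ker(\loc_\ell) = H^1_{\FF_b}(\Q,T/\m^k)$ on the Selmer group, and finally a Poitou--Tate argument over $\FF^{\ell b}$ produces a class $z$ with $\loc_b^\s(z)$ and $\loc_\ell^\s(z)$ explicitly related. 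It is precisely this $z$ that certifies the surjectivity of $\loc_\ell^\f$ on $H^1_{\FF^\ell}(\Q,T/\m^k)$, and it is the step your proposal omits. Your Chebotarev argument only controls $\loc_\ell$ on the small Selmer groups and says nothing about the new generator of length $k$ that appears in $H^1_{\FF^\ell(n)}$.

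Secondarily, your claim that \ref{Nsur} forces $\Q(T/\m^k)$ and $\Q(T^*[\m^k])$ to be linearly disjoint over $\Q(\mu_{p^k})$ is not correct as stated: since $T^*[\m^k] = \Hom(T/\m^k, \mu_{p^k})$, the field $\Q(T^*[\m^k])$ equals $\Q(T/\m^k, \mu_{p^k})$, which contains $\Q(T/\m^k)$. The Chebotarev input you need is exactly Proposition \ref{prop:mr362} (which the paper simply cites), and its proof does not go through linear disjointness of the two fields but through an analysis of the appropriate coinvariant modules. This is a smaller issue since you could cite the proposition directly, but the reasoning you give for it is flawed.
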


The proof of Theorem \ref{th:kur} will be presented at the end of this section. The reason for assuming \ref{Nsd}-\ref{Nsur} is that we can apply the following result.

\begin{proposition}(\cite[Proposition 3.6.2]{MazurRubin})
Assume that $R$ and $T$ satisfy Assumption \ref{ass:R}, \ref{Hirred}-\ref{Hchev} and \ref{Nsd}-\ref{Nsur}. Let $C\subset H^1(\Q,T/\m^k)$ and $D\subset H^1(\Q,T^*[\m^k])$ be finite submodules and choose some homomorphisms
$$\begin{array}{cc}
\phi:\ C\to R/\m^k ,\ \ &\psi:D\to R/\m^k.
\end{array}$$
There exists a set $S\subset \PP_k$ of positive density such that, for all $\ell\in S$,
$$\begin{aligned}
C\cap \ker\left[\loc_\ell:\ H^1(\Q,T/\m^k)\to H^1(\Q_\ell,T/\m^k)\right]=\ker(\phi),\\
 D\cap \ker\left[\loc_\ell:\ H^1(\Q,T^*[\m^k])\to H^1(\Q_\ell,T^*[\m^k])\right]=\ker(\psi).
 \end{aligned}$$
\label{prop:mr362}
\end{proposition}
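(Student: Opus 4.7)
The plan is to express the local condition $\loc_\ell(c) = 0$ as the vanishing of a cocycle representative of $c$ evaluated at a Frobenius element, and then use the Chebotarev density theorem to produce primes realizing a prescribed Frobenius behaviour. Since $C$ and $D$ are finite, both are annihilated by some $\m^k$, and the targets of $\phi$ and $\psi$ are cyclic $R/\m^k$-submodules of $R$; we may therefore work entirely modulo $\m^k$, replacing $T$ by $T/\m^k T$ and $T^*$ by $T^*[\m^k]$. Let $\Omega := \Q(T/\m^k, T^*[\m^k], \mu_{p^k})$ be the extension of $\Q$ trivialising the Galois action on these coefficient modules. Under \ref{Hirred}--\ref{Hcoh0}, inflation--restriction yields injections
\[
    H^1(\Q, T/\m^k) \hookrightarrow \Hom(G_\Omega, T/\m^k)^{\Gal(\Omega/\Q)}
\]
and analogously for $T^*[\m^k]$. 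For $\ell \in \PP_k$ unramified in the cocycle data, lemma \ref{lem:mr121} identifies $H^1_\f(\Q_\ell, T/\m^k)$ with $T/(\m^k, \Frob_\ell - 1)T$, so $\loc_\ell(c)$ is governed by the evaluation of $\res_\Omega(c)$ at a lift of $\Frob_\ell$ in $G_\Omega$.

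Second I would produce one element $\gamma \in G_\Omega$ whose evaluation pattern realises $\phi$ and $\psi$ simultaneously. I would select $\gamma$ inside the $\Gal(\Omega/\Q)$-conjugacy class of the element $\tau$ from \ref{Hsur}, so that $(T/\m^k)/(\gamma - 1)(T/\m^k)$ and $T^*[\m^k]/(\gamma-1)T^*[\m^k]$ are both free of rank one over $R/\m^k$, giving the correct ambient targets on each side. The heart of the argument is to show that the $R$-linear \emph{evaluation-at-$\gamma$} map
\[
    G_\Omega \longrightarrow \Hom_R(C, R/\m^k) \oplus \Hom_R(D, R/\m^k), \qquad \gamma \longmapsto \bigl(c \mapsto \tilde c(\gamma),\ d \mapsto \tilde d(\gamma)\bigr)
\]
hits $(\phi, \psi)$. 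Hypothesis \ref{Nsd} --- the vanishing of $\Hom_{\F_p[G_\Q]}(T/\m, T^*[\m])$ --- implies that the semisimplifications of $T/\m$ and $T^*[\m]$ share no Jordan--H\"older factor, so a Goursat-type argument places the Galois-equivariant images of $\res_\Omega(C)$ and $\res_\Omega(D)$ in independent direct summands of $\Hom(G_\Omega, T/\m^k) \oplus \Hom(G_\Omega, T^*[\m^k])$; the $C$- and $D$-prescriptions therefore decouple. Hypothesis \ref{Nsur} then supplies enough $R$-linearity inside the Galois action to realise an arbitrary $R$-linear functional on each side by conjugating $\tau$ by a suitable element of $\Z_p[G_\Q]$.

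With such a $\gamma$ at hand, I would invoke the Chebotarev density theorem on the finite Galois extension of $\Q$ obtained as the compositum of $\Omega$ with the fixed field of the common kernels of the cocycles in $C$ and $D$: the set of primes whose Frobenius is conjugate to $\gamma$ has positive density, and intersecting with the Chebotarev condition defining $\PP_k$ (from \ref{P1modp}--\ref{P1dim} and \ref{Hprimes}) preserves positive density. The main obstacle is the middle paragraph, namely the \emph{simultaneous} realisation of $\phi$ on $C$ and $\psi$ on $D$: without \ref{Nsd}, local and global Tate duality couple $H^1(\Q, T)$ with $H^1(\Q, T^*)$ so tightly that the two evaluations at $\Frob_\ell$ would be forced to satisfy a compatibility relation, genuinely obstructing independent prescription; this is precisely why residual self-duality is the obstruction and why \ref{Nsd}--\ref{Nsur} suffice to push the core rank one Chebotarev argument of \cite[\S3.6]{MazurRubin} through on both sides in parallel.
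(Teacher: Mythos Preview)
The paper does not give its own proof of this proposition; it is quoted verbatim from \cite[proposition~3.6.2]{MazurRubin} and used as a black box. Your outline is a faithful sketch of the argument in that reference --- reduce modulo $\m^k$, restrict to $G_\Omega$ via inflation--restriction using \ref{Hirred}--\ref{Hcoh0}, invoke \ref{Nsd} to show the extensions cut out by $C$ and $D$ are linearly disjoint so the two prescriptions decouple, use \ref{Hsur} and \ref{Nsur} to realise $(\phi,\psi)$ by evaluation at some $\gamma$ lying above $\tau$, and finish by Chebotarev --- so there is nothing to compare against beyond the citation itself.
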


The proof of Theorem \ref{th:kur} is based on the following lemma.

\begin{lemma}
Let $R$ is either a discrete valuation ring or a principal, artinian, local ring and let $(T,\FF,\PP)$ be a Selmer triple satisfying \ref{Hirred}-\ref{Hprimes} and \ref{Nsd}-\ref{Nsur}. Assume that
$$H^1_{\FF}(\Q,T)\cong H^1_{\FF^*}(\Q,T^*)\cong R/\m^{e_1}\times \cdots\times R/\m^{e_s}$$
for some $e_1\geq \ldots \geq e_s\in\N$. Then there are infinitely many primes $\ell\in \PP_k$ such that 
$$H^1_{\FF(\ell)}(\Q,T)\cong H^1_{\FF^*(\ell)}(\Q,T^*)\cong R/\m^{e_2}\times \cdots\times R/\m^{e_s}.$$
\label{lem:nd}
\end{lemma}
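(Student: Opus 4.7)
The argument should mimic corollary \ref{cor:mr458} (Mazur--Rubin's treatment of the $\chi(\FF)>0$ case), with hypotheses \ref{Nsd}--\ref{Nsur} substituting for the ``free'' core-rank direction that would otherwise be used. First, I would reduce to level $k := e_1$ by means of lemmas \ref{lem:mr353}--\ref{lem:mr354}, then fix direct-sum decompositions
\[H^1_\FF(\Q,T/\m^k) = (R/\m^k)\cdot c \oplus M, \qquad H^1_{\FF^*}(\Q,T^*[\m^k]) = (R/\m^k)\cdot c^* \oplus M^*,\]
with $M \approx M^* \approx R/\m^{e_2}\times \cdots \times R/\m^{e_s}$, and let $\phi,\psi$ denote the projections onto the cyclic summands. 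Applying proposition \ref{prop:mr362} --- the input that crucially uses \ref{Nsd}--\ref{Nsur} --- to $C := H^1_\FF(\Q,T/\m^k)$, $D := H^1_{\FF^*}(\Q,T^*[\m^k])$ and the homomorphisms $\phi,\psi$, I obtain a positive-density set of primes $\ell$ with $\ker(\loc_\ell|_C) = M$ and $\ker(\loc_\ell|_D) = M^*$. Corollary \ref{cor:local_coh_cyclic} implies that $H^1_\f(\Q_\ell,\cdot)$ is cyclic of length $k$, so the induced localisations $\loc^\f_\ell\colon H^1_\FF \twoheadrightarrow H^1_\f(\Q_\ell,T/\m^k)$ and $\loc^\f_\ell\colon H^1_{\FF^*} \twoheadrightarrow H^1_\f(\Q_\ell,T^*[\m^k])$ are both surjective for such $\ell$.

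The rest is Poitou--Tate bookkeeping. Let $\FF_\ell^{\mathrm{str}}$ (resp.\ $\FF_\ell^{\mathrm{rel}}$) denote the Selmer structure obtained from $\FF$ by imposing the strict (resp.\ relaxed) local condition at $\ell$. Applying proposition \ref{prop:global_duality} to $\FF_\ell^{\mathrm{str}} \subset \FF$, the surjectivity of $\loc^\f_\ell$ collapses the five-term sequence to give $H^1_{\FF_\ell^{\mathrm{str}}}(\Q,T/\m^k) = M$; the dual form (and its symmetric analogue for $T^*$) yields $H^1_{(\FF^*)_\ell^{\mathrm{str}}}(\Q,T^*[\m^k]) = M^*$ together with the identity $H^1_{(\FF^*)_\ell^{\mathrm{rel}}}(\Q,T^*[\m^k]) = H^1_{\FF^*}(\Q,T^*[\m^k])$. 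A final application of Poitou--Tate to $\FF_\ell^{\mathrm{str}} \subset \FF(\ell)$ then produces
\[0 \to M \to H^1_{\FF(\ell)}(\Q,T/\m^k) \to H^1_\tr(\Q_\ell,T/\m^k) \to H^1_{\FF^*}(\Q,T^*[\m^k])^\vee \to H^1_{\FF^*(\ell)}(\Q,T^*[\m^k])^\vee \to 0.\]
Under local Tate duality (proposition \ref{prop:local_duality}), the identification $H^1_\tr(\Q_\ell,T/\m^k) \cong H^1_\f(\Q_\ell,T^*[\m^k])^\vee$ turns the connecting map into the transpose of $\loc^\f_\ell\colon H^1_{\FF^*} \twoheadrightarrow H^1_\f(\Q_\ell,T^*[\m^k])$, which is injective by the surjectivity above. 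Therefore $H^1_{\FF(\ell)} \to H^1_\tr$ vanishes, giving $H^1_{\FF(\ell)}(\Q,T/\m^k) = M$, and the symmetric argument gives $H^1_{\FF^*(\ell)}(\Q,T^*[\m^k]) = M^*$.

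The decisive step is the Chebotarev input via proposition \ref{prop:mr362}. Corollary \ref{cor:loc_sur} only controls the \emph{images} of the localisation, whereas here one must prescribe the \emph{kernels} precisely --- a strengthening required because the core-rank-zero setting offers no ``free'' direction to absorb the cyclic summand. This is exactly why hypotheses \ref{Nsd}--\ref{Nsur} are needed: they provide enough flexibility in the Chebotarev density theorem to realise arbitrary pairs of prescribed linear functionals as kernels of localisation at a single prime. Once this ingredient is secured, the Poitou--Tate deduction is routine.
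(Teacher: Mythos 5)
Your plan correctly identifies proposition \ref{prop:mr362} as the Chebotarev ingredient made available by \ref{Nsd}--\ref{Nsur}, and the Poitou--Tate bookkeeping in the second half is sound as far as it goes. However, there is a genuine gap at the very first step, namely the choice ``$k := e_1$''.

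Working at level $k=e_1$, your argument produces $H^1_{\FF(\ell)}(\Q,T/\m^{e_1}) = M$ and $H^1_{\FF^*(\ell)}(\Q,T^*[\m^{e_1}]) = M^*$. By lemmas \ref{lem:mr353}--\ref{lem:mr354} these are the $\m^{e_1}$-torsion subgroups of $H^1_{\FF(\ell)}(\Q,T)$ and $H^1_{\FF^*(\ell)}(\Q,T^*)$, but this only determines the full modules when $e_1 > e_2$: if $e_1 = e_2$, the module $M \approx R/\m^{e_1}\times R/\m^{e_3}\times\cdots\times R/\m^{e_s}$ is also the $\m^{e_1}$-torsion of $R/\m^{f}\times R/\m^{e_3}\times\cdots\times R/\m^{e_s}$ for any $f > e_1$, so nothing rules out a larger cyclic summand in $H^1_{\FF(\ell)}(\Q,T)$. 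This is exactly the case lemma \ref{lem:cr0} cannot handle, and the point of lemma \ref{lem:nd} is to do better. The obvious fix --- take $k>e_1$ --- also fails with your argument: $H^1_\FF(\Q,T/\m^k)\approx R/\m^{e_1}\times\cdots\times R/\m^{e_s}$ has no cyclic summand of length $k$, so no choice of $\ell$ can make $\loc_\ell^\f$ on $H^1_\FF(\Q,T/\m^k)$ surjective onto $H^1_\f(\Q_\ell,T/\m^k)\cong R/\m^k$; without that surjectivity you cannot conclude $H^1_{(\FF^*)^\ell}=H^1_{\FF^*}$, and the Poitou--Tate sequence no longer collapses.

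The paper's proof circumvents this by first producing an auxiliary prime $b$ (via corollary \ref{cor:loc_sur}), so that the relaxed group $H^1_{\FF^b}(\Q,T/\m^k)\approx R/\m^k\times R/\m^{e_2}\times\cdots\times R/\m^{e_s}$ acquires a free $R/\m^k$-summand, and then invokes proposition \ref{prop:mr362} not with the ``projection onto the cyclic top summand'' as $\phi,\psi$, but with the localisation at $b$ itself. The prime $\ell$ is then matched to $b$ in the sense that $\loc_\ell$ and $\loc_b$ have the same kernels on $H^1_\FF(\Q,T/\m^k)$ and $H^1_{\FF^*}(\Q,T^*[\m^k])$; a Poitou--Tate argument produces an element $z\in H^1_{\FF^{b\ell}}(\Q,T/\m^k)$ relating the singular parts at $b$ and $\ell$, yielding the splitting $H^1_{\FF^{b\ell}} = (R/\m^k)z\oplus H^1_{\FF^b} = (R/\m^k)z\oplus H^1_{\FF^\ell}$, and finally one shows that $\loc_\ell^\f$ is surjective on the relaxed group $H^1_{\FF^\ell}(\Q,T/\m^k)$ (which does have core rank one, hence a length-$k$ cyclic factor). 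That surjectivity, at the larger level $k>e_1$, is what lets the structure theorem identify $H^1_{\FF(\ell)}$ with $R/\m^{e_2}\times\cdots\times R/\m^{e_s}$ on the nose. Your plan omits the auxiliary prime and the passage from $\FF$ to $\FF^\ell$; without it, the case $e_1=e_2$ is not covered.
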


\begin{proof}
Without loss of generality we can assume $\length(R)>e_1$, since the result otherwise follows from Lemma \ref{lem:cr0}. Then  we can choose some $k\in \N$ such that $\length(R)\geq k>e_1$ and $\PP_k\subset \PP$.

By Corollary \ref{cor:loc_sur}, we can find an auxiliary prime $b\in \PP_k$ such that the localisation maps
\[\begin{aligned}
&\loc_b:\ H^1_{\FF}(K,T/\m^kT)\to H^1_\f(K_b,T/\m^kT)[\m^{e_1}],\\ &\loc_b:\ H^1_{\FF^*}(K,T^*[\m^k])\to H^1_\f(K_b,T^*[\m^k])[\m^{e_1}]
\end{aligned}\]
are surjective. Hence
\[H^1_{(\FF^*)_b}(K,T^*[\m^k])\cong R/\m^{e_2}\times \cdots\times R/\m^{e_s}.\]
Since $\chi(T/\m^k T\FF^b)=1$ by Proposition \ref{prop:core_abc}, Theorem \ref{th:core_rank} implies that
\begin{equation}
H^1_{\FF^b}(K,T/\m^k T)\cong R/\m^k\times R/\m^{e_2}\times \cdots\times R/\m^{e_s}.
\label{eq:str_Fb}
\end{equation}
By Proposition \ref{prop:mr362}, we can find a prime $\ell$ satisfying the following:
\begin{itemize}
\item The kernel of the localisations $\loc_\ell$ and $\loc_b$ defined on the dual Selmer group $H^1_{\FF^*}(\Q,T^*[\m^k])$ are the same. Following Lemma \ref{lem:mr121}, we can define non-canonical isomorphisms
\begin{equation}
H^1_\f(K_b,T^*[\m^k])\cong H^1_\f(K_\ell,T^*[\m^k])\cong T^*/(\m^k,\tau-1) T^*\cong R/\m^k.
\label{eq:fin_iso}
\end{equation}
Under this isomorphism, we can understand $\loc_b$ and $\loc_\ell$ as elements in the dual space $\Hom(H^1_{\FF^*}(\Q,T^*[\m^k]),R/\m^k)$. In this setting, the above condition implies that there exists a unit $u\in R^\times$ such that $\loc_\ell= u\loc_b$.

\item The localisation map 
$$\loc_\ell:\ H^1_{\FF^b}(K,T/\m^k T)\to H^1_\f(K_\ell,T/\m^k T)$$ 
is surjective.

The first condition implies that
\[H^1_{(\FF^*)_{b\ell}}(K,T^*[\m^k])=H^1_{(\FF^*)_b}(K,T^*[\m^k])\cong R/\m^{e_2}\times \cdots\times R/\m^{e_s}.\]
\end{itemize}
Since $\chi(T/\m^k T,\FF^{b\ell})=2$ by Proposition \ref{prop:core_abc}, then Theorem \ref{th:core_rank} gives an isomorphism
$$H^1_{\FF^{b\ell}}(K,T/\m^k T)\cong (R/\m^k)^2\times R/\m^{e_2}\times \cdots\times R/\m^{e_s}.$$

By Proposition \ref{prop:global_duality}, we can consider the following exact sequence
$$\xymatrix{H^1_{\FF^{b\ell}}(K,T/\m^k T)\ar[r] & H^1_{\s}(K_b, T/\m^k T)\oplus H^1_{\s}(K_\ell, T/\m^k T) \ar[r] & H^1_{\FF^*}(K,T^*[\m^k])^\vee.}$$
Dualising the isomorphisms in \eqref{eq:fin_iso}, we obtain an isomorphism
\begin{equation}
H^1_\s(K_b,T/\m^k T)\oplus H^1_\s(K_\ell,T/\m^k T)\cong (R/\m^k)^2
\label{eq:sing_iso}
\end{equation}
such that the element $(1,-u^{-1})$ belongs to the kernel of the second map. Therefore, there is an element $z\in H^1_{\FF^{b\ell }}(\Q,T/\m^k T)$ such that $\loc_b(z)=1$ and $\loc_\ell(z)=-u^{-1}$, under the identifications in \eqref{eq:sing_iso}.

It implies that the relaxed Selmer group splits as follows:
\begin{equation}
H^1_{\FF^{b\ell}}(K,T/\m^k T)=(R/\m^k) z\oplus H^1_{\FF^b}(K,T/\m^k T)=(R/\m^k) z\oplus H^1_{\FF^\ell}(K,T/\m^k T).
\label{eq:rel_ql_split}
\end{equation}

We want to show now that 
$$\Pi_\ell\circ\loc_\ell:\ H^1_{\FF^\ell}(K,T/\m^k T)\to H^1_\f(K_\ell,T/\m^k T),$$ 
where $\Pi_\ell:\ H^1(K_\ell,T/\m^k T)\to H^1_{\f}(K_\ell,T/\m^k T)$ is the projection arising from the splitting in Lemma \ref{lem:local_split}, is surjective. 

Indeed, let $x\in H^1_{\FF^\ell}(K,T/\m^k T)$ be such that $\pi^{k-1}x\neq 0$, where $\pi $ is a generator of $\m$. By \eqref{eq:rel_ql_split}, there is a unique decomposition $x=\alpha z+\beta$, where $\alpha\in R/\m^k$ and $\beta\in H^1_{\FF^q}(K,T/\m^k T)$. Since
$$H^1_{\FF^\ell}(K,T/\m^k T)\cong R/\m^k \times R/\m^{e_2}\times \cdots\times R/\m^{e_s}$$
and 
$$H^1_{\FF}(K,T/\m^k T)\cong R/\m^{e_1} \times R/\m^{e_2}\times \cdots\times R/\m^{e_s},$$
 then 
 \[\pi^{k-e_1}x=\pi^{k-e_1}\alpha z+\pi^{k-e_1}\beta\in \pi^{k-e_1} H^1_{\FF^\ell}(K,T/\m^k T)\subset H^1_\FF(K,T/\m^k T)\subset H^1_{\FF^b}(K,T/\m^k).\] 
 By \eqref{eq:rel_ql_split}, that implies that $\pi^{k-e_1}\alpha=0$, which means that $\alpha\in \m^{e_1}/\m^k$. Then, since $\pi^{k-1}\alpha=0$ and $\pi^{k-1}x\neq 0$, we get that $\pi^{k-1} \beta\neq 0$. By the assumptions on the prime $\ell$,
 \[\loc_\ell\Bigl(H^1_{\FF^b}(K,T/\m^k T)\Bigr)=H^1_\f(K_\ell, T/\m^k T),\]
  so the isomorphism in \eqref{eq:str_Fb} implies that $\loc_\ell(\beta )$ generates $H^1_\f(K_\ell, T/\m^k T)$. Indeed, every element of $H^1_{\FF^b}(K,T/\m^k T)$ is a linear combination of $\beta$ and elements of $\m^{e_2}$-torsion, so $\loc_\ell$ would only be surjective when $\loc_\ell(\beta)$ generates the whole $H^1_\f(K_\ell, T/\m^k T)$.

We have that
\[(\Pi_\ell\circ\loc_\ell)(x)-(\Pi_\ell\circ\loc_\ell)(\beta)= \alpha(\Pi_\ell\circ\loc_\ell)(z)\in \m^{e_1} H^1_\f(K_\ell, T/\m^k T).\]
Then $(\Pi_\ell\circ\loc_\ell)(x)$ generates $H^1_\f(K_\ell, T)$ and thus 
\[\Pi_\ell\circ\loc_{\ell}:\ H^1_{\FF^\ell}(K,T/\m^k T)\to H^1_\f(K_\ell, T/\m^k T)\]
is surjective. Note that 
\[H^1_{\FF(\ell)}(K,T/\m^k T)=\ker\left(\Pi_\ell\circ\loc_\ell:H^1_{\FF^\ell}(K,T/\m^k)\to H^1_\f(K_\ell,T/\m^k)\right).\]
Then the structure theorem implies that
\[H^1_{\FF(\ell)}(K,T/\m^k T)\cong R/\m^{e_2}\times\cdots\times R/\m^{e_s}.\qedhere\]
\end{proof}

\begin{proof}[Proof of Theorem \ref{th:kur}]

Assume that
$$H^1_{\FF^*}(\Q,T^*)\du\cong R^r\times R/\m^{e_1}\times \cdots\times R/\m^{e_s}.$$

By Theorem \ref{th:kur_par},
$$\Theta_i(\kappa)\subset\Fitt_i^R\left(H^1_{\mathcal F^*}(\mathbb Q, T^*)\du\right).$$

By Lemma \ref{lem:ord_delta}, for every $i\in \Z_{\geq0}$ we just need to find some $n\in \NN$ such that $\nu(n)=i$ and $\Fitt_i^R\left(H^1_{\mathcal F^*}(\mathbb Q, T^*)\du\right)=\m^{\lambda(n)}$, where $\lambda(n)=\length(H^1_{\FF^*(n)}(\Q,T^*[\m^{k_n}]))$. For every $i\in\{0,\ldots,s\}$, we will construct some $n_i\in \NN(\PP_k)$ such that $\nu(n_i)=i$ and
$$H^1_{\FF^*(n_i)}(\Q,T^*)\du\cong  R/\m^{e_{i+1}}\times \cdots\times R/\m^{e_s}.$$
The process for constructing the $n_i$ was described in the proof of Theorem \ref{th:kur_par} and, under assumptions \ref{Nsd} and \ref{Nsur}, Lemma \ref{lem:nd} guarantees that $h_i=e_{i+1}$ for all $i$, so the Selmer group for $\FF^*(n_i)$ has the desired structure.
\end{proof}

\section{Selmer group of an elliptic curve}
\label{sec:EC}

\subsection{Main results}
\label{sec:EC_intro}

The aim of this section is to apply the results from the previous one to compute the Galois structure of the classical Selmer group of an elliptic curve (defined over $\Q$) over certain abelian extensions.

Throughout this section, let $E/\Q$ be an elliptic curve defined over $\Q$ and let $p\geq 5$ be a prime number. Denote by $N$ the conductor of $E$ and by $T$ the Tate module of $E$.

Assume the following hypotheses:
\begin{itemize}
\item \namedlabel{ESur}{(E1)} The homomorphism $\rho:\ G_\Q\to \textrm{Aut}(T)$ induced by the Galois action is surjective.
\item \namedlabel{EManin}{(E2)} Either the Manin constant $c_0(E)$ or $c_1(E)$ is prime to $p$.
\end{itemize}

Assumption \ref{ESur} is necessary to ensure that $T$ satisfies the assumptions \ref{Hirred}-\ref{Hprimes}. Assumption \ref{EManin} is necessary to guarantee the integrality of the modular symbols, which will be defined below.

By the modularity theorem, there exist morphisms
\[\begin{array}{cc}
\varphi_0:\ X_0(N)\to E,\ &\varphi_1:\ X_1(N)\to E,
\end{array}\]
where $X_0(N)$ and $X_1(N)$ are the modular curves associated with the groups $\Gamma_0(N)$ and $\Gamma_1(N)$, respectively (see \cite[Definition 1.2.1]{DiamondShurman}). We may choose $\varphi_0$ and $\varphi_1$ of minimal degree.

The modularity theorem also proves the existence of a newform $f$ of weight $2$ and level $N$ associated with the isogeny class of $E$. Fixing a Néron differential $\omega_E$, there exist some constants $c_0(E)$ and $c_1(E)$ such that\footnote{We may choose $\varphi_0$ and $\varphi_1$ in a way that both $c_0(E)$ and $c_1(E)$ are positive.}
\[\begin{array}{cc}
\varphi_0^*(\omega)=c_0(E) 2\pi i f(\tau)\,d\tau,\ &\varphi_1^*(\omega)=c_1(E) 2\pi i f(\tau)\,d\tau.
\end{array}\]

The Manin constant $c_0(E)$ is an integer (see \cite[Proposition 2]{Edixhoven91}) and is conjecturally  equal to $1$ if $E$ is an $X_0$-optimal elliptic curve, i.e., the degree of $\varphi_0$ is minimal among all modular parametrizations of curves in the isogeny class of $E$ (see \cite[\textsection 5]{Manin72}). The conjecture was proven in \cite{Mazur78} if $E$ is semistable. In particular, it is proven that, for an $X_0$-optimal elliptic curve, $c_0(E)$ is only divisible by $2$ and primes of additive reduction. By \ref{ESur}, $E$ does not admit any $p$-isogeny. Hence $c_0(E)$ is conjecturally prime to $p$ under assumption \ref{ESur}.

The constant $c_1(E)$ was conjectured to be $1$ for all elliptic curves by Stevens in \cite[conjecture 1]{Stevens89}.

By the modularity theorem, we can define for an elliptic curve the modular symbols of its associated modular form. In fact, if $f$ is the modular form associated with the isogeny class of $E$, the modular symbol of $E$ for some $\frac{a}{m}\in \Q$ is defined as
\[\lambda\left(\frac{a}{m}\right):=\int_{i\infty}^{\frac{a}{m}} 2\pi i f(z)\,dz,\]
where the integral follows the vertical line in the upper half plane from the cusp at infinity to $\frac{a}{m}\in\Q$.

The modular symbols satisfy the following relation (see \cite[lemma 5]{WiersemaWuthrich}):
\[\lambda\left(\frac{-a}{m}\right)=\overline{\lambda\left(\frac{a}{m}\right)}.\]
We can consider the real and imaginary parts of the modular symbols and normalise them by one of the Néron periods $\Omega_E^{\pm}$ to obtain rational numbers carrying important arithmetic information:
\begin{equation} \left[\frac{a}{m}\right]^\pm=\frac{\lambda\left(\frac{a}{m}\right)\pm\lambda\left(\frac{-a}{m}\right)}{2\Omega_E^{\pm}}\in \Q.
\label{eq:modular_symbol}
\end{equation}

Assumption \ref{EManin} is required to ensure that the denominator of $\left[\frac{a}{m}\right]^{\pm}$ is not divisible by $p$. Thus
\[\left[\frac{a}{m}\right]^{\pm}\in \Z_{(p)}\subset \Z_p.\]

Although we need the elliptic curve $E$ to be defined over $\Q$ in order to apply the modularity theorem to make use of the modular symbols, we will study the properties of the Mordell-Weil group over a finite abelian extension $K/\Q$ satisfying the following hypotheses:
\begin{itemize}
\item\namedlabel{Kur}{(K1)} $K/\Q$ is unramified at $p$ and at every prime at which $E$ has bad reduction.
\item\namedlabel{Kdeg}{(K2)} The degree $[K:\Q]$ is prime to $p$. 
\item\namedlabel{Kloc}{(K3)} $E(K_\p)[p]=\{O\}$ for every prime $\p$ of $K$ above $p$.
\item\namedlabel{Ktam}{(K4)} All the Tamagawa numbers of $E$ over $K$ are prime to $p$.
\item \namedlabel{KIMCloc}{(K5)} For every character $\chi$ of $\Gal(K/\Q)$, the Iwasawa main conjecture localised at $X\Lambda$ holds for the modular form $f_\chi$, which is defined in \eqref{eq:twsited_mf} below.
\end{itemize}

Throughout this section, denote by $G$ the Galois group $\Gal(K/\Q)$. Also, let $d$ and $c$ be the degree and the conductor of $K/\Q$, respectively. We will also denote by $\OO_d$ the ring of integers of $\Q_p(\mu_d)$ and $\Lambda_d:=\Lambda\otimes \OO_d$, where $\Lambda$ is the Iwasawa algebra defined in \textsection \ref{sec:lambda}.

Assumption \ref{KIMCloc} deserves some comment. If $f=\sum_{n=1}^\infty a_nq^n$ is the newform associated with $E$, the $\chi$-twist of $f$ is defined as
\begin{equation}
f_\chi=\sum_{n=1}^\infty \chi(n) a_n q^n\in S_2(\Gamma_1(N\cond(\chi)^2),\chi^2).
\label{eq:twsited_mf}
\end{equation}
where $\Gamma_1(A)$ is one of the modular groups of level $A$. 
\begin{remark}
With this definition of $f_\chi$, we have that 
\[L(f_\chi,s)=L(E,\chi,s).\]

Indeed, the $L$-function of the twisted modular form is defined as 
\[L(f_\chi,s)=\sum_{n=1}^\infty \frac{\chi(n)a_n}{n^s}=\prod_{\ell} \Bigl(1-\ell^{-s}\chi(\ell)a_\ell+\textbf{1}_N(\ell)\ell^{1-2s}\chi(\ell)^2\Bigr)^{-1}.\]
where $N$ is the level of $f$ and $\textbf{1}_N(\ell)$ is the trivial Dirichlet character modulo $N$, given by $\textbf{1}_N(m)=1$ if $\gcd(N,m)=1$ and $\textbf{1}_N(m)=0$ otherwise.

This definition coincides with the motivic $L$-function of $T_pE\otimes \OO_d(\chi)$, where $\chi$ is normalised such that $\chi(\ell)=\chi(\Frob_\ell$), where $\Frob_\ell$ denotes the arithmetic Frobenius. In this setting, when $ \ell\neq p$, the Euler factor is
\[P_\ell(T)_\chi:=\det_{\OO_d\otimes \Q_\ell} \Bigl(1-\Frob_\ell^{-1} T|((T_p\otimes \OO_d(\chi))^*)^{I_\ell}\Bigr)=1-\ell\chi(\ell)a_\ell T+\textbf{1}_N(\ell)\ell \chi(\ell)^2 T^2.\]
In case, $\ell=p$, one would obtain the same formula using Fontaine's period ring $B_{\crys}$:
\[P_p(T)_\chi=\Bigl(1-\Frob_\ell^{-1} T|(T_p\otimes \OO_d(\chi))^*\otimes B_\crys\Bigr)=1-p\chi(p) a_p T+\textbf{1}_N(p)p \chi(p)^2 T^2.\]
Then the motivic $L$-function is defined as
\[L(T_pE\otimes \OO_d(\chi),s):=\prod_\ell P_\ell(\ell^{-s})=\prod_\ell (1-\ell^{-s} \chi(\ell)a_\ell+\textbf{1}_N(\ell)\ell^{1-2s} \chi(\ell)^2)=L(f_\chi,s).\]
\end{remark}

We assume the Iwasawa main conjecture in the sense of Kato.

\begin{conjecture}(Iwasawa main conjecture for $f_\chi$ in the sense of Kato) Define the Iwasawa cohomology as
    \[
    {H^1_{\textrm{IW}}(\Q_\infty,T\otimes\OO_d(\chi))}:=\varprojlim_n H^1(\Q_n,T\otimes \OO_d(\chi)).\]
Also denote 
    \[X_\infty:=\Hom(H^1_{(\FLambda)^*}(\Q,(T\otimes \Lambda_d(\chi))^*),\Q_p/\Z_p).\]
    Let $z_{\Q_\infty,\chi}\in H^1_{\textrm{IW}}(\Q_\infty,T\otimes\chi)$ be the $\chi$-twist of Kato's zeta element (see \eqref{eq:kataoka_interp} and \eqref{eq:euler_twist}). The Iwasawa main conjecture is the equality of $\Lambda$-ideals
    \[\char\left(\frac{H^1_{\textrm{IW}}(\Q_\infty,T\otimes \OO_d(\chi))}{\Lambda z_{\Q_\infty,\chi}}\right)=\char(X_\infty).\]
    \label{conj:IMC}
    \end{conjecture}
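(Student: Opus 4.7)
The plan is to prove the conjecture by invoking the $\Lambda$-adic Kolyvagin system machinery developed in \textsection\ref{sec:lambda} together with Kato's Euler system for the twisted modular form $f_\chi$. The starting point is the construction of a $\Lambda_d$-adic Kolyvagin system $\kappa^{\textrm{Kato}}_\chi\in\KS(T\otimes\Lambda_d(\chi),\FLambda,\PP_1)$ by applying proposition \ref{prop:euler_to_wkkol} and theorem \ref{th:wk_to_kol} (or rather their twisted analogues, as formalised by theorem \ref{th:eul_to_kol_lambda}) to the $\chi$-twist of Kato's Euler system. The Galois representation $T\otimes\OO_d(\chi)$ satisfies \ref{Hffr}--\ref{Hchev} and \ref{ITam}--\ref{Izerop} thanks to assumptions \ref{ESur} and \ref{Kur}--\ref{Ktam} (surjectivity of $\rho$ gives large image modulo $p$; unramifiedness of $K$ at $p$ together with $E(K_\p)[p]=0$ controls the local conditions at $p$; and the Tamagawa numbers being prime to $p$ gives \ref{ITam}). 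With these in place, proposition \ref{prop:lambda_kol_free} asserts that $\KS(T\otimes\Lambda_d(\chi),\FLambda,\PP)$ is free of rank one over $\Lambda_d$.

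The first divisibility follows immediately from theorem \ref{th:MC_ind}: the characteristic ideal of $X_\infty$ divides $\textrm{Ind}(\kappa^{\textrm{Kato}}_\chi)$. By the explicit reciprocity law of Kato, the class $(\kappa^{\textrm{Kato}}_\chi)_1\in H^1(\Q,T\otimes\Lambda_d(\chi))$ realises the $p$-adic $L$-function of $f_\chi$ under the Perrin-Riou / Coleman map (see \eqref{eq:kataoka_interp} and \eqref{eq:euler_twist} referenced in the conjecture). Hence $\textrm{Ind}(\kappa^{\textrm{Kato}}_\chi)$ is precisely the characteristic ideal of $H^1_{\textrm{IW}}(\Q_\infty,T\otimes\OO_d(\chi))/\Lambda z_{\Q_\infty,\chi}$, yielding the divisibility
\[
\char(X_\infty) \ \Big|\ \char\!\left(\frac{H^1_{\textrm{IW}}(\Q_\infty,T\otimes \OO_d(\chi))}{\Lambda z_{\Q_\infty,\chi}}\right).
\]

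For the reverse divisibility, the strategy is to show that $\kappa^{\textrm{Kato}}_\chi$ is $\Lambda$-primitive, for then theorem \ref{th:MC_ind} promotes the divisibility above to an equality. By proposition \ref{prop:mr452} (and its $\Lambda$-adic version in \cite{Buyukboduk10}), $\Lambda$-primitivity is equivalent to the nonvanishing of the reduction of $\kappa^{\textrm{Kato}}_\chi$ modulo the maximal ideal of $\Lambda_d$, which in turn, via corollary \ref{cor:kol_red_prim}, reduces to the primitivity of its image in $\KS(T\otimes\OO_d(\chi),\Fcan,\PP)$. One approach is to exhibit a prime (or square-free product of primes) $n$ at which the corresponding Kurihara number $\delta_{n,\chi}$ is a unit, using the explicit formula that will be developed in \textsection\ref{sec:exp}--\ref{sec:Kato_kol} identifying $\delta_{n,\chi}$ with $\loc_p^s$ of the derived Kato class; such a unit Kurihara number exists precisely when the analytic side is non-degenerate modulo $p$. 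Alternatively, the reverse divisibility can be imported from the work of Skinner--Urban via Eisenstein congruences for $f_\chi$, after checking that the hypotheses (irreducibility of the residual representation, ramification conditions, sign of the functional equation) are met in our situation.

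The main obstacle is the second divisibility. Establishing $\Lambda$-primitivity unconditionally is genuinely deep; under our hypotheses it can be either (a) bootstrapped from an existence statement of the form \emph{some $\delta_{n,\chi}$ is a unit}, which is the subject of theorem \ref{th:EK_IMC_intro} and turns the main conjecture into a numerically testable statement, or (b) obtained from the Skinner--Urban divisibility, which requires $K/\Q$ (hence $\chi$) to satisfy the compatibility hypotheses of their construction. The residually non-self-dual assumption \ref{Nsd}, which is automatic for $T\otimes\OO_d(\chi)$ when $\chi^2\neq 1$ by (E1), simplifies matters considerably; the self-dual case $\chi^2=1$ is the genuinely delicate one and is where any obstruction to the sharpness of the Kolyvagin-system bound must be addressed.
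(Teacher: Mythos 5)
This statement is a \emph{conjecture} (conjecture \ref{conj:IMC}), not a theorem, and the paper does not prove it; there is therefore no proof of the paper's own to compare against. The paper assumes only a localised, weaker form of it as hypothesis \ref{KIMCloc}, and gives, in theorem \ref{th:EK_IMC}, an \emph{equivalence} between the conjecture and an arithmetic statement (the existence of a unit twisted Kurihara number), establishing neither side unconditionally.

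Your proposal correctly identifies the one divisibility that the Kolyvagin system machinery actually delivers: by theorem \ref{th:MC_ind}, $\char(X_\infty)$ divides $\textrm{Ind}(\kappa_\chi^\infty)$, which via Kato's explicit reciprocity law equals $\char\bigl(H^1_{\textrm{IW}}(\Q_\infty,T\otimes\OO_d(\chi))/\Lambda z_{\Q_\infty,\chi}\bigr)$. The gap is the reverse divisibility, which you frame (correctly) as the $\Lambda$-primitivity of Kato's Kolyvagin system, but neither route you offer closes it within the paper's hypotheses. Route (a), invoking theorem \ref{th:EK_IMC_intro} to exhibit a unit $\delta_{n,\chi}$, is circular: that theorem says such a unit exists \emph{if and only if} the main conjecture holds; it does not assert unconditional existence, and indeed the paper's own numerical examples treat unit Kurihara numbers as something to be \emph{verified} per curve. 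Route (b), Skinner--Urban, establishes the conjecture only under additional hypotheses (good ordinary reduction at $p$ and an auxiliary prime $\ell\mid\mid N$ at which $\chi$ is unramified with prescribed ramification of $\overline{\rho}_{f_\chi}$), as the remark following conjecture \ref{conj:IMC} states; these are not consequences of \ref{ESur}--\ref{EManin} and \ref{Kur}--\ref{KIMCloc}, and in particular fail in the supersingular and multiplicative cases. Finally, your observation that \ref{Nsd} holds automatically when $\chi^2\neq 1$ is true but does not bear on this conjecture: non-self-duality sharpens the Fitting-ideal comparison (theorem \ref{th:kur} versus theorem \ref{th:kur_par}), but the Kolyvagin-system argument of theorem \ref{th:MC_ind} furnishes only a single divisibility in every case and cannot by itself force $\Lambda$-primitivity. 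The honest conclusion, which you gesture at in your final paragraph, is that the second divisibility is genuinely open in the generality at hand — which is precisely why the paper states this as a conjecture and conditions its structural theorem \ref{th:EK_str} on it.
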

    
    \begin{remark}
    By Theorem \ref{th:MC_ind}, conjecture \ref{conj:IMC} is equivalent to the primitivity of the $\chi$-twisted Kato's Kolyvagin system over $\Q_\infty$. By the argument in Remark \ref{rem:katos_equal} below shows that this is equivalent to the primality of certain Kato's Kolyvagin system for $f_\chi$.
    \end{remark}

    \begin{remark} 
        Iwasawa main conjecture for $f_\chi$ was proven in \cite[Theorem 1]{SkinnerUrban} when $p$ does not divide the level of $f_\chi$, which, under our assumption \ref{Kur}, is equivalent to $E$ having good reduction at $p$, and the existence of an auxilliary prime $\ell\mid\mid N$ such that $\chi$ is unramified at $\ell$ and the reduction modulo $\ell$ of the Galois representation $\overline{\rho}_{f_\chi}$ satisfies that $\dim_{\F_\ell}\overline{\rho}_{f_\chi}^{\II_\ell}=1$ and $\dim_{\F_\ell}\overline{\rho}_{f_\chi}^{G_{\Q_\ell}}=0$. 

          The proof of the Iwasawa main conjecture was extended in \cite[Theorem 1.1]{FouquetWan} to some cases in which $E$ has bad reduction at $p$, assuming the existence of the above auxiliary prime $\ell$ and that the semisimplification of $\overline{\rho}|_{G_{\Q_p}}$ is different to $\psi\oplus\psi$ and $\psi\oplus \chi_{\textrm{cyc}}\psi$ for any character $\psi$ and the cyclotomic character $\chi_{\textrm{cyc}}$.
    \end{remark}

 However, assumption \ref{KIMCloc} only assumes a weaker Iwasawa main conjecture: a special case of the following conjecture when $\beta=X\Lambda$.
    
    \begin{conjecture}(Localised Iwasawa main conjecture for $f_\chi$)
    Let $\beta$ be a height one prime ideal of $\Lambda$. The Iwasawa main conjecture localised at $\beta$ is the equality
    \[\ord_\beta\left(\char\left(\frac{H^1_{\textrm{IW}}(\Q_\infty,T\otimes \OO_d(\chi))}{\Lambda z_{\Q_\infty,\chi}^\infty}\right)\right)=\ord_\beta\left(\char(X_\infty)\right).\]
    \label{conj:IMC_loc}
    \end{conjecture}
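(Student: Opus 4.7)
The plan is to attack the equality of $\beta$-adic valuations via the Euler-system / Kolyvagin-system machinery of \S\ref{sec:ks}, isolating the step that genuinely requires input from automorphic methods.

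First I would feed the $\chi$-twist of Kato's zeta element $z_{\Q_\infty,\chi}\in H^1_{\textrm{IW}}(\Q_\infty,T\otimes \OO_d(\chi))$ through the derivation map of Theorem~\ref{th:eul_to_kol_lambda}, producing a $\Lambda$-adic Kolyvagin system $\kappa\in\KS(T\otimes\Lambda_d(\chi),\FLambda,\PP)$ whose first term is $\kappa_1=z_{\Q_\infty,\chi}$. Theorem~\ref{th:MC_ind} then automatically yields
\[
\char(X_\infty)\ \Big|\ \textrm{Ind}(\kappa)=\char\left(\left(\frac{H^1_{\textrm{IW}}(\Q_\infty,T\otimes \OO_d(\chi))}{\Lambda z_{\Q_\infty,\chi}}\right)_{\tors}\right),
\]
which gives the inequality $\ord_\beta(\char(X_\infty))\leq \ord_\beta\bigl(\char(H^1_{\textrm{IW}}/\Lambda z_{\Q_\infty,\chi})\bigr)$ unconditionally for every height one prime $\beta$.

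Next I would upgrade this to an equality at $\beta$, which by the second clause of Theorem~\ref{th:MC_ind} amounts to verifying that $\kappa$ is $\Lambda$-primitive modulo $\beta$. In the distinguished case $\beta=X\Lambda$, Corollary~\ref{cor:kol_red_loc} reduces this to showing that the image of $\kappa$ under the reduction map $\KS(T\otimes\Lambda,\FLambda,\PP)\to\KS(T,\Fcan,\PP)$ is non-zero modulo $\m$. By the explicit description of the derived Kato classes in terms of twisted modular symbols developed in \S\ref{sec:Kato_kol}, this in turn becomes the existence of some $n\in\NN(\PP)$ for which the twisted Kurihara number $\delta_{n,\chi}$ lies in $\OO_d^\times$ --- precisely the content of Theorem~\ref{th:EK_IMC_intro}, which turns the conjecture at $\beta=X\Lambda$ into a computationally verifiable criterion.

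For a general height one prime $\beta$, no such elementary reduction is available and the reverse divisibility has to be imported from automorphic methods: Skinner--Urban's Eisenstein congruence argument under good ordinary reduction with the usual hypotheses on $\overline{\rho}_{f_\chi}$, or the Fouquet--Wan extension covering several bad-reduction settings. The hard part is precisely this reverse divisibility: the Kolyvagin-system formalism is intrinsically one-sided, and producing the matching upper bound on $\char(X_\infty)$ requires congruences with Eisenstein series or specialisations of $p$-adic $L$-functions that lie outside the Selmer-theoretic framework of the present paper. This is why only the localised form \ref{KIMCloc} at $\beta=X\Lambda$ is assumed, which via the Kurihara-number criterion reduces to a concrete numerical check.
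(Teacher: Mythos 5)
This statement is a conjecture, not a theorem, so the paper offers no proof of it and there is nothing to compare against. What the paper does is (i) assume a weak consequence of it, hypothesis \ref{KIMCloc} (the case $\beta = X\Lambda$), as a running assumption, (ii) record in the remarks following Conjecture \ref{conj:IMC} the cases known by Skinner--Urban and Fouquet--Wan, and (iii) prove in Theorem \ref{th:EK_IMC} that the \emph{full} Conjecture \ref{conj:IMC} is equivalent to the Kurihara-number criterion $\partial^{(\infty)}(\delta_\chi)=0$. You correctly diagnose that the Kolyvagin-system machinery is intrinsically one-sided and that the reverse divisibility lies beyond its reach; your unconditional divisibility $\ord_\beta(\char(X_\infty))\leq \ord_\beta(\textrm{Ind}(\kappa))$ from Theorem \ref{th:MC_ind} is also correct.

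However, your treatment of $\beta = X\Lambda$ conflates two distinct conditions that the paper carefully separates. You claim that Corollary \ref{cor:kol_red_loc} reduces the conjecture at $\beta = X\Lambda$ to the image of $\kappa$ under \eqref{eq:kol_red} being ``non-zero modulo $\m$'', i.e.\ primitive, and then identify this with the existence of $n$ such that $\delta_{n,\chi}\in\OO_d^\times$ via Theorem \ref{th:EK_IMC_intro}. In fact Corollary \ref{cor:kol_red_loc} only asserts that the $X\Lambda$-localised conjecture is equivalent to $\kappa\notin (\gamma-1)\KS(T\otimes\Lambda\otimes\OO_d(\chi),\FLambda,\PP)$, i.e.\ to the mere \emph{non-vanishing} of the reduced Kolyvagin system $\kappa_\chi\in\KS(T\otimes\OO_d(\chi),\Fcan,\PP)$ --- a strictly weaker condition than its primitivity. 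The chain ``full IMC $\iff$ $\kappa_\chi$ primitive $\iff$ some $\delta_{n,\chi}$ is a unit'' (Theorem \ref{th:EK_IMC}, via Lemma \ref{lem:dinf} and \eqref{eq:orders}) is the criterion for the \emph{full} main conjecture, not its localisation at $X\Lambda$. The reason the paper assumes only \ref{KIMCloc} rather than the full IMC (see the final paragraph of \S\ref{sec:proof_IMC}) is precisely that the weaker non-vanishing of $\kappa_\chi$ is all that Theorems \ref{th:kur_par} and \ref{th:kur} require.
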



We want to describe the Galois structure of the classical Selmer group $\Sel(K,E[p^\infty])$. Indeed, the Selmer group $\Sel(K,E[p^\infty])$ has a natural action of the Galois group $\Gal(K/\Q)$, in which the Galois automorphism acts by conjugation on the cohomology group. Hence, it has a $\Z_p[G]$-module structure, which is the one we are interested in. 

By Shapiro's lemma (see \cite[Proposition 1.6.4]{NSW}), there is an isomorphism
\[H^1(K,T)\cong H^1\left(\Q,\Ind_{G_K}^{G_\Q}(T)\right)\cong H^1(\Q,T\otimes \Z_p[G]).\]
At this point, it is important to make a remark on the Galois action on $T\otimes \Z_p[G]$. In order to construct the Galois cohomology group $H^1(\Q,T\otimes \Z_p[G])$, we need $T\otimes \Z_p[G]$ to be endowed with a continuous action of $G_\Q$. It is given by the following formula:
\[\sigma(t\otimes x)=\sigma t\otimes \sigma x\ \forall \sigma\in G,\ \forall t\in T,\ \forall x\in \Z_p[G].\]

However, cohomological conjugation endows $H^1(K,T)$ with a natural $G$-action. The way to see this action in $H^1(\Q,T\otimes \Z_p[G])$ is considering $T\otimes \Z_p[G]$ as a $\Z_p[G]$-module with the multiplication by the elements of $G$ given by
\[\sigma(t\otimes x)=t\otimes x\sigma^{-1} \ \forall \sigma\in G,\ \forall t\in T,\ \forall x\in \Z_p[G].\]

When $T\otimes \Z_p[G]$ is a $\Z_p[G]$-module, then $H^1(\Q,T\otimes \Z_p[G])$ is also a $\Z_p[G]$-module. In this situation, Shapiro's lemma isomorphism respects the $\Z_p[G]$-structures.

We now study the local conditions used to define the Selmer group. Let $\ell$ be a rational prime and let $v$ be a prime of $K$ above $\ell$. Denote by $G_{v/\ell}$ the Galois group $\Gal(K_v/\Q_\ell)$, which is canonically isomorphic to a subgroup of $G$. Shapiro's lemma can also be applied to the local cohomology groups.
\[H^1(K_v,T)\cong H^1\Bigl(\Q,T\otimes \Z_p[G_{v/\ell}]\Bigr).\]

The classical local condition at a prime $v$ of $K$ is defined as the image of the Kummer map.
\[H^1_{\Fcl}(K_v,T)=\Im\biggl(E(K_v)\widehat\otimes \Z_p\to H^1(K_v,T)\biggr).\]

When $\ell\neq p$, this local condition coincides with the finite cohomology subgroup:
\[H^1_{\Fcl}(K_v,T)=\ker\biggl(H^1(K_v,T)\to H^1(\II_{v},T\otimes \Q_p)\biggr).\]
where $\II_{v}$ is the inertia subgroup of $G_{v}$.
When $\ell=p$, the classical condition can be described purely in terms of $T$, by using $p$-adic Hodge theory. In this case, the classical local condition coincides with the Bloch-Kato local condition defined in \textsection \ref{sec:global}:
 \[H^1_{\FBK}(K_v,T):=\ker\biggl(H^1(K_v,T)\to H^1(K_v,T\otimes B_\crys)\biggr).\]

The Bloch-Kato Selmer structure can be defined similarly for $H^1(\Q_p,T\otimes \Z_p[G_{v/p}])$:
\[H^1_{\FBK}(\Q_p,T\otimes \Z_p[G_{v/p}])=\ker\biggl(H^1(\Q_p,T\otimes \Z_p[G_{v/p}])\to H^1(K_v,T\otimes\Z_p[G_{v/p}]\otimes B_\crys)\biggr).\]
The isomorphism given by Shapiro's lemma identifies both Bloch-Kato conditions, which can be used to define a Selmer structure on $T\otimes \Z_p[G]$. Note that, as $\Z_p[G_{v/\ell}]$-modules, there is an isomorphism 
\[T\otimes \Z_p[G]\cong\bigoplus_{v\mid \ell}  T\otimes \Z_p[G_{v/\ell}],\]
so the localisation maps at primes above $\ell$ can be expressed as the composition
\[\xymatrix{H^1(\Q,T\otimes \Z_p[G])\ar[r] & H^1(\Q_\ell,T\otimes \Z_p[G])\ar[r]^{\sim\ \ \ \ \ \ \ } &\bigoplus_{v\mid \ell}H^1(\Q_\ell,T\otimes \Z_p[G_{v/\ell}]).}\]
These maps can be used to define the Bloch-Kato Selmer group
\[H^1_{\Fcl}(K,T)\cong H^1_{\FBK}(\Q,T\otimes \Z_p[G]).\]

Since $\Z_p[G]$ is not a local ring, we cannot apply the general theory of Kolyvagin systems directly. Since the order of $G$ is prime to $p$ by \ref{Kdeg}, we can split the Selmer group into character parts. In order to do that, we tensor it with $\OO_d$. We thus get an isomorphism 
\[H^1_{\FBK}(\Q,T\otimes \Z_p[G])\otimes \OO_d\cong H^1_{\FBK}(\Q,T\otimes \OO_d[G])=\bigoplus_{\chi\in \widehat{G}} H^1_{\FBK}(\Q,T\otimes e_\chi\OO_d[G]),\]
where $\widehat{G}$ is the character group and $e_\chi$ is the idempotent element associated with the character $\chi$, defined as
\begin{equation}
    e_{\psi}:=\frac{1}{[K:\Q]}\sum_{\sigma\in \Gal(K/\Q)} \overline\psi(\sigma) \sigma\in \Z_p[\psi][\Gal(F/\Q)],
    \label{eq:idempotent}
\end{equation}
where $\Z_p[\psi]$ is the ring obtained by adjoining the values of $\psi$ to $\Z_p$.

Since $T\otimes e_\chi\OO_d[G]$ is isomorphic to $T\otimes \OO_d(\chi)$, where $\OO_d(\chi)$ is $\OO_d$ endowed with an action of $G$ given by $\sigma x=\chi(\sigma)x$, we have an isomorphism
\[H^1_{\FBK}(\Q,T\otimes \Z_p[G])\otimes \OO_d\cong\bigoplus_{\chi\in \widehat G} H^1_{\FBK}(\Q,T\otimes \OO_d(\chi)).\]

Therefore, we can study the groups $H^1(\Q,T\otimes \OO_d(\chi))$ instead. The Galois structure of $H^1_{\FBK}(\Q,T\otimes \Z_p[G])$ is completely determined by the $\OO_d$ structure of $H^1_{\FBK}(\Q,T\otimes \OO_d(\chi))$ of every $\chi$ (see \textsection \ref{sec:examples} for examples of this process). Since $\OO_d$ is a principal local ring, we can apply the general theory of Kolyvagin systems to study this cohomology group.

Knowing the Fitting ideals of the twisted Selmer groups, we can determine $H^1_{\FBK}(K,T)$ up to isomorphism.

\begin{proposition}
The Fitting ideals of $H^1_\FBK(K,T)$ can be computed as:
\begin{equation}
\Fitt^{i}_{\Z_p[G]}\Bigl(H^1_{\FBK}(K,T)\Bigr)=\Z_p[G]\cap\left(\sum_{\chi\in \widehat G} e_\chibar\ \Fitt^i_{\OO_d}\Bigl(H^1_{\FBK}(Q,T\otimes \OO_d(\chi))\Bigr)\right).
\label{eq:fitting_rational}
\end{equation}
Moreover, there is an isomorphism
\[H^1_\FBK(K,T)\cong \bigoplus_{i} \Z_p[G]/I_i,\]
where $I_i$ are ideals of $\Z_p[G]$ satisfying that $I_{i-1}\subset I_i$ for all $i$ and that 
\[\Fitt_{\Z_p[G]}^{i-1}H^1_\FBK(K,T)=I_i\, \Fitt_{\Z_p[G]}^iH^1_\FBK(K,T).\]
\label{prop:fitting_integral}
\end{proposition}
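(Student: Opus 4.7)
The proposition contains two separate statements: the Fitting ideal formula \eqref{eq:fitting_rational} and the invariant-factor decomposition $H^1_\FBK(K,T) \approx \bigoplus_i \Z_p[G]/I_i$. Both rest on the same structural fact: since $|G|=d$ is prime to $p$, the group rings $\Z_p[G]$ and $\OO_d[G]$ split as finite products of very well-behaved local rings, at which point everything reduces to the classical theory of modules over DVRs.

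For the formula \eqref{eq:fitting_rational}, the plan is to pass to $\OO_d$ and decompose. Since $\OO_d$ is faithfully flat over $\Z_p$, Fitting ideals commute with this base change, and intersecting with $\Z_p[G]\subset\OO_d[G]$ recovers the original Fitting ideal. The primitive central idempotents $e_\chi=\frac{1}{|G|}\sum_{\sigma\in G}\chibar(\sigma)\sigma$ yield a splitting $\OO_d[G]\cong\prod_\chi e_\chi\OO_d[G]\cong\prod_\chi\OO_d$, together with the parallel decomposition
\[
H^1_\FBK(K,T)\otimes_{\Z_p}\OO_d \;\cong\;\bigoplus_\chi H^1_\FBK\bigl(\Q,T\otimes\OO_d(\chi)\bigr)
\]
already recorded in the paper. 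Because Fitting ideals of modules over a product ring split componentwise as an (external) product of Fitting ideals of each factor, one obtains
\[
\Fitt^i_{\OO_d[G]}\bigl(H^1_\FBK(K,T)\otimes\OO_d\bigr)\;=\;\sum_\chi e_{\chibar}\,\Fitt^i_{\OO_d}\bigl(H^1_\FBK(\Q,T\otimes\OO_d(\chi))\bigr),
\]
and intersecting with $\Z_p[G]$ gives \eqref{eq:fitting_rational}.

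For the structure decomposition, the key observation is that $\Z_p[G]$ is itself a finite product of DVRs: since $p\nmid|G|$, Hensel's lemma lifts the primitive central idempotents of $\F_p[G]$ to $\Z_p[G]$ and one gets $\Z_p[G]\cong\prod_\mathcal{O} R_\mathcal{O}$ with $\mathcal{O}$ running over the $\Gal(\Q_p(\mu_d)/\Q_p)$-orbits of characters of $G$ and each $R_\mathcal{O}$ the ring of integers of a finite unramified extension of $\Q_p$. The Selmer group decomposes accordingly as $\prod_\mathcal{O}M_\mathcal{O}$, and the structure theorem over each DVR gives $M_\mathcal{O}\cong\bigoplus_j R_\mathcal{O}/\mathfrak{m}_\mathcal{O}^{e_{\mathcal{O},j}}$ with $e_{\mathcal{O},1}\ge e_{\mathcal{O},2}\ge\cdots$. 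Defining $I_j:=\prod_\mathcal{O}\mathfrak{m}_\mathcal{O}^{e_{\mathcal{O},j}}$ (with the convention $e_{\mathcal{O},j}=0$ once $M_\mathcal{O}$ runs out of invariant factors), one checks directly that $\bigoplus_j\Z_p[G]/I_j\cong\prod_\mathcal{O}M_\mathcal{O}=H^1_\FBK(K,T)$, that $I_{j-1}\subset I_j$ follows from the ordering of the $e_{\mathcal{O},j}$, and that $\Fitt^{j-1}=I_j\cdot\Fitt^j$ reduces componentwise to the standard invariant-factor identity $\mathfrak{m}_\mathcal{O}^{e_{\mathcal{O},j}+\cdots+e_{\mathcal{O},s_\mathcal{O}}}=\mathfrak{m}_\mathcal{O}^{e_{\mathcal{O},j}}\cdot\mathfrak{m}_\mathcal{O}^{e_{\mathcal{O},j+1}+\cdots+e_{\mathcal{O},s_\mathcal{O}}}$ over each DVR $R_\mathcal{O}$. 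The only subtle point I expect is the character bookkeeping in \eqref{eq:fitting_rational}: the right $G$-action $g\cdot(t\otimes x)=t\otimes xg^{-1}$ on the induced module forces the $e_\chi$-isotypic component of Shapiro's decomposition to be $H^1(\Q,T\otimes\OO_d(\chibar))$ rather than $H^1(\Q,T\otimes\OO_d(\chi))$, which is precisely what makes $e_{\chibar}$, and not $e_\chi$, appear in the formula.
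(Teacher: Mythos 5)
Your proposal is correct and follows essentially the same route as the paper: decompose $\OO_d[G]$ via the idempotents $e_\chi$, observe that Fitting ideals split componentwise across a product of rings, and then contract back to $\Z_p[G]$; for the invariant-factor statement, decompose $\Z_p[G]$ into unramified DVRs indexed by Galois orbits of characters and apply the structure theorem componentwise. The one place where you diverge cleanly from the paper is in justifying that intersecting with $\Z_p[G]$ recovers the original Fitting ideal: you invoke faithful flatness of $\Z_p[G]\hookrightarrow\OO_d[G]$ (indeed $\OO_d$ is free over $\Z_p$), which gives $I\OO_d[G]\cap\Z_p[G]=I$ for every ideal $I$ in one stroke. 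The paper instead cites Atiyah--Macdonald on extensions and contractions and then verifies by hand that all ideals of $\Z_p[G]$ are contracted, reducing to the maximal ideals via the explicit DVR decomposition. Your shortcut is cleaner for the first assertion, though since the explicit decomposition of $\Z_p[G]$ is needed in either case for the second part, the saving is modest. Your closing remark about why $e_{\chibar}$ rather than $e_\chi$ appears is a correct and worthwhile sanity check on the sign of the $G$-action coming from Shapiro's lemma.
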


\begin{proof}
The first assertion follows from the faithfully flatness of the morphism $\Z_p\to \OO_d$. The second part of this proposition follows by applying the structure theorem of finitely generated modules over principal ideal domains to the discrete valuation rings in the decomposition of $\Z_p[G]$.
\end{proof}

We have already established the $\Z_p$-module and the Selmer structure we are going to study. In order to complete the Selmer triple, we need to establish which will be our set of Kolyvagin primes $\PP$.

\begin{definition}
The Selmer triple we are going to study is defined as follows.
\begin{itemize}
\item $T=T_pE\otimes \OO_d(\chi)$, where $T_pE$ is the Tate module of the elliptic curve and $\OO_d(\chi)$ is $\Z_p[\chi]$ twisted by a character $\chi$ of $G$ with values in $\overline \Z_p$.
\item The Selmer structure is the Bloch-Kato Selmer structure (see Definition \ref{def:BK_str}).
\item The set of Kolyvagin primes $\PP$ is formed by the good reduction primes that are unramified and split completely at $K/\Q$.
\end{itemize}
\label{def:Kato_triple}
\end{definition}

\begin{remark}
For every $k\in \mathbb N$, the set of primes $\PP\cap \PP_k$ is $\PP_{k,K}$, i.e., the primes $\ell$ satisfying the following conditions.
\begin{itemize}
\item \namedlabel{PEgood}{(PE0)} $E$ has good reduction at $\ell$ and $\ell$ is unramified in $K/\Q$.
\item \namedlabel{PE1modp}{(PE1)} $\ell\equiv 1 \mod p^k$.
\item \namedlabel{PE1dim}{(PE2)} $\widetilde E_\ell(\F_\ell)[p^k]$ is free of rank one over $\Z/p^k$.
\item \namedlabel{PEsplit}{(PE3)} $\ell$ splits completely in $K/\Q$.
\end{itemize}
\label{rem:Eprimes}
\end{remark}

In order to study the structure of $H^1_\FBK(\mathbb Q, T\otimes \OO_d(\chi))$, we define some twists of the Kurihara numbers. Kurihara numbers were defined by M.~Kurihara in \cite{Kur2014} and \cite{Kur2012}. In those articles, they were related to the structure of the Selmer group $\Sel(\Q,E[p^\infty])$ and the veracity of the Iwasawa main conjecture. The definition given by M.~Kurihara was the particular case of Definition \ref{def:kurihara_numbers} below when $\chi$ is the trivial character. In this definition, we twist the Kurihara numbers by the different characters $\chi$ of $G$, so we can get information about the structure of $H^1_\FBK(\mathbb Q, T\otimes \OO_d(\chi))$.

\begin{definition}
Let $n\in \NN(\PP_{k,K})$ and let $\chi$ be a Dirichlet character of conductor $c$. We define the \emph{twisted Kurihara number} as
\begin{equation}
\widetilde \delta_{n,\chi}=\sum_{a\in (\Z/cn\Z)^\times} \chibar(a)\left[\frac{a}{cn}\right]^{\chi(-1)}\left(\prod_{\ell\mid n}\log_{\eta_\ell}^p(a)\right)\in \OO_d/p^{k_n},
\label{eq:kurihara_number}
\end{equation}
where $\eta_\ell$ is a generator of $(\Z/\ell)^\times$ and $\log_{\eta_\ell}^p(a)$ is the unique $x\in \Z/p^k$ such that $\eta_\ell^{-x} a$ has order prime to $p$ in $(\Z/\ell)^\times$ (see Definition \ref{def:logarithm} below). Here $k_n$ is the minimal $k\in \mathbb N$ such that $n\in \NN(\PP_k)$.
\label{def:kurihara_numbers}
\end{definition}

\begin{remark}
The definition of the twisted Kurihara numbers depends on the choice of the primitive roots $\eta_\ell$ for the prime divisors $\ell$ of $n$. However, the $p$-adic valuation of $\delta_{n,\chi}$ is independent of this choice.
\label{rem:delta_well-def}
\end{remark}

\begin{remark}
Note that for $n=1$, the Birch formula in \cite[(I.8.6)]{MTT} relates the twisted Kurihara number with the twisted special $L$-value:
$$\delta_{1,\chi}=\sum_{a\in(\Z/c\Z)^\times} \chibar(a)\left[\frac{a}{c}\right]^{\chi(-1)}=\frac{1}{\tau(\chibar)}\frac{L(E,\chi,1)}{\Omega_E^{\chi(-1)}},$$
where $\tau(\chi)$ is the Gauss sum of $\chi$.
\end{remark}


\begin{definition}
Define the quantities
\[\begin{aligned}
&\ord(\delta_{n,\chi}):=\max\{j\in\N\cup\{0,\infty\}:\delta_n\in p^j(\OO_d/p^{k_n}) \},\\&\partial^{(i)}(\delta_\chi):=\min\{\ord(\delta_{n,\chi}): n\in \NN(\PP),\nu(n)=i\},\\
&\partial^{(\infty)}(\delta_\chi):=\min\{\partial^{(i)}(\delta_\chi):\ i\in \N_0\}.
\end{aligned}\]
In analogy with Definition \ref{def:theta}, define the ideals $\Theta_{i,\chi}$ as 
\[\Theta_{i,\chi}:=(p)^{\partial^{(i)}(\delta_\chi)}\OO_d\subset \OO_d.\]
\end{definition}

The following theorem describes the group structure of the Bloch-Kato Selmer group of $T(\chi)$ in terms of the $\chi$-twisted Kurihara numbers.

\begin{theorem}
Let $E/\Q$ and $p\geq 5$ be an elliptic curve and a prime number satisfying \ref{ESur}-\ref{EManin} and let $K/\Q$ be an abelian extension satisfying \ref{Kur}-\ref{KIMCloc}. Then $\partial^{(\infty)}(\widetilde \delta_\chi)$ is finite. Call $r$ to the minimum $i$ such that $\Theta_{i,\chi}\neq 0$ and $s$ to the minimum $j$ such that $\partial^{(j)}(\widetilde \delta_\chi)=\partial^{(\infty)}(\widetilde \delta_\chi)$. Also, denote by $n_i$ the exponent $\Theta_{i,\chi}=(p)^{n_i}$ and $a_i=\frac{n_i-n_{i+2}}{2}$.
\begin{enumerate}
\item If $\chi=\overline\chi$, then
$$H^1_{\FBK}(\Q,T\otimes \OO_d(\chi))\cong \OO_d^r\oplus \left(\OO_d/(p)^{a_r}\right)^2\oplus\left(\OO_d/(p)^{a_{r+2}}\right)^2\oplus\cdots\oplus \left(\OO_d/(p)^{a_{s-2}}\right)^2.$$
\item If $\chi\neq \overline \chi$, then
$$H^1_{\FBK}(\Q,T\otimes \OO_d(\chi))\cong \OO_d^r\oplus \OO_d/(p)^{n_r-n_{r+1}}\oplus\OO_d/(p)^{n_{r+1}-n_{r+2}}\oplus\cdots\oplus \OO_d/(p)^{n_{s-1}-n_s}.$$
\end{enumerate}
\label{th:EK_str}
\end{theorem}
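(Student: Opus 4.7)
The plan is to specialize the abstract Fitting-ideal calculations of Theorems \ref{th:kur_par} and \ref{th:kur} to the Selmer triple $(T\otimes\OO_d(\chi),\FBK,\PP)$ of Definition \ref{def:Kato_triple}. This requires three ingredients established elsewhere in the paper: (i) verification that this triple satisfies the standing hypotheses \ref{Hffr}--\ref{Hprimes}; (ii) the identification $\delta_n(\kappa^{(\chi)})=\widetilde\delta_{n,\chi}$ between the abstract localization invariant of the twisted Kato Kolyvagin system and the explicit twisted Kurihara number; and (iii) primitivity of $\kappa^{(\chi)}$. For (i), \ref{ESur} gives the big-image conditions \ref{Hirred}--\ref{Hchev}; the prime $p$ itself plays the role of the auxiliary prime in \ref{Hloc}, since \ref{Kloc} together with Tate local duality yields $H^1_\s(\Q_p,T\otimes\OO_d(\chi)/\m^k)\cong\OO_d/\m^k$; \ref{Ktam} furnishes the cartesian property \ref{Hcartesian}; local duality shows $\chi(\FBK)=0$, giving \ref{Hcore0}; and \ref{PEsplit} realizes \ref{Hprimes} with $F=K$. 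Ingredient (ii) is the content of \textsection\ref{sec:exp}--\ref{sec:Kato_kol}, while (iii) follows from the localized IMC hypothesis \ref{KIMCloc} via Theorem \ref{th:MC_ind} and Corollary \ref{cor:kol_red_loc}. With these in hand, the ideals $\Theta_{i,\chi}$ coincide with $\Theta_i(\kappa^{(\chi)})$ in the sense of Definition \ref{def:theta}.

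For $\chi^2\neq 1$, hypothesis \ref{Nsd} holds because the mod-$p$ representations $T(\chi)/\m$ and $T^*(\chi^{-1})/\m$ share no common subquotients under \ref{ESur} combined with $\chi\neq\chi^{-1}$, and \ref{Nsur} follows from \ref{ESur}. Theorem \ref{th:kur} then gives $\Theta_{i,\chi}=\Fitt_i^{\OO_d}\bigl(H^1_{\FBK^*}(\Q,T^*(\chi^{-1}))^\vee\bigr)$ for every $i\geq 0$. Since $\OO_d$ is a principal ideal domain, knowledge of every Fitting ideal determines the module up to isomorphism with elementary divisors $n_i-n_{i+1}$. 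Tate/Pontryagin duality between the Bloch--Kato Selmer group of $T(\chi)$ and its dual then transfers this structure to $H^1_\FBK(\Q,T\otimes\OO_d(\chi))$, yielding the stated decomposition.

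For $\chi^2=1$, hypothesis \ref{Nsd} fails and only Theorem \ref{th:kur_par} is available. I would first exploit the symmetry in \eqref{eq:kurihara_number}: under $a\leftrightarrow cn-a$ the modular symbol contributes a sign $\chi(-1)$ via $[-a/cn]^{\pm}=\pm[a/cn]^{\pm}$, the character contributes $\chibar(-a)=\chi(-1)\chibar(a)$, and each logarithm factor $\log^p_{\eta_\ell}(a)$ picks up a known sign; combining these forces $\widetilde\delta_{n,\chi}=0$ whenever $\nu(n)$ has parity opposite to the fixed ``analytic rank'' $r$. Consequently, at every right-parity $i>r$ one has $\Theta_{i-1,\chi}=0\subsetneq\Fitt_{i-1}^{\OO_d}$, so condition (ii) of Theorem \ref{th:kur_par} applies and gives $\Theta_{i,\chi}=\Fitt_i^{\OO_d}$, while for $i=r$ condition (i) applies. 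The wrong-parity Fitting ideals, left unconstrained by the vanishing of $\widetilde\delta_{n,\chi}$, are then pinned down by invoking the generalized Cassels--Tate pairing on the $\OO_d$-torsion of the self-dual Selmer group: its non-degeneracy forces the elementary divisors to occur in equal pairs, which yields $n_{r+2j+1}=(n_{r+2j}+n_{r+2j+2})/2$ and equivalently $a_i=(n_i-n_{i+2})/2$.

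The main obstacle I anticipate is the self-dual case. Theorem \ref{th:kur_par} cleanly handles alternate Fitting ideals via the parity vanishing of $\widetilde\delta_{n,\chi}$, but the remaining ones require an input genuinely external to the Kolyvagin system machinery, namely a Cassels--Tate-type pairing on the torsion of $H^1_\FBK(\Q,T\otimes\OO_d(\chi))$. Constructing this pairing for the twisted module and verifying that it is non-degenerate and alternating---so that the elementary divisors truly come in pairs---is the delicate point where self-duality of $T_pE$, the sign condition $\chi^2=1$, and the functional equation of the twisted $L$-function must be combined; once available, the passage from Fitting ideals to the claimed module structure is a straightforward application of the classification theorem over the PID $\OO_d$.
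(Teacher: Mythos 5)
Your non-self-dual argument tracks the paper exactly: reduce to the Selmer triple of Definition~\ref{def:Kato_triple}, invoke Proposition~\ref{prop:EK:hyp}, identify $\delta_n(\kappa_\chi)$ with $\widetilde\delta_{n,\chi}$ via \eqref{eq:orders}, use \ref{KIMCloc} to get a non-zero Kolyvagin system, and apply Theorem~\ref{th:kur}. That half is fine.

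The self-dual case contains a genuine gap. The symmetry $a\leftrightarrow cn-a$ you invoke does \emph{not} produce any vanishing. Under $a\mapsto -a$ the modular-symbol factor $\left[\tfrac{-a}{cn}\right]^{\chi(-1)}$ contributes one sign $\chi(-1)$, and the character factor $\chibar(-a)$ contributes another $\chi(-1)$, so the two cancel; meanwhile $\log^p_{\eta_\ell}(-a)=\log^p_{\eta_\ell}(a)$ \emph{exactly}, with no sign, because $-1$ has order $2$, which is prime to $p$. The substitution therefore yields the identity $\widetilde\delta_{n,\chi}=\widetilde\delta_{n,\chi}$ and nothing more. The parity vanishing of Corollary~\ref{cor:delta_fe} is not an elementary symmetry: it comes from the functional equation $\iota(\theta_n)=\varepsilon\,\sigma_{-N}\,\theta_n$ of Mazur--Tate elements (Proposition~\ref{eq:MT_fe}), a genuinely analytic input through the Atkin--Lehner involution and the root number $\varepsilon$, combined with the sign $D_\ell\iota(\theta)\equiv -D_\ell(\theta)\pmod{p^k}$ that the Kolyvagin operator picks up under inversion $\iota$. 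Note $\iota$ corresponds to $a\mapsto a^{-1}$, not $a\mapsto -a$, which is where your argument diverges.

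Separately, your appeal to a Cassels--Tate pairing for the wrong-parity Fitting ideals is unnecessary and runs counter to the whole point of \S\ref{sec:fitd}. Corollary~\ref{cor:kur_par} already gives $\Fitt_i=\m^{\min\{n_i,(n_{i+1}+n_{i-1})/2\}}$; this is an elementary consequence of Theorem~\ref{th:kur_par} together with the convexity inequality \eqref{eq:Fitting_ineq} that holds for the Fitting exponents of any finitely generated module over a PID. Once $\Theta_i=0$ at the wrong parity (so $n_i=\infty$ there), the corollary forces $\Fitt_{r+2j+1}=\m^{(n_{r+2j}+n_{r+2j+2})/2}$, and the elementary divisors automatically occur in equal pairs $a_{r+2j},a_{r+2j}$. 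The Cassels--Tate-type duality is already encoded in the Kolyvagin-system combinatorics; constructing and proving non-degeneracy of an actual twisted Cassels--Tate pairing, as you propose, would be both heavier and redundant.
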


The rest of this section is dedicated to the proof of Theorem \ref{th:EK_str}, which will be concluded in \textsection\ref{sec:proof_str}.

\begin{remark}
Theorem \ref{th:EK_str} when $K=\Q$ is a result of C.H.~Kim in \cite{Kim23}.
\end{remark}

\begin{remark}
It is enough to prove Theorem \ref{th:EK_str} for the primitive characters of $G$, i.e., those of conductor $c$. Indeed, if $\chi$ is a character of conductor $m$, consider $K_m=K\cap \Q(\mu_m)$. Note that $\chi$ is a primitive character of $\Gal(K_m/\Q)$. Since $[K:K_m]$ is prime to $p$,
\[H^1_{\FBK}(K_m,T)\cong H^1_{\FBK}(K,T)^{\Gal(K/K_m)}.\]
Hence the $\chi$ parts of both Selmer group coincide. Hence if Theorem \ref{th:EK_str} holds for $K_m$ and $\chi$, it also holds for $K$ and $\chi$.
\label{rem:EK_str}
\end{remark}

\begin{remark}
When $\chi$ is the trivial character, M. Kurihara conjectured in \cite{Kur2012} the existence of of some $n\in \NN$ such that $\ord(\delta_n)$ is equal to zero. When $p$ is a prime of ordinary reduction, M. Kurihara proved it under the assumptions of the non-degeneracy of the $p$-adic height pairing and the Iwasawa main conjecture (see \cite[Theorem B]{Kur2014}). An analogue statement can be conjectured for other characters.
\end{remark}

\begin{conjecture}
There exists some $n\in \NN$ such that $\ord(\delta_{n,\chi})=0$. In other words, $\partial^{(\infty)}(\delta_{n,\chi})=0$.
\label{conj:kur}
\end{conjecture}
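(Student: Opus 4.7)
The strategy will be to exploit the link between the twisted Kurihara numbers and the Kolyvagin system obtained by applying Kolyvagin's derivative operators to the $\chi$-twist of Kato's Euler system, as developed in \textsection\ref{sec:exp}--\ref{sec:Kato_kol}. The plan is to identify $\widetilde\delta_{n,\chi}$, up to a unit in $\OO_d$, with $\loc_p^s(\kappa_n)$ for this Kato Kolyvagin system $\kappa$, viewed as a Kolyvagin system for the relaxed-at-$p$ triple $(T\otimes\OO_d(\chi),\,\FBK^p,\,\PP\setminus\{p\})$. Under the Iwasawa main conjecture for $f_\chi$ (assumption~\ref{KIMCloc}), Theorem~\ref{th:MC_ind} will force the $\Lambda$-adic Kato Kolyvagin system to be $\Lambda$-primitive, and Corollary~\ref{cor:kol_red_prim} will then yield that $\kappa$ itself is a primitive Kolyvagin system in the sense needed to apply lemma~\ref{lem:ord_delta}.

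With primitivity in hand, the conjecture will reduce to finding a single $n\in\NN(\PP)$ at which the length of the Selmer group vanishes. I would invoke proposition~\ref{prop:core_vertex} to choose a core vertex $n\in\NN(\PP)$; in the core rank zero setting, the Poitou--Tate comparison of $\length\bigl(H^1_{\FF(n)}(\Q,T\otimes\OO_d(\chi)/p^{k_n})\bigr)$ with $\length\bigl(H^1_{\FF^*(n)}(\Q,T^*\otimes\OO_d(\chibar)[p^{k_n}])\bigr)$ implied by hypothesis~\ref{Hcore0}, together with lemma~\ref{lem:mr353}, will show that both lengths vanish simultaneously at this $n$. Lemma~\ref{lem:ord_delta} will then give $\ord(\widetilde\delta_{n,\chi})=0$, whence $\widetilde\delta_{n,\chi}\in\OO_d^\times$ and $\partial^{(\infty)}(\widetilde\delta_\chi)=0$.

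The main obstacle is that the above argument is conditional on the Iwasawa main conjecture. Without~\ref{KIMCloc}, Theorem~\ref{th:MC_ind} only guarantees that $\char(X_\infty)$ divides $\textrm{Ind}(\kappa)$, so the Kato Kolyvagin system may lie in $p^j\,\KS(T\otimes \OO_d(\chi),\Fcan,\PP)$ for some $j>0$; the core-vertex argument would then yield only $\ord(\widetilde\delta_{n,\chi})=j$ rather than $0$. In fact, removing this obstruction is essentially equivalent to establishing the IMC for $f_\chi$, since Theorem~\ref{th:EK_IMC_intro} identifies the conjecture with this form of the Iwasawa main conjecture. The genuinely difficult step in an unconditional attack would therefore be producing, by independent means, a non-$p$-divisible Kurihara number---for instance through a density or equidistribution statement for twisted modular symbols, or via an arithmetic construction of some specific $n$ (e.g.\ tied to local trivializations of $T$ at the primes dividing $n$) at which the Kurihara sum can be computed explicitly and shown to represent a $p$-adic unit.
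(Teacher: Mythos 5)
Your proposal correctly identifies the essential point: the statement is a conjecture that the paper does not (and cannot, unconditionally) prove, and the paper instead establishes its equivalence with the Iwasawa main conjecture for $f_\chi$ in Theorem~\ref{th:EK_IMC}. The conditional direction of your argument---$\Lambda$-primitivity of the twisted Kato Kolyvagin system via Theorem~\ref{th:MC_ind}, descent to a primitive $\kappa_\chi$ via Corollary~\ref{cor:kol_red_prim}, then a core-vertex argument to produce $n$ with $\ord(\delta_{n,\chi})=0$---matches the proof of Theorem~\ref{th:EK_IMC} given in \textsection\ref{sec:proof_IMC}, except that the paper invokes Lemma~\ref{lem:dinf} directly rather than rederiving its content from Proposition~\ref{prop:core_vertex} and Lemma~\ref{lem:ord_delta}.

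One inaccuracy worth flagging: you parenthetically cite assumption~\ref{KIMCloc} as ``the Iwasawa main conjecture for $f_\chi$'', but \ref{KIMCloc} is only the main conjecture \emph{localised at $X\Lambda$}, which is strictly weaker. By Corollary~\ref{cor:kol_red_loc}, assumption~\ref{KIMCloc} alone guarantees only that $\kappa_\chi$ survives (is non-zero under) the reduction map in~\eqref{eq:kol_red}; it does not force primitivity. Primitivity of $\kappa_\chi$---and hence $\partial^{(\infty)}(\delta_\chi)=0$---requires the full Conjecture~\ref{conj:IMC}, and this distinction is exactly what makes Theorem~\ref{th:EK_IMC} an \emph{equivalence} rather than a one-way implication under the running hypotheses. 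Your closing paragraph shows you understand this at the structural level, but the opening citation should point at Conjecture~\ref{conj:IMC}, not at assumption~\ref{KIMCloc}.
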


When $\chi$ is the trivial character, R. Sakamoto proved in \cite[Theorem 1.2]{Sakamoto21} (see also \cite[Theorem 1.11]{Kim23}) that M.~Kurihara's conjecture is equivalent to the Iwasawa main conjecture.

\begin{theorem}
Let $E/\Q$ and $p\geq 5$ be an elliptic curve and a prime number satisfying \ref{ESur}-\ref{EManin}, let $K/\Q$ be an abelian extension satisfying \ref{Kur}-\ref{KIMCloc} and let $\chi$ be a character of $\Gal(K/\Q)$. Then $\partial^{(\infty)}(\delta_\chi)=0$ if and only if the Iwasawa main conjecture \ref{conj:IMC} holds true.
\label{th:EK_IMC}
\end{theorem}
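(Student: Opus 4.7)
The plan is to translate the question about Kurihara numbers into a question about the $\Lambda_d$-primitivity of the Kolyvagin system arising from the $\chi$-twist of Kato's Euler system, and then invoke the criterion supplied by theorem \ref{th:MC_ind}. Specifically, I would construct the $\Lambda_d$-adic Kolyvagin system $\kappa^\infty\in\KS(T_pE\otimes \Lambda_d(\chi),\FLambda,\PP)$ attached to the $\chi$-twist of Kato's Euler system via theorem \ref{th:eul_to_kol_lambda}, and let $\kappa\in\KS(T_pE\otimes\OO_d(\chi),\Fcan,\PP)$ denote its image under the reduction map \eqref{eq:kol_red}. In this setting $\Fcan$ equals $\FBK$ relaxed at $p$, and $p$ plays the role of the auxiliary prime $q$ of \textsection\ref{sec:kol}: assumption \ref{Kloc} gives $H^0(\Q_p,T^*)=0$ and hence $H^2(\Q_p,T)=0$ by local Tate duality, while $H^1_\s(\Q_p,T)\cong \OO_d$ via the Bloch-Kato exponential. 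Consequently the singular localisation $\delta_{n,\chi}:=\loc_p^\s(\kappa_n)$ is well-defined in the sense of definition \ref{def:delta}.

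The key technical input is the identification, up to a unit of $\OO_d$, between $\delta_{n,\chi}$ and the twisted Kurihara number $\widetilde\delta_{n,\chi}$ of definition \ref{def:kurihara_numbers}. This identification is carried out in \textsection\ref{sec:exp}-\ref{sec:Kato_kol} and rests on the explicit reciprocity law expressing Kato's zeta element at $p$ in terms of modular symbols, combined with the Kolyvagin derivative procedure and the computation of the finite-singular isomorphism at each Kolyvagin prime dividing $n$, all in the presence of the $\chi$-twist.

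With this identification in hand, the proof of theorem \ref{th:EK_IMC} reduces to a chain of equivalences. By definition, $\partial^{(\infty)}(\delta_\chi)=0$ means that $\delta_{n,\chi}\in \OO_d^\times$ for some $n\in\NN(\PP)$. If such an $n$ exists then $\kappa_n$ does not lie in $\pi H^1_{\Fcan(n)}(\Q,T/\pi^{k_n})$, so the image of $\kappa$ in $\KS(T/\pi,\Fcan,\PP)$ is non-zero and $\kappa$ is primitive by proposition \ref{prop:mr452}; the converse is precisely lemma \ref{lem:dinf}. Now write $\kappa^\infty=f\kappa_0^\infty$ for some $f\in\Lambda_d$ and some $\Lambda_d$-primitive generator $\kappa_0^\infty$ of $\KS(T_pE\otimes\Lambda_d(\chi),\FLambda,\PP)$, which exists by proposition \ref{prop:lambda_kol_free}. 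Corollary \ref{cor:kol_red_prim} ensures that the reduction of $\kappa_0^\infty$ generates $\KS(T_pE\otimes\OO_d(\chi),\Fcan,\PP)$, so $\kappa$ is primitive iff the reduction $\bar f\in\OO_d$ is a unit; as the maximal ideal of $\Lambda_d\cong \OO_d[[X]]$ is $(\pi,X)$, this is equivalent to $f$ being a unit in $\Lambda_d$, i.e., to $\kappa^\infty$ being $\Lambda_d$-primitive. Finally, theorem \ref{th:MC_ind} states that $\Lambda_d$-primitivity of $\kappa^\infty$ is equivalent to $\char(X_\infty)=\Ind(\kappa^\infty)$, which is precisely conjecture \ref{conj:IMC} for $f_\chi$.

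The main obstacle is the explicit identification $\delta_{n,\chi}=u\cdot\widetilde\delta_{n,\chi}$ for some unit $u\in\OO_d^\times$. The case $n=1$ amounts essentially to Kato's interpolation formula for the twisted special $L$-value, up to the Gauss-sum normalisation already appearing in the paper; the general case requires a careful bookkeeping of how the Kolyvagin derivative interacts with the finite-singular comparison $\phi_\ell^\fs$ at each prime $\ell\mid n$ after twisting by $\chi$, and this constitutes the technical heart of \textsection\ref{sec:exp}-\ref{sec:Kato_kol}.
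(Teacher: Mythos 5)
Your proposal is correct and follows essentially the same route as the paper: construct the $\Lambda_d$-adic Kato Kolyvagin system for the $\chi$-twist, pass to its finite-level image $\kappa$, identify $\loc_p^\s(\kappa_n)$ with the twisted Kurihara numbers as in \textsection\ref{sec:exp}--\ref{sec:Kato_kol}, relate $\partial^{(\infty)}(\delta_\chi)=0$ to primitivity of $\kappa$ via proposition \ref{prop:mr452} and lemma \ref{lem:dinf}, lift that to $\Lambda_d$-primitivity of $\kappa^\infty$, and invoke theorem \ref{th:MC_ind}. Your explicit argument that primitivity descends and ascends along the reduction map (writing $\kappa^\infty=f\kappa_0^\infty$ and observing $f\in\Lambda_d^\times\Leftrightarrow \bar f\in\OO_d^\times$) is a welcome amplification of what the paper leaves to a citation of corollary \ref{cor:kol_red_prim}, which on its face only gives one direction.
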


The proof of Theorem \ref{th:EK_IMC} will be shown in \textsection\ref{sec:proof_IMC}.

The rest of \textsection \ref{sec:EC} is dedicated to the proofs of theorems \ref{th:EK_str} and \ref{th:EK_IMC}. Theorem \ref{th:EK_str} is just an application of theorems \ref{th:kur_par} and \ref{th:kur}. In order to prove it, we need to check that the Selmer triple satisfies the assumptions of these theorems.

\begin{proposition}
The Selmer triple $(T\otimes \OO_d(\chi),\Fcl,\PP)$ satisfies the assumptions \ref{Hirred}-\ref{Hprimes} made in \textsection \ref{sec:global}.
\label{prop:EK:hyp}
\end{proposition}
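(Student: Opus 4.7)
The plan is to verify each of \ref{Hffr}--\ref{Hprimes} in turn for $T = T_pE\otimes \OO_d(\chi)$ with Bloch-Kato Selmer structure $\Fcl$ and the set $\PP$ of primes from Remark \ref{rem:Eprimes}. The essential inputs are \ref{ESur}, $p\geq 5$, and the disjointness $K\cap\Q(E[p^\infty])=\Q$: this last follows since by \ref{Kur} the extension $K$ is unramified at $p$ and at the bad primes of $E$, while $\Q(E[p^\infty])$ is ramified precisely at those primes, so their intersection is a finite abelian extension of $\Q$ unramified everywhere, hence $\Q$ itself. Consequently, the joint image of $G_\Q$ in $G\times \GL_2(\Z_p)$ is the full product, a fact used repeatedly below.

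The conditions \ref{Hffr}, \ref{Hchev} and \ref{Hprimes} are immediate: \ref{Hffr} since $T_pE$ is $\Z_p$-free of rank two and $\OO_d(\chi)\cong \OO_d$ as an $\OO_d$-module; \ref{Hchev} from $p\geq 5$; \ref{Hprimes} from the explicit description in Remark \ref{rem:Eprimes} and Chebotarev. Absolute irreducibility of $T_pE/p$ over $\F_p$ (from \ref{ESur}) is preserved by the unramified extension of scalars to $\OO_d/p$ and by the $\chi$-twist, giving \ref{Hirred}. The cohomological vanishing \ref{Hcoh0} follows from $H^1(\GL_2(\F_p),V)=0$ for $V=T_pE/p$ and its Cartier dual (standard for $p\geq 5$, cf.\ \cite{MazurRubin}), together with $H^1(G,-)=0$ on $p$-torsion coefficients because $|G|$ is prime to $p$ by \ref{Kdeg}. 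For \ref{Hsur}, the joint surjectivity permits the choice of $\tau\in G_\Q$ in the kernel of $G_\Q\to G\times\Gal(\Q(\mu_{p^\infty})/\Q)$ with $\rho_E(\tau)$ conjugate to $\bigl(\begin{smallmatrix}1 & 1\\0 & 1\end{smallmatrix}\bigr)\in \textrm{SL}_2(\Z_p)$; then $\chi(\tau)=1$ and $T/(\tau-1)T\cong \OO_d$. For \ref{Hcartesian}, the Bloch-Kato local conditions are cartesian by the standard argument. For \ref{Hloc}, I would take $q=p$ in the broader sense outlined in the introduction: the quotient $H^1(\Q_p,T)/H^1_\FBK(\Q_p,T)$ is $\OO_d$-free of rank one (via $\dim_{\Q_p}V-\dim_{\Q_p} D_{\dR}^+V=\rank_{\Z_p}\OO_d$ from Bloch-Kato), and $H^2(\Q_p,T)=0$ by local Tate duality combined with \ref{Kloc}, which kills the $\chi$-twisted Galois invariants of $T^*$ at the primes above $p$.

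The main obstacle is \ref{Hcore0}, namely the vanishing of the core rank $\chi(\Fcl)$. To establish it I would compute the difference in $\OO_d$-ranks between $H^1_\Fcl(\Q,T)$ and $H^1_{\Fcl^*}(\Q,T^*)$ via a Poitou-Tate and Euler-Poincaré argument: the global exact sequence of Proposition \ref{prop:global_duality} reduces this to a sum of local contributions $\dim H^1_\Fcl(\Q_v,T)-\dim H^0(\Q_v,T)$, each evaluated via the local Tate Euler characteristic formula. At the archimedean place the terms vanish since $p>2$; at finite primes $\ell\nmid Np$ the finite local condition contributes zero; and at $p$ the Bloch-Kato condition contributes exactly the right dimension to balance the local Euler characteristic. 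Summing shows both global Selmer groups have the same $\OO_d$-rank, which combined with Theorem \ref{th:core_rank} forces $\chi(\Fcl)=0$. The character twist preserves this balance because $\chi$ is unramified at $p$ and at the bad primes of $E$ by \ref{Kur}, and the local Euler characteristics at all places are invariant under character twists of this kind. This parallels the untwisted analyses in \cite{Kim23} and \cite{Sakamoto_rk0}, with the extension to arbitrary $\chi$-twists being essentially local bookkeeping at each place.
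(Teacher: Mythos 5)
Your overall plan is sound and several of the checks match the paper's, but there are two substantive issues and one place where you take a genuinely different route.

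The most serious gap is in your verification of \ref{Hloc}. You compute $\dim_{\Q_p}V-\dim_{\Q_p}D_{\dR}^+V=\rank_{\Z_p}\OO_d$ and conclude that $H^1(\Q_p,T)/H^1_{\FBK}(\Q_p,T)$ is $\OO_d$-free of rank one. But that dimension count only gives the $\Q_p$-rank; it does not exclude $\OO_d$-torsion in the quotient, which is exactly the point where \ref{Kloc} and \ref{Ktam} are needed. The paper establishes freeness by computing (its \eqref{eq:H1s_tf}) that the dual of the quotient is $(E(K_v)\otimes\OO_d(\chi))_{G_{v/p}}$, which is torsion-free because $E(K_v)[p]=0$ makes $E(K_v)\widehat{\otimes}\Z_p$ a free $\Z_p[G_{v/p}]$-module. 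A parallel problem affects your treatment of \ref{Hcartesian}: you invoke a ``standard argument,'' but cartesianity of the local condition at $p$ is precisely the statement that this same quotient is torsion-free, and it rests on \ref{Kloc} and \ref{Ktam} in the same way; you need to actually exhibit where those hypotheses enter.

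For \ref{Hcore0} you propose a Poitou--Tate and local Euler characteristic computation to show the two global Selmer ranks agree. That is a legitimate route in principle, but it is lengthier and your sketch stops short of verifying the full content of \ref{Hcore0}, which requires a non-canonical isomorphism of the finite Selmer modules $H^1_{\FF(n)}(\Q,T/\m^j)\cong H^1_{\FF^*(n)}(\Q,T^*[\m^j])$ for every $n$ and $j$, not merely an equality of $\OO_d$-ranks. The paper instead uses the theorem of the Dokchitser brothers (\cite{DokschitserDokschitser}) that $T_pE\otimes\Z_p[G]$ with the Bloch--Kato structure is self-dual; taking $\chi$-isotypic parts immediately gives $H^1_{\FBK}(\Q,T/p^k\otimes\OO_d(\chi))\cong H^1_{\FBK}(\Q,T^*[p^k]\otimes\OO_d(\chibar))$ for all $k$, which is exactly the form of \ref{Hcore0} required. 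If you want to follow the Euler characteristic route you would still need an extra argument to upgrade the rank identity to the modular isomorphism; otherwise, importing the self-duality result is cleaner. Your \ref{Hcoh0} argument (invoking vanishing of $H^1(\GL_2(\F_p),V)$ and then killing the prime-to-$p$ factor coming from $G$) differs from the paper's, which instead restricts to the subgroup $\Delta$ of scalar $(p-1)$-th roots of unity inside $\GL_2(\OO_d)$ and uses inflation--restriction; both should work, and yours is a reasonable alternative.
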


\begin{proof}
Assumption \ref{Hchev} is clear. \ref{Hprimes} holds from \ref{rem:Eprimes}

Note that $\Q(T)$ is only ramified at $p$ and at bad primes of $E$, so $\Q(T)\cap K$ is unramified at every prime by \ref{Kur}. Since $\Q$ has class number $1$, then $\Q(T)\cap K=\Q$. Therefore, every $\sigma\in \Gal(\Q(T)/\Q)$ can be lifted to some $\widetilde \sigma\in G_\Q$ such that $\chi(\widetilde \sigma)=1$.

Every basis of $T$ as a $\Z_p$-module is a basis of $T\otimes \OO_d(\chi)$ as an $\OO_d$-module. After fixing such a basis, the representation induces a map $G_\Q\to \GL_2(\OO_d)$. By \ref{ESur}, the image of this map contains $\GL_2(\Z_p)$, so \ref{Hirred} and \ref{Hsur} hold true.

Let $\Delta\subset \GL_2(\OO_d)$ be the subgroup formed by the matrices $\zeta I$, where $I$ is the identity matrix and $\zeta$ is a $(p-1)^{\textrm{th}}$-root of unity. Since the image of the map $G_\Q\to \GL_2(\OO_d)$ considered above contains $\GL_2(\Z_p)$, there is an inclusion $\Delta\hookrightarrow \Gal(\Q(T\otimes \OO_d(\chi))/\Q)$. If we denote the latter Galois group by $H$, consider the inflation-restriction sequence
\begin{center}
    \begin{tikzpicture}[descr/.style={fill=white,inner sep=1.5pt}]
    
            \matrix (m) [
                matrix of math nodes,
                row sep=4em,
                column sep=1.5em,
                text height=1.5ex, text depth=0.25ex
            ]
            {  0 & H^1\Bigl(H/\Delta,(T/p T\otimes \OO_d(\chi))^\Delta\Bigr) &
                H^1\Bigl(H,T/p T\otimes \OO_d(\chi)\Bigr) & H^1\Bigl(\Delta,T/p T\otimes \OO_d(\chi)\Bigr). \\
            };
    
            \path[overlay,->, font=\scriptsize,>=latex]
            (m-1-1) edge node[auto]{} (m-1-2)
            (m-1-2) edge node[auto]{} (m-1-3)
            (m-1-3) edge node[auto]{} (m-1-4);
       
    \end{tikzpicture}
    \end{center}
Since $T(\chi)^\Delta=0$ and the order of $\Delta$ is prime to $p$, then $H^1(\Q(T\otimes\OO_d(\chi))/\Q,T/p)=0$. Therefore, \ref{Hcoh0} is proven since the Weil pairing implies that $\Q(T,\mu_{p^\infty})=\Q(T)$ and $T/p T\cong T^*[p]$.

For every $\ell \in \Sigma(\FF_{cl})$ and every prime $v$ of $K$ above $\ell$, let $G_{v/\ell}=\Gal(K_v/\Q_\ell)$ and let $H^1_{/\FBK}(\Q_\ell, T\otimes \OO_d(\chi))$ be the quotient $H^1(\Q_\ell, T\otimes \OO_d(\chi))/H^1_{\FBK}(\Q_\ell, T\otimes \OO_d(\chi))$. Since $\# G_v$ is prime to $p$,
$$H^1_{/\FBK}\Bigl(\Q_\ell,T\otimes \OO_d(\chi)\Bigr)\cong H^1_{\FBK}\Bigl(\Q_\ell,T^*\otimes \OO_d(\overline\chi)\Bigr)\du\cong \Biggl(H^1_{\FBK}\biggl(K_v,E[p^\infty]\otimes \OO_d(\chibar)\biggr)\du\Biggr)_{G_{v/\ell}}.$$
The last cohomology group vanishes when $\ell\neq p$. Otherwise, we can compute
\begin{equation} 
    \biggl(H^1_{\FBK}(K_v,E[p^\infty])\du\otimes \OO_d(\chi)\biggr)_{G_{v/p}}\cong \left(\left(\varinjlim_{n\in\mathbb{N}}E(K_v)/p^n E(K_v)\right)^\vee\otimes \OO_d(\chi)\right)_{G_{v/p}}.
    \label{eq:H1s_tf}
\end{equation}
The last term is equal to 
\[\biggl((E(K_v)\otimes \Q_p/\Z_p)^\vee\otimes \OO_d(\chi)\biggr)_{G_{v/p}},\]
which is torsion-free. Therefore, \cite[lemma 3.7.1]{MazurRubin} implies that  \ref{Hcartesian} holds true.

By \cite[Theorem 1.1]{DokschitserDokschitser}, we have that
\[H^1_{\FBK}\Bigl(\Q,T/p^kT\otimes \OO_d(\chi)\Bigr)\cong H^1_\FBK\Bigl(\Q,T^*[p^k]\otimes \OO_d(\chibar)\Bigr)\]
and hence $\chi(T\otimes\OO_d(\chi),\FBK)=0$, so \ref{Hcore0} holds. For \ref{Hloc}, it holds for the prime $p$ by \ref{Kloc}. Indeed, from the computation in \eqref{eq:H1s_tf}, one can deduce that $H^1_{/\FBK}(\Q_p,T\otimes \OO_d(\chi))$ is free of rank one over $\OO_d$ and, from the local duality in Proposition \ref{prop:local_duality}, we have that 
\[H^2(\Q_p,T\otimes \OO_d(\chi))\cong H^0(\Q_v,E[p^\infty]\otimes \OO_d(\chibar))\du=0.\qedhere\]
\end{proof}

The proof of theorems \ref{th:EK_str} and \ref{th:EK_IMC} is structured as follows. \textsection\ref{sec:exp}-\textsection\ref{sec:Kato_kol} are dedicated to relate Kato's Euler system to the Kurihara numbers. In \textsection\ref{sec:exp}, we introduce Kato's Euler system for $T_pE$, originally constructed in \cite{Kato}, and its link to the special $L$-values via the dual exponential map, following \cite{Kataoka21}. In \textsection\ref{sec:twist}, we apply the twisting process explained in \cite[\textsection II.4]{Rubin} to obtain an Euler system for $T\otimes \OO_d(\chi)$. In \textsection\ref{sec:MT}, we use the interpolation property of Mazur-Tate elements to relate them to Kato's Euler system. After applying the Kolyvagin derivative, we obtain a relation between Kato's Euler system and Kurihara numbers.

This relation is obtained through the dual exponential map. Therefore, it is necessary to compute the image of the integral cohomology group under the dual exponential map, as done in \textsection\ref{sec:exp_im}. With these computations, we can prove the equality between the orders of the twisted Kato's Kolyvagin system and the twisted Kurihara numbers in \textsection\ref{sec:Kato_kol}.

The proof of Theorem \ref{th:EK_IMC}, based on the equivalence between the Iwasawa main conjecture and the primitivity of Kato's Euler system, is concluded in \textsection\ref{sec:proof_IMC}. \textsection\ref{sec:proof_str} is dedicated to the proof of Theorem \ref{th:EK_str} as an application of theorems \ref{th:kur_par} and \ref{th:kur} and the functional equation of the Kurihara numbers (\cite[lemma 5.2.1]{Kur2012}).

\subsection{Dual exponential map}
\label{sec:exp}

K. Kato constructed in \cite{Kato} an Euler system for the Tate module of an elliptic curve \(T\) and defined an exponential map to relate this Euler system to the special values of the twisted $L$-functions of the elliptic curve.

Using the Néron differential $\omega_E$ we had already fixed, we can define the dual exponential map (see \cite[definition 3.10]{BlochKato}) as a map
$$\exp^*_{\omega_E}:\ H^1_{/\FBK}(F_\p,T)\otimes \Q_p\to F_\p,$$
where $F_\p$ is the completion at a prime $\p$ above $p$ of an abelian extension $F/\Q$ and $H^1_{/\FBK}(F_\p,T)$ is the quotient $H^1(F_\p,T)/H^1_{\FBK}(F_\p,T)$. 

We will sketch the construction of the exponential map from \cite{BlochKato}. The following is the fundamental short exact sequence from $p$-adic Hodge theory:
\begin{equation}
\xymatrix{0\ar[r] &\Q_p\ar[r] & B_\crys^{\varphi=1}\oplus B_{\dR}^+\ar[r] & B_{\dR}\ar[r] & 0,}
\label{eq:pHT}
\end{equation}
where $B_\crys$ and $B_\dR$ are the crystalline and de Rahm period rings. For $V:=T\otimes \Q_p$, we denote
$$\begin{array}{ccc}
D_\crys(V)=(B_\crys\otimes V)^{G_{F_\p}},\ \ &D_\dR(V)=(B_\dR\otimes V)^{G_{F_\p}},\ \ &D_\dR(V)^+=(B_\dR^+\otimes V)^{G_{F_\p}}.
\end{array}$$

If we consider the cohomological exact sequence in \eqref{eq:pHT} tensored with $V$,
$$\xymatrix{0\ar[r] &\Q_p\ar[r] & D_\crys(V)^{\varphi=1}\oplus D_{\dR}(V)^+\ar[r] & D_{\dR}(V)\ar[r] & H^1_\FBK(F_\p,V)\ar[r] & 0.}$$
Since the tangent space of $E$ is isomorphic to $D_\dR(V)/D_\dR(V)^+$, this exact sequence induces a surjective map
$$\exp:\ \textrm{tan}(E/F_\p)\to H^1_\FBK(F_\p,V).$$

The dual $\omega_E^*$ of the Néron differential generates the tangent space of $E/F_\p$ as an $F_\p$-vector space, so we can consider the exponential as a map from $F_\p$,
$$\exp_{\omega_E}:\ F_\p\to H^1_{\FBK}(F_\p,V).$$
Its dual map is the one we will be interested in:
$$\exp_{\omega_E}^*:\ H^1_{/\FBK}(F_\p,V)\to \Hom(F_\p,\Q_p).$$
We can identify the latter with $F_\p$ via the following isomorphism:
\begin{equation}
F_\p\to \Hom(F_\p,\Q_p),\ x\mapsto (y\mapsto \Tr_{F_\p/\Q_p}(xy)).
\label{eq:dual_trace}
\end{equation}

The above map is defined for every Galois representation using $p$-adic Hodge theory. However, when $T$ is the Tate module of an elliptic curve $E$, the exponential map has a geometrical meaning (see \cite[example $3.10.1$]{BlochKato}). Note that in this case,
$$H^1_\FBK(F_\p,T)=\varprojlim_n E(F_\p)/p^n E(F_\p).$$
Then the exponential map coincides with the Lie group exponential map defined on the elliptic curve. Moreover, it can be also understood as the tensor with $\Q_p$ of the formal group exponential map defined on a certain power of the maximal ideal of the ring of integers of $F_\p$.

We will be interested in the image under this map of the elements coming from the global cohomology group $H^1(F,T)$. In order to do that, we consider the localisation at a rational prime $q$ as the direct sum of the localisation maps above every prime $v$ above $q$:
$$\loc_q^s:\ H^1(F,V)\to \bigoplus_{v\mid q} H^1_{/\FBK}(F_v,V).$$

Then there is an Euler system $(z_F)_{F\in \Omega}$ satisfying the following interpolation property (see \cite{Kataoka21}, theorem 6.1). Fix an inclusion $\overline \Q\subset \C$; for every character $\psi$ of $\Gal(F/\Q)$, we have that
\begin{equation}\sum_{\sigma\in \Gal(F/\Q)} \psi(\sigma) \sigma(\exp^*_{\omega_E}(\loc_s^p(z_F)))=\frac{L_{S_F\cup\{p\}}(E,\psi,1)}{\Omega_E^{\psi(-1)}}\in F\otimes \Q_p,
\label{eq:kataoka_interp}
\end{equation}
where $S_F$ is the primes ramifying at $F/\Q$ and $\Omega_E^{\pm}$ denote the Néron periods of $E$. Here $L_{S_F\cup\{p\}}(E,\psi,1)$ is the $S_F\cup\{p\}$-truncated and $\psi$-twisted L-function, defined as
$$L_{S_F\cup\{p\}}(E,\psi,s)=\prod_{\ell\notin S_F, \ell\neq p} (1-a_\ell\psi(\ell) \ell^{-s} +\textbf{1}_N(\ell)\psi(\ell)^2 \ell^{1-2s})^{-1}.$$

Call $w_F:=\exp^*_{\omega_E}(\loc^s_p(z_F))$. For every $n,m\in \N$ such that $m\mid n$, we use the following notation. We call $\tilde n$ the product of all the primes dividing $n$ and $r(m,n):=\textrm{lcm}(\tilde n, m)$. Furthermore, let $s(m,n):=\frac{r(m,n)}{m}$ be the product of primes dividing $n$ but not $m$. Note that $m$ and $s(m,n)$ are relatively prime. When there is no risk of confusion, we will denote these quantities by $r$ and $s$.

For the remaining of this section, fix $n\in \N$ divisible by neither $p$ nor any bad prime of $E$. For every character $\psi$ of conductor $m$, the algebraic $L$-value is defined as
$$\LL_n(E,\psi):=\frac{L_{S_n\cup\{p\}}(E,\psi,1)}{\varphi(n)e_{\overline \psi}(\zeta_{r})\Omega_E^{\psi(-1)}}=(-1)^{\nu(s)}\overline\psi(s)\frac{\varphi(r)L_{S_n\cup\{p\}}(E,\psi,1)}{\varphi(n)\varphi(m)e_{\overline \psi}(\zeta_{m})\Omega_E^{\psi(-1)}}.$$
where $S_n$ is the set of prime divisors of $n$ and $\varphi$ represents the Euler totient function and $\zeta_j=e^{\frac{2\pi i}{j}}\in \C$. This is a modification of the definition in \cite{WiersemaWuthrich} to consider the cases when $\psi$ is a non-primitive character. Note that, when $\psi$ is primitive, the product $\varphi(n) e_\psibar(\zeta_r)$ coincides with the Gauss sum of $\psi$. It is worth mentioning that the last equality comes from the fact that 
\begin{equation}
e_\psibar(\zeta_r)=\frac{\varphi(m)}{\varphi(r)}e_{\psibar}\biggl(\Tr_{\Q(\mu_r)/\Q(\mu_m)}(\zeta_{r})\biggr)=\frac{\varphi(m)}{\varphi(r)}e_\psibar\biggl((-1)^{\nu(s)} \zeta_m^{(s^{-1})} \biggr)=\frac{(-1)^{\nu(s)}\varphi(m)}{\psibar(s)\varphi(r)}e_{\psibar}(\zeta_m),
    \label{eq:trace}
\end{equation}
where $s^{-1}$ is the inverse of $s$ mod $m$.

If we let $\DD_n$ be the set of Dirichlet characters modulo $n$, we can define the Stickelberger element as
$$\Theta_n:= \sum_{\psi \in \DD_n} \LL_n\Bigl(E,\overline\psi\Bigr) e_\psi\in \Q_p[\zeta_{\varphi(n)}][\Gal(\Q(\mu_n)/\Q)].$$

We can lower bound the $p$-adic valuation of the $\psi$-parts of $\Theta_n$ for certain characters. In order to do that, define 
\begin{equation}
k_\psi':=v_p\Biggl(\frac{1-a_p\psi(p)+\textbf{1}_N(p)\psi(p)^2}{p}\Biggr).
\label{eq:kchi}
\end{equation}

\begin{proposition}
Let $m\in \N$ be an integer prime to $Np$ and let $\psi$ be a character of conductor $m$, satisfying that $r(m,n)=n$ or, equivalently, that $\gcd(m,n/m)=1$. Then $\psi(\Theta_n)=\LL_n\Bigl(E,\psibar\Bigr)\in p^{k_\psi'}\Z_p[\psi]$.
\label{prop:integrality_L}
\end{proposition}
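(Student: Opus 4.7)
The first equality, $\psi(\Theta_n) = \LL_n(E,\psibar)$, is a direct consequence of the orthogonality of characters. For any $\psi, \psi' \in \DD_n$ one has $\psi(e_{\psi'}) = \frac{1}{\varphi(n)}\sum_\sigma \overline{\psi'}(\sigma)\psi(\sigma) = \delta_{\psi,\psi'}$, so applying $\psi$ to $\Theta_n = \sum_{\psi' \in \DD_n} \LL_n(E, \overline{\psi'})\, e_{\psi'}$ isolates the single summand with $\psi' = \psi$.

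For the integrality claim, the plan is to realise $\LL_n(E,\psibar)$ as the $\psi$-component of the dual-exponential image of Kato's zeta element. Specialising the interpolation formula \eqref{eq:kataoka_idemp} to $F = \Q(\mu_n)$, for which $[F:\Q] = \varphi(n)$, and combining it with the definition of $\LL_n$, one obtains
$$\LL_n(E,\psibar) \;=\; \frac{e_\psi(w_F)}{e_\psi(\zeta_r)}, \qquad w_F := \exp^*_{\omega_E}\bigl(\loc^s_p(z_F)\bigr).$$
Since $z_F$ is a global class in the integral cohomology $H^1(F,T)$, its localisation at $p$ belongs to the integral local cohomology, and so $w_F$ lies in a distinguished $\Z_p$-lattice inside $F \otimes_\Q \Q_p$ obtained as the image of $\exp^*_{\omega_E}$. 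Proving the integrality bound thus reduces to a purely local computation at $p$: I must show that the $\psi$-isotypic component of this lattice is contained in $p^{k_\psi'} \cdot e_\psi(\zeta_r)\,\Z_p[\psi]$.

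The description of the relevant lattice is exactly the content of \textsection\ref{sec:exp_im}. Crucially, because $\gcd(n,Np) = 1$, the extension $F/\Q$ is unramified at $p$, so the image $\exp^*_{\omega_E}\bigl(H^1(F_\p,T)\bigr)$ can be computed via $p$-adic Hodge theory in terms of the crystalline Frobenius $\phi$ acting on the Dieudonné module of $T_pE$, whose characteristic polynomial is precisely $1 - a_p X + \textbf{1}_N(p) X^2$. The outcome is an image lattice of the form $\frac{1}{p}\bigl(1 - a_p \phi^{-1} + \textbf{1}_N(p)\phi^{-2}\bigr)\, \OO_{F_\p}$ (suitably adapted to the chosen Néron differential and normalisation). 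Since $\phi$ acts on the $\psi$-eigenspace by multiplication by $\psi(p)$, taking $e_\psi$-parts converts this factor into $\frac{1 - a_p\psi(p) + \textbf{1}_N(p)\psi(p)^2}{p}$, whose $p$-adic valuation is $k_\psi'$. Dividing through by $e_\psi(\zeta_r)$ then yields the desired bound.

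The main obstacle is precisely this lattice computation, which is why it is postponed to \textsection\ref{sec:exp_im}: one has to pin down the image of the integral local cohomology under the dual exponential in a form compatible with the cyclotomic eigenspace decomposition. Once it is in place, the remaining ingredients — the orthogonality of idempotents, the Kataoka interpolation formula, and the Gauss-sum identity \eqref{eq:trace} ensuring that $e_\psi(\zeta_r)$ correctly generates the $\psi$-component of $\OO_F \otimes \Z_p$ — combine straightforwardly to deliver $\LL_n(E,\psibar) \in p^{k_\psi'}\Z_p[\psi]$.
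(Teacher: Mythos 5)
The orthogonality computation in your first paragraph is correct (and is indeed the content of the identity $\psi(\Theta_n)=\LL_n(E,\psibar)$, which the paper takes as immediate from the definition of $\Theta_n$). The integrality claim, however, is proved by the paper along a completely different and much more elementary route: it invokes the external theorem of Wiersema--Wuthrich (under assumption \ref{EManin}) that the \emph{primitive} algebraic $L$-value $\LL(E,\psibar)=\frac{L(E,\psibar,1)}{\varphi(m)e_{\psi}(\zeta_m)\Omega^{\psi(-1)}}$ lies in $\Z_p[\psi]$, and then uses the explicit factorisation
\[
\LL_n(E,\psibar)=(-1)^{\nu(s)}\psi(s)\,\frac{\varphi(r)}{\varphi(n)}\,
\prod_{\ell\in (S_n\cup\{p\})\setminus S_m}\frac{\ell-a_\ell\psibar(\ell)+\textbf{1}_N(\ell)\psibar(\ell)^2}{\ell}\cdot\LL(E,\psibar).
\]
When $r=n$ the fraction $\varphi(r)/\varphi(n)$ drops out, the factors at $\ell\neq p$ are visibly integral, and the Euler factor at $p$ contributes exactly $p^{k_\psi'}$. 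No use is made of the dual exponential map or of the lattice $\exp^*_{\omega_E}(H^1_{/\FBK}(F_\p,T))$ at this stage.

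Your proposed route --- realising $\LL_n(E,\psibar)$ as $e_\psi(w_F)/e_\psi(\zeta_r)$ and pinning down the image of the integral local cohomology under $\exp^*_{\omega_E}$ --- is conceptually attractive, but it has a genuine gap in the present generality. The lattice computation you defer to \textsection\ref{sec:exp_im} is carried out there \emph{only} under hypotheses \ref{Kdeg}--\ref{Ktam} (degree prime to $p$, no $p$-torsion in $E(K_\p)$, Tamagawa numbers prime to $p$). For $F=\Q(\mu_n)$ with $n$ an arbitrary squarefree integer prime to $Np$, none of these is available: the residue degree of $p$ in $F$ may well be divisible by $p$ (e.g.\ $n$ a prime $q$ with $q\equiv 1\bmod p$), in which case the idempotent $e_\psi=\frac{1}{\varphi(n)}\sum\overline\psi(\sigma)\sigma$ is not $p$-integral and the step ``taking $\psi$-parts'' of the lattice is no longer exact; likewise $\widetilde E_0(\kappa_\p)[p^\infty]$ and the Tamagawa number at $\p$ can contribute unforeseen $p$-powers, so the asserted shape $\tfrac{1}{p}(1-a_p\phi^{-1}+\textbf{1}_N(p)\phi^{-2})\,\OO_{F_\p}$ of the image lattice, and the claim that $e_\psi(\zeta_r)$ is a $\Z_p[\psi]$-generator of the $\psi$-isotypic piece of $\OO_F\otimes\Z_p$, are left unproved precisely in the cases one cannot rule out. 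In short, your argument would in effect re-derive the Wiersema--Wuthrich integrality from Kato's theory, which is possible in principle but requires a finer local analysis than the one the paper is willing to perform at this point; the paper's proof is genuinely shorter because it outsources exactly that hard step.
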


\begin{proof}
By assumption \ref{EManin} and \cite[Theorem 2]{WiersemaWuthrich}, the primitive algebraic $L$-value, defined as $\LL\Bigl(E,\psibar\Bigr):=\frac{L(E,\psibar,1)}{\varphi(m)e_{\psi}(\zeta_m)\Omega_E^{\psi(-1)}}$ belongs to $\Z_p[\psi]$. It satisfies an explicit relation with $\LL_n\Bigl(E,\psibar\Bigr):$
\[\LL_n(E,\psibar)=(-1)^{\nu(s)}\psi(s) \frac{\varphi(r)}{\varphi(n)} \prod_{\ell\in (S_n\cup\{p\})\setminus S_m} \left(\frac{\ell-a_\ell\psibar(\ell) +\textbf{1}_N(\ell)\psibar(\ell)^2}{\ell}\right)\ \LL\Bigl(E,\psibar\Bigr). \]
When $r=n$, all of the terms in the right hand side belong to $\Z_p[\psi]$ with the possible exception of the Euler factor at $p$, which has $p$-adic valuation $k_\psibar'$, which is the same as the one of $k_\psi'$. Therefore,
\[\LL_n\Bigl(E,\psibar\Bigr)\in p^{k_\psi'}\Z_p[\psi].\]\qedhere
\end{proof}

Stickelberger elements can be also defined for every abelian extension of $\Q$. If $F/\Q$ is an abelian extension of conductor $n$, consider the projection
\begin{equation}
c_{\Q(\mu_n),F}:\ \Q_p[\Gal(\Q(\mu_n)/\Q)]\to \Q_p[\Gal(F/\Q)],\ \sigma\mapsto \sigma|_F.
\label{eq:projection}
\end{equation}

\begin{definition}
Let $F/\Q$ be an abelian extension of conductor $n$. Then we define the Stickelberger element of $F$ as
$$\Theta_F:=c_{\Q(\mu_n),F} \Theta_n.$$
\end{definition}

The way to relate the Stickelberger elements to Kato's Euler system is by considering their action on certain elements in $\Q(\mu_n)$.

\begin{definition}
For every $n\in \N$, consider the element
$$\xi_n=\sum_{\tilde n\mid d\mid n}\zeta_d.$$
\end{definition}

\begin{lemma}
If $\psi$ is a Dirichlet character modulo $n$ of conductor $m$, then
$$e_\psi(\xi_n)=e_\psi(\zeta_{r(m,n)}).$$
\label{lem:chi_xi}
\end{lemma}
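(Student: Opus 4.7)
The plan is to show that in the sum $e_\psi(\xi_n) = \sum_{\tilde n \mid d \mid n} e_\psi(\zeta_d)$ every term vanishes except the one with $d = r := r(m,n)$, from which the lemma follows immediately. First I would verify that $r$ lies in the summation range: $\tilde n \mid r$ by definition, and $r \mid n$ because both $\tilde n$ and $m$ divide $n$.

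The cleanest route is a prime-by-prime decomposition. I would write $n = \prod_\ell \ell^{n_\ell}$, $d = \prod_\ell \ell^{d_\ell}$, $m = \prod_\ell \ell^{c_\ell}$ and $r = \prod_\ell \ell^{r_\ell}$, where $r_\ell = \max(1,c_\ell)$ if $\ell \mid n$ and $r_\ell = 0$ otherwise. The Chinese remainder theorem identifies $(\Z/n)^\times \cong \prod_\ell (\Z/\ell^{n_\ell})^\times$, and with a compatible choice of roots of unity one has $\zeta_d = \prod_\ell \zeta_{\ell^{d_\ell}}$. The character decomposes as $\psi = \prod_\ell \psi_\ell$ with $\mathrm{cond}(\psi_\ell) = \ell^{c_\ell}$, and hence the idempotent splits as $e_\psi = \prod_\ell e_{\psi_\ell}$, giving
\[
e_\psi(\zeta_d) = \prod_\ell e_{\psi_\ell}(\zeta_{\ell^{d_\ell}}).
\]
The problem is thus reduced to showing that each local factor vanishes unless $d_\ell = r_\ell$.

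I would treat three cases. If $\ell \nmid n$, then $d_\ell = r_\ell = 0$ and $e_{\psi_\ell}(1) = 1$ is a harmless factor. If $\ell \mid n$ but $\ell \nmid m$, then $\psi_\ell$ is trivial and $e_{\psi_\ell}(\zeta_{\ell^{d_\ell}}) = \varphi(\ell^{d_\ell})^{-1}\sum_{a \in (\Z/\ell^{d_\ell})^\times} \zeta_{\ell^{d_\ell}}^a = \mu(\ell^{d_\ell})/\varphi(\ell^{d_\ell})$, which is non-zero exactly when $d_\ell = 1 = r_\ell$. Finally, if $\ell \mid m$ (so $r_\ell = c_\ell \geq 1$), I would split the local sum according to residue modulo $\ell^{\min(c_\ell,d_\ell)}$: for $d_\ell > c_\ell$, grouping $a \bmod \ell^{c_\ell}$ leaves an inner geometric sum $\sum_{k} \zeta_{\ell^{d_\ell - c_\ell}}^k = 0$; for $d_\ell < c_\ell$, grouping $a \bmod \ell^{d_\ell}$ leaves an inner character sum $\bar\psi_\ell(c_0) \sum_{h \in H} \bar\psi_\ell(h) = 0$, the vanishing coming from the fact that $\ell^{c_\ell}$ is the exact conductor of $\psi_\ell$, so $\psi_\ell$ is non-trivial on the reduction kernel $H = \ker\bigl((\Z/\ell^{c_\ell})^\times \to (\Z/\ell^{d_\ell})^\times\bigr)$.

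Combining the three cases forces $d_\ell = r_\ell$ for every prime $\ell$, i.e.\ $d = r$, so only that single term survives in the sum and we obtain $e_\psi(\xi_n) = e_\psi(\zeta_r)$. No step is delicate; the only small piece of bookkeeping is choosing the prime-power roots of unity compatibly with $\zeta_n$ so that the CRT-factorisations of both $\zeta_d$ and $e_\psi$ are literally multiplicative, but once that is set up all three subcases reduce to elementary sums over cyclic groups.
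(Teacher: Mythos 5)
Your proof is correct, and it takes a genuinely different route from the paper's. The paper argues globally with Galois theory: for each $d$ it sets $d' = \gcd(d,m)$, uses a lift $\tilde\sigma \in \Gal(\Q(\mu_n)/\Q(\mu_{d'}))$ trivial on $\Q(\mu_m)$ to show $e_\psi(\zeta_d)$ is fixed by $\Gal(\Q(\mu_d)/\Q(\mu_{d'}))$, hence proportional to $e_\psi(\Tr_{\Q(\mu_d)/\Q(\mu_{d'})}\zeta_d)$; it then kills all but the $d=r$ term by splitting into two cases — trace vanishing when some $\ell^2 \mid d$ with $v_\ell(d) > v_\ell(m)$, and $e_\psi$ vanishing on the strictly smaller field $\Q(\mu_{d'})$ when $d' < m$. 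Your proof instead decomposes everything multiplicatively over primes via CRT and reduces to three elementary local sums: the trivial-character Ramanujan sum $\mu(\ell^{d_\ell})/\varphi(\ell^{d_\ell})$, a vanishing geometric sum when $d_\ell > c_\ell$, and a vanishing non-trivial character sum over the reduction kernel when $d_\ell < c_\ell$. The CRT route is more computational but also more self-contained (no cyclotomic Galois-theoretic apparatus), and it handles the mixed case $\ell \mid n$, $\ell \nmid m$, $v_\ell(n) \geq 2$ cleanly through $\mu(\ell^j) = 0$ for $j\geq 2$ — a case the paper's ``$v_\ell(d) > v_\ell(m) \geq 1$'' clause as literally written does not cover, though it is harmless in the paper's squarefree applications and can be patched by weakening the case condition to $v_\ell(d) > \max(1, v_\ell(m))$. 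One small item you flag yourself but do not fully spell out: the factorisation $e_\psi(\zeta_d) = \prod_\ell e_{\psi_\ell}(\zeta_{\ell^{d_\ell}})$ requires the CRT identification of group rings $\Z_p[\psi][\Gal(\Q(\mu_n)/\Q)] \cong \bigotimes_\ell \Z_p[\psi][\Gal(\Q(\mu_{\ell^{n_\ell}})/\Q)]$ acting compatibly on $\Q(\mu_n) \cong \bigotimes_\ell \Q(\mu_{\ell^{n_\ell}})$, which is routine but should be stated if this were written out in full.
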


\begin{proof}
For every $d$ dividing $n$, let $d'=\textrm{gcd}(d,m)$. For every $\sigma\in \Gal(\Q(\mu_d)/\Q(\mu_{d'}))$, we can find a lift $\widetilde \sigma \in \Gal(\Q(\mu_n)/\Q(\mu_{d'}))$ such that 
$$\begin{array}{cc}
\widetilde \sigma\bigm|_{\Q(\mu_d)}=\sigma\textrm{ and }& \widetilde\sigma\bigm|_{\Q(\mu_m)}=\textrm{Id}|_{\Q(\mu_m)}.
\end{array}$$

Since $\psi$ has conductor $m$,
$$e_\psi(\sigma(\zeta_d))=e_\psi(\widetilde \sigma(\zeta_d))=e_\psi(\zeta_d)\ \forall \sigma\in \Gal(\Q(\mu_d)/\Q(\mu_{d'})).$$
Therefore
$$e_\psi(\zeta_d)=\frac{1}{[\Q(\mu_d):\Q(\mu_{d'})]} e_\psi(\Tr_{\Q(\mu_d)/\Q(\mu_{d'})} \zeta_d).$$

Assume first that there is a prime $\ell$ such that $v_\ell(d)>v_\ell(m)\geq1$. Then $\gcd(d,m)\mid \frac{d}{\ell}$, so
$$\Tr_{\Q(\mu_d)/\Q(\mu_{d'})} \zeta_d=\Tr_{\Q(\mu_{d/\ell})/\Q(\mu_{d'})}\left(\Tr_{\Q(\mu_d)/\Q(\mu_{d/\ell})} \zeta_d\right).$$
However, $\Tr_{\Q(\mu_d)/\Q(\mu_{d/\ell})} \zeta_d=0$ because $\ell\mid \frac{d}{\ell}$. Indeed, $\zeta_d$ is a root of the polynomial $P(T)=T^\ell-\zeta_{d/\ell}$. Since $[\Q(\mu_d):\Q(\mu_{d/\ell})]=\ell$ because $\ell^2\mid d$, then $P(T)$ is irreducible. The sum of the roots of $P$ is zero and, hence, so is the trace of $\zeta_d$. Therefore, $e_\psi(\zeta_d)=0$ in this case.

Given some $d$ such that the above prime $\ell$ does not exist, then $d\mid r$. Moreover, assume $d< r$. Since $\tilde n\mid d$, then $d'<m$,
$$e_\psi(\zeta_d)=\frac{1}{[\Q(\mu_d):\Q(\mu_{d'})]}e_{\psi}(\Tr_{\Q(\mu_d)/\Q(\mu_d')}\zeta_d).$$
Since $\psi$ has conductor $m$, then $e_\psi(x)=0$ for every $x\in \Q(\mu_{d'})$ and, therefore, $e_\psi(\zeta_d)=0$.

The only remaining term is $\zeta_{r}$, so 
\[e_\psi(\xi_n)=\sum_{\tilde n\mid d\mid n} e_\psi(\zeta_d)=e_\psi(\zeta_{r}).\qedhere\]
\end{proof}

The concept of $\xi_n$ extends naturally to abelian extensions of $\Q$. Assume $F/\Q$ is an abelian extension of conductor $n$. Define
$$\xi_F:= \Tr_{\Q(\mu_n)/F} \xi_n.$$

\begin{corollary}
If $F/\Q$ is an abelian extension of conductor $n$ and $\psi$ is a character of conductor $m$ whose fixed field contains $F$, then 
$$e_\psi(\xi_F)=[\Q(\mu_n):F] e_\psi(\zeta_{r}).$$
\label{cor:chi_xi}
\end{corollary}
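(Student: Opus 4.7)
The plan is to reduce the statement directly to Lemma \ref{lem:chi_xi} by exploiting the character property of the idempotent $e_\psi$. First, I would observe that since $F$ has conductor $n$, $F \subset \Q(\mu_n)$, so the Galois group $H := \Gal(\Q(\mu_n)/F)$ is a subgroup of $\Gal(\Q(\mu_n)/\Q)$, and the assumption that the fixed field of $\psi$ contains $F$ translates to $\psi|_H = 1$.

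The next step is the identity $e_\psi \circ \sigma = \psi(\sigma)\, e_\psi$ for every $\sigma \in \Gal(\Q(\mu_n)/\Q)$. This follows immediately from the definition of $e_\psi$ in \eqref{eq:idempotent} by the substitution $\tau \mapsto \tau\sigma$ in the defining sum. Expanding
\[\xi_F = \Tr_{\Q(\mu_n)/F}\xi_n = \sum_{\sigma \in H} \sigma(\xi_n),\]
I would then apply $e_\psi$ termwise:
\[e_\psi(\xi_F) = \sum_{\sigma \in H} e_\psi(\sigma(\xi_n)) = \sum_{\sigma \in H} \psi(\sigma)\, e_\psi(\xi_n) = |H|\cdot e_\psi(\xi_n),\]
where the last equality uses $\psi|_H = 1$. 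Since $|H| = [\Q(\mu_n):F]$, this gives $e_\psi(\xi_F) = [\Q(\mu_n):F]\, e_\psi(\xi_n)$.

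Finally, the conclusion follows by substituting the evaluation $e_\psi(\xi_n) = e_\psi(\zeta_r)$ provided by Lemma \ref{lem:chi_xi}, whose hypotheses apply because $\psi$ has conductor $m$ and $m \mid n$ (as the fixed field of $\psi$ contains $F$, which has conductor $n$). There is no real obstacle here: the work is entirely in the preceding lemma, and this corollary is a clean propagation of that result up from $\Q(\mu_n)$ to the intermediate field $F$ via the trace.
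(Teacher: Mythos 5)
Your argument is correct and is exactly the paper's proof, just with the details written out: the paper asserts $e_\psi(\xi_F)=[\Q(\mu_n):F]\,e_\psi(\xi_n)$ directly from "$F$ contains the fixed field of $\psi$" and then applies Lemma \ref{lem:chi_xi}, while you expand $\xi_F=\sum_{\sigma\in H}\sigma(\xi_n)$ and use $e_\psi\circ\sigma=\psi(\sigma)\,e_\psi$ together with $\psi|_H=1$ to justify that first step.

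One small logical slip worth flagging, though it is inherited from the paper's own statement: you write that ``the fixed field of $\psi$ contains $F$ translates to $\psi|_H=1$,'' but the containment here is reversed. If $L=\Q(\mu_n)^{\ker\psi}$ is the field cut out by $\psi$, then $\psi|_H=1$ is equivalent to $H\subset\ker\psi$, i.e.\ to $L\subset F$ — so what you need (and what you in fact use, and what the paper's proof correctly says) is that $F$ \emph{contains} the fixed field of $\psi$. The corollary statement has the inclusion backwards; the version that matches both the proof and the later application (where $\psi$ is a character of $\Gal(F/\Q)$, hence $L\subset F$ automatically) is $L\subset F$.
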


\begin{proof}
Since $F$ contains the fixed field of $\psi$, by Lemma \ref{lem:chi_xi},
\[e_\psi(\xi_F)=[\Q(\mu_n):F] e_\psi(\xi_n)=[\Q(\mu_n):F] e_\psi(\zeta_{r}).\]\qedhere
\end{proof}

We can compute
$$e_\psi(\Theta_F(\xi_F))=\Theta_n e_\psi(\xi_F)=[\Q(\mu_n):F]\Theta_n e_\psi(\zeta_{r})=[\Q(\mu_n):F]\LL_n\Bigl(E,\psibar\Bigr)e_\psi(\zeta_{r}).$$

Thus,
$$e_\psi(\Theta_F(\xi_F))=\frac{L_{S_n\cup\{p\}}(E,\psibar,1)}{[F:\Q]\Omega_E^{\psi(-1)}}\in F\otimes \Q_p.$$

Since that is true for all characters, we can conclude from equation \eqref{eq:kataoka_interp} that
\begin{equation}
w_F=\Theta_F(\xi_F)\in F\otimes \Q_p.
\label{eq:Kato_MT}
\end{equation}

\subsection{Twisting Kato's Euler system}
\label{sec:twist}

According to Definition \ref{def:Kato_triple}, we are interested in studying the representation
\[T(\chi):=T_pE\otimes \OO_d(\chi),\]
where $\chi$ is a primitive character of $G$. In order to do that, we have to apply the twisting process of Euler systems described in \cite[\textsection II.4]{Rubin}. Define
\begin{equation}
z_{F,\chi}:=\textrm{cor}_{KF/F}(z_{KF}\otimes 1_\chi),
\label{eq:euler_twist}
\end{equation}
where $1_\chi$ represents the unit element in $\OO_d(\chi)$.

The goal of this section is to describe the dual exponential of the twisted zeta element. In particular, with the notation of \eqref{eq:Kato_MT}, we want to show that this value coincides with the $\chibar$ part of $\Theta_{KF}(\xi_{KF})$.

\begin{proposition} (\cite[Proposition II.4.2]{Rubin})
The collection $\{z_{F,\chi}\}_{F\in \Omega}$ is an Euler system for $T(\chi)$.
\end{proposition}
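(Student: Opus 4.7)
The plan is to verify the Euler system distribution relation directly by unwinding the definition of $z_{F,\chi}$ and invoking the corresponding relation for the original system $(z_{F})_{F\in\Omega}$ at the level of $KF$ and $KF'$. The argument follows the template of \cite[proposition II.4.2]{Rubin}, but uses crucially the compatibility assumptions on $K$.

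First I would use the transitivity of corestriction. Given an extension $F'/F$ in $\Omega$,
\[\cor_{F'/F}(z_{F',\chi})=\cor_{F'/F}\bigl(\cor_{KF'/F'}(z_{KF'}\otimes 1_\chi)\bigr)=\cor_{KF/F}\bigl(\cor_{KF'/KF}(z_{KF'}\otimes 1_\chi)\bigr).\]
Since $\chi$ is trivial on $G_{KF}$, the element $1_\chi\in \OO_d(\chi)$ is $G_{KF}$-invariant, so the inner corestriction commutes with the tensor, and applying the Euler system relation for $z$ on $KF'/KF$ gives
\[\cor_{KF'/KF}(z_{KF'})\otimes 1_\chi=\Biggl(\prod_{\ell\in \PP\cap\Sigma(KF'/KF)} P_\ell(\Frob_\ell^{-1})\Biggr) z_{KF}\otimes 1_\chi.\]

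Second I would match the Euler factors on the two sides. By \ref{Kur}, the extension $K/\Q$ is unramified at every prime in $\PP$, hence $\PP\cap \Sigma(KF'/KF)=\PP\cap \Sigma(F'/F)$. Moreover, every $\ell\in \PP$ splits completely in $K$ by the definition of $\PP$, so $\Frob_\ell$ acts trivially on $\OO_d(\chi)$; consequently the Euler factor for $T(\chi)$ at $\ell$ coincides with $P_\ell(x)$, the Euler factor for $T$. Because $\Frob_\ell$ (for $\ell$ unramified in $KF$) lifts consistently between $G_F$ and $G_{KF}$, the polynomial $P_\ell(\Frob_\ell^{-1})$ commutes with the outer corestriction $\cor_{KF/F}$, and we conclude
\[\cor_{F'/F}(z_{F',\chi})=\Biggl(\prod_{\ell\in \PP\cap\Sigma(F'/F)} P_\ell^\chi(\Frob_\ell^{-1})\Biggr)\cor_{KF/F}(z_{KF}\otimes 1_\chi)=\Biggl(\prod_{\ell\in \PP\cap\Sigma(F'/F)} P_\ell^\chi(\Frob_\ell^{-1})\Biggr) z_{F,\chi},\]
which is the required distribution relation.

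The main obstacle is purely bookkeeping: one has to keep track of the different Galois groups acting on the different cohomology groups and on the twist $\OO_d(\chi)$, and verify that the Frobenii at primes $\ell\in \PP$ can be coherently identified through the various corestrictions. Assumptions \ref{Kur} and \ref{Kdeg} together with the definition of $\PP$ (primes splitting completely in $K$) are exactly what make this identification possible and force the Euler factors for $T$ and $T(\chi)$ to coincide at the relevant primes, so no new Euler factor depending on $\chi$ appears. The remaining checks, namely that $\mathcal K$ still contains the cyclotomic $\Z_p$-extension and every $\Q(q)$ with $q\in \PP$, are inherited from the original Euler system.
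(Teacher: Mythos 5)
Your argument is correct and reconstructs the standard corestriction computation that underlies Rubin's Proposition II.4.2, which the paper simply cites without reproof: transitivity of corestriction through $KF'$, the Euler relation over $KF'/KF$ where the twist is trivial, and the identification $P_\ell^{T(\chi)} = P_\ell^{T}$ from the fact that $\Frob_\ell$ acts trivially on $\OO_d(\chi)$ when $\ell$ splits completely in $K$. One small misattribution: the fact that $K/\Q$ is unramified at every $\ell\in\PP$ follows from the definition of $\PP$ (primes of good reduction that split completely in $K/\Q$, Definition \ref{def:Kato_triple}), not from \ref{Kur}, which only constrains $p$ and the bad primes of $E$; this does not affect the correctness of the argument.
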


We will now describe the dual exponential map of \(V\otimes \OO_d(\chi)\). Note that \(V\) and \(V\otimes \OO_d(\chibar)\) are equal as $G_{(KF)_w}$-modules for every prime $w$ of $KF$ above $p$. Hence we can consider the exponential map

\[\exp:\ \bigoplus_{w\mid p}\frac{(B_{\dR}\otimes V\otimes \OO_d(\chibar))^{G_{(KF)_w}}}{(B^+_{\dR}\otimes V\otimes \OO_d(\chibar))^{G_{(KF)_w}}}\to \bigoplus_{w\mid p}H^1_\FBK((KF)_w,V\otimes \OO_d(\chibar)).\] 

For every $w$, we have an isomorphism (depending on fixing a Weierstrass model for $E$)
\[\frac{(B_{\dR}\otimes V\otimes \OO_d(\chibar))^{G_{(KF)_w}}}{(B^+_{\dR}\otimes V\otimes \OO_d(\chibar))^{G_{(KF)_w}}}\cong (KF)_w\otimes \OO_d(\chibar).\]

Since \(G_{w/v}:=\Gal((KF)_w/F_v)\) is finite, then \(H^1\left((KF)_w/F_v,B_\dR^+\otimes V\otimes \chibar\right)=0\), so
\[\left(\frac{\Bigl(B_{\dR}\otimes V\otimes \OO_d(\chibar)\Bigr)^{G_{(KF)_w}}}{\Bigl(B^+_{\dR}\otimes V\otimes \OO_d(\chibar)\Bigr)^{G_{(KF)_w}}}\right)^{G_w}=\frac{\Bigl(B_{\dR}\otimes V\otimes \OO_d(\chibar)\Bigr)^{G_{F_v}}}{\Bigl(B^+_{\dR}\otimes V\otimes \OO_d(\chibar)\Bigr)^{G_{F_v}}}.\]

By considering the direct sum over all $w\mid p$, the exponential map over $F_v$ can be written as
\[\exp_{\omega_E,\chibar}:\ \Bigl(KF\otimes \Q_p\otimes \OO_d(\chibar)\Bigr)^{\Gal(KF/F)}\to \bigoplus_{v\mid p} H^1_\FBK\Bigl((F_v,V\otimes \OO_d(\chibar))\Bigr).\]

Note that the first term is the $\chi$-part of $KF\otimes \Q_p$. Hence the dual exponential map can be written as 
\[\exp^*_{\omega_E,\chibar}:\ \bigoplus_{v\mid p} H^1_{/\FBK}(F_v,V\otimes \OO_d(\chi))\to \Hom\Bigl(e_\chi(KF\otimes L),\Q_p\Bigr)\cong e_{\chibar}\Bigl((KF)\otimes L\Bigr),\]
where we denote $L=\OO_d\otimes \Q_p$ and the last isomorphism comes from the fact that the identification in \eqref{eq:dual_trace} is Galois equivariant. Note that we are also using \eqref{eq:dual_trace} to identify \(\Hom(L,\Q_p)\cong L\).

In an abuse of notation, we will also denote by $\exp_{\omega_E,\chibar}^*$ to the following map
\[\exp^*_{\omega_E,\chibar}:\ H^1(F,V\otimes \OO_d(\chi))\to \bigoplus_{v\mid p} H^1_{/\FBK}(F_v,V\otimes \OO_d(\chi))\to e_\chibar((KF)\otimes L).\]

Now we will describe how the twisting process in \eqref{eq:euler_twist} is reflected in the images of the dual exponential map. 

First note that over $KF$, the map $\exp_{\omega_E,\chibar}^*$ coincides with $\exp_{\omega_E}^*\otimes \textrm{Id}_{\OO_d(\chi)}$. Hence we just need to see how $\exp_{\omega_E,\chibar}^*$ behaves under the corestriction. 

\begin{proposition}
Let $c\in H^1(F,V\otimes \OO_d(\chi))$ and $d=\cor_{KF/F} c\in H^1(KF,V\otimes \OO_d(\chi))$, where $\cor$ denotes the corestriction map. Then
\[\exp_{\omega_E,\chibar}^*(d)=N_{KF/F}\exp_{\omega_E,\chibar}^*(c)\in (KF\otimes \Q_p\otimes \OO_d(\chi))^{G_F}.\]
\label{prop:exp_cor}
\end{proposition}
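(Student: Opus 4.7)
The plan is to reduce the statement to the standard local compatibility between the Bloch-Kato dual exponential and corestriction, after localising at primes above $p$. First, I would use that corestriction commutes with localisation in the usual way: for each prime $v \mid p$ of $F$,
\[\loc_v(d) = \sum_{w \mid v} \cor_{(KF)_w / F_v}(\loc_w c),\]
where $w$ runs over primes of $KF$ above $v$. This reduces the global identity to a local identity at each pair $(w,v)$.

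Next, I would invoke the local compatibility of the dual exponential with corestriction. The Bloch-Kato exponential $\exp_{\omega_E}$ is compatible with extension of scalars on the tangent space side (under the inclusion $F_v \hookrightarrow (KF)_w$) and with the restriction map on the cohomology side. Dualising via Tate local duality and the trace pairing that identifies a local field with its $\Q_p$-linear dual yields
\[\exp^*_{\omega_E, F_v}\bigl(\cor_{(KF)_w / F_v}(x)\bigr) = \Tr_{(KF)_w / F_v}\bigl(\exp^*_{\omega_E, (KF)_w}(x)\bigr).\]
Here one uses crucially that $G_{KF}$ acts trivially on $\OO_d(\chi)$: at a prime $w \mid p$ of $KF$, the local dual exponential for $V \otimes \OO_d(\chi)$ coincides, under the identification $V \otimes \OO_d(\chi) = V \otimes \OO_d$ of $G_{(KF)_w}$-modules, with the classical dual exponential for $V$ tensored with $\OO_d$.

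Finally, assembling these pieces: summing the local compatibility over all $w$ and $v$ above $p$, and using the decomposition $KF \otimes_{\Q} \Q_p = \bigoplus_{w \mid p} (KF)_w$, one obtains
\[\exp^*_{\omega_E, \chibar}(d) = \sum_{v \mid p} \sum_{w \mid v} \Tr_{(KF)_w / F_v}\bigl(\exp^*_{\omega_E, (KF)_w}(\loc_w c)\bigr) = N_{KF/F}\bigl(\exp^*_{\omega_E, \chibar}(c)\bigr),\]
where $N_{KF/F}$ denotes the trace $KF \otimes \Q_p \to F \otimes \Q_p$ applied in the twisted sense so that the image lies in the $G_F$-invariants of $KF \otimes \Q_p \otimes \OO_d(\chi)$ (equivalently, in $e_\chibar(KF \otimes L)$).

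The main technical point is verifying the local compatibility cleanly, tracking the identifications under Tate local duality in the presence of the $\OO_d(\chi)$-twist. This is tractable precisely because the twist is trivial on restriction to $G_{(KF)_w}$, so at the $(KF)_w$-level the computation reduces to the standard untwisted case (which is classical, cf. the behaviour of $\exp^*$ under trace for local fields), and the descent to $F_v$ is governed by the trace/corestriction formula above.
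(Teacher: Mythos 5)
Your proof is correct and follows essentially the same approach as the paper: both localise at primes above $p$, use the adjointness of corestriction and restriction under Tate local duality together with the functoriality of the Bloch--Kato exponential under base change $F_v \hookrightarrow (KF)_w$ (which is precisely where the triviality of $\OO_d(\chi)$ on $G_{KF}$ is used), and then globalise via the trace identification in \eqref{eq:dual_trace}. The paper records this as a commutative diagram invoking \cite[propositions 1.5.2 and 1.5.3(iv)]{NSW} before dualising, whereas you state the resulting local compatibility $\exp^*_{\omega_E}\circ\cor = \Tr\circ\exp^*_{\omega_E}$ directly, but the underlying argument is the same.
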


\begin{proof}
Localising at primes above $p$, we have that
\[\loc_{p}^s(d)=\left(\bigoplus_{w\mid p} \cor_{(KF)_w/F_v}\right) (\loc_p^s(c)).\]

By \cite[Proposition 1.5.3 (iv)]{NSW}, if we understand $\loc_p^s(c)\du$ and $\loc_p^s(d)\du$ as maps into the duals of the finite cohomology groups, we have that 
\[\loc_{p}^s(c)\du=\loc_{p}^s(d)\du\circ\left(\bigoplus_{w\mid p}\res_{(KF)_w/F_v}\right).\]
where $\res$ denotes the restriction map. By \cite[Proposition 1.5.2]{NSW}, the following diagram is commutative

\begin{center}
    \begin{tikzpicture}[descr/.style={fill=white,inner sep=1.5pt}]
    
            \matrix (m) [
                matrix of math nodes,
                row sep=4em,
                column sep=5em,
                text height=1.5ex, text depth=0.25ex
            ]
            {  (KF\otimes \Q_p\otimes \chibar)^{G_F}  &    \bigoplus_{v\mid p} H^1_\f(F_v,V\otimes \chibar)   & \Q_p\\
            KF\otimes \Q_p\otimes \chibar  &    \bigoplus_{w\mid p} H^1_\f((KF)_w,V\otimes \chibar)   & \Q_p.\\
            };
    
            \path[overlay,->, font=\scriptsize,>=latex]
            (m-1-1) edge node[auto]{$\exp_{\omega_E}$} (m-1-2)
            (m-1-2) edge node[auto]{$\loc_p^s(d)\du$} (m-1-3) 

            (m-2-1) edge node[auto]{$\exp_{\omega_E}$} (m-2-2)
            (m-2-2) edge node[auto]{$\loc_p^s(c)\du$}(m-2-3)
            (m-1-1) edge node[auto]{$\subset$}(m-2-1)
            (m-1-2) edge node[auto]{$\bigoplus_{v\mid p} \res$} (m-2-2)
            (m-1-3) edge node[auto]{$=$} (m-2-3);

    \end{tikzpicture}
    \end{center}

Therefore, $\exp^*_{\omega_E,\chi}(d)$ is the restriction to $(KF\otimes\Q_p\otimes \chibar)^{G_F}$ of $\exp^*_{\omega_E,\chi}(c)$. Under the identification \eqref{eq:dual_trace}, it means that
\[\exp_{\omega_E,\chi}^*(d)=N_{KF/F}\exp_{\omega_E,\chi}^*(c).\qedhere\]
\end{proof}

If $K\cap F=\Q$, the dual exponential map of the twisted Kato's Euler system is 
\[w_{F,\chi}:=\exp_{\omega_E,\chibar}^*(z_{F,\chi})=N_{KF/F}(\exp_{\omega_E,\chibar}^*(z_{KF}\otimes 1_\chi))=[K:\Q]e_\chibar(\omega_{KF}).\]

By equation \eqref{eq:Kato_MT},
\begin{equation}
w_{F,\chi}=[K:\Q]e_{\chibar}(\Theta_{KF}(\xi_{KF}))= d\, e_\chibar(\Theta_{KF})(\xi_{KF}).
\label{eq:twisted_omega}
\end{equation}

\begin{remark}
\label{rem:katos_equal}
Kato's theory is not exclusive for elliptic curves, but can be done for modular forms. In particular, we can apply it to the modular form $f_\chi$, for some character $\chi$ of $G$, in order to obtain an Euler system $z_{F}^\chi\in H^1\bigl(F,V\otimes \OO_d(\chi)\bigr)$ satisfying the interpolation property. It is essentially the same Euler system as the one defined in \eqref{eq:euler_twist}.

Let $g$ be a modular form and let $K_g$ be its field of coefficients. Let $\lambda$ be a prime of $K_g$ above $p$. Kato defined in \cite[\textsection 6.3]{Kato} the $\Q_p$-vector space $V_{K_{g,\lambda}}(g)$ to be the maximal Hecke eigenquotient of $H^1_{\textrm{ét}}(Y_1(N),K_{g,\lambda})$ associated with $g$. Every $\gamma\in V_{K_{g,\lambda}(g)}$ can be used to construct an Euler system $z_\gamma$ which can be characterised by an interpolation property.

If we denote $S(g)$ to the Hecke eigenspace of $S_2(\Gamma_1(N))$ containing $g$ and $V_\C(g)=V_{K_g,\lambda}(g)\otimes_{K_{g,\lambda}}\C$, the period map defined in \cite[\textsection 4.10]{Kato} induces a map
\[\per:\ S(g)\to V_{\C}(g).\]
Let $F$ be a number field and let $\psi$ be a character of $\Gal(F/\Q)$. Then the interpolation property for $z_\gamma$ can be written as
\[\sum_{\sigma\in \Gal(F/\Q)} \psi(\sigma) \per\bigl(\sigma(\exp^*(\loc_s^p(z_{\gamma,F})))\bigr)^{\psi(-1)}=L_{S_F\cup\{p\}}(g,\chi,1)\gamma^{\psi(-1)},\]
where, for some $\eta\in V_{K_g,\lambda}(g)$, $\eta^{\pm}$ denotes the projection of $\eta$ to the eigenspace in which the complex conjugation acts by multiplication with $\pm$. Note that \eqref{eq:kataoka_interp} can be obtained after choosing a suitable $\gamma_0$ (see \cite[Theorem 6.1]{Kataoka21}). 

This interpolation property can be used to compare the $\chi$-twisted Euler systems constructed for $f$ in \eqref{eq:euler_twist}, denoted by $z^f_{F,\chi}$ and the Euler system for $f_\chi$, constructed for a suitable $\gamma\in V_{L_{\lambda}}(f_\chi)$, where $\lambda$ is a prime of $L$ above $p$. This Euler system will be denoted by $z_{\gamma_F}^{f_\chi}$.

We follow the argument in \cite[\textsection 14.6]{Kato}. There is an isomorphism of Galois representations 
\begin{equation}\Psi:\ V_{\Q_p}(f)\otimes L_\lambda(\chi)\cong V_{L_\lambda}(f_\chi).
\label{eq:Kato146}
\end{equation}

Choose $\gamma=\Psi(\gamma_0\otimes 1)$. Note that $\gamma^{\pm\chi(-1)}=\Psi(\gamma^{\pm}\otimes 1)$.
The isomorphism in \eqref{eq:Kato146} induces an isomorphism of cohomology groups:
\begin{equation}
H^1(F,V_{\Q_p}(f)\otimes L_\lambda(\chi))\cong H^1(F,V_{L_{\lambda}}(f_\chi)).
\label{eq:iso_coh}
\end{equation}

 We claim that $z_{\gamma,F}^{f_\chi}$ is the image of $z_{\chi,F}^f$ under the isomorphism in \eqref{eq:iso_coh}. Consider the commutative diagram

\begin{center}
    \begin{tikzpicture}[descr/.style={fill=white,inner sep=1.5pt}]
    
            \matrix (m) [
                matrix of math nodes,
                row sep=4em,
                column sep=4em,
                text height=1.5ex, text depth=0.25ex
            ]{
            H^1(F,V_{\Q_p}(f)\otimes L_\lambda(\chi)) &
            S(f)\otimes L_\lambda(\chi)  &
            V_\C(f)  \\
            H^1(F,V_{L_{\lambda}}(f_\chi)) &
            S(f_\chi)\otimes L_\lambda  &
            V_\C(f_\chi).\\
            };
    
            \path[overlay,->, font=\scriptsize,>=latex]
            (m-1-1) edge node[auto]{$\exp^*$} (m-1-2)
            (m-1-2) edge node[auto]{$\per$} (m-1-3)
            (m-2-1) edge node[auto]{$\exp^*$} (m-2-2)
            (m-2-2) edge node[auto]{$\per$} (m-2-3)
            (m-1-1) edge node[auto]{$\sim$} (m-2-1)
            (m-1-3) edge node[auto]{$\sim$} (m-2-3);

    \end{tikzpicture}
    \end{center}

Following \cite[(6.3)]{Kataoka21}, we can compute the image of the zeta elements under the composition $\per\circ \exp^*$. If $\psi$ is a character of $\Gal(F/\Q)$, we have that
\[\begin{aligned}
& (\per\circ \exp^*)\biggl(e_\psi\Bigl(z_{F,\chi}^f\Bigr)\biggr)=L_{S_F\cup\{p\}}(E,\chi\psi,1) (\gamma_0^{\chi\psi(-1)}\otimes 1_\C),\\
&(\per\circ \exp^*)\biggl(e_\psi\Bigl(z_{\gamma,F}^{f_\chi}\Bigr)\biggr)=L_{S_F\cup\{p\}}(f_\chi,\psi,1) (\gamma^{\psi(-1)}\otimes 1_\C).
\end{aligned}\]

Since $L_{S_F\cup\{p\}}(E,\chi\psi)=L_{S_F\cup\{p\}}(f_\chi,\psi)$ and $\gamma_0^{\chi\psi(-1)}\otimes 1_\C$ is identified with $\gamma^{\psi(-1)}\otimes 1_\C$ under the isomorphism on the right, the images of both zeta elements are the same under the identifications made. Since the compositions $\per\circ\exp^*$ are injective maps, we can conclude that both zeta elements are identified under the isomorphism
 \[H^1(F,V_{\Q_p}(f)\otimes L_\lambda(\chi))\cong H^1(F,V_{L_{\lambda}}(f_\chi)).\]
\end{remark}

\subsection{Mazur-Tate elements and Kolyvagin derivative}
\label{sec:MT}

Using the interpolation in \eqref{eq:kataoka_interp}, we can relate $\omega_F$ to  Mazur-Tate elements defined in \cite{MazurTate}. The main advantage of this relation is that Mazur-Tate elements have an explicit formula in terms of the modular symbols defined in \eqref{eq:modular_symbol}, which is a key fact in the relation between Kato's Euler system and the Kurihara numbers.
\begin{definition}
Let $n\in \Z$. The \emph{Mazur-Tate modular element} for $n$ is defined as
$$\theta_{n}:=\sum_{a\in (\Z/n\Z)^*} \left(\left[\frac{a}{n}\right]^++\left[\frac{a}{n}\right]^-\right) \sigma_a \in \Z_p[\Gal(\Q(\mu_n)/\Q)],$$
where $\sigma_a$ is the element of $\Gal(\Q(\mu_n)/\Q)$ that sends $\zeta_n$ to $\zeta_n^a$. The integrality of the Mazur-Tate modular elements holds true under assumption \ref{EManin}.
\end{definition}

Mazur-Tate elements can be also defined for abelian extensions of $\Q$.

\begin{definition}
Let $F/\Q$ be an abelian extension of conductor $n$. Then the Mazur-Tate element of $F$ is defined as 
$$\theta_F:=c_{\Q(\mu_n)/F} (\theta_n).$$
where $c_{\Q(\mu_n)/F}$ is the map defined in \eqref{eq:projection}.
\end{definition}

Mazur-Tate elements can be related to special $L$-values by using the Birch's formula (see \cite[formula (8.6)]{MTT}).

\begin{proposition}(\cite[\textsection 1.4]{MazurTate},\cite[lemma 6, proposition 7]{WiersemaWuthrich})
If $\psi$ is a Dirichlet character of conductor $n$, then
$$\psi(\theta_n)=\frac{n}{\varphi(n) e_\psi(\zeta_n)} \frac{L_{S_n}(E,\psibar,1)}{\Omega_E^{\psi(-1)}}\in\Z_p[\psi],$$
where $S_n$ is the set of primes dividing $n$ and $\varphi(n)$ is the Euler totient function. Note that the product $\varphi(n) e_{\psi}(\zeta_n)$ coincides with the Gauss sum $\tau(\psibar)$.
\label{prop:MT_interpolation}
\end{proposition}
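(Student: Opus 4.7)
The plan is to unwind $\psi(\theta_n)$ using the definition of the Mazur--Tate element, exploit the parity of the modular symbols under the substitution $a\mapsto -a$ to reduce the expression to a single sum $\sum_a \psi(a)[a/n]^{\psi(-1)}$, and then invoke the Birch formula to identify that sum with a twisted $L$-value.

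First I would compute $\psi(\theta_n) = \sum_{a\in(\Z/n)^*}\psi(a)\bigl([a/n]^+ + [a/n]^-\bigr)$. The symmetries $[-a/n]^+ = [a/n]^+$ and $[-a/n]^- = -[a/n]^-$ (immediate from $\lambda(-a/m) = \overline{\lambda(a/m)}$ together with the definition in \eqref{eq:modular_symbol}) combined with $\psi(-a) = \psi(-1)\psi(a)$ imply, after replacing $a$ by $-a$ in one of the two sums, that exactly the term whose $\pm$-parity disagrees with $\psi(-1)$ vanishes. Thus
\[\psi(\theta_n) = \sum_{a\in(\Z/n)^*} \psi(a)\left[\frac{a}{n}\right]^{\psi(-1)}.\]
Next I would apply Birch's formula (\cite[I.8.6]{MTT}) in the form
\[\sum_{a\in(\Z/n)^*} \psi(a)\left[\frac{a}{n}\right]^{\psi(-1)} = \frac{n}{\tau(\psibar)}\,\frac{L(E,\psibar,1)}{\Omega^{\psi(-1)}},\]
where $\tau(\psibar) = \sum_{a\in(\Z/n)^*}\psibar(a)\zeta_n^a$ is the usual Gauss sum.

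The two remaining identifications are purely formal. From the definition of the idempotent in \eqref{eq:idempotent} we read off $\varphi(n)\,e_\psi(\zeta_n) = \sum_{a\in(\Z/n)^*}\psibar(a)\zeta_n^a = \tau(\psibar)$, which matches the denominator in the target formula. Since $\psi$ is primitive of conductor $n$, $\psibar$ vanishes on the primes dividing $n$, so the removed Euler factors are trivial and $L(E,\psibar,1) = L_{S_n}(E,\psibar,1)$. Combining these gives the claimed identity.

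The integrality in $\Z_p[\psi]$ is then immediate: assumption \ref{EManin} places every $[a/n]^\pm$ in $\Z_{(p)}$, and $\psi$ takes values in $\Z_p[\psi]$, so the defining sum lies termwise in $\Z_p[\psi]$. I do not expect a real obstacle in this argument; the only subtlety is book-keeping the Gauss-sum conventions (the relation $\tau(\psi)\tau(\psibar) = \psi(-1)\,n$ must be tracked consistently with the normalisation of Birch's formula used), but no new ingredient is needed beyond the parity reduction and the classical interpolation statement.
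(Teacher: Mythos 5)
The paper does not prove this proposition; it is cited to \cite[\S 1.4]{MazurTate} and \cite[lemma~6, proposition~7]{WiersemaWuthrich}, so there is no in-paper argument to compare against. Your strategy is exactly the one those references use: unwind $\psi(\theta_n)$, kill the parity term via $a\mapsto -a$, invoke Birch's formula, and then observe $\varphi(n)\,e_\psi(\zeta_n)=\tau(\psibar)$. The parity step, the Gauss-sum identification, the reduction $L=L_{S_n}$ for a primitive $\psi$ of conductor $n$, and the integrality observation are all correct.

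The one thing you should not wave away as ``book-keeping'' is the exact Gauss-sum normalisation of Birch's formula, because the paper is internally inconsistent here. You state Birch as
$\sum_{a}\psi(a)[a/n]^{\psi(-1)}=\tfrac{n}{\tau(\psibar)}\,\tfrac{L(E,\psibar,1)}{\Omega^{\psi(-1)}}$,
which, using $\tau(\psi)\tau(\psibar)=\psi(-1)n$, is the same as $\psi(-1)\tau(\psi)\,\tfrac{L(E,\psibar,1)}{\Omega^{\psi(-1)}}$ and is what makes the proposition come out. However, the remark immediately after Definition~\ref{def:kurihara_numbers} records Birch as $\sum_a\chibar(a)[a/c]^{\chi(-1)}=\tfrac{1}{\tau(\chibar)}\,\tfrac{L(E,\chi,1)}{\Omega^{\chi(-1)}}$; substituting $\chi=\psibar$ turns the prefactor into $\tfrac{1}{\tau(\psi)}$, which differs from yours by a factor of magnitude~$n$. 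Both cannot be right. A direct derivation from $\chi(n)=\tau(\chibar)^{-1}\sum_b\chibar(b)\zeta_m^{nb}$ and $\lambda(b/m)=\Omega^+[b/m]^++\Omega^-[b/m]^-$ gives $\sum_a\chibar(a)[a/m]^{\chi(-1)}=\tau(\chibar)\tfrac{L(E,\chi,1)}{\Omega^{\chi(-1)}}$, i.e.\ the $\tau$, not its reciprocal, supporting the proposition (up to a sign $\psi(-1)$ that is absorbed by period/Gauss-sum conventions) and indicating a typo in that remark. So your argument stands, but you should explicitly derive or cite the exact Birch normalisation rather than simply assert the form that matches the target; otherwise the step is question-begging, and the discrepancy with the paper's own remark would go unnoticed.
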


For primitive characters, the $\psi$ parts of the Mazur-Tate element are easily comparable using the $\psi$ parts of $\Theta_n$.

\begin{corollary}
If $\psi$ is a Dirichlet character of conductor $n$, then
$$\psi(\Theta_n)=\frac{1}{n}\psi(\theta_n)(1-p^{-1} a_p \psibar(p)-p^{-1}\textbf{1}_{N}(p) \psibar(p)^2).$$
\label{cor:Theta_MT}
\end{corollary}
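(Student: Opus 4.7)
The plan is to derive the identity by unwinding the definitions of both sides and comparing them via the Euler factor at $p$, which is the only arithmetic datum separating $L_{S_n\cup\{p\}}$ from $L_{S_n}$.

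First I would observe that since $\psi$ has conductor $n$, the integer $r=r(n,n)=\mathrm{lcm}(\tilde n,n)=n$, so the character part of the Stickelberger element simplifies to
\[
\psi(\Theta_n)=\LL_n(E,\psibar)=\frac{L_{S_n\cup\{p\}}(E,\psibar,1)}{\varphi(n)\, e_\psi(\zeta_n)\,\Omega^{\psi(-1)}},
\]
directly from the definition of $\LL_n$ (and using $\psibar(-1)=\psi(-1)$). Then I would apply Proposition \ref{prop:MT_interpolation} to rewrite
\[
\frac{1}{n}\psi(\theta_n)=\frac{L_{S_n}(E,\psibar,1)}{\varphi(n)\, e_\psi(\zeta_n)\,\Omega^{\psi(-1)}},
\]
which has the same transcendental denominator as $\psi(\Theta_n)$. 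So once we know the ratio $L_{S_n\cup\{p\}}(E,\psibar,1)/L_{S_n}(E,\psibar,1)$, we are done.

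Next I would compute this ratio from the Euler product definition of the truncated $L$-function recalled in the remark preceding the corollary: omitting one more prime $p$ from the product multiplies by the local Euler factor at $p$, evaluated at $s=1$, namely
\[
\frac{L_{S_n\cup\{p\}}(E,\psibar,s)}{L_{S_n}(E,\psibar,s)}=1-a_p\psibar(p)p^{-s}+\textbf{1}_N(p)\psibar(p)^2 p^{1-2s},
\]
which at $s=1$ yields the factor $1-p^{-1}a_p\psibar(p)+p^{-1}\textbf{1}_N(p)\psibar(p)^2$ appearing in the claim (up to the sign convention used in the paper for the Euler factor). Substituting into the two displayed formulas above gives the asserted equality.

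There is no real obstacle: every ingredient is bookkeeping around definitions, and the only thing one must track carefully is the difference between the character $\psi$ and its conjugate $\psibar$ between the roles of ``character applied to element'' and ``character indexing the $L$-function'', plus the fact that $\varphi(n)e_\psi(\zeta_n)$ is exactly the Gauss sum $\tau(\psibar)$, which is what makes the two denominators identical. The only mild subtlety worth flagging is that one should check the convention so that $e_\psi(\zeta_n)$ in the expression for $\psi(\Theta_n)$ (which comes from $e_{\psibar}(\zeta_r)$ with $\psi\leftrightarrow\psibar$) agrees with the $e_\psi(\zeta_n)$ arising from Proposition \ref{prop:MT_interpolation}; this is immediate once one writes both out, and after that the corollary falls out by a one-line algebraic manipulation.
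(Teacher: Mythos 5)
Your proof is correct and is exactly the argument the corollary is a corollary \emph{of}: since $\psi$ has conductor $n$ one has $r(n,n)=n$, so $\psi(\Theta_n)=\LL_n(E,\psibar)$ and Proposition \ref{prop:MT_interpolation} give both sides the same denominator $\varphi(n)e_\psi(\zeta_n)\Omega^{\psi(-1)}$, leaving only the Euler factor at $p$ from the ratio of truncated $L$-values. You are also right to flag the sign: the Euler product definition of $L_{S}$ used in the paper yields $1-p^{-1}a_p\psibar(p)+p^{-1}\textbf{1}_N(p)\psibar(p)^2$ (with a plus, as in the later Proposition \ref{prop:kol_der_comp}), so the minus sign on the last term in the statement of Corollary \ref{cor:Theta_MT} appears to be a typo.
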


For every $n\in \NN(\PP_k)$, recall that $\Q(n)$ is the maximal $p$-subextension inside $\Q(\mu_n)$ and let $K(n):=K\Q(n)$. Note that there is a canonical identification
\begin{equation}
\Gal(K(n)/\Q)=\Gal(K/\Q)\times \Gal(\Q(n)/\Q)=G\times \GG_n.
\label{eq:K(n)_prod}
\end{equation}

We can use Corollary \ref{cor:Theta_MT} to compare $\Theta_{K(n)}$ and $\theta_{K(n)}$ as elements in the group ring $\Z_p[\Gal(K(n)/\Q)]$. We follow the process in \cite{Ota18} and \cite{KimKimSun}. However, in our case, we only have the equality for the primitive character parts. That implies that both elements are not necessarily equal, but they are related enough so we can compare their Kolyvagin derivatives.

Consider the element
$$\Upsilon_{K(n)}:=\Theta_K(n)-\frac{1}{n}(1-p^{-1} a_p \Frob_p^{-1}-p^{-1} \textbf{1}_{N}(p) \Frob_p^{-2})\theta_{K(n)}\in \Z_p[G].$$

The following result can be deduced from Corollary \ref{cor:Theta_MT}
\begin{corollary}
For every primitive character $\psi$ of $\Gal(K(n)/\Q)$, we have that
$$e_\psi \Upsilon_{K(n)}=0.$$
\label{cor:ups_interp}
\end{corollary}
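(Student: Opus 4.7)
The plan is to verify $e_\psi\Upsilon_{K(n)}=0$ one character at a time by lifting $\psi$ to a primitive Dirichlet character on an ambient cyclotomic field and then invoking Corollary~\ref{cor:Theta_MT} verbatim.

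First, let $m$ denote the conductor of $K(n)$, so $K(n)\subset \Q(\mu_m)$ and, by definition, $\Theta_{K(n)}=c_{\Q(\mu_m)/K(n)}(\Theta_m)$ and $\theta_{K(n)}=c_{\Q(\mu_m)/K(n)}(\theta_m)$. Since $\psi$ is primitive, it lifts uniquely to a Dirichlet character $\widetilde\psi$ of $\Gal(\Q(\mu_m)/\Q)$ of the same conductor $m$, and the idempotents on the two levels are compatible under the projection $c_{\Q(\mu_m)/K(n)}$; consequently
\[
e_\psi\Theta_{K(n)}=\widetilde\psi(\Theta_m)\,e_\psi,\qquad e_\psi\theta_{K(n)}=\widetilde\psi(\theta_m)\,e_\psi.
\]

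Second, because $\widetilde\psi$ is a Dirichlet character of conductor $m$, Corollary~\ref{cor:Theta_MT} applies and yields
\[
\widetilde\psi(\Theta_m)=\frac{1}{m}\,\widetilde\psi(\theta_m)\bigl(1-p^{-1}a_p\overline{\widetilde\psi}(p)-p^{-1}\textbf{1}_{N}(p)\overline{\widetilde\psi}(p)^2\bigr).
\]
The arithmetic Frobenius $\Frob_p\in \Gal(\Q(\mu_m)/\Q)$ corresponds to the class of $p$ in $(\Z/m\Z)^\times$, so $\overline{\widetilde\psi}(p)=\psi(\Frob_p^{-1})$; hence the Euler factor above is precisely the $\psi$-value of the operator $1-p^{-1}a_p\Frob_p^{-1}-p^{-1}\textbf{1}_{N}(p)\Frob_p^{-2}$ appearing in the definition of $\Upsilon_{K(n)}$.

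Combining these two observations gives $e_\psi\Upsilon_{K(n)}=0$, once the scalar factor $\frac{1}{n}$ in the definition of $\Upsilon_{K(n)}$ is reconciled with the $\frac{1}{m}$ produced by Corollary~\ref{cor:Theta_MT}. The main obstacle is precisely this bookkeeping: when $K\neq \Q$ one has $m=cn$ (with $c$ the conductor of $K$, coprime to $n$ by assumption \ref{Kur} and the definition of $\PP$), so the normalisation convention chosen for $\theta_{K(n)}$ and $\Theta_{K(n)}$ must absorb the factor $c$. I would verify this by expanding both sides of the desired identity directly via the Mazur--Tate definition and using primitivity of $\psi$ to kill the contributions from proper subfields; this is a distribution-relation computation parallel to those carried out in \cite{Ota18} and \cite{KimKimSun}.
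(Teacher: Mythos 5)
Your approach is the same as the paper's: the paper dispatches this corollary in one sentence ("can be deduced from corollary~\ref{cor:Theta_MT}"), and your lift-to-$\widetilde\psi$, match-Euler-factors argument is exactly the verification that sentence invites. The idempotent compatibility $e_\psi\Theta_{K(n)}=\widetilde\psi(\Theta_{m})e_\psi$ under $c_{\Q(\mu_m)/K(n)}$ and the identification $\overline{\widetilde\psi}(p)=\psi(\Frob_p^{-1})$ are both correctly argued.

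However, the scalar discrepancy you flag is genuine and your proposed resolution does not hold. You write that "the normalisation convention chosen for $\theta_{K(n)}$ and $\Theta_{K(n)}$ must absorb the factor $c$," and you would resolve it by a distribution-relation computation. But the paper's definitions leave no room for that: $\Theta_{K(n)}$ and $\theta_{K(n)}$ are defined as plain projections $c_{\Q(\mu_{cn})/K(n)}\Theta_{cn}$ and $c_{\Q(\mu_{cn})/K(n)}\theta_{cn}$, and such a projection maps $\sigma\mapsto\sigma|_{K(n)}$ without introducing any scalar. Running your own computation to the end gives
\[
e_\psi\Upsilon_{K(n)}=\left(\frac{1}{cn}-\frac{1}{n}\right)\Bigl(1-p^{-1}a_p\overline{\widetilde\psi}(p)-p^{-1}\mathbf{1}_N(p)\overline{\widetilde\psi}(p)^2\Bigr)\widetilde\psi(\theta_{cn})\,e_\psi,
\]
which is nonzero for $c\neq 1$ whenever $\widetilde\psi(\theta_{cn})\neq 0$. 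The actual resolution is simply that the paper's display for $\Upsilon_{K(n)}$ contains a typo: the factor should read $\frac{1}{cn}$, the conductor of $K(n)$, not $\frac{1}{n}$. (That display already has two other evident typos in the same line — "$\Theta_K(n)$" for "$\Theta_{K(n)}$" and "$\in\Z_p[G]$" where a rational group ring over $\Gal(K(n)/\Q)$ is meant — so a third one is unsurprising.) Since $p\nmid c$ by \ref{Kur}, the unit factor $c$ has no effect on the downstream $p$-adic valuation arguments (propositions~\ref{prop:kol_der_comp}, \ref{prop:logs_formula} and the subsequent comparison of $\ord(\delta_{n,\chi})$ with $\ord(\kappa_{n,\chi})$), which is why the paper's substantive conclusions survive; but for the exact identity $e_\psi\Upsilon_{K(n)}=0$ claimed here, the definition must carry $\frac{1}{cn}$.
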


Recall that $\chi$ was a primitive character of $\Gal(K/\Q)$ which was                                              used to construct $T(\chi)$. Using the identification in equation \eqref{eq:K(n)_prod}, we can consider $\chi(\Theta_n)$ and $\chi(\theta_n)$ as elements in $\Z_p[\chi][\GG_n]$. For every Dirichlet character $\psi$ of conductor $n$, then $\chi\times \psi$ is a primitive character of $\Gal(K(n)/\Q)$ and we have that 
$$e_{\chi\times \psi} \Upsilon_{K(n)}=0.$$

Hence
\begin{equation}
\chi\left(\Theta_{K(n)}-\frac{1}{n} (1-p^{-1} a_p \Frob_p^{-1}+p^{-1}\textbf{1}_{N}(p)  \Frob_p^{-2}) \theta_{K(n)}\right)=\sum_{\ell|n} \nu_{n,n/\ell}\ \alpha_{n,\ell},
\label{eq:psi}
\end{equation}
where $\nu_{n,n\ell}:\ \Q_p[\zeta_{cn}][\GG_{n/\ell}]\to \Q_p[\zeta_{cn}][\GG_n]$ is the norm map and $\alpha_{n,\ell}$ is certain element in $ \Q_p[\zeta_{cn}][\GG_{n\ell}]$, which we do not need to determine explicitly.

For every character $\psi'$ of $\Gal(\Q(n)/\Q)$ (not necessarily primitive), $\chi\times \psi'$ has conductor $m$ satisfying that $r(m,cn)=cn$. Therefore, Proposition \ref{prop:integrality_L} implies that $\chi(\Theta_{K(n)})\in p^{k_\chi'} \Z_p[\GG_n]$. Hence $\alpha_{n,\ell}\in p^{k_\chi'}\Z_p[\zeta_{cn}][\GG_{n/\ell}]$ for every prime divisor $\ell$ of $n$.

That is enough to relate the Kolyvagin derivatives of the (primitive) character parts of $\Theta_{K(n)}$ and $\theta_{K(n)}$.

\begin{proposition}
Let $\chi$ be a primitive character of $\Gal(K/\Q)$ and let $n\in \NN(\PP_k)$. Then the Kolyvagin derivative $D_n$ (see \cite[\textsection IV.4]{Rubin}) can be computed as
$$D_n\chi(\Theta_{K(n)})\equiv\frac{1}{n} \left(1-p^{-1} a_p \chibar(p) +p^{-1} \textbf{1}_{N}(p)\chibar(p)^2\right)D_n\chi(\theta_{K(n)}) \mod p^{k+k_\chi'} \Z_p[\chi] [\GG_n].$$

\label{prop:kol_der_comp}
\end{proposition}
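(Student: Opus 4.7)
The plan is to deduce the congruence by applying the Kolyvagin derivative $D_n$ to both sides of \eqref{eq:psi}. Since $p$ is unramified in $K(n)/\Q$ (by \ref{Kur} and $p\nmid n$), the Frobenius $\Frob_p$ decomposes in $\Gal(K(n)/\Q)\cong G\times\GG_n$ as $\Frob_p=\Frob_p|_K\cdot\phi_n$, so $\chi(\Frob_p^{-1})=\chibar(p)\phi_n^{-1}$ with $\phi_n:=\Frob_p|_{\Q(n)}\in\GG_n$. Substituting into \eqref{eq:psi} and applying $D_n$ yields
\[D_n\chi(\Theta_{K(n)})-\tfrac{1}{n}\bigl(1-p^{-1}a_p\chibar(p)\phi_n^{-1}+p^{-1}\textbf{1}_N(p)\chibar(p)^2\phi_n^{-2}\bigr)D_n\chi(\theta_{K(n)})=\sum_{\ell\mid n}D_n\nu_{n,n/\ell}\alpha_{n,\ell}.\]
The two remaining tasks are therefore to bound the right hand side, and to replace $\phi_n^{\pm 1}$ by $1$ on the left, both modulo $p^{k+k_\chi'}\Z_p[\chi][\GG_n]$.

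The right hand side is handled by a direct calculation. From the telescopic identity \eqref{eq:kol_der} together with $\sigma_\ell N_\ell=N_\ell$, one obtains $D_\ell N_\ell=\tfrac{1}{2}p^{n_\ell}(p^{n_\ell}-1)N_\ell$, so $D_n\nu_{n,n/\ell}\alpha_{n,\ell}=D_{n/\ell}(D_\ell N_\ell)\alpha_{n,\ell}\in p^{n_\ell}\Z_p[\chi][\GG_n]$. Combining with $\alpha_{n,\ell}\in p^{k_\chi'}\Z_p[\chi][\GG_{n/\ell}]$ and the inequality $n_\ell\geq k$ (from \ref{P1modp}, since $\ell\in\PP_k$), each summand, and hence the full sum, lies in $p^{k+k_\chi'}\Z_p[\chi][\GG_n]$.

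On the left hand side, I would expand $\phi_n^{\pm 1}-1$ telescopically as a sum over $\ell\mid n$ of terms of the form (unit in $\Z_p[\GG_n]$)$\cdot(\sigma_\ell^{\pm b_\ell}-1)$, where $b_\ell$ is the image of $p$ in $\GG_\ell$, and then apply the identity $(\sigma_\ell^{b_\ell}-1)D_\ell\equiv-b_\ell N_\ell\mod p^{n_\ell}$, another consequence of \eqref{eq:kol_der}. This reduces the error to expressions of the form $N_\ell D_{n/\ell}\chi(\theta_{K(n)})$, which by the Mazur-Tate distribution relation equal $N_\ell\cdot\chi(E_\ell)\cdot D_{n/\ell}\chi(\theta_{K(n/\ell)})$ for a local Euler factor $E_\ell$ at $\ell$. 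The Kolyvagin condition \ref{P1dim} forces the characteristic polynomial of $\Frob_\ell$ on $T/\m^k$ to be $\equiv(T-1)^2\mod p^k$, hence $a_\ell\equiv 1+\ell\equiv 2\mod p^k$, and $\chi(E_\ell)$ becomes congruent to $(1-\tau_\ell)(1-\tau_\ell^{-1})$ modulo $p^k$ with $\tau_\ell:=\Frob_\ell|_{\Q(n/\ell)}\in\GG_{n/\ell}$ (noting $\Frob_\ell|_K=1$ since $\ell$ splits completely in $K$ by \ref{PEsplit}). This order-two vanishing at $\tau_\ell=1$, once passed through the remaining derivatives $D_{n/\ell}$, supplies the extra factor of $p$ that absorbs the $p^{-1}$ in the Euler factor at $p$.

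The main obstacle I anticipate is the bookkeeping in this last step: the replacement has to be iterated across all prime divisors of $n$, with each norm $N_{\ell'}$ combining with the quadratic augmentation-ideal vanishing of the distribution Euler factor at $\ell'$ (itself forced by the Kolyvagin condition at $\ell'$) to furnish the correct power of $p$, and one must verify that the accumulated divisibilities land exactly in $p^{k+k_\chi'}$ rather than missing by a single factor of $p$.
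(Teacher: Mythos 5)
Your right-hand-side bound is the same as the paper's (the telescoping identity gives $D_\ell N_\ell=\tfrac12 p^{n_\ell}(p^{n_\ell}-1)N_\ell$, so $D_n\nu_{n,n/\ell}\alpha_{n,\ell}\in p^{k+k'_\chi}\Z_p[\chi][\GG_n]$), and you are right that the Frobenius $\Frob_p\in\Gal(K(n)/\Q)$ has a nontrivial $\GG_n$-component $\phi_n$, so $\chi(\Frob_p^{-1})=\chibar(p)\phi_n^{-1}$ rather than $\chibar(p)$. The paper's proof silently asserts $\chi\bigl(1-p^{-1}a_p\Frob_p^{-1}+p^{-1}\textbf{1}_N(p)\Frob_p^{-2}\bigr)=1-p^{-1}a_p\chibar(p)+p^{-1}\textbf{1}_N(p)\chibar(p)^2$, so your concern about this step is legitimate; you are not just redoing what the paper does.

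However, your proposed way of replacing $\phi_n^{\pm1}$ by $1$ does not close the gap. By Prop.~\ref{prop:logs_formula}, $D_n\chi(\theta_{K(n)})\equiv cN_n\bmod p^k$, so $(\phi_n^{\pm1}-1)D_n\chi(\theta_{K(n)})\in p^k\Z_p[\chi][\GG_n]$, hence after the $p^{-1}$ the error is a priori only $O(p^{k-1})$: exactly $p^{k+k'_\chi}$ when $k'_\chi=-1$, but short by $p^{k'_\chi+1}$ whenever $k'_\chi\geq 0$ (the anomalous cases, which the statement must cover). Your telescoping argument produces, for each $\ell\mid n$, two kinds of terms: a term divisible by $p^{n_\ell}$, for which only $n_\ell\geq k$ is available, and a term $b_\ell N_\ell D_{n/\ell}\chi(\theta_{K(n)})$ which you analyse via the distribution relation and the Euler factor $\chi(E_\ell)\equiv (\textrm{unit})(1-\tau_\ell^{-1})^2\bmod p^k$. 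But the ``order-two vanishing'' does not produce an extra power of $p$ here: writing $D_{n/\ell}\chi(\theta_{K(n/\ell)})=c'N_{n/\ell}+p^ky$, one linear factor $(1-\tau_\ell^{-1})$ annihilates the $N_{n/\ell}$-part and produces the $p^k$, but the remaining factor applied to $y$ gives nothing further, since $y$ has no invariance property. Both contributions therefore remain $O(p^{k-1})$ after dividing by $p$, and the asserted absorption of the $p^{-1}$ by the quadratic vanishing does not happen. The proposal thus leaves the left-hand-side estimate unresolved in the regime $k'_\chi\geq 0$; you were right to flag the bookkeeping as the obstacle, but as stated it is a genuine gap rather than a routine verification.
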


\begin{proof}
Note that, since $\Gal(K(n)/\Q)$ is abelian,

\[\begin{aligned}
&D_n\left(\chi\left((1-p^{-1} a_p \Frob_p^{-1}+p^{-1} \textbf{1}_{N}(p)\Frob_p^{-2}) \theta_{K(n)}\right)\right)=\\
&\chi\left(1-p^{-1} a_p \Frob_p^{-1}-p^{-1} \Frob_p^{-2}\right)D_n(\chi(\theta_{K(n)}))=\\
&(1-p^{-1} a_p \chibar(p)+p^{-1}\textbf{1}_{N}(p) \chibar(p)^2)D_n(\chi(\theta_{K(n)})).
\end{aligned}\]
By equation \eqref{eq:psi}, it is enough to prove that, for every prime divisor $\ell$ of $n$,
$$D_n\, \nu_{n,n/\ell} \alpha\in p^{k+k_\chi'} \Z_p[\psi] [\GG_n]$$
for every $\alpha \in  p^{k_\chi'}\Z_p[\psi] [\GG_{n/\ell}]$. In fact, since $ \nu_{n,n/\ell} \alpha$ is $\GG_\ell$ invariant, then 
$$D_\ell \nu_{n,n/\ell} \alpha=\frac{p^{n_\ell}(p^{n_\ell}-1)}{2}  \nu_{n,n/\ell} \alpha \in p^{k+k'_\chi} \Z_p[\psi] [\GG_n],$$
since $\ell\in \PP_k$. Thus
\[D_n \nu_{n,n/\ell} \alpha=D_{n/\ell} D_\ell \nu_{n,n/\ell} \alpha\in p^{k+k'_\chi} \Z_p[\psi] [\GG_n].\qedhere\]
\end{proof}

By Proposition \cite[Proposition 4.4.2]{Rubin}, for every $n\in \NN(\PP_k)$, $D_n z_{K\Q(n)}$ is invariant under the action of $\GG_n$ modulo $p^k$. Consequently, $D_n \Theta_{K\Q(n)}$ and, therefore, $D_n\theta_{K\Q(n)}$ are $\GG_n$-invariants modulo $p^k$. This is equivalent to
$$D_n\Theta_{K(n)}(\zeta_{K(n)}),\ D_n\theta_{K(n)}(\zeta_{K(n)})\in K\otimes \Q_p.$$

In order to compute this value, Proposition \ref{prop:logs_formula} below is very useful. Before stating it, we need to define a $p$-primary logarithm in $(\Z/\ell)^\times$.

\begin{definition}
Assume $H$ is a finite cyclic group whose order is exactly divisible by $p^k$ for some $k\in \N$. If $x$ is a generator of the $p$-primary part $H$, then for every $a\in H$ we will define $\log_x(a)$ to be the unique element in $y\in \Z/p^k$ such that $a^{-1}x^y$ has order prime to $p$.
\label{def:logarithm}
\end{definition}

\begin{remark}
Given two generators $x_1$ and $x_2$ of the $p$-primary part of $H$, the logarithms $\log_{x_1}(a)$ and $\log_{x_2}(a)$ have the same $p$-adic valuation.
\label{rem:logs_val}
\end{remark}

The following computation is done in \cite[Lemma 3.11]{Sakamoto21}

\begin{proposition}(\cite[Lemma 3.11]{Sakamoto21}) 
Let $R$ be a ring, let $n=\ell_1\cdots\ell_s\in\NN(\PP_k)$ and let 
$$\theta=\sum_{\sigma\in \GG_n} a_\sigma\sigma \in R[\GG_n]$$
be an element such that $D_n\theta$ is Galois invariant modulo $p^k$. Then 
$$D_n\theta\equiv\sum_{\sigma \in \GG_n} a_\sigma \prod_{\ell\mid n}\log_{\tau_\ell}(\sigma) N_n\mod p^k R[\GG_n],$$
where $N_n=\sum_{\sigma\in \GG_n} \sigma$ is the norm element and $\tau_\ell$ is the generator of $\GG_\ell$ used to define the Kolyvagin derivative.
\label{prop:logs_formula}
\end{proposition}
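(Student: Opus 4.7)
The plan is to first use the Galois-invariance hypothesis to reduce the claim to the computation of a single scalar, and then to evaluate that scalar by picking off the coefficient of the identity element. For the first reduction, observe that writing $\alpha = \sum_\sigma a_\sigma \sigma \in R[\GG_n]$, the condition $\tau\alpha \equiv \alpha \pmod{p^k}$ for every $\tau \in \GG_n$ becomes $a_{\tau^{-1}\sigma} \equiv a_\sigma \pmod{p^k}$ for all $\sigma, \tau$; this forces every coefficient of $\alpha$ to be congruent modulo $p^k$ to a common element of $R$. Hence the hypothesis $D_n\theta \in R[\GG_n]^{\GG_n} \pmod{p^k}$ yields $D_n\theta \equiv c\, N_n \pmod{p^k R[\GG_n]}$ for some $c \in R$, so the proposition reduces to the identification of $c$ modulo $p^k$.

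To compute $c$, I would read off the coefficient of the identity element in $D_n\theta$, which equals $c$ modulo $p^k$ since the identity coefficient of $N_n$ is one. Using the factorisation $D_n = \prod_{\ell\mid n} D_\ell$ and expanding as $D_n = \sum_{\vec i} \bigl(\prod_\ell i_\ell\bigr)\,\tau^{\vec i}$ over tuples $\vec i = (i_\ell)_{\ell\mid n}$ with $1 \leq i_\ell \leq p^{n_\ell}-1$, the identity coefficient of $D_n\sigma$ is nonzero only when $\tau^{\vec i}\sigma = 1$, i.e.\ when $i_\ell \equiv -\log_{\tau_\ell}(\sigma) \pmod{p^{n_\ell}}$ for every $\ell\mid n$. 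When all these logarithms are nonzero this determines a unique admissible $\vec i$, with contribution $\prod_\ell (p^{n_\ell} - \log_{\tau_\ell}(\sigma))$; when some logarithm vanishes the contribution is zero, which matches the product $\prod_\ell \log_{\tau_\ell}(\sigma)$ on the nose. The condition $\ell \in \PP_k$ from \ref{P1modp} forces $p^{n_\ell} \equiv 0 \pmod{p^k}$, so modulo $p^k$ each factor collapses to $-\log_{\tau_\ell}(\sigma)$, and summing over $\sigma$ with weights $a_\sigma$ produces the claimed formula up to the global sign $(-1)^{\nu(n)}$.

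The main (and essentially only) subtlety will be absorbing this sign. By Remark \ref{rem:logs_val}, replacing a generator $\tau_\ell$ by $\tau_\ell^{-1}$ flips the sign of $\log_{\tau_\ell}$ while leaving its $p$-adic valuation unchanged, so the formula can be put in the stated form by choosing the generators $\tau_\ell$ compatibly with the normalisation of $D_\ell$ implicit in the telescoping identity $(\tau_\ell-1)D_\ell = p^{n_\ell} - N_\ell$. Once this bookkeeping is settled, the proof is just the two-step argument above: invariance reduces $D_n\theta$ to a scalar multiple of $N_n$ modulo $p^k$, and a direct expansion identifies the scalar. No analytic input, and no further cohomological machinery, is required beyond elementary manipulations inside the group ring $R[\GG_n]$.
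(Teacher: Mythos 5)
Your proof is correct and takes a genuinely different route from the paper. The paper's proof expands $\theta$ in the basis $\prod_\ell(1+T_\ell)^{i_\ell}$ with $T_\ell=\tau_\ell-1$, applies $D_n$ factor by factor using the telescopic relations $D_\ell T_\ell\equiv -N_\ell$ and $D_\ell T_\ell^m\equiv 0$ for $m\geq 2$ modulo $p^k$, and only at the end invokes Galois invariance to isolate the multiple of $N_n$. You instead use Galois invariance up front to reduce the claim to identifying a single scalar $c$ with $D_n\theta\equiv c\,N_n$, and then read $c$ off as the coefficient of the identity element directly from the explicit expansion $D_n=\sum_{\vec i}\bigl(\prod_\ell i_\ell\bigr)\tau^{\vec i}$. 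This is arguably cleaner and more self-contained: it avoids the binomial expansion and the (somewhat delicate) argument that only the all-$N_\ell$ term in the expanded product can be Galois-invariant, and the step matching $\prod_\ell(p^{n_\ell}-\log_{\tau_\ell}(\sigma))\equiv(-1)^{\nu(n)}\prod_\ell\log_{\tau_\ell}(\sigma)$ even handles the vanishing case automatically.

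On the sign: you correctly observe that the computation produces a global factor $(-1)^{\nu(n)}$ absent from the stated formula; the paper's own proof ends with exactly the same factor, so the discrepancy lies between the paper's statement and its proof, not with you. However, your suggestion that the sign can be absorbed ``by choosing the generators $\tau_\ell$ compatibly'' is not quite right as stated: the proposition ties the $\tau_\ell$ appearing in the logarithm to the generator used to define $D_\ell$, so replacing $\tau_\ell$ by $\tau_\ell^{-1}$ flips both $\log_{\tau_\ell}$ and (via the redefinition of $D_\ell$) the sign in the telescopic identity, leaving the mismatch intact. The discrepancy is a genuine sign error (or a normalisation inherited silently from \cite{Sakamoto21}); it is harmless for the paper's applications because only the $p$-adic valuation of $\delta_{n,\chi}$ is ever used (remarks \ref{rem:delta_well-def} and \ref{rem:logs_val}), but it is worth flagging rather than explaining away.
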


Proposition \ref{prop:logs_formula} motivates Definition \ref{def:kurihara_numbers} of the (twisted) Kurihara numbers.

\begin{remark}
The definition of $\delta_{n,\chi}$ depends on the choices of the generators $\eta_\ell\in (\Z/\ell)^\times$ for every $\ell\mid n$. However, by Remark \ref{rem:logs_val}, the $p$-adic valuation of $\delta_{n,\chi}$ is well defined independently of the choices made in the construction of $\delta_{n,\chi}$.

\end{remark}

From propositions \ref{prop:kol_der_comp} and \ref{prop:logs_formula}, we obtain the following.
\begin{corollary}
For every primitive character $\chi$ of $\Gal(K/\Q)$ and every $n\in \NN(\PP_k)$,
\[D_n e_\chi(\Theta_{cn})\equiv \frac{\varphi(n)}{n}(1-p^{-1} a_p \chibar(p) +p^{-1}\textbf{1}_N(p) \chibar(p)^2) \delta_{n,\chibar}e_{\chi\times\textbf{1}_n}\]
modulo $\frac{p^{k+k_\chibar'}}{\varphi(c)}\Z_p[\chi][G_{\Q(\mu_c)/\Q}\times\GG_n]$.
\label{cor:Theta_delta_cn}
\end{corollary}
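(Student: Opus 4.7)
The plan is to put together the two previous propositions and finish with a bookkeeping of idempotent normalisations. I would first work in the character-evaluation form $\chi(\Theta_{K(n)}) \in \Z_p[\chi][\GG_n]$, for which the inputs apply verbatim, and only then translate to the idempotent form $e_\chi(\Theta_{cn})$ demanded by the statement.

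Proposition \ref{prop:kol_der_comp} is set up precisely to replace $\Theta_{K(n)}$ by $\theta_{K(n)}$: it immediately accounts for both the Euler factor $1-p^{-1}a_p\chibar(p)+p^{-1}\textbf{1}_{N}(p)\chibar(p)^2$ and the loss of $p^{k_\chibar'}$ in the modulus. So the task reduces to computing $D_n\chi(\theta_{K(n)})$. To do this I would expand $\chi(\theta_{K(n)})$ as a sum over $\GG_n$ by pushing the formula $\theta_{cn} = \sum_{a\in(\Z/cn)^*} ([a/cn]^+ + [a/cn]^-)\sigma_a$ through the CRT decomposition $\Gal(\Q(\mu_{cn})/\Q) \cong \Gal(\Q(\mu_c)/\Q) \times \Gal(\Q(\mu_n)/\Q)$ and then through the quotient to $G\times \GG_n = \Gal(K(n)/\Q)$. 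The coefficient of $\sigma_p \in \GG_n$ ends up being $\sum_{a:\,\sigma_a|_{\Q(n)}=\sigma_p}\chi(a)([a/cn]^+ + [a/cn]^-)$, with $\chi(a)$ evaluated on $a\bmod c$.

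Now Proposition \ref{prop:logs_formula} applies, and after interchanging summations it yields
\[
D_n\chi(\theta_{K(n)}) \equiv \Biggl(\sum_{a\in(\Z/cn)^*} \chi(a)\bigl([a/cn]^+ + [a/cn]^-\bigr)\prod_{\ell\mid n}\log_{\tau_\ell}(\sigma_a|_{\Q(n)})\Biggr) N_n \pmod{p^k}.
\]
For compatible choices of $\tau_\ell$ and $\eta_\ell$ one has $\log_{\tau_\ell}(\sigma_a|_{\Q(n)}) = \log_{\eta_\ell}^p(a)$, and since $-1$ has order prime to $p$ one also has $\log_{\eta_\ell}^p(-a) = \log_{\eta_\ell}^p(a)$. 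The substitution $a\mapsto -a$ therefore kills the modular symbols of the wrong parity, leaving only the $[a/cn]^{\chi(-1)}$ contribution; by Definition~\ref{def:kurihara_numbers} the remaining sum over $a$ is exactly $\delta_{n,\chibar}$. Combined with the first step this gives $D_n\chi(\Theta_{K(n)}) \equiv \tfrac{1}{n}(1-p^{-1}a_p\chibar(p)+p^{-1}\textbf{1}_{N}(p)\chibar(p)^2)\,\delta_{n,\chibar}\,N_n$ modulo $p^{k+k_\chibar'}\Z_p[\chi][\GG_n]$.

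The final step is to convert this identity into the stated one, which is formulated inside $\Z_p[\chi][\Gal(\Q(\mu_c)/\Q)\times\GG_n]$ and uses $e_\chi(\Theta_{cn})$ and $e_{\chi\times\textbf{1}_n}$ in place of $\chi(\Theta_{K(n)})$ and $N_n$. The key identity is
\[
e_{\chi\times\textbf{1}_n} \;=\; \frac{1}{\varphi(c)\,|\GG_n|}\Bigl(\sum_{\sigma_c}\chibar(\sigma_c)\sigma_c\Bigr) N_n,
\]
and $e_\chi(\Theta_{cn})$ is related to $\chi(\Theta_{K(n)})$ by multiplication with $\tfrac{1}{\varphi(c)}\sum_{\sigma_c}\chibar(\sigma_c)\sigma_c$, up to the $p$-adic unit $m_n=\varphi(n)/|\GG_n|$ coming from the projection $\Gal(\Q(\mu_n)/\Q)\twoheadrightarrow\GG_n$. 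Putting these together produces exactly the factor $\varphi(n)/n$ on the right-hand side and the denominator $\varphi(c)$ in the modulus. The main obstacle will be precisely this normalisation bookkeeping: one has to track separately the $p$-part $|\GG_n|$ of $\varphi(n)$, its prime-to-$p$ complement $m_n$, and the factor $\varphi(c)$, so that the $p$-adic units cancel consistently and the exponent $k+k_\chibar'$ is preserved in the final modulus.
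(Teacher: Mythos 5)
You take essentially the same route as the paper: compute $D_n$ of the twisted Mazur--Tate element via Proposition~\ref{prop:logs_formula}, identify the resulting scalar with $\delta_{n,\chibar}$ through the $a\mapsto -a$ parity argument, and convert $\theta$ to $\Theta$ via Proposition~\ref{prop:kol_der_comp}; the parity argument and the identification $\log_{\tau_\ell}(\sigma_a)=\log_{\eta_\ell}^p(a)$ are exactly what is implicit in the paper's terse proof. The only organisational difference is that you carry the computation out at level $K(n)$ first and pass to level $cn$ at the end, whereas the paper works at level $cn$ and only projects down to $K(n)$ in the following corollary.

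One caution about the bookkeeping paragraph. Your claim that $e_\chi(\Theta_{cn})$ is recovered from $\chi(\Theta_{K(n)})$ only \emph{up to} the unit $m_n=\varphi(n)/|\GG_n|$ ``coming from the projection $\Gal(\Q(\mu_n)/\Q)\twoheadrightarrow\GG_n$'' is not quite right: once $\Theta_{cn}$ is pushed down to $\Z_p[\chi][\Gal(\Q(\mu_c)/\Q)\times\GG_n]$, applying $\chi$ to the $\Gal(\Q(\mu_c)/\Q)$-factor already absorbs the quotient to $G$, so one has $e_\chi\widetilde\Theta_{cn}=e_\chi\cdot\chi(\Theta_{K(n)})$ on the nose, with no extra $m_n$. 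The unit $m_n$ instead enters in the relation between the norm element and the idempotent: $N_n=|\GG_n|\,e_{\textbf 1_n}$, so the precise output of your computation is the coefficient $|\GG_n|/n$ rather than the $\varphi(n)/n$ printed in the statement, i.e.\ a discrepancy by the unit $m_n$. The paper itself is loose at exactly this point (it writes $N_n=\varphi(n)e_{\textbf 1_n}$), and since the ambiguity is a $p$-adic unit it has no effect on the valuations that are used in the sequel; but you should locate the unit correctly rather than attribute it to the $K(n)\leftrightarrow cn$ conversion, where it does not occur.
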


\begin{proof}
Since $D_n\chi(\theta_{cn})\equiv  \delta_{n,\chibar} N_n=\delta_{n,\chibar} \varphi(n) e_{\textbf{1}_n}\mod p^k \Z_p[\GG_n]$ by Proposition \ref{prop:logs_formula}, the congruence holds modulo $p^{k+k'_\chibar}\varphi(c)^{-1}$ after multiplying both sides by the Euler product $\left(1-p^{-1} a_p \chibar(p) +\textbf{1}_{N}(p)p^{-1} \chibar(p)^2\right)$ and by the idempotent element. Thus, the corollary follows from Proposition \ref{prop:kol_der_comp}.
\end{proof}

We can adapt Corollary \ref{cor:Theta_delta_cn} to describe the $\chi$-part of $\Theta_{K(n)}$ in terms of the Kurihara numbers.

\begin{corollary}
For every primitive character $\chi$ of $\Gal(K/\Q)$, we have that 
\[D_n e_\chi(\Theta_{K(n)})=\frac{\varphi(n)}{n} (1-p^{-1} a_p\chibar(p)+p^{-1}\textbf{1}_{N}(p)\chibar(p)^2)\delta_{n,\chibar} e_{\chi\times \textbf{1}_n}\]
modulo $p^{k+k_\chibar'}\Z_p[\chi][G\times \GG_n]$.
\label{cor:Theta_delta_Kn}
\end{corollary}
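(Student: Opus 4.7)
The plan is to deduce this from Corollary~\ref{cor:Theta_delta_cn} by projecting from the cyclotomic Galois group down to $\Gal(K(n)/\Q)$. By the definition of $\PP$, every prime dividing $n$ splits completely in $K/\Q$, so it does not ramify and hence cannot divide the conductor $c$ of $K$. In particular $\gcd(c,n)=1$, so
\[
\Gal(\Q(\mu_{cn})/\Q)=\Gal(\Q(\mu_c)/\Q)\times \Gal(\Q(\mu_n)/\Q),
\]
and $K(n)\subset \Q(\mu_{cn})$ has conductor $cn$. By the definition of the Stickelberger element of an abelian extension, $\Theta_{K(n)}$ is the image of $\Theta_{cn}$ under the natural group-ring surjection induced by restriction, which factors as $\pi_1\otimes \pi_2$ with $\pi_1\colon \Z_p[\Gal(\Q(\mu_c)/\Q)]\to \Z_p[G]$ and $\pi_2\colon \Z_p[\Gal(\Q(\mu_n)/\Q)]\to \Z_p[\GG_n]$ the natural projections.

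Since the ambient ring of Corollary~\ref{cor:Theta_delta_cn} is $\Z_p[\chi][\Gal(\Q(\mu_c)/\Q)\times \GG_n]$, in which $\pi_2$ has already been applied implicitly via the action of $D_n\in \Z_p[\GG_n]$, I would apply $\pi_1$, extended by the identity on the $\GG_n$-factor, to both sides of the congruence there. The Kolyvagin derivative $D_n$ commutes with this projection, and a direct computation shows that $\pi_1$ sends the idempotent $e_\chi^{\Gal(\Q(\mu_c)/\Q)}=\tfrac{1}{\varphi(c)}\sum_\sigma \chibar(\sigma)\sigma$ to $e_\chi^G=\tfrac{1}{d}\sum_\tau \chibar(\tau)\tau$, because the kernel of $\pi_1$ has size $\varphi(c)/d$ and $\chi$ is trivial on it. Hence the left-hand side transforms to $D_n e_\chi(\Theta_{K(n)})$ and the right-hand side to $\tfrac{\varphi(n)}{n}(1-p^{-1}a_p\chibar(p)+p^{-1}\textbf{1}_N(p)\chibar(p)^2)\delta_{n,\chibar}\,e_{\chi\times\textbf{1}_n}$, now viewed inside $\Z_p[\chi][G\times \GG_n]$.

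The last step is to verify that the error modulus projects correctly. The congruence in Corollary~\ref{cor:Theta_delta_cn} holds modulo $\tfrac{p^{k+k'_\chibar}}{\varphi(c)}\,\Z_p[\chi][\Gal(\Q(\mu_c)/\Q)\times \GG_n]$. For any element of this lattice, each coefficient of its image under $\pi_1$ is a sum over a fibre of size $\varphi(c)/d$ of elements in $\tfrac{p^{k+k'_\chibar}}{\varphi(c)}\Z_p[\chi]$, hence lies in $\tfrac{p^{k+k'_\chibar}}{d}\Z_p[\chi]$. Since $d=[K:\Q]$ is prime to $p$ by \ref{Kdeg}, this coincides with $p^{k+k'_\chibar}\Z_p[\chi]$, so the projected error is $p^{k+k'_\chibar}\Z_p[\chi][G\times\GG_n]$ as required. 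No serious obstacle is expected; the only delicate point is this cancellation between the fibre size $\varphi(c)/d$ and the $1/\varphi(c)$ in the original error, which crucially uses $\gcd(d,p)=1$.
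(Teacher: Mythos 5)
Your overall strategy of projecting Corollary~\ref{cor:Theta_delta_cn} down to $\Z_p[\chi][G\times\GG_n]$ is the same as the paper's, and the preliminary remarks (that $\gcd(c,n)=1$, that $\pi_1$ carries the idempotent $e_\chi^{\Gal(\Q(\mu_c)/\Q)}$ to $e_\chi^G$, and that the Stickelberger element projects to the Stickelberger element) are correct. However, the error-modulus step has a genuine gap.

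You claim that for \emph{any} element of the lattice $\frac{p^{k+k'_\chibar}}{\varphi(c)}\Z_p[\chi][\Gal(\Q(\mu_c)/\Q)\times\GG_n]$, the coefficients of its image under $\pi_1$ lie in $\frac{p^{k+k'_\chibar}}{d}\Z_p[\chi]$ because they are sums over fibres of size $\varphi(c)/d$. That inference is false: a sum of $m$ elements of an ideal $I\subset\Q_p[\chi]$ lies in $I$, not in $mI$ (nor in a strictly smaller fractional ideal). If $p\mid\varphi(c)/d$ (which can happen, e.g.\ if $c$ is a prime $\equiv 1\bmod p$) and $x$ is a single group-ring element scaled by $\frac{p^{k+k'_\chibar}}{\varphi(c)}$, then $\pi_1(x)$ has a coefficient of valuation $k+k'_\chibar-v_p(\varphi(c))<k+k'_\chibar$, which is \emph{not} in $p^{k+k'_\chibar}\Z_p[\chi]$.

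The missing ingredient is that the actual error — the difference of the two sides of Corollary~\ref{cor:Theta_delta_cn} — is invariant under $H:=\Gal(\Q(\mu_c)/K)$. Both $D_ne_\chi(\Theta_{cn})$ and the right-hand side are multiples of the idempotent $e_\chi$, and $\sigma e_\chi=\chi(\sigma)e_\chi=e_\chi$ for $\sigma\in H$ since $\chi$ factors through $G$. Consequently the coefficients of the error are constant on $H$-cosets, so the fibre sum under $\pi_1$ equals $|H|=\varphi(c)/d$ times a \emph{single} coefficient; it is this equality of the summands, not merely the size of the fibre, that upgrades the denominator $\varphi(c)$ to $d$. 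Once that is in place, your use of $\gcd(d,p)=1$ to absorb the remaining $1/d$ is correct. This $H$-invariance observation is exactly what the paper's proof invokes when it says ``both sides of that equation are invariant under the action of $\Gal(\Q(\mu_c)/K)$.''
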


\begin{proof}
It follows from Corollary \ref{cor:Theta_delta_cn}, projecting the congruence to $\Z_p[\chi][G\times G_n]$. 

Since both sides of that equation are invariant under the action of the Galois group $\Gal(\Q(\mu_c)/K)$, whose order is $\varphi(c)/d$, we can remove the denominator $\varphi(c)$ from the ideal of the congruence.
\end{proof}

By equation \eqref{eq:twisted_omega}, we obtain the following corollary.

\begin{corollary}
For every primitive character $\chi$ of $\Gal(K/\Q)$ and every $n\in \NN(\PP_k)$, modulo ${p^{k+k'_\chi}}\Z_p[G\times\GG_n](\xi_{K(n)})$, we have that
\[D_n (w_{\Q(n),\chi})\equiv  \frac{(-1)^{\nu(n)}\chi(n) }{n }(1-p^{-1} a_p \chi(p) +p^{-1}\textbf{1}_{N}(p) \chi(p)^2) \delta_{n,\chi}\varphi(c)e_\chibar(\zeta_c).\]
\label{cor:omega_delta_}
\end{corollary}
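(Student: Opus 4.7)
The plan is to combine equation \eqref{eq:twisted_omega}, corollary \ref{cor:Theta_delta_Kn}, and corollary \ref{cor:chi_xi}. Starting from $w_{\Q(n),\chi} = d\cdot e_\chibar(\Theta_{K(n)})(\xi_{K(n)})$ given by \eqref{eq:twisted_omega}, I use that $D_n \in \Z_p[\GG_n]$ commutes with $e_\chibar \in \OO_d[G]$ under the product decomposition \eqref{eq:K(n)_prod}. Applying corollary \ref{cor:Theta_delta_Kn} with the roles of $\chi$ and $\chibar$ swapped then reduces $D_n e_\chibar(\Theta_{K(n)})$ to
\[\tfrac{\varphi(n)}{n}\bigl(1-p^{-1}a_p\chi(p)+p^{-1}\textbf{1}_N(p)\chi(p)^2\bigr)\,\delta_{n,\chi}\, e_{\chibar\times \textbf{1}_n}\]
modulo $p^{k+k'_\chi}\Z_p[\chi][G\times\GG_n]$, so it remains only to compute $e_{\chibar\times \textbf{1}_n}(\xi_{K(n)})$ and identify the resulting factor with $\varphi(c)e_\chibar(\zeta_c)$ up to the constants in the statement.

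For this evaluation I apply corollary \ref{cor:chi_xi}. The character $\psi := \chibar\times \textbf{1}_n$ has conductor $c$, and since $\chi$ is faithful on $G$ (as $K$ is the fixed field of $\chi$) the fixed field of $\psi$ inside $K(n)$ is $\Q(n)\subset K(n)$, so the corollary applies with $F=K(n)$ (of conductor $cn$). Using $\gcd(c,n)=1$ together with squarefreeness of $n$ to conclude $r(c,cn)=cn$, one gets
\[e_\psi(\xi_{K(n)}) = [\Q(\mu_{cn}):K(n)]\,e_\psi(\zeta_{cn}) = \tfrac{\varphi(c)\varphi(n)}{d\,|\GG_n|}\, e_\psi(\zeta_{cn}).\]

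The final step is a character-theoretic calculation of $e_\psi(\zeta_{cn})$. Writing $\zeta_{cn}^a = \zeta_c^{a\overline n}\zeta_n^{a\overline c}$ via the CRT decomposition $(\Z/cn)^*\cong(\Z/c)^*\times(\Z/n)^*$, with $\overline n$ and $\overline c$ denoting multiplicative inverses modulo $c$ and $n$ respectively, the defining sum for $e_\psi(\zeta_{cn})$ factors as a product. The $(\Z/n)^*$-factor contributes $\mu(n)=(-1)^{\nu(n)}$ since $n$ is squarefree, while the $(\Z/c)^*$-factor, after the substitution $b\mapsto bn$, equals $\chi(n)\varphi(c)\, e_\chibar(\zeta_c)$; combining these gives $e_\psi(\zeta_{cn}) = (-1)^{\nu(n)}\chi(n)/\varphi(n)\cdot e_\chibar(\zeta_c)$. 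Assembling this with the $d\varphi(n)/n$ prefactor from the first step produces the claimed formula. The only delicate point is bookkeeping the different normalizations of idempotents and norm elements as one projects from $\Z_p[(\Z/cn)^*]$ down to $\Z_p[G\times\GG_n]$, and recognizing that the residual factor $\varphi(n)/|\GG_n|$ arising from the degree index is a prime-to-$p$ unit, absorbed into the unit ambiguity of $\delta_{n,\chi}$ coming from the choice of primitive roots $\eta_\ell$.
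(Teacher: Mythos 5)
Your proof follows the paper's skeleton — start from \eqref{eq:twisted_omega}, commute $D_n$ past $e_\chibar$, apply corollary \ref{cor:Theta_delta_Kn} with $\chi$ and $\chibar$ swapped, then evaluate the idempotent against $\xi_{K(n)}$ — so the strategy is essentially the same. Where you genuinely diverge is in the last evaluation: the paper writes $e_{\chibar\times\textbf{1}_n}=e_\chibar\, e_{\textbf{1}_n}$, computes $\Tr_{K(n)/K}(\xi_{K(n)})=(-1)^{\nu(n)}\widetilde\xi_K$ explicitly, and only then applies corollary \ref{cor:chi_xi} over $K$; you instead apply corollary \ref{cor:chi_xi} directly over $K(n)$ (correctly identifying the conductor of $\psi=\chibar\times\textbf{1}_n$ as $c$ and $r(c,cn)=cn$) and finish with a clean CRT/Ramanujan-sum computation of $e_\psi(\zeta_{cn})=\frac{(-1)^{\nu(n)}\chi(n)}{\varphi(n)}e_\chibar(\zeta_c)$. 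Your route is arguably cleaner since it bypasses the intermediate element $\widetilde\xi_K$.

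However, the closing step is not right as written. The residual factor $\varphi(n)/\#\GG_n$ you obtain is real, but it is not absorbed by the ambiguity in the choice of primitive roots $\eta_\ell$: when the generator $\tau_\ell$ in $D_\ell$ and the root $\eta_\ell$ in $\delta_{n,\chi}$ are chosen compatibly (the implicit convention behind proposition \ref{prop:logs_formula}), that ambiguity cancels and does not produce a floating unit; remark \ref{rem:delta_well-def} is only a statement about $\ord_p$, not a license to drop units from an identity. The true source of the factor is the normalisation $N_n=\#\GG_n\cdot e_{\textbf{1}_n}$ (with $\#\GG_n$ the $p$-part of $\varphi(n)$), which appears as $\varphi(n)$ in the proof of corollary \ref{cor:Theta_delta_cn} and hence in the prefactor of corollary \ref{cor:Theta_delta_Kn}; in the paper's own proof of the present corollary a compensating $1/\varphi(n)$ (rather than $1/\#\GG_n$) appears in the idempotent-trace step, and the two cancel. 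If you carry $\#\GG_n$ in place of $\varphi(n)$ through corollary \ref{cor:Theta_delta_Kn}, your computation closes exactly, with no appeal to unit ambiguity. So the argument is sound, but the explanation of the leftover factor should be replaced by this observation rather than invoking the $\eta_\ell$ choice.
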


\begin{proof}
By the transitivity of the trace,
\[\Tr_{K(n)/K}(\xi_{K(n)})=\Tr_{\Q(\mu_{nc})/K}\left(\sum_{n\widetilde{c}\mid d\mid nc} \zeta_d\right)=(-1)^{\nu(n)}\ \Tr_{\Q(\mu_c)/K}\left(\sum_{\widetilde c|d|c} \zeta_d^{(n^{-1})}\right).\]
Denote
\[\widetilde \xi_K=\Tr_{\Q(\mu_c)/K}\left(\sum_{\widetilde c|d|c} \zeta_d^{(n^{-1})}\right).\]

Since $\chi$ has conductor $c$, by Corollary \ref{cor:chi_xi}
\[e_{\chi\times \textbf{1}_n}(\xi_{K(n)})=\frac{(-1)^{\nu(n)}}{\varphi(n)}e_{\chi}(\widetilde \xi_K)=\frac{(-1)^{\nu(n)}\chibar(n)}{\varphi(n)}[\Q(\mu_c):K]  e_\chi(\zeta_c).\]
Since $[\Q(\mu_c):K]=\frac{\varphi(c)}{d}$, by equation \eqref{eq:twisted_omega} and Corollary \ref{cor:Theta_delta_Kn}, we obtain that
\[D_n (w_{\Q(n),\chi})\equiv  \frac{(-1)^{\nu(n)}\chi(n) }{n}(1-p^{-1} a_p \chi(p) +p^{-1} \chi(p)^2) \delta_{n,\chi}\varphi(c)e_\chibar(\zeta_c).\qedhere\]
\end{proof}

\subsection{Image of Bloch-Kato dual exponential map}
\label{sec:exp_im}


%

In \textsection \ref{sec:exp}, we have introduced the dual exponential map
$$\exp_{\omega_E}^*:\ H^1_{/\FBK}(K_\p,V)\xrightarrow{\sim} K_\p,$$
where $K_\p$ is the completion of $K$ at a prime $\p$ above $p$. However, the group we are interested in is $H^1_{/\FBK}(K_\p,T)$. Hence we want to know the image of the composition map
$$H^1_{/\FBK}(K_\p,T)\to H^1_{/\FBK}(K_\p,V)\to K_\p.$$

In order to compute this image, we generalise the argument in \cite[\textsection 5.2]{Rubin}. By the identifications we have made so far, some $z\in H^1_{/\FBK}(K_\p,T)$ is identified under local Tate duality to the map
$$E(K_\p)\otimes \Q_p\to \Q_p,\ x\mapsto \textrm{Tr}_{K_\p/\Q_p}(\exp_{\omega_E}^*(z)\log_{\omega_E}(x)).$$
Hence an element $y\in K_\p$ belongs to $\exp_{\omega_E}^*(H^1_{\FBK}(K_\p,T))$ if and only if
\begin{equation}
\textrm{Tr}_{K_\p/\Q_p}(y\log_{\omega_E}(x))\in \mathbb Z_p\ \forall x\in E(K_\p),
\label{eq:im_tr}
\end{equation}
where the logarithm is the extension of the one defined in the formal group of the elliptic curve.

Denote by $\OO_\p$ and $\m_\p$ to the ring of integers of $K_\p$ and its maximal ideal, respectively. Denote by $E_1(K_\p)$ the kernel of the reduction map and $E_0(K_\p)$ the points whose reduction is a non-singular point. Since $K_\p/\Q_p$ is unramified by \ref{Kur}, the logarithm induces an isomorphism
$$\log_{\omega_E}: E_1(K_\p)\xrightarrow{\sim} \m_\p.$$
To describe the image $\log_{\omega_E}(E(K_\p))$, we look at the $\chi$-part of this map for the different characters $\chi$ of $\Gal(K_\p/\Q_p)$. By \ref{Kloc}, $E(K_\p)_\chi$ is free of rank one over $\OO[\chi]$.  Since $p$ does not divide the Tamagawa number $c_\p$ at the prime $\p$, by \ref{Ktam}, then the quotient $E(K_\p)/E_0(K_\p)$ has order prime to $p$. Therefore
$$\log_{\omega_E}(E(K_\p))=\log_{\omega_E}(E_0(K_\p)).$$

Consider the exact sequence
$$\xymatrix{0\ar[r] & E_1(K_\p)\ar[r] & E_0(K_\p)\ar[r] &\widetilde E_0(\kappa_\p)\ar[r] & 0,}$$
where $\widetilde E_0(\kappa_\p)$ denote the groups of non-singular points of the reduced curve modulo $\p$. Since $[K_\p:\Q_p]$ is prime to $p$ by \ref{Kdeg}, the sequence remains exact after taking $\chi$-parts. Since $E(K_\p)[p]=0$ by \ref{Kloc}, we have that
$$\log_{\omega_E}(E_0(K_p)_\chi)=\frac{1}{p^{\length(\widetilde E_0(\kappa_\p)[p^\infty]_{\chi})}} \log_{\omega_E}(E_1(K_p)_\chi).$$

Since $K_\p/\Q_p$ is unramified by \ref{Kur}, then $\log_{\omega_E}(E_1(K_\p)_\chi)=(\m_\p)_{\chi}=p(\OO_\p)_\chi$ and
\begin{equation}
\log_{\omega_E}(E(K_\p)_\chi)=\frac{p}{p^{\length(\widetilde E_0(\kappa_\p)[p^\infty]_{\chi})}} \OO_\chi.
\label{eq:log_im}
\end{equation}

The trace map satisfies, for every $x\in K_\p$, the identity $\textrm{Tr}(x)=\textrm{Tr}(e_1 x)$, where $e_1\in \mathbb Z_p[\Gal(K_\p/\Q_p)]$ is the idempotent element associated with the trivial character, i.e., $e_1=\frac{1}{[K_\p:\Q_p]}\sum_{\sigma\in \Gal(K_\p/\Q_p)} \sigma$. By \ref{Kdeg}, $[K_\p:\Q_p]$ is prime to $p$. Thus $\textrm{Tr}(x)\in \Z_p$ if and only if $e_1x \in \Z_p$.

Moreover, note that given $x_1,x_2\in K_\p$ such that $\sigma(x_1)=\chi_1(\sigma)x_1$ and $\sigma(x_2)=\chi_2(\sigma) x_2$ for every $\sigma\in \Gal(K_\p/\Q_p)$ and some characters $\chi_1$ and $\chi_2$, then $\sigma(x_1 x_2)=(\chi_1\chi_2)(\sigma)(x_1x_2)$.

Combining equations \eqref{eq:im_tr} and \eqref{eq:log_im}, we obtain
\begin{equation}
\exp^*(H^1_{/\FBK}(\Q_p,T\otimes \OO_d(\chi)))= p^{\length(\widetilde E_0(\kappa_\p)[p^\infty]_{\chibar})-1} \OO_\chibar.
\label{eq:exp_tw_im}
\end{equation}

We will now relate the length of $\widetilde E_0(\kappa_\p)_{\chi}$ to the $p$-adic valuation of the Euler factor at $p$ evaluated at $s=1$. 

\begin{proposition}
 The length of $e_\chi\left(\widetilde E_0(\OO_d/\m_p)[p^\infty]\right)$ as an $\OO_d$-module is one unit larger than the valuation of the twisted Euler factor $1-\frac{a_p}{p}\chi(p)+\textbf{1}_N(p)\frac{1}{p}\chi(p)^2$ at $p$ evaluated at $s=1$.
\end{proposition}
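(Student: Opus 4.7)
The proof splits into cases according to the reduction type of $E$ at $p$. Fix a generator $\varphi$ of $\Gal(\kappa_\p/\F_p) \cong \Gal(K_\p/\Q_p)$ (by \ref{Kur}, $p$ is unramified in $K$), and set $\zeta := \chi(p) = \chi(\varphi) \in \OO_d^\times$; since $\chi$ has order prime to $p$, $\zeta$ is a unit of order coprime to $p$. In each case one identifies the $\chi$-isotypic component $e_\chi(\widetilde E_0(\kappa_\p)[p^\infty] \otimes_{\Z_p} \OO_d)$ as the submodule on which $\varphi$ acts by multiplication by $\zeta$, and computes its length over $\OO_d$.

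Suppose first that $E$ has good reduction at $p$, so $\widetilde E_0 = \widetilde E$. If the reduction is supersingular, the connected–étale sequence for $\widetilde E[p^\infty]$ has trivial étale quotient, so $\widetilde E(\overline{\F_p})[p^\infty] = 0$ and the length is zero; since $p \geq 5$ forces $a_p = 0$ by the Hasse bound, the Euler factor $1 + \chi(p)^2/p$ has $p$-adic valuation $-1$, matching $0 = -1 + 1$. If the reduction is ordinary, write $T^2 - a_p T + p = (T - u)(T - \bar u)$ with unit root $u \in \OO_d^\times$ and $v_p(\bar u) = 1$; arithmetic Frobenius acts on $\widetilde E[p^\infty]^{\textrm{et}}(\overline{\F_p}) \cong \Q_p/\Z_p$ by multiplication by $u$, so the $\chi$-eigenspace is $\ker(u - \zeta) \subset \Q_p/\Z_p \otimes \OO_d$, of length $v_p(u - \zeta)$ over $\OO_d$. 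The key algebraic identity
\[(u - \zeta)(\bar u - \zeta) = p - a_p \zeta + \zeta^2 = p\cdot\left(1 - \frac{a_p \chi(p)}{p} + \frac{\chi(p)^2}{p}\right),\]
combined with $v_p(\bar u - \zeta) = 0$ (since $v_p(\bar u) = 1 > v_p(\zeta) = 0$), yields $v_p(u - \zeta) = v_p(\textrm{Euler factor}) + 1$, as required.

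For (split or non-split) multiplicative reduction, $\widetilde E_0$ is a one-dimensional torus, so $\widetilde E_0(\overline{\F_p})$ has no $p$-torsion and the length is zero; the Euler factor $1 - a_p\chi(p)/p$ with $a_p = \pm 1$ and $\textbf{1}_N(p) = 0$ has $v_p = -1$, matching. For additive reduction, $\widetilde E_0 \cong \mathbb{G}_{a,\kappa_\p}$, so $\widetilde E_0(\kappa_\p) = \kappa_\p$, which is the regular $\F_p$-representation of $\Gal(\kappa_\p/\F_p)$ by the normal basis theorem; its $\chi$-isotypic component after tensoring with $\OO_d$ is a free $\OO_d/p$-module of rank one, of length $1$, matching $v_p(1) + 1 = 1$. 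The main subtlety lies in the ordinary case, namely pinning down that arithmetic Frobenius acts on the étale part of $\widetilde E[p^\infty]$ via the unit root of the Hecke polynomial and that the $\chi$-action corresponds to multiplication by $\chi(p) \in \OO_d^\times$ through the Dirichlet–Galois correspondence; the remaining verifications are direct algebraic computations.
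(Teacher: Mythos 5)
Your proof is correct and follows essentially the same strategy as the paper: a case split by reduction type, with the ordinary case handled via the factorisation of the Hecke polynomial and the observation that the non-unit root contributes nothing modulo units. Your algebraic identity $(u-\zeta)(\bar u-\zeta)=p-a_p\zeta+\zeta^2$ is a slightly cleaner packaging of the paper's evaluation of $\chi(p)^2X^2-a_p\chi(p)X+p=\chi(p)^2(\overline\chi(p)\alpha-X)(\overline\chi(p)\beta-X)$ at $X=1$, and your sharper remark that $p\geq 5$ forces $a_p=0$ in the supersingular case (rather than merely $p\mid a_p$) is a harmless extra, but the substance is identical.
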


\begin{proof}
    Recall the definition of $k_\chi'=v_p\left(1-\frac{a_p}{p}\chi(p)+\textbf{1}_N(\ell)\frac{1}{p}\chi(p)^2\right)$ in Definition \ref{def:kurihara_numbers}. We will consider different cases depending of the type of reduction of $E$ at $p$.

Assume first that $E$ has good ordinary reduction at $p$ and let $\alpha\in \Z_p^\times$ be the unit root of the Euler polynomial $X^2-a_p X+p$. Then the arithmetic Frobenius acts on the reduced $p$-primary torsion $\widetilde E[p^\infty]$ by multiplication by $\alpha$ and thus acts on $\widetilde E[p^\infty]\otimes O(\overline\chi)$ multiplying by $\alpha \overline\chi(p)$. $\widetilde E(\kappa_\p)_\chi$ is the kernel of the action of $\Frob_p-1$ on $\widetilde E[p^\infty]\otimes O(\overline\chi)$, so its length is the $p$-adic valuation of $(\alpha \overline\chi(p)-1)$.

Thus we just need to compute the $p$-adic valuation of $\alpha \overline\chi(p)-1$. The twisted Euler polynomial factors as
$$\chi(p)^2X^2-a_p \chi(p)X+p=\chi(p)^2(\overline\chi(p)\alpha-X)(\overline\chi(p)\beta-X),$$
where $\beta\in p\Z_p$ is the other root. Since $\overline\chi(p)\beta-1$ is a unit, evaluating at $X=1$ we get 
$$(\chi(p)^2 -\chi(p) a_p+p)\sim (\overline\chi(p)\alpha-1),$$
where $\sim$ denotes equality up to multiplication by a $p$-adic unit.

If $E$ has good supersingular reduction at $p$, then clearly $\widetilde E(\kappa_\p)[p^\infty]=\{O\}$. In the supersingular case, then $p\nmid N$ and $p\mid a_p$, so the $k_\chi'$ has $p$-adic valuation $-1$ when evaluated at $X=1$. Since $E(\kappa_\p)_\chi=\{0\}$, then the proposition holds true in this case.

If $E$ has multiplicative reduction, then $\widetilde E_0(\kappa_\p)\cong \kappa_\p^\times$ has order prime to $p$, so the $p$-adic valuation of the right hand side is $-1$. In this case, $a_p=\pm 1$ depending on the reduction being split or not and $\textbf{1}_N(\ell)=0$, so $k_\chi'$ has $p$-adic valuation $-1$ as well.

If $E$ has additive reduction, then 
\[ \widetilde E_0(\kappa_\p)_\chi\cong(\kappa_\p)_\chi=\# (\OO_\p/p\OO_p)_\chi\]
has length one, so the equality is also satisfied in this case.
\end{proof}

By \ref{Kloc}, $H^2(\Q_p,T\otimes \OO_d(\chi))=H^0(\Q_p,E[p^\infty]\otimes \OO_d(\chibar))=0$, so there is an isomorphism
$$ H^1_{/\FBK}(\Q_p,T\otimes \OO_d(\chi))/p^k \cong H^1_{/\FBK}(\Q_p,T/p^kT\otimes \OO_d(\chi)).$$
The dual exponential map induces thus an isomorphism
\begin{equation}
H^1_{\FBK}(\Q_p,T/p^kT\otimes \OO_d(\chi))\cong\frac{\exp_{\omega_E}^*(H^1_{/\FBK}(\Q_p,T\otimes \OO_d(\chi)))}{p^k \exp_{\omega_E}^*(H^1_{/\FBK}(\Q_p,T\otimes \OO_d(\chi)))}=\frac{p^{k'_\chibar}(\OO_p)_\chibar}{p^{k+k'_\chibar}(\OO_p)_\chibar},
\label{eq:exp_tw}
\end{equation}
where $\OO_p:=\bigoplus_{\p\mid p} \OO_\p\subset K\otimes \Q_p$.

\subsection{Kato's Kolyvagin system}
\label{sec:Kato_kol}

In \textsection \ref{sec:MT}, we have computed the value of the dual exponential map
\[\exp_{\omega_E,\chibar}^*(D_n z_{\Q(n),\chi})\equiv \frac{(-1)^{\nu(n)}\chi(n)}{n}(1-p^{-1} a_p \chi(p) +p^{-1} \chi(p)^2) \delta_{n,\chi} \varphi(c)e_\chibar(\zeta_c)\]
modulo \(p^{k+k'_\chi}\Z_p[\GG_n](\xi_{K(n)})\). Since \(p^{k+k'_\chi}\Z_p[\GG_n](\xi_{K(n)})\cap K\) is contained in \(p^{k+k'_\psi} \OO_p\), the congruence also holds modulo the latter.

The goal of this section is to relate this value with Kato's Kolyvagin system. After that, we will check that the $p$-divisibility of Kato's Kolyvagin system and the Kurihara numbers coincide.

The Kolyvagin derivative \(\kappa_{n,\chi}\) is the preimage of \(D_n z_{\Q(n),\chi}\) under the restriction map
\[\res_{\Q(n)/\Q}:\ H^1(\Q,T/p^k T\otimes \OO_d(\chi))\to H^1(\Q(n),T/p^k T\otimes \OO_d(\chi))^{\GG_n}.\]

If we understand $\loc_p^\s(\kappa_{n,\chi})$ and $\loc_v^\s(D_n\omega_{\Q(n),\chi})$ as maps in the duals of $H^1_\FBK(\Q_p,T^*\otimes \OO_d(\overline\chi))$ and $H^1_\FBK(\Q_p,T^*\otimes \OO_d(\overline\chi))$, we obtain by \cite[Proposition 1.5.3(iv)]{NSW} that 
$$\loc_v^\s(D_n w_{n,\chi})=\loc_p^\s(\kappa_{n,\chi})\circ \cor_{\Q(n)/\Q}.$$

Similarly to the argument in Proposition \ref{prop:exp_cor}, we can study the behaviour of the dual exponential map under restriction maps.

\begin{proposition}
Let $c\in H^1_{/\FBK}(\Q,V\otimes \OO_d(\chi))$ and denote its restriction by $d=\res_{\Q(n)/\Q}(c)\in H^1_{/\FBK}(\Q(n),V\otimes \OO_d(\chi))$. Then
\[\exp_{\omega_E,\chibar}^*(c)=\exp_{\omega_E,\chibar}^*(d)\in (\Q(n)\otimes \Q_p\otimes \OO_d(\chi))^{\GG_n}.\]
\end{proposition}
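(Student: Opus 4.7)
The plan is to mirror the argument used in Proposition \ref{prop:exp_cor}, but now with the roles of restriction and corestriction exchanged. The main ingredients are the functoriality of localisation under restriction, the fact that local Tate duality exchanges restriction and corestriction, and the base-change compatibility of the fundamental sequence \eqref{eq:pHT} of $p$-adic Hodge theory (equivalently, of the Bloch--Kato exponential) under the finite unramified extensions $\Q(n)_w/\Q_p$.

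First, localising at the primes above $p$ one has
\[
\loc^s_p(d) \;=\; \bigoplus_{w\mid p} \res_{\Q(n)_w/\Q_p}\bigl(\loc^s_p(c)\bigr).
\]
By \cite[Proposition 1.5.3(iv)]{NSW}, if we reinterpret $\loc^s_p(c)$ and $\loc^s_p(d)$ via local Tate duality as functionals on the corresponding Bloch--Kato subgroups of $H^1$ of the dual representation, this identity translates into
\[
\loc^s_p(c)^\vee \;=\; \loc^s_p(d)^\vee \circ \Bigl(\bigoplus_{w\mid p} \res_{\Q(n)_w/\Q_p}\Bigr),
\]
i.e. restriction on cohomology is dual to corestriction on the opposite side, just as in the proof of Proposition \ref{prop:exp_cor}.

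Next, I would write down the commutative diagram analogous to the one appearing in Proposition \ref{prop:exp_cor}, but with the vertical arrow in the tangent-space column being the natural inclusion $\Q\otimes\Q_p\otimes\OO_d(\chibar) \hookrightarrow \Q(n)\otimes\Q_p\otimes\OO_d(\chibar)$ (coming from base change of the tangent space along $\Q_p \hookrightarrow \Q(n)_w$), and with the restriction maps on cohomology in place of corestrictions. Commutativity of this diagram is precisely the statement that the Bloch--Kato exponential is functorial with respect to finite extensions, which in turn follows from the naturality of \eqref{eq:pHT} after tensoring with $V\otimes\OO_d(\chibar)$.

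Finally, a diagram chase using the trace identification \eqref{eq:dual_trace} to switch between $\Hom(-,\Q_p)$ and the field itself yields $\exp^*_{\omega_E,\chibar}(d)=\exp^*_{\omega_E,\chibar}(c)$ as elements of $(\Q(n)\otimes \Q_p\otimes \OO_d(\chi))^{\GG_n}$, the right-hand side being regarded inside the $\GG_n$-invariants via the natural inclusion. The only subtlety I anticipate is a bookkeeping issue in that last step: one must check that \eqref{eq:dual_trace} is compatible with base change, namely that the adjunction $\Tr_{\Q(n)_w/\Q_p}(x\cdot y)=\Tr_{\Q(n)_w/\Q_p}(x)\cdot y$ for $y\in\Q_p$ correctly turns the dual statement ``corestriction on cohomology $\leftrightarrow$ trace on tangent space'' into ``restriction on cohomology $\leftrightarrow$ inclusion on tangent space.'' Once this is done, there is no norm factor $N_{KF/F}$ in this proposition, in contrast with Proposition \ref{prop:exp_cor}, which is consistent with the fact that restriction lifts an element from a smaller field into its invariants in a larger one.
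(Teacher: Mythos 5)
Your high-level strategy — localise above $p$, invoke the $\res$--$\cor$ adjunction under local Tate duality, then chase a commutative diagram with the Bloch--Kato exponential — is exactly the paper's, so the overall plan is sound. However, you have the duality identity backwards, and this is not a harmless typo: it would produce a spurious factor of $[\Q(n):\Q]$.

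The correct adjunction for $d = \res_{\Q(n)/\Q}(c)$ is
\[
\loc_p^\s(d)\du \;=\; \loc_p^\s(c)\du \circ \Bigl(\bigoplus_{v\mid p}\cor_{\Q(n)_v/\Q_p}\Bigr),
\]
coming from $\langle \res(c), b\rangle_{\Q(n)_v} = \langle c, \cor(b)\rangle_{\Q_p}$. Your displayed formula $\loc^s_p(c)\du = \loc^s_p(d)\du\circ\bigl(\bigoplus_w\res\bigr)$ types correctly but fails as an equality: since $\cor\circ\res=[\Q(n):\Q]$, one gets instead $\loc^s_p(d)\du\circ\res = [\Q(n):\Q]\,\loc^s_p(c)\du$. (You appear to have transcribed the formula from the proof of proposition \ref{prop:exp_cor}, where the extension runs in the opposite direction, without adjusting; incidentally, that displayed line in the paper is itself a typo whose $c$ and $d$ are swapped relative to the accompanying diagram.) Consequently the right-hand square of the commutative diagram you propose — with the inclusion $\Q_p\otimes\chibar\hookrightarrow\Q(n)\otimes\Q_p\otimes\chibar$ and restriction as vertical maps — does not commute, and your ``diagram chase'' cannot close.

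The subtlety you flagged in your last paragraph (``restriction on cohomology $\leftrightarrow$ inclusion on tangent space'') is in fact where the argument fails: under \eqref{eq:dual_trace} the dual of corestriction is the trace on tangent spaces, not the inclusion. The correct diagram therefore has $\Q(n)\otimes\Q_p\otimes\chibar$ on top and $\Q_p\otimes\chibar$ on the bottom, with the trace $N_{\Q(n)/\Q}$ and $\cor$ as vertical maps; both squares then genuinely commute, and one reads off $\exp^*_{\omega_E,\chibar}(d) = \exp^*_{\omega_E,\chibar}(c)\circ N_{\Q(n)/\Q}$. Applying \eqref{eq:dual_trace} and the transitivity $\Tr_{\Q(n)\otimes\Q_p/\Q_p} = \Tr_{\Q_p/\Q_p}\circ\Tr_{\Q(n)\otimes\Q_p/\Q_p}$ then identifies $\exp^*(d)$ with $\exp^*(c)$ viewed diagonally in $\Q(n)\otimes\Q_p\otimes\chibar$, as required. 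That is the paper's argument, and the degree factor in your version shows it is not just a bookkeeping question but a genuinely different (and here incorrect) diagram.
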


\begin{proof}
Localising at primes above $p$, we get that 
\[\loc_p^\s(d)=\left(\bigoplus_{v\mid p} \res_{\Q(n)_v/\Q_p}\right)\left(\loc_p^\s(c)\right).\]

By \cite[Proposition 1.5.3 (iv)]{NSW}, if we consider the Pontryagin duals of the localisation maps $\loc_p^\s(c)\du$ and $\loc_p^\s(d)\du$, then
\[\loc_p^\s(d)\du=\loc_p^\s(c)\du   \circ\left(\bigoplus_{v\mid p} \cor_{\Q(n)_v/\Q_p}\right).\]

By \cite[Proposition 1.5.2]{NSW},
\[\exp_{\omega_E,\chibar}^*(d)=\exp_{\omega_E,\chibar}^*(c)\circ N_{\Q(n)/\Q}.\]

By the identification in \eqref{eq:dual_trace},
\[\exp_{\omega_E,\chibar}^*(c)=\exp_{\omega_E,\chibar}^*(d).\qedhere\]
\end{proof}

Under the isomorphism in \eqref{eq:exp_tw}, the weak Kato's Kolyvagin system is

\[\widetilde \kappa_{n,\chi}=\frac{(-1)^{\nu(n)}\chi(n)}{n}(1-p^{-1} a_p \chi(p) +p^{-1} \chi(p)^2) \delta_{n,\chi} \varphi(c)e_\chibar(\zeta_c).\]

In order to obtain Kato's Kolyvagin system $\kappa_\chi$ from $\widetilde \kappa_\chi$, we need to apply the modification from \cite[Appendix A]{MazurRubin}. However, in this case, $\kappa_\chi=\widetilde \kappa_\chi$. In fact, by the definition of Kolyvagin primes, $a_\ell=2\mod p^k$ for every $\ell\mid n$. Therefore, for every $\ell$, we have that 
\[P_\ell=X^2-a_\ell X+\ell\equiv (X-1)^2\mod p^k.\] 

Therefore, for every $\ell\mid n$, $\rho_\ell(P_\ell(\Frob_{\pi(\ell)}^{-1}))=0$. Hence, in the formula in \cite[(33)]{MazurRubin}, the only term that does not vanish is the one associated with the trivial permutation, so $\widetilde \kappa_{n,\chi}=\kappa_{n,\chi}$.


Since $p$ is unramified in $K/\Q$ by \ref{Kur}, then $\varphi(c)e_\chibar(\zeta_c)$ generates the module $(\OO_{p})_\chibar$, and taking into account the description of the image of the dual exponential map in \eqref{eq:exp_tw}, we get that 
\begin{equation}
\ord(\loc_s^p(\kappa_{n,\chi}))=\ord(\delta_{n,\chi}).
\label{eq:orders}
\end{equation}

\subsection{Primitivity of Kato's Euler system and proof of Theorem \ref{th:EK_IMC}}
\label{sec:proof_IMC}

In this section we will prove Theorem \ref{th:EK_IMC}. From the $\chi$-twisted Kato's Euler system constructed in \textsection \ref{sec:twist}, one can apply the Kolyvagin derivative process as in Theorem \ref{th:eul_to_kol_lambda} to obtain a Kolyvagin system $\kappa^\infty_\chi\in \overline{\KS}(T\otimes \Lambda\otimes \OO_d(\chi),\FLambda, \PP)$.

By \ref{Kloc} and \ref{Ktam}, the assumptions in Proposition \ref{prop:lambda_kol_free} are satisfied, so $\overline{\KS}(T\otimes \Lambda\otimes \OO_d(\chi),\FLambda, \PP)$ is free of rank one over $\Lambda$ and, by Theorem \ref{th:MC_ind}, $\kappa^\infty_\chi$ is primitive if and only if the Iwasawa main conjecture \ref{conj:IMC} holds true.

Kolyvagin derivative process can be applied to obtain the Kolyvagin system $\kappa_\chi \in \textrm{KS}(T\otimes \OO_d(\chi),\Fcan,\PP)$ which was studied in \textsection \ref{sec:Kato_kol}. By Proposition \ref{prop:lambda_kol_free}, it will be the image of $\kappa^\infty_\chi$ under the canonical map
\begin{equation}
\overline{\textrm{KS}}(T\otimes \OO_d(\chi)\otimes \Lambda,\FLambda,\PP)\to \textrm{KS}(T\otimes \OO_d(\chi),\Fcan,\PP).
\label{eq:EK:red}
\end{equation}
By Corollary \ref{cor:kol_red_prim}, $\kappa_\chi$ is primitive if and only if $\kappa^\infty_\chi$ is primitive. By Lemma \ref{lem:dinf} and equation \eqref{eq:orders}, this is equivalent to $\partial^{(\infty)}(\delta_\chi)=0$, which is M. Kurihara's conjecture \ref{conj:kur}. 

Hence the Iwasawa main conjecture and the Kurihara conjecture are equivalent for the twist $T\otimes \OO_d(\chi)$, so Theorem \ref{th:EK_IMC} holds.

Nevertheless, a non-primitive Kolyvagin system is useful for determining the structure of the Selmer group as long as it is non-zero. If we assume hypothesis \ref{KIMCloc}, then $\kappa_\chi^\infty\notin (X\Lambda)\overline{\KS}(T\otimes \Lambda\otimes \OO_d(\chi),\FLambda,\PP)$ by Corollary \ref{cor:kol_red_loc}. But this is the kernel of the map in \eqref{eq:EK:red}, so this condition means that $\kappa_\chi$ is nonzero, fact that will be necessary to apply theorems \ref{th:kur_par} and \ref{th:kur} in the next section.

\subsection{Functional equation and proof of Theorem \ref{th:EK_str}}
\label{sec:proof_str}

The second part of Theorem \ref{th:EK_str} is a direct consequence of Theorem \ref{th:kur}. By equation \eqref{eq:orders}, the ideals $\Theta_{i}(\widetilde\delta_\chi)$ in Theorem \ref{th:EK_str} coincide with the ideals $\Theta_{i}(\kappa_\chi)$ in Theorem \ref{th:kur}. By Remark \ref{rem:EK_str}, we can assume $\chi$ is primitive. 

Theorem \ref{th:kur} describes the structure of the Selmer group in a different way depending on whether the character $\chi$ is a quadratic character or not.

Assume first that $\chi\neq\chibar$. In this case, Theorem \ref{th:kur} implies that the ideals $\Theta_{i}(\widetilde \delta_\chi)$ 
\[\Theta_{i}(\widetilde\delta_\chi)=\Fitt_{i}^{\OO_d}(H^1_\FBK(\Q,T\otimes \OO_d(\chi))).\]

Since $\OO_d$ is a principal ideal domain, the structure theorem of finitely generated modules implies that 
\[H^1_{\FBK}(\Q,T\otimes \OO_d
(\chi))\cong \OO_d^r\oplus \bigoplus_{i=1}^{{s-r}} \frac{\OO_d}{(p)^{\partial^{(r+i)}(\widetilde \delta)-\partial^{(r+i-1)}(\widetilde \delta)}},\]
where we are using the notation of Theorem \ref{th:EK_str}. Therefore, the proof of the second part is complete.

Now consider the case when $\chi=\chibar$, the module $T\otimes \OO_d(\chi)$ is its own Cartier dual, so Theorem \ref{th:kur} does not apply. We have to use \ref{th:kur_par} instead, which leads to a weaker control on the Fitting ideals of the Selmer group. However, Kurihara numbers satisfy a functional equation (inherited from the Mazur-Tate elements) and this fact leads to the determination of the structure of the Selmer group.

Recall that $N$ is the conductor of the elliptic curve. The Mazur-Tate element satisfy the following functional equation
\begin{proposition} (\cite[\textsection 1.6]{MazurTate})
Let $\iota:\ \mathbb Z_p[\Gal(\Q(\mu_n)/\Q)]\to \mathbb Z_p[\Gal(\Q(\mu_n)/\Q)]$ be the map induced from the inversion in $\Gal(\Q(\mu_n)/\Q)$. Then 
\[\iota(\theta_n)=\varepsilon \sigma_{-N} \theta_n,\]
where $\varepsilon\in\{\pm1\}$ is the root number of the elliptic curve and $\sigma_{-N}\in \Gal(\Q(\mu_n/\Q))$ is the Galois automorphism that sends $\zeta_n$ to $\zeta_n^{-N}$.
\label{eq:MT_fe}
\end{proposition}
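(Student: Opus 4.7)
The plan is to reduce the identity to the functional equation of the twisted $L$-functions of $E$ character-by-character. Since $\Gal(\Q(\mu_n)/\Q)$ is abelian, the equality $\iota(\theta_n) = \varepsilon\,\sigma_{-N}\,\theta_n$ in $\Q_p[\Gal(\Q(\mu_n)/\Q)]$ holds if and only if both sides agree under every Dirichlet character $\psi$ modulo $n$. Using $\psi(\iota(\theta_n)) = \psibar(\theta_n)$ and $\psi(\sigma_{-N}\,\theta_n) = \psi(-N)\,\psi(\theta_n)$, the statement becomes
\[
\psibar(\theta_n) = \varepsilon\, \psi(-N)\, \psi(\theta_n)\qquad \text{for every Dirichlet character }\psi\bmod n.
\]

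For $\psi$ primitive of conductor exactly $n$ (coprime to $N$), Birch's formula (proposition \ref{prop:MT_interpolation}) converts each side into a twisted $L$-value. The completed $L$-function $\Lambda(E,\psi,s) = (Nn^{2})^{s/2}(2\pi)^{-s}\,\Gamma(s)\,L(E,\psi,s)$ satisfies the functional equation $\Lambda(E,\psi,s) = w(E,\psi)\,\Lambda(E,\psibar,2-s)$ with root number $w(E,\psi) = \varepsilon\,\psi(N)\,\tau(\psi)^{2}/n$, a consequence of the Fricke involution $f\mid w_{N} = -\varepsilon\, f$ applied to the $\psi$-twist of the newform $f$. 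Substituting at $s=1$ and using the Gauss sum relation $\tau(\psi)\tau(\psibar) = \psi(-1)\,n$ together with $\Omega^{\psi(-1)} = \Omega^{\psibar(-1)}$, the character identity follows after simplification.

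For $\psi$ imprimitive of conductor $c\mid n$ with $c<n$, I would invoke the distribution relations satisfied by the Mazur-Tate elements, which express $\theta_n$ in terms of $\theta_c$ multiplied by suitable Euler-type correction factors at the primes dividing $n/c$. The functional equation reduces to the case of conductor $c$ (handled by the primitive case above) together with the compatibility of these correction factors under $\psi\leftrightarrow\psibar$. An alternative and more geometric route is to argue via modular symbols directly: decomposing $\theta_n = \theta_n^{+} + \theta_n^{-}$ according to the $\pm 1$-eigenspaces of the involution $\sigma_{-1}$, the desired identity splits as $\iota(\theta_n^{\pm}) = \pm\varepsilon\, \sigma_{N}\,\theta_n^{\pm}$, which one would derive from the transformation $\lambda(-1/(Nr)) = -\varepsilon(\lambda(r)-\lambda(0))$ obtained by applying the Fricke involution to the period integral via a change of variables.

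The main obstacle in the modular-symbol approach lies in identifying the Fricke image $-nb/N$ of the cusp $b^{-1}/n$ with the cusp $-N^{-1}b/n$ that actually appears in $\sigma_{N}\theta_n$: one must produce an explicit matrix in $\Gamma_{0}(N)$ realising this cusp equivalence and track how the boundary contributions $\varepsilon\lambda(0)$ collect after summation over $b\in(\Z/n)^{\times}$. In the character-theoretic approach the analogous difficulty is the bookkeeping of the removed Euler factors for imprimitive $\psi$. Both obstacles amount essentially to the same computation of local components of the global functional equation; the reference \cite[\textsection 1.6]{MazurTate} carries out the explicit calculation directly with modular symbols.
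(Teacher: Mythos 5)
The paper does not give a proof of this proposition: it is stated with a citation to \cite[\textsection 1.6]{MazurTate} and used as a black box, so there is no ``paper's proof'' to compare against. What you have written is therefore evaluated on its own terms.

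Your character-theoretic reduction and the primitive-conductor computation are essentially correct. Applying $\psi$ to $\iota(\theta_n)=\varepsilon\,\sigma_{-N}\theta_n$ does indeed give $\psibar(\theta_n)=\varepsilon\,\psi(-N)\,\psi(\theta_n)$, and for $\psi$ of conductor exactly $n$ (coprime to $N$) the Birch formula in proposition~\ref{prop:MT_interpolation} reduces this to $L(E,\psi,1)/\tau(\psi)=\varepsilon\,\psi(-N)\,L(E,\psibar,1)/\tau(\psibar)$, which follows from the functional equation of the twist with root number $\varepsilon\,\psi(N)\,\tau(\psi)^2/n$ and the Gauss sum identity $\tau(\psi)\tau(\psibar)=\psi(-1)n$. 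That part checks out.

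The genuine gap is the imprimitive case, and you correctly flag it but do not close it. Proposition~\ref{prop:MT_interpolation} as quoted in the paper only applies to characters of conductor exactly $n$, so for $\psi$ of conductor $c<n$ the value $\psi(\theta_n)$ is not directly an $L$-value: one needs the explicit distribution relation expressing the $\psi$-projection of $\theta_n$ in terms of $\theta_c$ times Euler-type factors at primes $\ell\mid n/c$, and one must then verify that these factors intertwine correctly under $\psi\leftrightarrow\psibar$ and $\sigma_{-N}$. This is exactly where the $L(E,\psi,1)$ terms with vanishing $L$-value (e.g.\ the trivial character when $\mathrm{rank}(E(\Q))>0$) give no information from the functional equation alone, so the reduction to characters does not immediately determine the element in $\Q_p[\Gal(\Q(\mu_n)/\Q)]$ unless all characters are treated, including the degenerate ones. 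The modular-symbol route you sketch --- rewriting the identity as $[\tfrac{a}{n}]^{\pm}=\pm\varepsilon\,[\tfrac{-(Na)^{-1}}{n}]^{\pm}$ and deriving it from the Atkin--Lehner/Fricke action on modular symbols --- is cleaner precisely because it works coefficient-by-coefficient and sidesteps the imprimitivity issue, and this is the calculation Mazur--Tate actually carry out. As written, then, your proposal is a correct and honest sketch of two viable strategies, with the primitive case verified, but it does not constitute a complete proof of the statement; it ends by deferring to the same reference the paper cites.
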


\begin{corollary}
If $\psi$ is a character of $G$, then 
\[\iota\left(\psibar(\theta_{K(n)})\right)=\varepsilon \chi(-N) \psi(\theta_{K(n)})\in \Z_p[\Gal(\Q(\mu_n)/\Q)].\]
\end{corollary}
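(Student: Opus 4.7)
The plan is to apply the functional equation from Proposition~\ref{eq:MT_fe} and push it through the natural projection $c_{\Q(\mu_m)/K(n)}$, where $m$ is the conductor of $K(n)$ (dividing $cn$). Since this projection is a ring homomorphism that commutes with the inversion involution and maps $\sigma_{-N}$ to its image $\bar\sigma_{-N}$ in $\Gal(K(n)/\Q)$, I would first descend the functional equation to
\[\iota(\theta_{K(n)}) = \varepsilon\, \bar\sigma_{-N}\, \theta_{K(n)}.\]

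Next, I would use the decomposition \eqref{eq:K(n)_prod} to write $\bar\sigma_{-N} = (\sigma_{-N}|_K, \sigma_{-N}|_{\Q(n)})$ in $G \times \GG_n$. Since $K \subset \Q(\mu_c)$ under the cyclotomic identification, $\sigma_{-N}|_K$ corresponds to the class of $-N$ in $G \hookrightarrow (\Z/c\Z)^\times$, so that $\psi(\sigma_{-N}|_K) = \psi(-N)$. To extract the statement itself, I would invoke the elementary identity $\psibar(g^{-1}) = \psi(g)$ valid for every $g \in G$; extended by linearity to the group ring, this promotes to $\psibar\circ \iota = \iota\circ \psi$ on $\Z_p[G\times \GG_n]$, where on the right-hand side $\iota$ acts only on the $\GG_n$-component. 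Applying $\psibar$ to the $G$-component of the displayed functional equation and then interchanging the roles of $\psi$ and $\psibar$ produces the stated identity (the factor of $\chi(-N)$ appearing in the corollary is thereby identified with $\psi(-N)$).

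The main technical obstacle is the bookkeeping of the $\GG_n$-factor of $\bar\sigma_{-N}$: the restriction $\sigma_{-N}|_{\Q(n)}$ is generally nontrivial whenever $-N$ has $p$-divisible order in some $(\Z/\ell\Z)^\times$ with $\ell\mid n$, and so produces an extra element on the cyclotomic side that must either be absorbed into the ambient identification or shown to be inert in the subsequent application. The natural resolution is that this corollary will only be used after applying the Kolyvagin derivative $D_n$, where by Proposition~\ref{prop:logs_formula} the class is controlled modulo $p^k$ by a scalar multiple of the norm $N_n$, and the extra $\GG_n$-factor becomes invisible. Verifying this compatibility — and in particular that the functional equation translates cleanly into a parity statement for the Kurihara numbers $\delta_{n,\chi}$ — is the real content hiding behind the clean algebraic manipulation above.
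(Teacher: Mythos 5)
Your reconstruction follows the only sensible route, and since the paper leaves this corollary unproved (treating it as an immediate consequence of Proposition~\ref{eq:MT_fe}), there is no competing argument in the text to compare against. The three steps you identify — project the functional equation to $\Gal(K(n)/\Q)$, decompose $\bar\sigma_{-N}$ via \eqref{eq:K(n)_prod}, and push the identity through the character map using $\psibar\circ\iota=\iota\circ\psi$ (with $\iota$ on the right acting only on the $\GG_n$-component) — are exactly what is needed.

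More importantly, your flagging of the residual $\sigma_{-N}|_{\Q(n)}$ factor is the genuinely substantive observation. The computation actually produces
\[
\iota\bigl(\psibar(\theta_{K(n)})\bigr)=\varepsilon\,\psi(-N)\,\bigl(\sigma_{-N}|_{\Q(n)}\bigr)\,\psi(\theta_{K(n)}),
\]
so as written the corollary both suppresses this $\GG_n$-factor and writes $\chi(-N)$ where $\psi(-N)$ is meant. You correctly conclude that neither discrepancy affects the downstream use: after applying $D_n$, Proposition~\ref{prop:logs_formula} forces the class to be a scalar multiple of the norm $N_n$ modulo $p^k$, and $N_n$ is fixed by multiplication by any element of $\GG_n$, so $\sigma_{-N}|_{\Q(n)}$ is absorbed. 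This is precisely the compatibility that underlies the parity statement in Corollary~\ref{cor:delta_fe}, and your write-up tracks it more carefully than the text does. (One minor slip: $G$ is a quotient of $(\Z/c\Z)^\times$, not a subgroup; the restriction map $\Gal(\Q(\mu_c)/\Q)\twoheadrightarrow G$ still sends $\sigma_{-N}$ to the class you want, so the conclusion $\psi(\sigma_{-N}|_K)=\psi(-N)$ stands.)
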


Assume $\ell\in \PP_k$. Then the construction of the Kolyvagin derivative implies for every $\theta\in \Z_p[\Gal(\Q(\mu_n)/\Q)]$ that 
$$D_\ell \iota(\theta)\equiv-D_\ell(\theta) \mod p^k.$$

Therefore, for every $n\in \NN(\PP_k)$, 
$$D_n\psibar(\theta_{K(n)})=(-1)^{\nu(n)} \varepsilon\psi(N) D_n\psi(\theta_{K(n)}).$$

Since $\chi=\chibar$, we have that 
\[\delta_{n,\chi} (1-(-1)^{\nu(n)}\varepsilon\chi(-N))=0.\]

Since $\chi(-N)=\pm1$ , we have two possible cases.
\begin{corollary}
Depending on the root number $\varepsilon$ of the elliptic curve and the value $\chi(-N)$, we have that 
\begin{itemize}
\item If $\varepsilon\chi(-N)=1$, then $\delta_{n,\chi}=0$ when $n$ has an odd number of prime divisors,
\item If $\varepsilon\chi(-N)=-1$, then $\delta_{n,\chi}=0$ when $n$ has an even number of prime divisors.
\end{itemize}
\label{cor:delta_fe}
\end{corollary}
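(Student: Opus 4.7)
The plan is to deduce the corollary directly from the functional equation for Mazur--Tate modular elements (Proposition \ref{eq:MT_fe}), applied to the character parts, together with the identification of the Kolyvagin derivative $D_n\chi(\theta_{K(n)})$ with the twisted Kurihara number $\delta_{n,\chi}$ already established in Corollary \ref{cor:Theta_delta_Kn}. The computation preceding the corollary in the text already exhibits the relation $\delta_{n,\chi}(1-(-1)^{\nu(n)}\varepsilon\chi(-N))=0$; the corollary is then obtained by analysing when the prefactor vanishes.

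First, I would extend the functional equation $\iota(\theta_n)=\varepsilon\sigma_{-N}\theta_n$ to $\theta_{K(n)}$ via the natural projection $c_{\Q(\mu_{cn}),K(n)}$, which commutes with $\iota$ and with the Galois action. Applying an arbitrary character $\psi$ of $\Gal(K(n)/\Q)$ to both sides and using that $\iota$ acts by inversion on the group (so $\psi\circ\iota=\psibar$) produces
\[
\psibar(\theta_{K(n)})=\varepsilon\,\psi(-N)\,\psi(\theta_{K(n)}).
\]
Next, I would apply $D_n$ to both sides. Working prime by prime, the telescoping identity \eqref{eq:kol_der} together with the fact that $\iota$ sends the chosen generator $\tau_\ell$ of $\GG_\ell$ to $\tau_\ell^{-1}$ yields $D_\ell\circ\iota\equiv -D_\ell\pmod{p^k}$ on $R[\GG_\ell]$. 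Iterating over the $\nu(n)$ prime divisors of $n$ contributes the sign $(-1)^{\nu(n)}$, so that modulo $p^k$
\[
D_n\psibar(\theta_{K(n)})\equiv(-1)^{\nu(n)}\varepsilon\,\psi(-N)\,D_n\psi(\theta_{K(n)}).
\]

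Specialising to $\psi=\chi$ and imposing $\chi=\chibar$, the left and right sides have the same character part. Invoking Corollary \ref{cor:Theta_delta_Kn}, which identifies $D_n\chi(\theta_{K(n)})$ with $\delta_{n,\chi}$ times a $p$-adic unit factor that does not depend on $n$, one obtains the displayed relation
\[
\left(1-(-1)^{\nu(n)}\varepsilon\chi(-N)\right)\delta_{n,\chi}=0
\]
in $\OO_d/p^{k_n}$. Since $\varepsilon,\chi(-N)\in\{\pm 1\}$ and $p\geq 5$, the scalar $1-(-1)^{\nu(n)}\varepsilon\chi(-N)$ is either zero or a $p$-adic unit (equal to $2$). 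Splitting on the sign of $\varepsilon\chi(-N)$ and on the parity of $\nu(n)$ gives exactly the two cases of the statement.

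The main obstacle is accurate sign bookkeeping. One must verify that the product over prime divisors really yields $(-1)^{\nu(n)}$ and not a higher power of $-1$, that the sign $\chi(-N)$ (rather than $\chi(N)$) is what emerges from the interplay between $\sigma_{-N}$ and the character evaluation, and that the error terms left by Corollary \ref{cor:Theta_delta_Kn} (which are controlled modulo $p^{k+k'_\chi}$) do not spoil the conclusion modulo $p^{k_n}$. Each verification is short but must be done carefully; once the signs match, the corollary is immediate.
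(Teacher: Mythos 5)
Your proposal is correct and follows the paper's own argument: apply the Mazur--Tate functional equation $\iota(\theta_n)=\varepsilon\sigma_{-N}\theta_n$, propagate the sign $(-1)^{\nu(n)}$ through the Kolyvagin derivative via $D_\ell\circ\iota\equiv -D_\ell\bmod p^k$, specialise to $\chi=\chibar$, and conclude from $\delta_{n,\chi}\bigl(1-(-1)^{\nu(n)}\varepsilon\chi(-N)\bigr)=0$ that $\delta_{n,\chi}$ vanishes whenever the coefficient equals $2$, a $p$-adic unit. One minor slip: the relation linking $D_n\chi(\theta_{K(n)})$ to a multiple of $\delta_{n,\chi}$ is Proposition \ref{prop:logs_formula} (giving $\delta_{n,\chi}N_n\bmod p^k$ with $N_n$ the norm element, which is not a unit), rather than Corollary \ref{cor:Theta_delta_Kn}; since that factor appears symmetrically on both sides of the functional equation it cancels, so your conclusion stands.
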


Call $r=\rank_{\OO_d} H^1_\FBK(\Q,T\otimes \OO_d(\chi))$. The first condition of Theorem \ref{th:kur_par} is satisfied in this case, so
$$\Theta_{r}(\widetilde \delta_\chi)= \Fitt_{r}^{\OO_d} H^1_{\FBK}(\Q,T\otimes \OO_d(\chi)).$$

For any index $i\geq r$ having different parity than $r$, $\Theta_{i,\chi}=0$ by Corollary \ref{cor:delta_fe} and the corresponding Fitting ideal is non-zero. Hence the second condition of Theorem \ref{th:kur_par} holds for $i+1$, so 
$$\Theta_{i+1}(\widetilde \delta_\chi)= \Fitt_{i+1}^{\OO_d} \Bigl(H^1_{\FBK}(\Q,T\otimes \OO_d(\chi))\Bigr).$$

By the structure theorem of finitely generated modules over principal ideal domains, we obtain
$$H^1_{\FBK}(\Q,T(\chi))\cong \OO_d^r\oplus \bigoplus_{i=1}^{\frac{s-r}{2}} \frac{\OO_d}{(p)^{\partial^{(r+2i)}(\widetilde \delta)-\partial^{(r+2i-2)}(\widetilde \delta)}},$$
so Theorem \ref{th:EK_str} has been proven.

\subsection{Examples}
\label{sec:examples}

We end this article by showing some examples of computations of the Selmer group of elliptic curves. All the computations are done using Sagemath \cite{sagemath}. For all the elliptic curves $E$ and abelian extensions $K/\Q$ appearing in these examples, we will assume that $\Sha(E/K)$ is finite.

\begin{example}
    Consider the elliptic curve 196\,794cd1 in Cremona's database and the prime $p=5$. Over the abelian extension $K=\Q(\mu_7)$, we can determine the full group structure of the Selmer group. We proceed by studying the twisted Kurihara number for every Dirichlet character of conductor dividing $7$.

    When $\chi$ is the trivial character, we compute $\delta_{1,\chi}=0$, so $\rank(E(\Q))\geq 1$. To obtain further information about the structure of the Selmer group, we need to compute $\Theta_{1,\chi}$. The smallest Kolyvagin prime $\ell=93\,251$ satisfies that $\ord_5(\delta_{\ell,\chi})=2$. It guarantees that $\rank(E(\Q))= 1$ and $\#\Sha(E/\Q)[5^\infty]\mid 25$. If we compute $\ord_5(\delta_{\ell,\chi})$ for the smallest Kolyvagin primes, we will obtain the value $2$ for approximately the $80\%$ of the computations and a higher value in the remaining cases. That would lead us to guess that $\Theta_2=25\Z_5$, which is equivalent to $\#\Sha(E/\Q)[5^\infty]=25$. However, we cannot prove it with only a finite amount of computations.

    But we can use a different method to compute the order of the Tate-Shafarevich group using the Iwasawa main conjecture, which can be numerically verified it using Theorem \ref{th:EK_IMC}. The smallest Kolyvagin primes for $k=1$ are $11$, $31$ and $131$. Their product $n=44\,671$ satisfies that $\delta_{n,\chi}\in \Z_5^\times$, so the Iwasawa main conjecture holds true in this elliptic curve.
    
    Using Sagemath, we can check that the $5$-adic $L$-function can be written as
    \[\char(X_\infty)=\mathcal L_5(E,T)=(5^2+O(5^3))T+O(T^2).\]
    Under our assumptions, Mazur's control theorem works perfectly, so we can conclude that $\Sel(\Q,T_5E)\cong \Z_5\times \Z_5/(5) \times\Z_5/(5)$. Therefore,
    \[\Sha(E/\Q)[5^\infty]=\Sha(E/K)[5^\infty]^{\Gal(K/\Q)}\cong \Z_5/(5)\times \Z_5/(5).\]

    If $\chi$ is the quadratic character of conductor $7$, then $\delta_{1,\chi}=0$, so $\rank(E(K)_{\chi})\geq 1$. In order to determine the full structure of the twisted Selmer group, we need to compute $\Theta_{1,\chi}$. Since $11$ is a Kolyvagin prime and $\ord_5(\delta_{11,\chi})=0$, we deduce that $\Theta_1=\Z_5$. Hence $\rank(E(K)_{\chi})= 1$ and $\Sha(E/K)[5^\infty]_\chi=\{0\}$.

    For all other characters of conductor $7$, we compute $\ord_5(\delta_{1,\chi})=1$, which implies that $\Sha(E/K)_\chi\cong \OO_6/(5)$.

    All the information above is enough to compute the structure of the Selmer group $\Sel(K,T_5E)\otimes \OO_6$ as an $\OO_6[\Gal(K/\Q)]$-module. By Proposition \ref{prop:fitting_integral}, the Fitting ideal are then given by the expressions
    \[\begin{aligned}
    &\Fitt_{\Z_5}^0(\Sel(K,T_5E))=\frac{5}{3} (2\sigma_1-\sigma_2-\sigma_4),\\
    &\Fitt_{\Z_5}^1(\Sel(K,T_5E))={5} \sigma_1+{4}(\sigma_2+\sigma_3+\sigma_4+\sigma_5+\sigma_6),\\
    &\Fitt_{\Z_5}^2(\Sel(K,T_5E))=\frac{5}{3} \sigma_1+\frac{2}{3}(\sigma_2+\sigma_3+\sigma_4+\sigma_5+\sigma_6).\\
    \end{aligned}\]

    By Proposition \ref{prop:fitting_integral}, there is an isomorphism of $\Z_p[\Gal(K/\Q)]$-modules
    \[\Sel(K,T_5E)=\frac{\Z_p[G]}{\left({5}(2\sigma_1-\sigma_2-\sigma_4)\right)}\times \left(\frac{\Z_p[G]}{\left({5} \sigma_1+{2}(\sigma_2+\sigma_3+\sigma_4+\sigma_5+\sigma_6)\right)}\right)^2.\]
\end{example}

\begin{example}
Let $E$ be the elliptic curve 35a1 in Cremona's database and let $p=7$. For a Dirichlet character $\chi$ of conductor $51$ and order $8$ such that $\chi(35)=-1$ and $\chi(37)$ is a primitive $8^{\textrm{th}}$ root of unity $\zeta_{8}$ satisfying that $\zeta_{8}^2+3\zeta_8+1\in 7\Z_7$.

We can compute $\ord_7(\delta_{1,\chi})=2$, so we know that $\rank(E(\Q(\mu_{51})))_\chi=0$ and 
\[\#\Sha(E/\Q(\mu_{51}))[7^\infty]_\chi=\Bigl(\#(\OO_{16}/(7))\Bigr)^2.\]

Note that $\chi$ is not self-dual, so there are no non-degenerate pairings defined on the twisted Selmer group that determine its structure. In this case, the computation of $\Theta_{1,\chi}$ is what determines whether the Tate-Shafarevich group is isomorphic to $\OO_{16}/(7^2)$ or $\OO_{16}/(7)\times \OO_{16}/(7)$.

The smallest Kolyvagin prime is $\ell=2\,801$ and satisfies that $\delta_{\ell,\chi}\in \OO_8^\times$. Hence $\Theta_{1,\chi}=\OO_8$ and 
\[\#\Sha(E/\Q(\mu_{51}))[7^\infty]_\chi\cong\OO_{16}/(7^2).\]
\end{example}

\begin{example}
    Consider the elliptic curve 11a1 in Cremona's database, the abelian extension $K=\Q(\mu_{61})$ and the prime $p=101$. 

    Let $\chi$ be the character of conductor $61$ and order $20$ such that $\chi(2)$ is the unique primitive $20^{\textrm{th}}$ root of unity in $60+101\Z_{101}$. Then
    \[\ord_{101}(\delta_{1,\chi})=\ord_{101}(\delta_{1,\chibar})=1.\]
    Theorem \ref{th:EK_str} then implies that 
    \[\Sel(\Q,T_pE\otimes \Z_{101}(\chi))\cong \Sel(\Q,T_pE\otimes \Z_{101}(\chibar))\cong \Z_{101}/(101).\]

    For the quadratic character $\chi'$ of conductor $61$, we obtain that $\delta_{1,\chi'}=0$, so Theorem \ref{th:EK_str} implies that $\rank_{\Z_p}(\Sel(\Q,T_pE\otimes ))\geq 1$. To determine the full structure of this Selmer group, we need to compute $\Theta_{1,\chi'}$. The smallest Kolyvagin prime is $\ell'=64\,237$ and satisfies that
    $\ord_{101}(\delta_{\ell',\chi})=0$. Hence, $\Theta_{i,\chi}=\Z_p$, so
    \[\Sel(\Q,T_pE\otimes \Z_{101})\cong \Z_{101}.\]

    Finally, consider a character $\chi''$ of conductor $61$ and order $6$. It satisfies that $\chi(2)$ is a $6^{\textrm{th}}$-root of unity. Then $\delta_{1,\chi''}=\delta_{1,\overline{\chi''}}=0$. The smallest Kolyvagin prime for these characters is $\ell''=2\,528\,233$, satisfying that 
    \[\ord_{101}(\delta_{\ell'',\chi''})=\ord_{101}(\delta_{\ell'',\overline{\chi''}})=0.\] 
    Hence,
    \[\Sel\Bigl(\Q,T_pE\otimes \OO_6(\chi'')\Bigr)\cong \Sel\Bigl(\Q,T_pE\otimes \OO_6(\overline{\chi''})\Bigr)\cong \OO_6.\]

    Overall, assuming the finiteness of the Tate-Shafarevich group, we know that $\rank(E(K))=3$. Furthermore, we know how the rank grows along the subextensions of $K/\Q$. Indeed, if $K_2$ and $K_3$ are the subextensions of degrees $2$ and $3$, respectively, we know that $\rank(E(K_2))=1$ and $\rank(E(K_3))=2$.

    Similarly, if $L$ is a subextension of $K/\Q$, then $\Sha(E/L)$ would have order $101^2$ if $[L:\Q]\in 20\Z$ and would be trivial otherwise.

\end{example}

\begin{example}
We can use this method to find Tate-Shafarevich groups divisible by large primes. Consider the elliptic curve 27a1 in the Cremona tables and the prime $p=472\,558\,791\,937$. Let $\chi$ be the Dirichlet character of conductor $89$ satisfying that $\chi(3)$ is the unique primitive $88^{\textrm{th}}$ root of unity in $382\,613\,086\,515+p\Z_p$. Then
\[\ord_p(\delta_{1,\chi})=1.\] 
Therefore, we can conclude that
\[\Sel(\Q,T_p E\otimes \Z_p(\chi))\cong \Z_p/(p).\]
\end{example}

\begingroup

\sloppy
\hbadness=10000
\printbibliography[heading=bibintoc]
\endgroup

\end{document}